\title{An index formula for families of end-periodic Dirac operators}
\date{}
\author{Alex R. Taylor\\University of Illinois Urbana-Champaign}
\titleformat{\subsection}[runin]{\normalfont\small\bfseries}{\thesubsection.}{5pt}{\addperiod}
\newcommand{\addperiod}[1]{#1.}
\newcommand\numberthis{\addtocounter{equation}{1}\tag{\theequation}}
\newtheorem{theorem}{Theorem}
\newtheorem*{theorem*}{Theorem}
\numberwithin{theorem}{section}
\numberwithin{equation}{section}
\newtheorem{corollary}[theorem]{Corollary}
\newtheorem{proposition}[theorem]{Proposition}
\newtheorem{lemma}[theorem]{Lemma}
\theoremstyle{definition}
\newtheorem{remark}[theorem]{Remark}
\newlist{axioms}{enumerate}{1}
\setlist[axioms]{
    label=\textbf{(AP\arabic*)~}, 
    itemsep=0.5em, 
    parsep=0.2em, 
    leftmargin=4em 
}
\DeclareMathSymbol{\widetildesym}{\mathord}{largesymbols}{"65}
    \newcommand\lowerwidetildesym{%
      \text{\smash{\raisebox{-1.3ex}{%
        $\widetildesym$}}}}
            \newcommand\tildhat[1]{%
              \mathchoice
                {\accentset{\displaystyle\lowerwidetildesym}{#1}}
        {\accentset{\textstyle\lowerwidetildesym}{#1}}
        {\accentset{\scriptstyle\lowerwidetildesym}{#1}}
        {\accentset{\scriptscriptstyle\lowerwidetildesym}{#1}}
    }
\newcommand{\tild}
{\raise.17ex\hbox{$\scriptstyle\mathtt{\sim}$}}
\DeclareMathOperator{\supp}{supp}
\DeclareMathOperator{\Rg}{Rg}
\DeclareMathOperator{\Ker}{Ker}
\DeclareMathOperator{\Coker}{Coker} 
\DeclareMathOperator{\coker}{coker} 
\DeclareMathOperator{\ind}{ind} 
\DeclareMathOperator{\Ind}{Ind} 
\DeclareMathOperator{\id}{id}
\DeclareMathOperator{\Hom}{Hom}
\DeclareMathOperator{\End}{End}
\DeclareMathOperator{\grad}{grad}
\DeclareMathOperator{\en}{end}
\DeclareMathOperator{\cl}{c\ell} 
\DeclareMathOperator{\Cl}{Cl} 
\DeclareMathOperator{\rch}{\prescript{R}{}ch} 
\DeclareMathOperator{\bch}{\prescript{b}{}ch} 
\DeclareMathOperator{\ch}{ch} 
\DeclareMathOperator{\rStr}{\prescript{R}{}Str} 
\DeclareMathOperator{\rTr}{\prescript{R}{}Tr} 
\DeclareMathOperator{\bStr}{\prescript{b}{}Str} 
\DeclareMathOperator{\bTr}{\prescript{b}{}Tr} 
\DeclareMathOperator{\Str}{Str}
\DeclareMathOperator{\Tr}{Tr}
\DeclareMathOperator{\tr}{tr}
\DeclareMathOperator{\str}{str}
\DeclareMathOperator{\vol}{vol}
\newcommand{\lessim}{\mathrel{\mathop{<}\limits_{\raisebox{0.6ex}{$\scriptstyle\sim$}}}}
\newcommand{\epdiff}{\mathrm{Diff}_{\pi,\mathrm{ep}}}
\newcommand{\epeta}{\widehat{\eta}_{\mathrm{ep}}}
\newcommand{\AS}{\mathrm{\mathbf{I}}}
\newcommand{\mathdash}{\relbar\mkern-9mu\relbar}
\newcommand{\con}{\nabla}
\newcommand*{\dt}[1]{\accentset{\mbox{\large\bfseries .}}{#1}}
\DeclareSymbolFont{toneitalic}{T1}{\familydefault}{m}{it}
\DeclareMathSymbol{\dir}{\mathord}{toneitalic}{"F0}
\newcommand\mf\mathfrak
\newcommand\mc\mathcal
\newcommand\mb\mathbb
\newcommand*\bigcdot{\mathpalette\bigcdot@{.5}}
\newcommand*\bigcdot@[2]{\mathbin{\vcenter{\hbox{\scalebox{#2}{$\m@th#1\bullet$}}}}}
\pgfplotsset{compat=1.15} 
\begin{document}

\maketitle
\thispagestyle{empty} 

\abstract{We derive a transgression formula for the renormalized Chern character of the Bismut superconnection in the context of end-periodic fiber bundles and families of end-periodic Clifford modules. The transgression is expressed in terms of the Fourier-Laplace transform of the Bismut superconnection using the renormalized supertrace of Mrowka-Ruberman-Saveliev. Consequently, we establish an index formula for families of Dirac operators on end-periodic manifolds. The index formula involves a new ``end-periodic eta form'' which generalizes both the Bismut-Cheeger eta form and the end-periodic eta invariant of Mrowka-Ruberman-Saveliev.}

\tableofcontents
\pagenumbering{arabic} 

\section{Introduction and main results}\label{section:intro}

Given a closed Riemannian fiber bundle $Z \to B$, a Clifford module $E \to Z$, and a family of Dirac operators on the fibers, there is a natural class in the $K$-theory of the parameter space $B$ called the \emph{index bundle} which generalizes the Fredholm index of a single operator. The objective of a local index formula for families is to find a nice representative for the Chern character of the index bundle in de Rham cohomology. In 1985 Quillen \cite{quillen1} suggested an approach to this problem (later carried out by Bismut \cite{bismut}) in the category of closed manifolds using an infinite-dimensional, $\mb{Z}_{2}$-graded version of the transgression formula appearing in Chern-Weil theory, i.e., a formula for the variation of the Chern character of a path of connections, where one replaces connections with superconnections and the matrix exponential with the heat operator. One shows that
\[
\frac{d}{dt}\Str\left(e^{-A_{t}^{2}}\right) = - d_{B}\Str\left(\dt{A}_{t}e^{-A_{t}^{2}}\right)
\]
and then integrating over $t \in (0,\infty)$ obtains a representative in de Rham cohomology for the Chern character of the index bundle, that is
\[
\ch(\con_{0}) - \int_{Z/B}\AS(D(Z)) = - d_{B}\Str\left(\dt{A}_{t}e^{-A_{t}^{2}}\right).
\]
In this formula,
\begin{enumerate}[(i)]
\item The local index form $\AS(D(Z)) = \widehat{A}(T(Z/B))\wedge\ch'(E)$ is defined in terms of the local geometries of $Z$ and $E$. Here $\widehat{A}(T(Z/B))$ is the A-hat genus of the vertical tangent bundle and $\ch'(E)$ is the Chern character of the twisting curvature of $E \to Z$.
\item The Chern character $\ch(\con_{0})$ is a closed differential form on the base $B$ whose cohomology class equals the Chern character of the index bundle, and the index formula gives an explicit description of this representative.
\end{enumerate}
Of course, in order for this approach to succeed, one must construct a special superconnection $A_{t}$ so that the short-time limit of $\Str(e^{-A_{t}^{2}})$ exists. Bismut \cite{bismut} carried out this procedure via probabilistic methods to establish the families index formula in the case of closed manifolds. Berline and Vergne \cite{bv} and Donnelly \cite{don} then gave proofs of the theorem based on the classical expansion of heat kernels (also see Berline, Getzler, Vergne \cite[\S 10]{bgv}).

The local families index formula has seen generalizations to several categories of non-compact manifolds. Bismut and Cheeger \cite{bc1,bc2} established a local index formula for families in the context of manifolds with boundary,
\begin{equation}\label{eqn:cylendformula}
\ch(\con_{0}) - \int_{Z/B}\AS(D(Z)) = -\frac{1}{2}\widehat{\eta} - d_{B}\Str\left(\dt{A}_{t}e^{-A_{t}^{2}}\right)
\end{equation}
which generalizes the Atiyah-Patodi-Singer index theorem \cite{aps}. Here the boundary contribution $\widehat{\eta}$ is the Bismut-Cheeger eta form which generalizes the Atiyah-Patodi-Singer eta invariant. Melrose and Piazza \cite{mp} later proved the formula (\ref{eqn:cylendformula}) in the more general context of manifolds with b-metrics, which includes the case of manifolds with cylindrical ends. Other examples include the index formulas of \cite{tz}, \cite{liuma}, and \cite{ag}. The differential forms appearing in these index formulas often have interesting geometric interpretations, most notably the ``anomaly formula'' of Bismut-Freed for the curvature of the determinant line bundle associated with a family of Dirac operators, cf. \cite{quillen2} and \cite[\S 10.6]{bgv}. Further, as Freed \cite[\S 8]{freed} explains, the index theory for families has many interesting connections with mathematical physics.

Another interesting non-compact setting is that of end-periodic manifolds, where the non-compact end is allowed to have some periodically repeating topology. Taubes \cite{tau} originally introduced the index theory of elliptic operators on end-periodic manifolds to investigate exotic smooth structures on $\mb{R}^{4}$, and this end-periodic index theory has since found other interesting applications in the study of positive scalar curvature metrics and positive scalar curvature cobordism \cite{mrs1}, \cite{krs}, \cite{hm} and Seiberg-Witten invariants \cite{mrs2}. The end-periodic eta invariant arising from the index formula \cite[Theorem A]{mrs1} has played a fundamental role in these applications.

Thus we are motivated by the variety of interesting applications surrounding on the one hand the index theory of end-periodic operators, and the index theory of families of Dirac operators on the other. The main result of this article is a families index formula in the category of manifolds with periodic ends -- see Theorem \ref{mainthm1} and Theorem \ref{mainthm2}.

\begin{theorem*}
Let $B$ be a closed manifold. Let $M \to B$ be an end-periodic fiber bundle with compact part $Z$, and periodic end modeled on an infinite cyclic covering $\tildhat{Y}$ of an even-dimensional closed manifold $Y$. Included in the data of the periodic end is a function $f: \tildhat{Y} \to \mb{R}$ such that $W_{0} = f^{-1}[0,1]$ is the periodically repeating segment of the end.

Let $E \to M$ be a family of end-periodic Clifford modules, with $D$ the associated family of end-periodic Dirac operators, and suppose that the family $D$ is Fredholm. Then the Chern character of the index bundle of $D$ is represented in de Rham cohomology by the following differential form on $B$:
\begin{equation}\label{eqn:epformula}
\int_{Z/B}\AS(D(Z)) - \int_{W_{0}/B}f\,\AS(D(Y)) - \frac{1}{2}\epeta.
\end{equation}
Here, $\epeta$ is a differential form on $B$ whose degree 0 part is the end-periodic eta invariant of \cite[Theorem A]{mrs1}.
\end{theorem*}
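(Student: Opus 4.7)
The plan is to follow Quillen's infinite-dimensional transgression strategy, but with two crucial modifications dictated by the periodic ends: first, replace the fiberwise supertrace with the renormalized supertrace $\rStr$ of Mrowka-Ruberman-Saveliev, which is finite on the non-compact end-periodic fibers but which fails to annihilate supercommutators; second, express this anomaly using the Fourier-Laplace transform of the Bismut superconnection along the infinite cyclic cover, so that it can be packaged into a well-defined end-periodic eta form $\epeta$ together with a local interior correction over $W_0$.

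First I would construct a Bismut superconnection $A_t$ for the family $D$ by choosing a vertical metric and horizontal distribution on $M \to B$ that are translation-invariant on the periodic end. This ensures that, after Fourier transform along $\tildhat{Y} \to Y$, the restriction of $A_t$ to the end is compatible with the $\mb{Z}$-periodic decomposition into copies of $W_0$. I then define the renormalized Chern character $\rch(A_t) := \rStr(e^{-A_t^2})$ and establish a transgression identity of the form
$$\frac{d}{dt}\rch(A_t) = -d_B\,\rStr\bigl(\dt{A}_t e^{-A_t^2}\bigr) + R(A_t),$$
where $R(A_t)$ is the supercommutator defect of $\rStr$, to be computed via a contour/Fourier-Laplace representation of the resolvent on $\tildhat{Y}$ and shown to depend only on the boundary data $D(Y)$. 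Next I analyze the two endpoints in $t$. For $t \to \infty$, the Fredholm hypothesis provides a spectral gap for the family and standard arguments (adapted to the renormalized setting) give $\rch(A_t) \to \ch(\con_0)$ for a connection $\con_0$ on the index bundle. For $t \to 0$, Getzler rescaling applied separately on the compact piece $Z$ and on a fundamental domain $W_0$, combined with the defining property of $\rStr$, should yield
$$\lim_{t\to 0}\rch(A_t) = \int_{Z/B}\AS(D(Z)) - \int_{W_0/B} f\,\AS(D(Y));$$
the second term arises because the renormalized trace of the fiberwise heat kernel on $\tildhat{Y}$ produces one copy of the integrated Atiyah-Singer integrand, weighted by the slope $f$ of the defining function. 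Integrating the transgression from $0$ to $\infty$ and bundling the integrated $R(A_t)$ contribution together with the usual $\frac{1}{\sqrt{\pi}}\int_0^\infty\rStr(\dt{A}_t e^{-A_t^2})\,dt$ into the definition of $\epeta$ (with degree-zero part recovering the Mrowka-Ruberman-Saveliev invariant) gives the stated formula.

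The main obstacle is the precise identification of the anomaly $R(A_t)$ and the verification that its Fourier-Laplace expression converges and is computable purely from the data on $Y$. This step is the genuine replacement for Duhamel's formula and the cylindrical boundary contribution in the Bismut-Cheeger proof: one must uniformly control the resolvent family $(A_t|_{\tildhat{Y}} - z)^{-1}$ for $z$ on a suitable contour, extract the leading periodic piece, and show that the non-leading contributions telescope across adjacent copies of $W_0$ under the renormalization. A secondary analytic difficulty is the long-time limit, since ruling out accumulation of the spectrum at zero on the end-periodic fibers requires a spectral analysis tailored to the periodic (rather than cylindrical) structure of the end, rather than following directly from the Bismut-Cheeger/APS templates.
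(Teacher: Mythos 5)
Your overall architecture (renormalized supertrace, transgression, Fourier--Laplace analysis of the defect, limits at $t\to 0$ and $t\to\infty$) is the right one, but there is a concrete error in where the term $\int_{W_{0}/B}f\,\AS(D(Y))$ comes from, and it breaks your accounting. You claim $\lim_{t\to 0}\rch(A_{t}) = \int_{Z/B}\AS(D(Z)) - \int_{W_{0}/B}f\,\AS(D(Y))$, with the second term produced by ``the renormalized trace of the fiberwise heat kernel on $\tildhat{Y}$.'' This cannot happen: the renormalized supertrace is defined by subtracting $(N+1)$ copies of the $W_{0}$-integral of the \emph{periodic} model kernel from the integral over $Z_{N}$, so in the short-time limit the periodic-end contributions cancel exactly and one gets only $\int_{Z/B}\AS(D(Z))$; there is no mechanism in the definition of $\rStr$ that introduces a weight $f$. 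The $f$-weighted term enters solely through the Mrowka--Ruberman--Saveliev trace-defect formula, whose first piece is an integral over $W_{0}$ weighted by $f(x)$ of supertraces of indicial-family supercommutators. The real work is to show that, after splitting $A_{t}=(A_{t}-A_{[1]})+A_{[1]}$ and applying the defect formula to each resulting bracket (including the Duhamel correction $\int_{0}^{1}\rStr[e^{-sA_{t}^{2}},\tfrac{d}{dt}(A_{t}^{2})e^{-(1-s)A_{t}^{2}}]\,ds$), all of these interior $f$-weighted defect terms assemble into an exact $t$-derivative of $\frac{1}{2\pi i}\oint_{|\xi|=1}\int_{W_{0}/B}f\,\str\bigl(K_{e^{-A_{t}^{2}(Y)(\xi)}}(x,x)\bigr)dx\,\frac{d\xi}{\xi}$; its short-time limit is then computed not by Getzler rescaling on $W_{0}$ but by observing that $A(Y)(\xi)$, twisted by the flat line bundle $L_{\xi}$ with connection $d-\ln\xi\,df$, is a genuine Bismut superconnection on a closed fiber bundle, so the standard family local index theorem applies and yields $\int_{W_{0}/B}f\,\AS(D(Y))$, while its long-time limit vanishes by Fredholmness. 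Without this step your plan either loses the $f$-weighted term or (if you also fold the full integrated defect into $\epeta$) double counts it, and your $\epeta$ would not have the MRS invariant as its degree-zero part.

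Two further gaps. First, the end-periodic defect formula has a second piece, a $\xi$-derivative term $\oint\Str(\partial_{\xi}P_{\xi}\,Q_{\xi})\,d\xi$, and the Duhamel correction contributes through it a genuinely new summand in $\epeta(t)$ governed by $[A_{t}(Y)(\xi),\delta_{t}m_{0}(df)]$, i.e.\ by the Hessian $\con^{T^{*}Y}df$; this term is absent in the cylindrical (Bismut--Cheeger/Melrose--Piazza) case and does not appear in your description of $R(A_{t})$, so your candidate eta form is incomplete precisely where the end-periodic geometry differs from the cylindrical one. Second, two analytic points you wave at need independent arguments: interchanging $t\to 0^{+}$ with the $N\to\infty$ renormalization limit requires uniform-in-$t$ control of $K_{e^{-A_{t}^{2}}}-K_{e^{-\tildhat{A}_{t}^{2}}}$ on the end (the rescaled curvature $A_{t}^{2}=tD^{2}+t^{1/2}\delta_{t}R$ has negatively-scaled terms, so the single-operator estimates do not apply verbatim), and the non-constant-rank case cannot be dispatched by assuming an index bundle with connection $\con_{0}$ exists: one must stabilize by a smoothing perturbation $P$ chosen with vanishing normal operator $N(P)=0$, so that the indicial families, and hence all defect and eta contributions, are unchanged by the perturbation.
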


The approach taken by \cite{mp} in proving the index formula (\ref{eqn:cylendformula}) is to derive a transgression formula for a renormalized supertrace adapted to the context of manifolds with cylindrical ends, namely the b-supertrace, and then apply the defect formula which describes the nonvanishing of the b-supertrace of a supercommutator. In proving the formula (\ref{eqn:epformula}) it becomes apparent that many of the distinguishing features as compared with (\ref{eqn:cylendformula}) originate from the presence of an additional term in the renormalized supertrace defect formula \cite[Theorem 4.6]{mrs1} in the end-periodic context. Most significantly, the end-periodic eta form $\epeta$ in (\ref{eqn:epformula}) involves a new term which arises in exactly this way. In \S\ref{section:epeta} we show that this new term can be interpreted geometrically in terms of the Hessian $\con^{T^{*}Y}df$; moreover, in \S\ref{section:hessian} we show that this Hessian measures the nontriviality of the periodic end in the sense that it vanishes if and only if the periodic end is actually cylindrical. \\

\noindent \textbf{Acknowledgements.} I am deeply grateful to my advisor, Pierre Albin, for providing invaluable guidance on this project, and for patiently reviewing innumerable earlier drafts. I would also like to thank Nikolai Saveliev for kindly answering my questions via email regarding end-periodic index theory.
\section{End-periodic Riemannian fiber bundles}\label{section:EPgeometry}

We start by recounting the basic components of end-periodic geometry. For a more detailed exposition we refer the reader to \cite[\S 3]{tau} and \cite[\S 2]{mrs1}. Let $Y$ be any even-dimensional closed Riemannian manifold with nontrivial $H^{1}(Y,\mb{Z})$, called the \emph{furled-up manifold}. We choose a primitive cohomology class in $H^{1}(Y,\mathbb{Z})$ which specifies how to unfurl $Y$ to produce the periodic segment. This is equivalent to choosing a smooth map $f_{0}: Y \to S^{1}$, in which case the cohomology class is given by $f_{0}^{*}[d\theta] \in H^{1}(Y,\mathbb{Z})$. Corresponding to this class, the infinite cyclic covering space $p: \tildhat{Y} \to Y$ is defined via the pullback diagram
\begin{equation}\label{diagram:pullback}
\begin{tikzcd}
	\tildhat{Y} & {\mb{R}} \\
	Y & {S^{1}.}
	\arrow["f", from=1-1, to=1-2]
	\arrow["p"', from=1-1, to=2-1]
	\arrow["{e^{2\pi i t}}", from=1-2, to=2-2]
	\arrow["f_{0}", from=2-1, to=2-2]
\end{tikzcd}
\end{equation}
\noindent Thus $\tildhat{Y} = \{(t,y)\in \mb{R}\times Y: e^{2\pi it} = f_{0}(y)\}$. We will denote the projection $\tildhat{Y} \to \mb{R}$ by $f(t,y) = t$. If we fix a basepoint $y_{0} \in Y$, then $f_{0}$ is given by
\begin{align*}
&f_{0}: Y \to S^{1} \\
&f_{0}(y) = \exp\left(2\pi i\int_{\gamma(y)}f_{0}^{*}(d\theta)\right)
\end{align*}
where $\gamma(y)$ is any path starting at $y_{0} \in Y$ and ending at $y \in Y$. For any other choice of path, the difference of the integrals over the two paths yields an integral of $f_{0}^{*}(d\theta)$ over a closed loop based at $y_{0}$. This difference is necessarily an integer, so $f_{0}(y) \in S^{1}$ is well-defined.

The infinite cyclic cover $\tildhat{Y}$ will be the model for the end of an end-periodic manifold. To explain this, note that $\tildhat{Y}$ can be constructed rather explicitly as follows. Suppose for simplicity that $Y$ is connected. Define the periodic segment $W_{0} = f^{-1}[0,1] \subseteq \tildhat{Y}$, which is a connected submanifold with two boundary components. Then $\tildhat{Y}$ is diffeomorphic to the manifold obtained by recursively gluing copies of the segment $W_{0}$ together along the boundary components:
\[
\tildhat{Y} \simeq \bigsqcup_{k\in\mb{Z}}W_{k} = \tildhat{Y}_{+}\sqcup\tildhat{Y}_{-}
\]
where each $W_{k} = f^{-1}[k,k+1]$ is diffeomorphic to $W_{0}$, and $\tildhat{Y}_{\pm}$ are the two ends of $\tildhat{Y}$. An \emph{end-periodic manifold} with end modeled on $\tildhat{Y}$ is a smooth manifold $M$ with a compact submanifold $Z \subseteq M$ with boundary, such that $M\setminus Z$ is diffeomorphic to $\tildhat{Y}_{+}$. We let $\en(M) = M\setminus Z$ denote the \emph{periodic end} of $M$.

\begin{remark}\label{remark:epgeostuff}
It is worth mentioning a few things at this point:
\begin{enumerate}
\item In the special case where $\tildhat{Y} = \mb{R} \times \partial Z$, $W_{0} = [0,1] \times \partial Z$, $Y = S^{1}\times \partial Z$, and $f(t,q) = t$, then $M$ is just a manifold with a cylindrical end.

\item By allowing $Y$ to have multiple components we obtain the notion of an end-periodic manifold $M$ with multiple periodic ends. 

\item The projection $f: \tildhat{Y} \to \mb{R}$ in the diagram (\ref{diagram:pullback}) has the property that $f(t+k,y) = f(t,y) + k$ for any $k\in\mb{Z}$, and so the differential $df$ descends to a well-defined 1-form on $Y$, which is exactly $f_{0}^{*}(d\theta)$. Following \cite{mrs1}, the symbol $df$ will everywhere denote this (not necessarily exact) 1-form on $Y$.
\end{enumerate}
\end{remark}

A Riemannian metric $g_{Y}$ on $Y$ lifts to a metric $\tildhat{g} = p^{*}g_{Y}$ on $\tildhat{Y}$. A Riemannian metric $g$ on $M$ is called an \emph{end-periodic metric} if its restriction to $\en(M)$ coincides with $\tildhat{g}$ for some $g_{Y}$ on $Y$. In general we will always assume that an end-periodic manifold is equipped with an end-periodic metric.

Given any ordinary vector bundle $E_{Y} \to Y$ we construct a lifted vector bundle $\tildhat{E} = p^{*}E_{Y} \to \tildhat{Y}$ via the pullback diagram
\[\begin{tikzcd}
	\tildhat{E} & \tildhat{Y} \\
	{E_{Y}} & Y.
	\arrow[from=1-1, to=1-2]
	\arrow[from=1-1, to=2-1]
	\arrow["p", from=1-2, to=2-2]
	\arrow[from=2-1, to=2-2]
\end{tikzcd}\]
A vector bundle $E \to M$ is called an \emph{end-periodic vector bundle} if its restriction to $\en(M)$ coincides with $\tildhat{E} \to \tildhat{Y}$ for some $E_{Y} \to Y$. In the same vein, given a differential operator $P(Y)$ acting on sections of $E_{Y} \to Y$, we can define a lifted differential operator $\tildhat{P} = p^{*}P(Y)$ acting on sections of $\tildhat{E} \to \tildhat{Y}$. In particular we have
\[
\tildhat{P}(p^{*}u) = p^{*}(P(Y)u)
\]
for any $u \in \Gamma(Y;E_{Y})$. A differential operator $P$ acting on sections of an end-periodic vector bundle $E \to M$ is called an \emph{end-periodic differential operator} if its restriction to $\en(M)$ coincides with $\tildhat{P} = p^{*}P(Y)$ for some such differential operator $P(Y)$. We call $\tildhat{P}$ the \emph{normal operator} of $P$, also  denoted by $N(P) = \tildhat{P}$.

\begin{remark}[Bounded geometry]
A smooth Riemannian manifold has bounded geometry if its injectivity radius is positive and all covariant derivatives of its curvature tensor are uniformly bounded. An end-periodic manifold $M$ equipped with an end-periodic metric clearly has bounded geometry because the Riemannian metric on $\en(M)$ is the restriction of the lifted metric $\tildhat{g} = p^{*}g_{Y}$ from $Y$ and $p$ is a local isometry. For the same reason, it is clear that an end-periodic vector bundle $E \to M$ has bounded geometry.
\end{remark}

\begin{remark}[Clifford modules]
Our notion of \emph{Clifford module} coincides with the notion of Dirac bundle defined in \cite[Definition 5.2]{lm}. The data of a Clifford module $E \to M$ determines a distinguished Dirac-type operator $D: \Gamma(M;E) \to \Gamma(M;E)$ given by composing the Clifford action with the Clifford connection, cf. \cite[Proposition 3.42]{bgv}. In the presence of a $\mb{Z}_{2}$-grading $E = E^{+}\oplus E^{-}$ (compatible with the Clifford module data), the Dirac operator splits into the ``chiral parts'' $D^{\pm}: \Gamma(M;E^{\pm}) \to \Gamma(M;E^{\mp})$. Whereas $D$ is self-adjoint, the chiral operators satisfy $(D^{+})^{*} = D^{-}$. An important invariant associated with the Dirac operator is the \emph{Fredholm index}, $\ind D^{+} = \dim \ker D^{+} - \dim \coker D^{+}$.
\end{remark} 

Of course, the most important end-periodic differential operators for our purposes will be the end-periodic Dirac operators. Let $E_{Y} \to Y$ be a Clifford module with associated Dirac operator $D(Y)$. The Clifford module structure on $E_{Y} \to Y$ lifts to $\tildhat{E} \to \tildhat{Y}$, and the Dirac operator associated with this lifted structure is exactly the pullback $\tildhat{D} = p^{*}D(Y)$. Suppose that $M$ is an end-periodic manifold with end modeled on $\tildhat{Y} \to Y$. An \emph{end-periodic Clifford module} is by definition a Clifford module $E \to M$ which is also an end-periodic vector bundle, with data coming from
\begin{enumerate}[(i)]
\item A Clifford module structure on $E|_{Z} \to Z$.

\item A Clifford module structure on $E|_{\en(M)} \to \en(M)$ which coincides with that of $\tildhat{E} \to \tildhat{Y}$ on $\en(M)$ (and glues together smoothly with the data on $E|_{Z} \to Z$).
\end{enumerate}
Let $E \to M$ be an end-periodic Clifford module and let $D = D(M)$ be the associated Dirac operator. It follows from the definition of the Clifford module structure on $E \to M$ that $D$ coincides with $\tildhat{D}$ along $\en(M)$, and so $D$ is an end-periodic differential operator with normal operator $N(D) = \tildhat{D}$. We call such an operator an \emph{end-periodic Dirac operator}.

\begin{remark}
It is also natural to consider end-periodic differential operators of a slightly more general type, by requiring only that they be asymptotic to some periodic normal operator along the end, instead of coinciding exactly. Similarly, we could relax the definition of ``end-periodic metric'' by requiring it to be merely asymptotic to some periodic metric along the end. Such metrics have been considered for example in \cite{tau} and \cite{mpu}. All of our results extend in a straightforward manner to these more general metrics and operators.
\end{remark}

Now we will introduce the setting in which our index formula for families of end-periodic Dirac operators takes place. A fiber bundle $M_{z} \mathdash M \xrightarrow{\pi} B$ of smooth manifolds is called an \emph{end-periodic fiber bundle} if there is a fiber bundle $Y_{z} \mathdash Y \xrightarrow{\pi_{1}} B$ such that
\begin{enumerate}[(i)]
\item $M$ is an end-periodic manifold with furled-up manifold $Y$.
\item Each fiber $M_{z}$ is an end-periodic manifold with furled-up manifold $Y_{z}$.
\item The diffeomorphic identification of $\en(M)$ with $\tildhat{Y}_{+}$ is compatible with each of the identifications of $\en(M_{z})$ with $(\tildhat{Y}_{+})_{z}$ in that the following diagram commutes
\[
\begin{tikzcd}
	{\tildhat{Y}_{+}} & {\en(M)} \\
	Y & B.
	\arrow["\simeq", from=1-1, to=1-2]
	\arrow["p"', from=1-1, to=2-1]
	\arrow["\pi", from=1-2, to=2-2]
	\arrow["{\pi_{1}}"', from=2-1, to=2-2]
\end{tikzcd}
\]
\end{enumerate}
We will also assume that the base $B$ is closed. In this paper we will consider an end-periodic fiber bundle $M \xrightarrow{\pi} B$ and a family of end-periodic vector bundles $E_{z} \to M_{z}$ (which assemble into a total end-periodic vector bundle $E \to M$ with $E|_{M_{z}} = E_{z}$). We will use the notation $\epdiff^{\bigcdot}(M;E)$ to denote the space of vertical families of end-periodic differential operators on $E \to M$, and $\mathrm{Diff}_{\pi_{1}}^{\bigcdot}(Y;E_{Y})$ the space of vertical families of differential operators on $E_{Y} \to Y$.

For the fiber bundle $Y \to B$ we adopt the geometric setup of \cite[\S 10.1-10.3]{bgv}:
\begin{itemize}
\item Choose a metric $g_{B}$ on $TB$ and a metric $g_{Y}$ on $TY$. This choice of metric on $Y$ induces a connection on the fiber bundle $Y \to B$, i.e., a splitting $TY = \pi^{*}TB \oplus T(Y/B)$ by identifying $\pi^{*}TB$ with the orthogonal complement of $T(Y/B)$. Let $g_{Y/B}$ denote the induced metric on the vertical tangent bundle $T(Y/B)$ and let $\pi_{V}$ denote the projection onto the vertical component. Define the degenerate metric on $TY$ by
\begin{equation}\label{eqn:degenmetric}
g^{Y}_{0}(u,v) = g_{Y/B}(\pi_{V}u,\pi_{V}v).
\end{equation}
We define the degenerate Clifford bundle $\Cl_{0}(Y) = \Cl(T^{*}Y,g^{Y}_{0})$ with respect to this metric.

\item Suppose $E_{Y}$ is a Clifford module for the vertical cotangent bundle of $Y$ with Clifford action $\cl: \Cl(T^{*}(Y/B),g_{Y/B}) \to \End E_{Y}$ and Clifford connection $\nabla^{E_{Y}}$. From this Clifford module structure we get a vertical family of Dirac operators $D(Y) = (D(Y)^{z})_{z\in B}$.

\item Define the vector bundle $\mb{E}_{Y} = \pi^{*}\Lambda T^{*}B \otimes E_{Y} \to Y$ and degenerate Clifford action $m_{0}: \Cl_{0}(Y) \to \End \mb{E}_{Y}$ which acts on the differential form component of $\mb{E}_{Y}$ via exterior multiplication and acts on the $E_{Y}$-valued component of $\mb{E}_{Y}$ via the vertical Clifford action. More precisely, any $\alpha \in T^{*}Y$ decomposes uniquely into horizontal and vertical parts $\alpha = \alpha_{H} + \alpha_{V}$, and then
\[
m_{0}(\alpha)(\beta \otimes u) = (\alpha_{H} \wedge \beta)\otimes u + (-1)^{\deg \beta}\beta \otimes \cl(\alpha_{V})(u)
\]
for any $\beta \otimes u \in \Gamma(Y;\mb{E}_{Y})$. The action then extends in an obvious way. Thus $\mb{E}_{Y}$ is a Clifford module for the degenerate Clifford bundle $\Cl_{0}(Y)$.  The Clifford module $\mb{E}_{Y}$ is horizontally degenerate in the sense that $m_{0}(\alpha)^{2} = -g_{Y/B}(\alpha_{V},\alpha_{V})$, i.e., $m_{0}(\alpha_{H})^{2} = 0$.

\item With respect to the aforementioned degenerate Clifford module structure on $\mb{E}_{Y}$, we define a Clifford connection on $\mb{E}_{Y}$ by
\[
\nabla^{\mb{E}_{Y},0} = \pi^{*}\nabla^{B}\otimes 1 + 1 \otimes \nabla^{{E}_{Y}} + \frac{1}{2}m_{0}(\omega)
\]
where $\omega$ is the 2-tensor on $Y$ defined in \cite[Definition 10.5]{bgv} in terms of the second fundamental form and the curvature associated with the vertical connection $\nabla^{Y/B}$ and metric $g_{Y/B}$.

\item The Bismut superconnection $A(Y)$ on $E_{Y}$ is the Dirac operator on $\mb{E}_{Y}$ associated with (a) the degenerate Clifford action $m_{0}$ and (b) the Clifford connection $\nabla^{\mb{E}_{Y},0}$ on $\mb{E}_{Y}$. Thus, for any orthonormal local frame $(e_{j})$ for $TY$ with dual coframe $(e^{j})$, we define
\[
A(Y) = \sum_{j}m_{0}(e^{j})\con^{\mb{E}_{Y},0}_{e_{j}}.
\]
When restricted to $\Gamma(Y;E_{Y})$, the Bismut superconnection $A(Y)$ can be decomposed as
\[
A(Y) = D(Y) + A_{[1]}(Y) + A_{[2]}(Y)
\]
where $A_{[j]}(Y): \Gamma(Y;E_{Y}) \to \Gamma(Y;\pi^{*}\Lambda^{j}T^{*}B\otimes E_{Y})$ for each $j \geq 0$. The rescaled Bismut superconnection is given by
\[
A_{t}(Y) = t^{1/2}D(Y) + A_{[1]}(Y) + t^{-1/2}A_{[2]}(Y)
\]
for any $t > 0$. The Bismut superconnection $A(Y)$ is a first-order differential operator which is odd with respect to the $\mb{Z}_{2}$-grading on $\mb{E}_{Y}$. See \cite[Proposition 10.15]{bgv} for more details on the Bismut superconnection in the context of closed manifolds.
\end{itemize}

The following basic fact will enable us to transport the tools of end-periodic geometry to the family context.

\begin{proposition}\label{prop1}
Let $M \xrightarrow{\pi} B$ be an end-periodic fiber bundle. For a family of end-periodic vector bundles $E\to M$ with associated bundles $\tildhat{Y} \xrightarrow{p} Y$, $Y \xrightarrow{\pi_{1}} B$ and $\tildhat{Y} \xrightarrow{\pi_{2}} B$, and for any vector bundle $V \to B$, we have an isomorphism of vector bundles over $\en(M)$,
\[
E \otimes \pi^{*}V|_{\en(M)} \simeq p^{*}(E_{Y}\otimes \pi_{1}^{*}V)|_{\tildhat{Y}_{+}}
\]
where we identify $\en(M) \simeq \tildhat{Y}_{+}$. Thus $E \otimes \pi^{*}V \to M$ is an end-periodic vector bundle. Similarly, $\tildhat{E} \otimes \pi_{2}^{*}V \to \tildhat{Y}$ is an end-periodic vector bundle.
\end{proposition}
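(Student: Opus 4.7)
The plan is to reduce the statement to the naturality of pullback under tensor product, combined with the compatibility square built into the definition of an end-periodic fiber bundle. The verification is essentially a diagram chase; no analytic input is required beyond the elementary fact that pullback of vector bundles is a tensor functor.

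First I would invoke the definition of end-periodic vector bundle applied to $E$ itself, which furnishes an isomorphism $E|_{\en(M)}\simeq p^{*}E_{Y}|_{\tildhat{Y}_{+}}$ under the identification $\en(M)\simeq \tildhat{Y}_{+}$. Next, axiom (iii) in the definition of end-periodic fiber bundle says that under this same identification, $\pi|_{\en(M)}$ corresponds to $\pi_{1}\circ p\colon \tildhat{Y}_{+}\to B$. Consequently, as bundles over $\tildhat{Y}_{+}$,
\[
(\pi^{*}V)|_{\en(M)}\ \simeq\ (\pi_{1}\circ p)^{*}V|_{\tildhat{Y}_{+}}\ =\ p^{*}(\pi_{1}^{*}V)|_{\tildhat{Y}_{+}}.
\]
Combining these two isomorphisms with the naturality of tensor product under pullback gives
\[
(E\otimes \pi^{*}V)|_{\en(M)}\ \simeq\ p^{*}E_{Y}|_{\tildhat{Y}_{+}}\otimes p^{*}\pi_{1}^{*}V|_{\tildhat{Y}_{+}}\ \simeq\ p^{*}(E_{Y}\otimes \pi_{1}^{*}V)|_{\tildhat{Y}_{+}},
\]
which is precisely the defining isomorphism exhibiting $E\otimes \pi^{*}V\to M$ as an end-periodic vector bundle with furled-up bundle $E_{Y}\otimes \pi_{1}^{*}V\to Y$.

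For the second assertion, I observe that $\pi_{2}=\pi_{1}\circ p$ as maps $\tildhat{Y}\to B$, so globally on $\tildhat{Y}$ one has $\tildhat{E}\otimes \pi_{2}^{*}V = p^{*}E_{Y}\otimes p^{*}\pi_{1}^{*}V\simeq p^{*}(E_{Y}\otimes \pi_{1}^{*}V)$; this is manifestly the lift of a bundle over $Y$ along $p$, so the end-periodic structure on the ends of $\tildhat{Y}$ is tautological. I do not anticipate any real obstacle here: the mild subtlety is only the bookkeeping of which pullback is taken along which map, and recognizing that axiom (iii) of the end-periodic fiber bundle definition is exactly the compatibility needed to make tensoring with $\pi^{*}V$ preserve the end-periodic structure.
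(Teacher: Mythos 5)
Your proposal is correct and follows essentially the same route as the paper's proof: use end-periodicity of $E$ to write $E|_{\en(M)} \simeq p^{*}E_{Y}$, invoke compatibility condition (iii) to identify $\pi$ with $\pi_{1}\circ p$ over the end, and conclude via functoriality of pullback with respect to tensor products. The only (welcome) addition is that you spell out the second assertion about $\tildhat{E}\otimes\pi_{2}^{*}V$, which the paper leaves as a brief remark.
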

\begin{proof}
Since $E$ is an end-periodic vector bundle we have $E = p^{*}E_{Y}$ over $\en(M)$. Moreover, from the compatibility condition (iii) in the definition of the end-periodic fiber bundle $M \to B$ we have $\pi_{1}\circ p = \pi$ after identifying $\tildhat{Y}_{+}$ with $\en(M)$, and therefore
\[
p^{*}(E_{Y}\otimes \pi_{1}^{*}V) \simeq p^{*}E_{Y} \otimes p^{*}\pi_{1}^{*}V \simeq E \otimes \pi^{*}V
\]
over $\en(M) \simeq \tildhat{Y}_{+}$. Owing to this isomorphism, $E\otimes \pi^{*}V$ fits into the pullback diagram
\[\begin{tikzcd}
	{E\otimes\pi^{*}V} & {E_{Y}\otimes\pi_{1}^{*}V} \\
	{\en(M)} & Y
	\arrow[from=1-1, to=1-2]
	\arrow[from=1-1, to=2-1]
	\arrow["\mathrm{proj}", from=1-2, to=2-2]
	\arrow["p"', from=2-1, to=2-2]
\end{tikzcd}\]
which is to say that $E \otimes \pi^{*}V$ is an end-periodic vector bundle over $M$.
\end{proof}

Applying Proposition \ref{prop1} with $V = \Lambda T^{*}B$ we deduce that $\mb{E} = \pi^{*}\Lambda T^{*}B \otimes E \to M$ and $\tildhat{\mb{E}} = \pi_{2}^{*}\Lambda T^{*}B \otimes \tildhat{E} \to \tildhat{Y}$ are end-periodic vector bundles. As a result, all of the geometric data (e.g., metrics, Clifford module structure, Clifford connections, etc.) of $\mb{E}_{Y} \to Y$ lifts to $\tildhat{\mb{E}} \to \tildhat{Y}$ and then (together with the given data on $E|_{Z} \to Z$) yields end-periodic geometric data on $\mb{E} \to M$. We let $\tildhat{A}$ denote the associated Bismut superconnection on $\tildhat{\mb{E}} \to \tildhat{Y}$ and $A$ the associated Bismut superconnection on $\mb{E} \to M$. We call $A$ the \emph{end-periodic Bismut superconnection} on $\mb{E} \to M$, it is an end-periodic first-order differential operator which is odd with respect to the $\mb{Z}_{2}$-grading on $\mb{E}$.

A key feature of the Bismut superconnection $A$ is that it satisfies a generalized Lichnerowicz formula for the curvature $A^2$, see \cite[Theorem 10.17]{bgv} and equation (9.25) of \cite{mp}. Let $L = (L^{z})_{z \in B}$ denote the family of connection Laplacians associated with the connections $\con^{\mb{E},0}|_{M_{z}}$ on the Clifford modules $E_{z} \to M_{z}$. Let $S_{M/B}$ denote the scalar curvature of the fibers of $M \to B$, and let $F_{E}' \in \Omega(M,\End_{\Cl(T^{*}(M/B))}(E))$ denote the twisting curvature of $E$. Then
\begin{equation}\label{eqn:lich}
A^{2} = L + \frac{1}{4}S_{M/B} - \frac{1}{2}\sum_{a,b}m_{0}(e^{a})m_{0}(e^{b})F_{E}'(e_{a},e_{b}).
\end{equation}
where $(e^{a})$ is an orthonormal local frame for $T^{*}M$. Similar formulas hold of course for the Bismut superconnections $A(Y)$ on $\mb{E}_{Y} \to Y$ and $\tildhat{A}$ on $\tildhat{\mb{E}} \to \tildhat{Y}$. 

\begin{remark}[Supercommutator]\label{remark:supercomm}
The natural Lie bracket in the $\mb{Z}_{2}$-graded setting is the supercommutator. For operators $P,Q$ acting on sections of a $\mb{Z}_{2}$-graded vector bundle $E = E^{+}\oplus E^{-} \to M$ that are even or odd with respect to the $\mb{Z}_{2}$-grading, their supercommutator is defined as
\[
[P,Q] = PQ - (-1)^{|P||Q|}QP
\]
where we set $|P| = 0$ if $P$ is an even operator and $|P| = 1$ if $P$ is an odd operator.
\end{remark}
\section{Fourier-Laplace transform of the Bismut superconnection}\label{section:FLtransform}

One of the main tools we will use is the Fourier-Laplace (FL) transform introduced in the context of end-periodic geometry in \cite[Proposition 4.2]{tau}. Since $\tildhat{\mb{E}} \to \tildhat{Y}$ is an end-periodic vector bundle, we have for any $\xi \in \mb{C}^{\times}$ a well-defined FL transform, initially defined on compactly supported smooth sections by
\begin{align*}
&\mc{F}_{\xi}: C_{c}^{\infty}(\tildhat{Y};\tildhat{\mb{E}}) \to C^{\infty}(Y;\mb{E}_{Y}) \\
&(\mc{F}_{\xi}u)(p(x)) = \xi^{f(x)}\sum_{m\in\mb{Z}}\xi^{m}u(x+m).
\end{align*}
Here $x+m$ denotes the $\mb{Z}$-action on the covering space $\tildhat{Y}$, i.e., $(y,t)+m = (y,t+m)$, and the function $f:\tildhat{Y} \to \mb{R}$ comes from the pullback diagram (\ref{diagram:pullback}). Note that the right-hand side is a sum of vectors in the fiber $\mb{E}_{Y}|_{p(x)}$, because the fibers of $\tildhat{\mb{E}} = p^{*}\mb{E}_{Y}$ are by definition $\tildhat{\mb{E}}|_{x+m} = \mb{E}_{Y}|_{p(x+m)} = \mb{E}_{Y}|_{p(x)}$. Also, the sum is finite because $u$ is compactly supported.

For any vertical family of end-periodic differential operators $P \in \epdiff^{s}(M;\mb{E})$, let $N(P) \in \mathrm{Diff}_{\pi_{2}}^{s}(\tildhat{Y};\tildhat{\mb{E}})$ denote the normal operator. The FL transform extends by continuity to the Sobolev space $H^{k}(\tildhat{Y},\tildhat{\mb{E}})$ for any $k \geq 0$, see
\cite[Proposition 3.15]{tony}.
The FL transform defines a linear map
\begin{align*}
\mc{F}: C_{c}^{\infty}(\tildhat{Y},\tildhat{\mb{E}}) \to \Rg\mc{F} \subseteq \{v:S^{1} \to C^{\infty}(Y;\mb{E}_{Y})\}
\end{align*}
by $(\mc{F}u)(\xi) = \mc{F}_{\xi}u$, which also extends by continuity to a linear isomorphism $\mc{F}:H^{k}(\tildhat{Y};\tildhat{\mb{E}}) \xrightarrow{\simeq} L^{2}(S^{1},H^{k}(Y;\mb{E}_{Y}))$, with inverse given by
\begin{equation}\label{eqn:FLinverse}
\mc{F}^{-1}(v_{\theta})(x) = \frac{1}{2\pi i}\oint_{S^{1}}\xi^{-f(x)}v_{\xi}(p(x))\frac{d\xi}{\xi}.
\end{equation} Using the FL transform we may define another operator
\[
FL(N(P)): L^{2}(S^{1},H^{k}(Y;\mb{E}_{Y})) \to L^{2}(S^{1},H^{k-s}(Y;\mb{E}_{Y}))
\]
determined by the relation
\[
\mc{F}\circ N(P) = FL(N(P))\circ\mc{F}.
\]
Then for any $\xi \in S^1$, we get a vertical family of differential operators $P(Y)(\xi) \in \mathrm{Diff}_{\pi_{1}}^{s}(Y;\mb{E}_{Y})$ determined by the relation
\[
\mc{F}_{\xi}\circ N(P) = P(Y)(\xi)\circ \mc{F}_{\xi}.
\]
We call $P(Y)(\xi)$ the \emph{indicial family} of $P$. The operators $FL(N(P))$ and $P(Y)(\xi)$ are related by
\[
FL(N(P))(\mc{F}u)(\xi) = \mc{F}_{\xi}(N(P)u) = P(Y)(\xi)(\mc{F}_{\xi}u) \in H^{k-s}(Y;\mb{E}_{Y})
\]
for any $\xi \in S^{1}$ and $u \in C_{c}^{\infty}(\tildhat{Y};\tildhat{\mb{E}})$.

Taubes in \cite{tau} used the FL transform to characterize the Fredholmness of end-periodic Dirac operators. In contrast with the situation for closed manifolds, an end-periodic Dirac operator $D$ need not be Fredholm on $H^{1}(M;E)$; in addition to ellipticity, $D$ must be ``invertible at infinity'', a notion which is captured precisely by the invertibility of the indicial family $D(Y)(\xi)$. This is analogous to the case of manifolds with cylindrical ends, where a Dirac operator is Fredholm if and only if the Mellin transform of the indicial operator is invertible \cite[Theorem 5.40]{melrose}. Throughout the paper we will use the following result of Taubes, which we have paraphrased from \cite[Proposition 4.1, Lemma 4.3]{tau}.
\begin{theorem*}[Taubes]\label{taubes1}
Let $E \to M$ be an end-periodic Clifford module with end-periodic Dirac operator $D$ and induced Dirac operators $D(Y)$ and $\tildhat{D} = N(D)$. The following are equivalent:
\begin{enumerate}[(i)]
\item $D: H^{1}(M;E) \to L^{2}(M;E)$ is Fredholm.
\item $\tildhat{D}: H^{1}(\tildhat{Y};\tildhat{E}) \to L^{2}(\tildhat{Y};\tildhat{E})$ is invertible.
\item $D(Y)(\xi): H^{1}(Y;E_{Y}) \to L^{2}(Y;E_{Y})$ is invertible for every $\xi \in S^{1} \subseteq \mb{C}^{\times}$.
\end{enumerate}
\end{theorem*}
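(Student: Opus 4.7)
The equivalence $(ii) \Leftrightarrow (iii)$ follows directly from the Fourier-Laplace isomorphism $\mc{F}: H^{k}(\tildhat{Y}; \tildhat{E}) \xrightarrow{\simeq} L^{2}(S^{1}, H^{k}(Y; E_{Y}))$. By the very definition of the indicial family, $\mc{F}$ conjugates $\tildhat{D}$ into the pointwise multiplication operator $v(\xi) \mapsto D(Y)(\xi) v(\xi)$. Such a multiplication operator is invertible on $L^{2}(S^{1}, H^{1}(Y; E_{Y}))$ if and only if $D(Y)(\xi)$ is invertible for every $\xi \in S^{1}$ with a uniform bound on $\|D(Y)(\xi)^{-1}\|$; since $\xi \mapsto D(Y)(\xi)$ is operator-norm analytic and $S^{1}$ is compact, pointwise invertibility automatically implies uniform invertibility.

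For $(ii) \Rightarrow (i)$ I would construct a parametrix for $D$. Fix cutoffs $\chi, \chi' \in C^{\infty}(M)$ supported in $\en(M)$ with $\chi \equiv 1$ outside some compact set and $\chi' \equiv 1$ on $\supp \chi$. Under the identification $\en(M) \simeq \tildhat{Y}_{+}$, the operator $D$ agrees with $\tildhat{D}$, so I can set
\begin{equation*}
Q = (1 - \chi') Q_{0} (1 - \chi) + \chi' \tildhat{D}^{-1} \chi,
\end{equation*}
where $Q_{0}$ is a standard interior parametrix for $D$ on a relatively compact neighborhood of $Z$. A direct calculation shows that $DQ - I$ and $QD - I$ reduce to terms of the form $[D, \chi]$ and $[D, \chi']$ composed with bounded operators; each such term is a compactly supported zeroth-order operator times one that gains a derivative, hence compact on $L^{2}(M; E)$ by Rellich's theorem. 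This shows $D$ is Fredholm.

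For $(i) \Rightarrow (ii)$ I argue by contrapositive. If $\tildhat{D}$ is not invertible, then by the already-established $(ii) \Leftrightarrow (iii)$ there exists $\xi_{0} \in S^{1}$ with $D(Y)(\xi_{0})$ not invertible. Being elliptic on the closed manifold $Y$, this operator is Fredholm and so has either nontrivial kernel or nontrivial cokernel. Assuming the former and picking $v_{0} \in \ker D(Y)(\xi_{0})$, the formula $w(x) = \xi_{0}^{-f(x)} v_{0}(p(x))$ defines a pointwise (non-$L^{2}$) solution of $\tildhat{D} w = 0$ on $\tildhat{Y}_{+}$. Multiplying by a sequence of cutoffs $\psi_{n}$ supported on long intervals $f^{-1}[n, n + L_{n}]$ with $L_{n} \to \infty$ and $\|d\psi_{n}\|_{\infty} \to 0$, then transferring to $M$ via $\en(M) \simeq \tildhat{Y}_{+}$, yields $u_{n} \in H^{1}(M; E)$ with eventually disjoint supports and $\|D u_{n}\|_{L^{2}} / \|u_{n}\|_{L^{2}} \to 0$. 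Such a Weyl-type sequence is incompatible with Fredholmness of $D$.

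The main obstacle is this final step: one must tune $L_{n}$ and $\|d\psi_{n}\|_{\infty}$ so that $\|u_{n}\|_{L^{2}}^{2} \asymp L_{n}$ while $\|D u_{n}\|_{L^{2}}^{2} \asymp L_{n} \|d \psi_{n}\|_{\infty}^{2} \to 0$, and one must handle the case where only $\coker D(Y)(\xi_{0})$ is nontrivial via formal self-adjointness of $\tildhat{D}$ (or the adjoint relationship between its chiral parts). The parametrix construction in $(ii) \Rightarrow (i)$ is conceptually routine but requires keeping track of the fact that $\tildhat{D}^{-1}$ lives only on the cover and must be transplanted via the periodic identification $\en(M) \simeq \tildhat{Y}_{+}$.
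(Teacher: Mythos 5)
The paper itself does not prove this statement: it is quoted from Taubes \cite[Proposition 4.1, Lemma 4.3]{tau}, so there is no in-paper argument to compare against. Your outline is essentially a reconstruction of the standard proof and the overall route is sound: (ii)$\Leftrightarrow$(iii) via the Fourier--Laplace conjugation of $\tildhat{D}$ into the multiplication family $D(Y)(\xi)$ (with the uniform bound coming from compactness of $S^{1}$ and norm-continuity of the family, and with the caveat that $\log\xi$ is only defined up to a branch, different branches giving conjugate operators so invertibility is branch-independent); (ii)$\Rightarrow$(i) by patching $\tildhat{D}^{-1}$ with an interior parametrix; and $\neg$(ii)$\Rightarrow\neg$(i) by a Weyl sequence built from $w=\xi_{0}^{-f}p^{*}v_{0}$, which indeed satisfies $\tildhat{D}w=0$ and has constant $L^{2}$-mass per segment, so cutting off over longer and longer windows kills $\|Du_{n}\|/\|u_{n}\|$ and contradicts the Fredholm estimate.

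Two concrete corrections. First, your explicit parametrix is mis-arranged: with $\chi\equiv 1$ near infinity and $\chi'\equiv 1$ on $\supp\chi$, the leading term of $DQ$ is $(1-\chi')(1-\chi)+\chi'\chi=1-(\chi'-\chi)$, so $DQ-I$ contains multiplication by the compactly supported bump $\chi'-\chi$, which is bounded but \emph{not} compact on $L^{2}$; the error does not reduce to commutator terms for this choice of cutoffs. The fix is the standard arrangement: take a partition of unity $\phi_{1}+\phi_{2}=1$ with $\phi_{2}$ supported in $\en(M)$ and $\equiv 1$ near infinity, choose outer cutoffs $\psi_{j}\equiv 1$ on $\supp\phi_{j}$, and set $Q=\psi_{1}Q_{0}\phi_{1}+\psi_{2}\tildhat{D}^{-1}\phi_{2}$; then the leading term is $\phi_{1}+\phi_{2}=1$ and the remaining terms (commutators $\cl(d\psi_{j})$ against operators gaining a derivative, plus the compactly supported interior error) are compact by Rellich. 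This is exactly the patching carried out in Lemma \ref{parametrixlemma} of the paper with $N(D)^{-1}$. Second, in (i)$\Rightarrow$(ii) the cokernel case you flag does not arise: $D(Y)(\xi)=D(Y)-\log(\xi)\cl(df)$ differs from the self-adjoint elliptic operator $D(Y)$ on the closed manifold $Y$ by an operator that is compact from $H^{1}(Y;E_{Y})$ to $L^{2}(Y;E_{Y})$, hence has index zero, so non-invertibility always yields a nonzero kernel element and your construction applies directly. Finally, in (ii)$\Leftrightarrow$(iii) the implication from invertibility of the multiplication operator to pointwise invertibility deserves a sentence (e.g.\ concentrate sections near a bad $\xi_{0}$ and use $\|(D(Y)(\xi)-D(Y)(\xi_{0}))v_{0}\|\to 0$ to violate the uniform lower bound); it is easy but not automatic for direct integrals.
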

We note that the results of \cite{tau} also show that the end-periodic Dirac operator can always be made Fredholm on a suitable \emph{weighted} Sobolev space. Our results extend in a straightforward manner to the context of weighted Sobolev spaces, but for the sake of simplicity we will not include this detail. In the family context, Taubes' theorem tells us that a family of end-periodic Dirac operators $D = (D^{z})_{z\in B}$ is Fredholm on $H^{1}(M_{z};E_{z})$ (i.e., the index $\ind D^{z}$ is finite for every $z \in B$) if and only if the indicial families $D(Y)^{z}(\xi)$ are invertible for every $z \in B$ and $\xi \in S^{1} \subseteq \mb{C}^{\times}$. Throughout this paper we will be working under this assumption.

The following proposition summarizes some basic properties of the normal operator and the FL transform. The proof is a straightforward calculation.
\begin{proposition}[Properties of the FL transform]\label{FLprop2} \ \vspace{2mm} \\
Let $P,Q \in \epdiff^{\bigcdot}(M;\mb{E})$ with associated differential operators $P(Y),Q(Y) \in \mathrm{Diff}_{\pi_{1}}^{\bigcdot}(Y;\mb{E}_{Y})$ and $N(P),N(Q) \in \mathrm{Diff}_{\pi_{2}}^{\bigcdot}(\tildhat{Y};\tildhat{\mb{E}})$. Then 
\begin{enumerate}[a)]
\item $N(PQ) = N(P)\circ N(Q)$.
\item $[PQ](Y)(\xi) = P(Y)(\xi)\circ Q(Y)(\xi)$ for every $\xi \in S^{1}$.
\item $FL(N(P)N(Q)) = FL(N(P))\circ FL(N(Q))$.
\end{enumerate}
\end{proposition}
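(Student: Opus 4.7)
The plan is to prove (a) as a consequence of the locality of differential operators, and then to derive (b) and (c) from (a) together with the defining intertwining properties of the FL transform.

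For (a), recall that the normal operator $N(P) = p^{*}P(Y) = \tildhat{P}$ is the periodic extension, via pullback through the local diffeomorphism $p: \tildhat{Y} \to Y$, of the restriction $P|_{\en(M)}$. Since differential operators are local, restriction to $\en(M)$ is multiplicative, and since $p^{*}$ is a ring homomorphism on differential operators over a local diffeomorphism, the composite $N(\bigcdot) = p^{*}\circ(\bigcdot)(Y)$ is multiplicative:
\begin{equation*}
N(PQ) = p^{*}(P(Y)Q(Y)) = (p^{*}P(Y))(p^{*}Q(Y)) = N(P)\circ N(Q).
\end{equation*}

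For (c), observe that $FL(N(P))$ is characterized by $FL(N(P))\circ\mc{F} = \mc{F}\circ N(P)$; since $\mc{F}$ is an isomorphism this amounts to $FL(N(P)) = \mc{F}\circ N(P)\circ\mc{F}^{-1}$. Conjugation by an isomorphism is automatically multiplicative, so using (a):
\begin{equation*}
FL(N(P)N(Q)) = \mc{F}\circ N(P)N(Q)\circ\mc{F}^{-1} = (\mc{F}N(P)\mc{F}^{-1})(\mc{F}N(Q)\mc{F}^{-1}) = FL(N(P))\circ FL(N(Q)).
\end{equation*}
For (b), I would evaluate at each $\xi \in S^{1}$: the intertwining $\mc{F}_{\xi}\circ N(P) = P(Y)(\xi)\circ\mc{F}_{\xi}$ exhibits $FL(N(P))$ as the operator of pointwise multiplication by the family $\xi \mapsto P(Y)(\xi)$, so pointwise composition of indicial families corresponds to composition of the $FL$ operators, yielding $[PQ](Y)(\xi) = P(Y)(\xi)\circ Q(Y)(\xi)$.

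There is no substantive obstacle; as the author explicitly notes, the proof is a direct calculation. The only point that requires any care is the uniqueness of the characterizations, both of $P(Y)(\xi)$ via intertwining with $\mc{F}_{\xi}$ and of $FL(N(P))$ via intertwining with $\mc{F}$. Both follow from the fact that $\mc{F}$ is a linear isomorphism on the relevant Sobolev spaces, together with the explicit inversion formula (\ref{eqn:FLinverse}), which furnishes a rich enough supply of test sections in the range of $\mc{F}_{\xi}$ to pin down a differential operator on $Y$.
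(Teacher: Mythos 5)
Your proof is correct and follows exactly the direct-calculation route the paper intends: the paper omits the argument, remarking only that it is a straightforward computation, and your verification of (a) via locality and multiplicativity of $p^{*}$, with (b) and (c) obtained by conjugating/intertwining through $\mc{F}$ and $\mc{F}_{\xi}$ (with the uniqueness point settled by the richness of $\Rg\mc{F}_{\xi}$, as the paper itself later uses in Proposition \ref{prop:normalop2}), is precisely that computation.
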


\noindent In the single-operator case (i.e., when the base $B$ of the fiber bundle is a point), the indicial family of an end-periodic Dirac operator is
\begin{equation}\label{eqn:indicialfamily}
D(Y)(\xi) = D(Y) - \log(\xi)\cl(df),
\end{equation}
for any $\xi \in S^1$, see equation (4.7) in \cite{tau}, or equation (9) in \cite{mrs1}. Note that $df$ is a smooth 1-form on $Y$, cf. Remark \ref{remark:epgeostuff}. There is a similar formula for the indicial family of the Bismut superconnection in the family case, which we will now derive.

\begin{proposition}\label{FLprop1}
Let $A$ denote the Bismut superconnection on $\mb{E} \to M$ and $A(Y)$ the Bismut superconnection on $\mb{E}_{Y} \to Y$. Thus we write
\begin{align*}
&A = D + A_{[1]} + A_{[2]} \\
&A(Y) = D(Y) + A_{[1]}(Y) + A_{[2]}(Y).
\end{align*}
The indicial family of the Bismut superconnection $A$ is given by
\begin{align*}
A(Y)(\xi) &= (D(Y) - \log(\xi)\cl(d_{Y/B}f)) + (A_{[1]}(Y) - \log(\xi)\pi^{*}d_{B}f) + A_{[2]}(Y) \\
&= D(Y)(\xi) + A_{[1]}(Y)(\xi) + A_{[2]}(Y) \\
&= A(Y) - \log(\xi)m_{0}(df).
\end{align*}
\end{proposition}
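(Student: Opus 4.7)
The statement is essentially a family-version of the single-operator formula (\ref{eqn:indicialfamily}), and my plan is to reduce it to an extension of that computation by exploiting the fact that the Bismut superconnection is algebraically of the same shape as a Dirac operator, but with Clifford action $m_{0}$ replacing $\cl$.

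First I would establish the basic commutation rule between the FL transform and an end-periodic covariant derivative. For a vector field $X$ on $Y$ with lift $\widehat{X}$ to $\widehat{Y}$, differentiating the defining formula $(\mc{F}_{\xi}u)(p(x)) = \xi^{f(x)}\sum_{m}\xi^{m}u(x+m)$ and using the fact (Remark \ref{remark:epgeostuff}(3)) that $df$ descends to a well-defined 1-form on $Y$, a direct computation gives
\[
\mc{F}_{\xi}\circ \widehat{\con}_{\widehat{X}} \;=\; \bigl(\con^{\mb{E}_{Y},0}_{X} - \log(\xi)\,X(f)\bigr)\circ \mc{F}_{\xi},
\]
where $X(f) = df(X)$ is the evaluation of the descended 1-form. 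This is the family analog of the derivation that produces equation (\ref{eqn:indicialfamily}) in the single-operator case. The commutation rule applies uniformly whether $X$ is vertical or horizontal with respect to the splitting $TY = \pi^{*}TB \oplus T(Y/B)$, because the only fact used is that $df$ is a well-defined 1-form on $Y$.

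Next I would apply this to the Bismut superconnection, written locally as $A = \sum_{j}m_{0}(e^{j})\con^{\mb{E},0}_{e_{j}}$ for an orthonormal frame $(e_{j})$ of $TY$ (the superconnection $\widehat{A}=N(A)$ on the cover is the corresponding sum for the lifted frame on $\widehat{Y}$). The algebraic symbol $m_{0}$ is a zeroth-order pointwise operator and is pulled back from $Y$, so it commutes through the FL transform. Combining with Step 1, the defining relation $\mc{F}_{\xi}\circ N(A) = A(Y)(\xi)\circ \mc{F}_{\xi}$ forces
\[
A(Y)(\xi) \;=\; \sum_{j} m_{0}(e^{j})\bigl(\con^{\mb{E}_{Y},0}_{e_{j}} - \log(\xi)\,e_{j}(f)\bigr) \;=\; A(Y) - \log(\xi)\,m_{0}(df),
\]
using $\sum_{j} e_{j}(f)\,e^{j} = df$. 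This yields the third displayed form of the proposition.

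Finally, I would verify that the three lines of the stated formula are consistent by decomposing $df$ into vertical and horizontal components, $df = d_{Y/B}f + d_{B}f$, where here I follow the convention that $d_{B}f$ denotes the horizontal part of $df$ regarded as a pulled-back form. Under the definition of $m_{0}$, the vertical piece acts as $\cl(d_{Y/B}f)$ on the $E_{Y}$-factor, which recovers the single-operator correction in $D(Y)(\xi) = D(Y) - \log(\xi)\cl(d_{Y/B}f)$; the horizontal piece acts as exterior multiplication by $\pi^{*}d_{B}f$ on the form factor, giving the $A_{[1]}(Y)(\xi) = A_{[1]}(Y) - \log(\xi)\pi^{*}d_{B}f$ term; and since $A_{[2]}$ is zeroth-order (a curvature term, cf.\ \cite[Prop.\ 10.15]{bgv}) it contributes nothing to $m_{0}(df)$ and its indicial family is itself. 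The main obstacle, such as it is, lies not in a deep step but in a careful bookkeeping of vertical versus horizontal pieces of $df$ and matching the algebraic correction $m_{0}(df)$ with the form-degree decomposition $A = D + A_{[1]} + A_{[2]}$; once the commutation rule of Step 1 is in hand, the rest is formal.
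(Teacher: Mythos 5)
Your proposal is correct and amounts to essentially the same argument as the paper's: a direct computation with the Fourier--Laplace transform in which the extra term $-\log(\xi)\,m_{0}(df)$ arises from differentiating the factor $\xi^{f}$, while periodicity of the connection and of $m_{0}$ lets everything else pass through, and the three displayed lines follow from splitting $m_{0}(df)$ into its vertical Clifford part and horizontal exterior part. The only difference is organizational: you first prove the commutation rule $\mc{F}_{\xi}\circ\widehat{\con}_{\widehat{X}} = (\con^{\mb{E}_{Y},0}_{X}-\log(\xi)\,df(X))\circ\mc{F}_{\xi}$ and then sum against $m_{0}(e^{j})$, whereas the paper writes a compactly supported section as $\varphi\cdot p^{*}u$ and applies the Leibniz rule to the full operator via its principal symbol.
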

\begin{proof}
Since $\tildhat{E} = p^{*}E_{Y}$ we have
\[
\Gamma(\tildhat{Y};\tildhat{E}) = \Gamma(\tildhat{Y};p^{*}E_{Y}) \simeq C^{\infty}(\tildhat{Y})\otimes_{p^{*}C^{\infty}(Y)}p^{*}\Gamma(Y;E_{Y})
\]
so any compactly supported section of $\tildhat{\mb{E}}$ has the form $\tildhat{u} = \varphi\cdot p^{*}u$ for some $u \in \Gamma(Y;\mb{E}_{Y})$ and $\varphi \in C_{c}^{\infty}(\tildhat{Y})$. By definition of the lifted Bismut superconnection $\tildhat{A}$ we have $\tildhat{A}(p^{*}u) = p^{*}(A(Y)u)$ and so
\[
\tildhat{A}\tildhat{u} = \tildhat{A}(\varphi\cdot p^{*}u) = m_{0}(d\varphi)p^{*}u + \varphi\cdot p^{*}(A(Y)u),
\]
noting that the principal symbol of $A(Y)$ is the Clifford action $m_{0}$. Since $W_{0}$ and $Y$ are almost everywhere identified we will use the same symbol $x$ to denote a point in $W_{0}$ and its projection in $Y$.  We fix $x \in Y$ and calculate
\begin{align*}
&(\mc{F}_{\xi}\circ \tildhat{A}\tildhat{u})(x) = \mc{F}_{\xi}\bigg(m_{0}(d\varphi)p^{*}u + \varphi\cdot p^{*}(A(Y)u)\bigg)(x) \\
&= \xi^{f(x)}\sum_{k\in\mb{Z}}\xi^{k}\bigg(m_{0}(d\varphi|_{x+k})(p^{*}u)(x+k) + \varphi(x+k)p^{*}(A(Y)u)(x+k)\bigg).
\end{align*}
On the other hand,
\begin{align*}
&(A(Y)\circ\mc{F}_{\xi}\tildhat u)(x) = A(Y)\cdot\sum_{k\in\mb{Z}}\xi^{f(x)}\xi^{k}\tildhat{u}(x+k) \\
&= \log(\xi)m_{0}(df)(\mc{F}_{\xi}\tildhat{u})(x) + \sum_{k\in\mb{Z}}\xi^{f(x)}\xi^{k}\tildhat{A}\cdot(\varphi(x+k)(p^{*}u)(x+k)) \\
&= \log(\xi)m_{0}(df)(\mc{F}_{\xi}\tildhat{u})(x) \\
&\hspace{4mm} +\xi^{f(x)}\sum_{k\in\mb{Z}}\xi^{k}\bigg(m_{0}(d\varphi|_{x+k})(p^{*}u)(x+k) + \varphi(x+k)p^{*}(A(Y)u)(x+k)\bigg) \\
&= \log(\xi)m_{0}(df)(\mc{F}_{\xi}\tildhat{u})(x) + (\mc{F}_{\xi}\circ \tildhat{A})\tildhat{u}(x).
\end{align*}
where in the second line we make sense of the action of $A(Y)$ on $\tildhat{u}$ using the fact that $\tildhat{E}|_{x+k} = E_{Y}|_{x}$, where the action coincides with that of $\tildhat{A}$. Comparing these two equations we see that
\[
(\mc{F}_{\xi}\circ \tildhat{A})\tildhat{u} = (A(Y)-\log(\xi)m_{0}(df))\cdot \mc{F}_{\xi}\tildhat{u}
\]
which means that the indicial family is exactly $A(Y)(\xi) = A(Y) - \log(\xi)m_{0}(df)$.
\end{proof}

In the single operator case, the end-periodic eta invariant appearing in the index formula of Mrowka-Ruberman-Saveliev \cite[Theorem A]{mrs1} is expressed in terms of the indicial family of the end-periodic Dirac operator in question. The index formula states that the index of an end-periodic Dirac operator $D$ is given by
\[
\ind D^{+} = \int_{Z} \AS(D(Z)) - \int_{Y}f\,\AS(D(Y)) - \frac{1}{2}\eta_{\mathrm{ep}}(D(Y)).
\]
Here the end-periodic eta invariant $\eta_{\mathrm{ep}}(D(Y))$ generalizes the Atiyah-Patodi-Singer (APS) eta invariant; it is defined in equation (27) of \cite{mrs1} by the formula
\begin{align*}
\eta_{\mathrm{ep}}(D(Y)) &= \int_{0}^{\infty}\eta_{\mathrm{ep}}(D(Y))(t)\,dt \numberthis \label{eqn:epetainv} \\
&= \frac{1}{\pi i}\int_{0}^{\infty}\oint_{S^{1}}\Tr\left(\cl(df)D_{\xi}^{+}e^{-tD_{\xi}^{-}D_{\xi}^{+}}\right)\frac{d\xi}{\xi}\,dt
\end{align*}
where $D_{\xi} = D(Y)(\xi)$ is the indicial family from (\ref{eqn:indicialfamily}) associated with the end-periodic Dirac operator $D$. Whereas the APS eta invariant is a global spectral invariant of the boundary Dirac operator $D(\partial Z)$, the end-periodic eta invariant is a global spectral invariant of the entire indicial family. We will see that the indicial family of the end-periodic Bismut superconnection plays an analogous role in the index formula for families of end-periodic Dirac operators.

For a superconnection $A$ on $\mb{E} \to M$, the action on $\Gamma(M;E)$ decomposes into components
\[
A = \sum_{j=0}^{\dim B} A_{[j]}
\]
where $A_{[j]}: \Gamma(M;E) \to \Gamma(M;\pi^{*}\Lambda^{j}T^{*}B \otimes E)$. For $t > 0$ we define the rescaled superconnection
\begin{equation}\label{eqn:rescaledsuperconn}
A_{t} = \sum_{j \geq 0} t^{(1-j)/2}A_{[j]}.
\end{equation}
In general, if we have a linear operator $P$ on $\mb{E}$ which decomposes into components $P = \sum_{j} P_{[j]}$, then we will let $\delta_{t}P$ denote the operator obtained by applying the same rescaling, namely,
\[
\delta_{t}P = \sum_{j \geq 0} t^{(1-j)/2}P_{[j]}
\]
so in particular $A_{t} = \delta_{t}A$. Another important example is the rescaled degenerate Clifford action
\[
\delta_{t}m_{0}(df) = t^{1/2}\cl(d_{Y/B}f) + \pi^{*}d_{B}f, 
\]
as well as the rescaled version of the indicial family of the Bismut superconnection,
\begin{equation}\label{eqn:rescaledFL}
A_{t}(Y)(\xi) = A_{t}(Y) - \log(\xi)\delta_{t}m_{0}(df).
\end{equation}

In order to prove the index formula we will need to be able to calculate the short-time limit of the pointwise supertrace of the heat kernel associated with the indicial operator $A_{t}^{2}(Y)(\xi)$,
\[
\lim_{t \to 0^{+}}\str\left(K_{e^{-A_{t}^{2}(Y)(\xi)}}(x,x)\right)
\]
for any $\xi \in S^{1}$. Unfortunately, the operator $A(Y)(\xi)$ is not a Bismut superconnection with respect to the usual data on $\mb{E}_{Y}$, so the result of \cite[Theorem 10.23]{bgv} cannot be applied directly. However, we will show here that $A(Y)(\xi)$ when twisted by a suitable connection produces a Bismut superconnection on a twisted bundle, which will enable us to calculate the short-time limit.

First we recall the notion of a first-order differential operator twisted by a connection, see for example \cite[Chapter IV, \S 9, Theorem 3]{palais}. Let $E \to M^{n}$ be a vector bundle and let $D$ be a first-order differential operator acting on sections of $E$. Let $\mc{W} \to M$ be an auxiliary vector bundle with connection $\con^{\mc{W}}$ and consider the twisted bundle $E\otimes\mc{W} \to M$. Then there is a unique first-order differential operator $D_{\mc{W}}$ acting on sections of $E \otimes\mc{W}$ satisfying $D_{\mc{W}}(u_{1}\otimes u_{2}) = (Du_{1})\otimes u_{2}$ whenever $\con^{\mc{W}}u_{2} = 0$. The twisted operator can be locally constructed as follows. Let $\sigma_{1}(D) \in \Gamma(M;\Hom(T^{*}M,\End E))$ denote the principal symbol of $D$, take a local frame $(e_{j})$ for $TM$, and define
\[
D_{\mc{W}}(u\otimes v) = (Du)\otimes v + \sum_{j=1}^{n} \sigma_{1}(D)(e^{j})u_{1}\otimes \con^{\mc{W}}_{e_{j}}u_{2}.
\]
Evidently this defines a first-order differential operator with the property that $D_{\mc{W}}(u_{1}\otimes u_{2}) = (Du_{1})\otimes u_{2}$ whenever $\con^{\mc{W}}u_{2} = 0$. We will call $D_{\mc{W}}$ the \emph{twist of $D$ by the connection $\con^{\mc{W}}$}.

Now we will apply this twisting construction to show that the operator $A(Y)(\xi)$ can be twisted to produce a Bismut superconnection on a twisted bundle. For any $\xi \in \mb{C}^{\times}$ let $L_{\xi} = \mb{C} \times Y \to Y$ denote the trivial complex line bundle over $Y$ equipped with the standard metric $(z_{1},z_{2}) \mapsto z_{1}\overline{z_{2}}$ on the $\mb{C}$ fibers, and the connection
\[
\con^{L_{\xi}}_{X}u = Xu -\ln \xi \cdot df(X) u
\]
for any $X \in \Gamma(Y;TY)$ and $u \in C^{\infty}(Y;\mb{C})$. Then $\con^{L_{\xi}}$ is a flat connection on $\mb{C}\times Y \to Y$, and it is compatible with the aforementioned metric when $|\xi| = 1$. We apply the twisting construction with $\mathcal{W} = L_{\xi}$ and consider the twisted bundle $\mb{E}_{Y} \otimes L_{\xi} \to Y$.

\begin{proposition}\label{twistedprop}
Consider the twisted bundle $\mb{E}_{Y} \otimes L_{\xi} \to Y$, equipped with the following structures:
\begin{itemize}
\item The tensor product metric formed from the bundle metric on $\mb{E}_{Y}$ and the bundle metric on $L_{\xi}$.
\item The Clifford action $m_{0}\otimes 1$, acting on $\mb{E}_{Y}$ by $m_{0}$ and acting on $L_{\xi}$ by the identity.
\item The tensor product connection 
\[
\con^{\mb{E}_{Y}\otimes L_{\xi},0} = \con^{\mb{E}_{Y},0}\otimes 1 + 1 \otimes \con^{L_{\xi}}.
\]
\end{itemize}
Then we have the following:
\begin{enumerate}[(a)]
\item $\mb{E}_{Y} \otimes L_{\xi} \to Y$ is a degenerate Clifford module with $\mb{Z}_{2}$-grading $\mb{E}_{Y} \otimes L_{\xi} = (\mb{E}_{Y}^{+}\otimes L_{\xi})\oplus(\mb{E}_{Y}^{-}\otimes L_{\xi})$.
\item The tensor product connection on $\mb{E}_{Y}\otimes L_{\xi}$ is a Clifford connection, and a metric connection when $|\xi| = 1$.
\item The Bismut superconnection $\mb{A}$ on $\mb{E}_{Y}\otimes L_{\xi} \to Y$ determined by the aforementioned data is precisely the twist of $A(Y)$ by the connection $\con^{L_{\xi}}$ on $L_{\xi}$, and $\mb{A}$ is also the twist of $A(Y)(\xi)$ by the trivial connection $d$ on $L_{\xi}$.
\end{enumerate}
\end{proposition}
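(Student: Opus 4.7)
The plan is to check (a), (b), (c) by direct computation, with the substantive content concentrated in part (c). For (a), since $L_\xi$ is trivially $\mb{Z}_2$-graded and $m_0 \otimes 1$ acts only on the $\mb{E}_Y$ factor, the degenerate Clifford relation $(m_0(\alpha) \otimes 1)^2 = -g_{Y/B}(\alpha_V, \alpha_V) \cdot \mathrm{id}$ and compatibility with the $\mb{Z}_2$-grading transfer immediately from $\mb{E}_Y$ to the tensor product. For (b), Clifford compatibility of the tensor product connection reduces to the corresponding identity for $\con^{\mb{E}_Y,0}$, because $\con^{L_\xi}$ commutes through the tensor factor $m_0 \otimes 1$; metric compatibility requires that $\con^{L_\xi}$ preserve the standard Hermitian metric on $L_\xi$, which by a short expansion is equivalent to
\[
X\langle u, v\rangle - \langle \con^{L_\xi}_X u, v\rangle - \langle u, \con^{L_\xi}_X v\rangle = (\log\xi + \overline{\log\xi}) \, df(X) \, \langle u, v\rangle,
\]
and the right-hand side vanishes identically iff $\log\xi$ is purely imaginary, i.e.\ iff $|\xi| = 1$.

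For (c), the main step, I would compute $\mb{A}$ on a decomposable section $u \otimes v \in \Gamma(Y;\mb{E}_Y \otimes L_\xi)$ directly from the defining formula of the Bismut superconnection and the Leibniz rule for the tensor product connection. Expanding
\[
\mb{A}(u \otimes v) = \sum_j (m_0(e^j) \otimes 1)\bigl(\con^{\mb{E}_Y,0}_{e_j} u \otimes v + u \otimes \con^{L_\xi}_{e_j} v\bigr),
\]
substituting $\con^{L_\xi}_{e_j} v = e_j v - \log\xi \cdot df(e_j) v$, and recognizing $\sum_j m_0(e^j) df(e_j) = m_0(df)$ yields
\[
\mb{A}(u \otimes v) = A(Y)u \otimes v + \sum_j m_0(e^j) u \otimes e_j v - \log\xi \cdot m_0(df) u \otimes v.
\]
I would then compare this with the local formula $D_{\mc{W}}(u \otimes v) = (Du)\otimes v + \sum_j \sigma_1(D)(e^j) u \otimes \con^{\mc{W}}_{e_j} v$ for the twist, applied first with $D = A(Y)$ and $\con^{\mc{W}} = \con^{L_\xi}$, and then with $D = A(Y)(\xi)$ and $\con^{\mc{W}} = d$; both identifications return exactly the expression above. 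The two are consistent precisely through the formula $A(Y)(\xi) = A(Y) - \log\xi \cdot m_0(df)$ of Proposition \ref{FLprop1}: the $-\log\xi \cdot m_0(df)$ produced by twisting $A(Y)$ via $\con^{L_\xi}$ is absorbed into the indicial symbol of $A(Y)(\xi)$ in the second identification.

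The only real obstacle is ensuring the twisting construction, phrased for a single first-order operator, applies uniformly to the full Bismut superconnection $A(Y) = D(Y) + A_{[1]}(Y) + A_{[2]}(Y)$. This is straightforward because only the fiberwise Dirac term $D(Y)$ contributes to the principal symbol $m_0$; the higher form-degree components $A_{[1]}(Y)$ and $A_{[2]}(Y)$ carry coefficients pulled back from $B$ and therefore interact with $L_\xi$ only through scalar multiplication, so the $\log\xi$-corrections appear on exactly the components predicted by Proposition \ref{FLprop1}.
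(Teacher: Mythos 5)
Your proposal is correct and follows essentially the same route as the paper: expand $\mb{A}$ on decomposable sections via the tensor-product connection, match the result against the local twist formula using $\sigma_{1}(A(Y)) = \sigma_{1}(A(Y)(\xi)) = m_{0}$, and pass between the two identifications by absorbing $-\log\xi\, m_{0}(df)$ via Proposition \ref{FLprop1}. One small inaccuracy in your closing remark: the principal symbol $m_{0}$ of $A(Y)$ is not carried by $D(Y)$ alone (the component $A_{[1]}(Y)$ contributes the horizontal exterior multiplication), but this does not affect your argument, since your main computation, like the paper's, applies the twisting construction to the full first-order operator $A(Y)$ with symbol $m_{0}$ rather than componentwise.
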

The conclusion (a) is immediate. The conclusion (b) follows from a straightforward calculation using the fact that $\con^{\mb{E}_{Y},0}$ is a Clifford connection. The proof of part (c) amounts to the following calculation. Let $\mb{A}$ denote the Bismut superconnection on $\mb{E}_{Y}\otimes L_{\xi} \to Y$. Take any local frame $(e_{j})$ for $TM$, then the action of $\mb{A}$ on any $u_{1}\otimes u_{2} \in \Gamma(Y;\mb{E}_{Y} \otimes L_{\xi})$ looks like
\begin{align*}
\mb{A}(u_{1}\otimes u_{2}) &= \sum_{j}m_{0}(e^{j})\con_{e_{j}}^{\otimes}(u_{1}\otimes u_{2}) \\
&= \sum_{j} m_{0}(e^{j})\left(\con_{e_{j}}^{\mb{E}_{Y},0}u_{1}\otimes u_{2} + u_{1}\otimes\con_{e_{j}}^{L_{\xi}}u_{2}\right) \\
&= A(Y)u_{1}\otimes u_{2} + \sum_{j}m_{0}(e^{j})u_{1}\otimes\con^{L_{\xi}}_{e_{j}}u_{2}.
\end{align*}
Note that the principal symbol of $A(Y)$ as a first-order differential operator on $\mb{E}$ is just the Clifford action $m_{0}$, and since $A(Y)(\xi)$ and $A(Y)$ differ by an order zero operator we also have $\sigma_{1}(A(Y)(\xi)) = \sigma_{1}(A(Y)) = m_{0}$. Thus the preceding calculation shows that $\mb{A}$ is the twist of $A(Y)$ by the flat connection $\con^{L_{\xi}}$ on $L_{\xi}$. Going a step further, we also see that
\begin{align*}
\mb{A}(u_{1}\otimes u_{2}) &= A(Y)u_{1}\otimes u_{2} + \sum_{j}m_{0}(e^{j})u_{1}\otimes\con^{L_{\xi}}_{e_{j}}u_{2} \\
&= A(Y)u_{1}\otimes u_{2} + \sum_{j}m_{0}(e^{j})u_{1}\otimes\left(e_{j}(u_{2})-\log(\xi)e_{j}(f)u_{2}\right) \\
&= A(Y)(\xi)u_{1}\otimes u_{2} + \sum_{j}m_{0}(e^{j})u_{1}\otimes e_{j}(u_{2})
\end{align*}
which shows that $\mb{A}$ is the twist of $A(Y)(\xi)$ by the trivial connection $d$ on $L_{\xi}$.
\section{Renormalized supertrace}

\subsection{Asymptotically periodic smoothing operators}
In order to make sense of the renormalized trace on end-periodic manifolds we first need a suitable class of operators to which it can be applied. In \cite{mrs1} the authors show that their renormalized trace is well-defined on operators of the form $D^{m}e^{-tD^{2}}$ for $m \geq 0$, which is all that is required to establish the index formula for end-periodic Dirac operators. Similarly, in our case too we will only need to apply the renormalized supertrace to certain smoothing operators. In this subsection we will define the notion of an asymptotically periodic smoothing operator. In the next subsection we will show that the renormalized supertrace is well-defined on these operators, and then show that the smoothing operators we are concerned with are asymptotically periodic.

Note that non-local operators constructed from end-periodic data do not necessarily have the property that they coincide with the induced normal operator along the end. For example, if $D$ is an end-periodic Dirac operator on $E \to M$ with normal operator $\tildhat{D}$, and $K$ and $\tildhat{K}$ are the corresponding heat kernels, then $K$ is in general asymptotic to $\tildhat{K}$ along $\en(M)$ (i.e., $K$ is not just the restriction of $\tildhat{K}$ along the end), see \cite[Proposition 10.6]{mrs1}. This leads to the notion of an asymptotically periodic smoothing operator, which we introduce below based on the heat kernel estimates of \cite[\S 10]{mrs1}.

Let $M^{n} \to B$ be an end-periodic fiber bundle and $E \to M$ a family of end-periodic vector bundles. Recall that we denote by $W_{0}$ the ``first copy'' of the fundamental segment along $\en(M)$, i.e., $W_{0} = f^{-1}[0,1]$. Given a vertical family of smoothing operators $H = (H^{z})_{z\in B}$ on $\mb{E} \to M$ we have a family of smooth integral kernels $K_{H}^{z} \in \Gamma(M_{z} \times M_{z},\Lambda T_{z}^{*}B \otimes E_{z}\boxtimes E_{z}^{*})$ such that
\[
(H^{z}u)(x) = \int_{M_{z}} K_{H}^{z}(x,x')u(x')\,dx'
\]
for any section $u \in \Gamma(M;E)$. For the sake of brevity we will often drop the $z$-superscript and write
\[
(Hu)(x) = \int_{M/B} K_{H}(x,x')u(x')\,dx',
\]
which one may interpret as a differential form on $B$ given by $(Hu)(x): z \mapsto (H^{z}u)(x)$.

We say that $H$ is \emph{asymptotically periodic} if there exists a family of periodic smoothing operators $(\tildhat{H}^{z})_{z\in B} = (p^{*}H(Y)^{z})_{z\in B}$ on $\tildhat{E} \to \tildhat{Y}$ with integral kernels $\tildhat{K}^{z}$ such that the following estimates hold:
\begin{axioms}

\item For any $z \in B$ there exist constants $C, \gamma > 0$ such that 
\[
|K^{z}(x,y)| \leq Ce^{-\gamma d^{2}(x,y)}
\]
for all $x,y \in \bigsqcup_{k \geq 1}W_{k}^{z}$, where $d(x,y)$ is the Riemannian distance between $x$ and $y$.

\item  For any $z \in B$ there exist constants $C,\gamma > 0$ such that
\[
|K^{z}(x,x) - \tildhat{K}^{z}(x,x)| \leq C e^{-\gamma d^{2}(x,W_{0})}
\]
for all $x \in \bigsqcup_{k\geq 1}W_{k}^{z}$.

\item  For any $z \in B$ there exist constants $C,\gamma > 0$ such that
\[
|K^{z}(x,y)-\tildhat{K}^{z}(x,y)| \leq Ce^{-\gamma d^{2}}
\]
for all $x,y \in \bigsqcup_{k\geq 1}W_{k}^{z}$, where $d = \min\{d(x,W_{0}),d(y,W_{0})\}$. 
\end{axioms}
The constants $C,\gamma$ may depend on $z$ but not on $x$ or $y$. Also, they are not required to be the same for each estimate.

Note that an immediate consequence of (AP3) is that the operator $\tildhat{H}$ to which $H$ is asymptotic is uniquely determined, because if $\tildhat{H}_{1}$ and $\tildhat{H}_{2}$ are two periodic operators satisfying (AP3) then their difference $\tildhat{H}_{1} - \tildhat{H}_{2}$ must decay along $\en(M)$, but it is also periodic so it must be identically zero. Therefore, for an asymptotically periodic smoothing operator $H$ we say that $\tildhat{H} = N(H)$ is \emph{the normal operator} of $H$.

Our principal objective in this section is to show that the heat operator $e^{-sA^2}$ associated with the end-periodic Bismut superconnection $A$ is an asymptotically periodic smoothing operator, i.e., the heat kernel $K$ satisfies the conditions (AP1), (AP2), (AP3) where $\tildhat{K}$ is the heat kernel for the normal operator $\tildhat{A}$. Our proofs will for the most part follow those of \cite[\S 10]{mrs1}, except of course we will work with the Bismut superconnection $A$ and families of heat kernels.

Let $A$ denote the end-periodic Bismut superconnection on $\mb{E} \to M$ adapted to an end-periodic Dirac operator $D$ on $E \to M$. Consider the heat operator $e^{-sA^{2}}$ with smooth integral kernel $K(s,x,y)$. As a function of $s$ and $x$ with $y$ fixed, $K$ solves the initial value problem
\begin{equation}\label{2eqn1}
\left(\frac{\partial}{\partial s} + A^{2}\right)K(s,x,y) = 0, \hspace{3mm} \lim_{s\to 0^{+}}K(s,x,y) = \delta_{y}\cdot\id
\end{equation}
where $A$ acts on the $x$ variable and $\delta_{y} = \delta(x-y)$ is the delta distribution supported at $y$. The following estimate, which is immediate from \cite{don}, serves as the foundation for the heat kernel estimates we will establish in this section.

\begin{proposition}\label{prop:don}
Let $M^{n} \to B$ be a Riemannian fiber bundle with bounded geometry, and let $E \to M$ be a Clifford module with bounded geometry. Let $A$ denote the Bismut superconnection adapted to a family of Dirac operators $D = (D^{z})_{z\in B}$ on $E$, and let $K^{z}(s,x,y)$ denote the family of heat kernels. For any $z \in B$ and any $T > 0$ there exists a constant $C>0$ such that
\[
\left|\frac{\partial^{i}}{\partial s^{i}}\con_{x}^{j}\con_{y}^{k}K^{z}(s,x,y)\right| \leq Cs^{-n/2-i-|j|-|k|}e^{-d^2{(x,y)}/4s}
\]
for all $s \in (0,T]$ and $x,y \in M^{z}$, and the constant $C$ depends only on $T$ and $z$. Here $j$ and $k$ are multi-indices.
\end{proposition}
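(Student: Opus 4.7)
The plan is to reduce the families heat-kernel estimate to Donnelly's classical Gaussian bounds for generalized Laplacians on manifolds of bounded geometry, by expanding the Bismut superconnection by form degree on the base and applying Duhamel's principle. Fix $z \in B$. Restricted to sections supported in the fiber $M_z$, the Bismut superconnection acts as a first-order differential operator on $\mb{E}|_{M_z} = \Lambda T_z^*B \otimes E_z$, and by the Lichnerowicz formula (\ref{eqn:lich}) its square is a generalized Laplacian on $M_z$ whose leading symbol is the fiber metric $g_{M/B}|_{M_z}$ and whose zeroth-order part takes values in $\Lambda T_z^*B$. Because $\Lambda T_z^*B$ is a finite-dimensional graded vector space (with $B$ closed) and the fiber $M_z$ together with $E_z$ has bounded geometry, all coefficients of $A_z^2$ are uniformly bounded.

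Decompose by form degree: writing $A_z = D^z + \sum_{j \geq 1} A_{[j]}^z$ where $A_{[j]}^z$ lands in $\Lambda^j T_z^*B$-valued sections, squaring and collecting by degree yields $A_z^2 = \sum_{k \geq 0} B_k^z$ with $B_0^z$ a scalar generalized Laplacian on $E_z \to M_z$ and $B_k^z$ ($k \geq 1$) a differential operator of order at most one with coefficients in $\Lambda^k T_z^* B$. Correspondingly expand the heat kernel $K^z = \sum_k K_k^z$ by form degree. The zeroth component $K_0^z$ is the heat kernel of the ordinary generalized Laplacian $B_0^z$ on the bounded-geometry manifold $M_z$, so Donnelly's estimate \cite{don} gives the desired Gaussian bound (and all its $s,x,y$-derivatives) directly. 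For $k \geq 1$, the component $K_k^z$ satisfies the inhomogeneous heat equation
\[
(\partial_s + B_0^z) K_k^z = -\sum_{j=1}^{k} B_j^z K_{k-j}^z, \qquad \lim_{s\to 0^+} K_k^z = 0,
\]
so by Duhamel's principle $K_k^z$ is a time-convolution of $K_0^z$ with the order-$\leq 1$ operators $B_j^z$ applied to lower-order components. Inducting on $k$, each convolution is estimated by the standard semigroup bound for convolutions of Gaussians, yielding the same $e^{-d^2(x,y)/4s}$ decay while accruing at worst an additional factor of $s^{-1/2}$ per applied first-order operator, which is absorbed into the stated power of $s$.

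Derivatives in $x$, $y$, and $s$ are handled either by differentiating the Duhamel formula and invoking Donnelly's bounds for derivatives of $K_0^z$, or equivalently by using interior parabolic regularity for the generalized Laplacian $A_z^2$ on the manifold-of-bounded-geometry $M_z$ (which in any case is the content of Donnelly's theorem). The main technical point to watch is the Duhamel convolution estimate: one must verify that convolving two Gaussians with decay rate $1/4s$ reproduces a Gaussian with the same rate, and that inserting an intermediate first-order operator contributes only $s^{-1/2}$ to the pre-factor rather than disturbing the exponent. This is the standard parametrix-iteration bookkeeping and poses no genuine difficulty beyond being careful with the constants, all of which are permitted to depend on $T$ and $z$ as in the statement.
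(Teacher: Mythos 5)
Your argument is correct in substance, but it is a genuinely different (and much longer) route than the paper's. The paper proves Proposition \ref{prop:don} by direct citation: Donnelly's Proposition 4.1 in \cite{don} is already stated for the Bismut superconnection heat kernel of a \emph{family} of Dirac operators on manifolds and bundles of bounded geometry, so the paper simply invokes it (pointing also to \cite[Proposition 10.1]{mrs1} for the single-operator case). You instead re-derive the family estimate from the scalar one: you isolate the form-degree-zero part $B_0^z=(D^z)^2$ of $A_z^2$, apply the classical Gaussian bounds to its heat kernel $K_0^z$, and then build the higher form-degree components $K_k^z$ by Duhamel iteration, using that the positive-degree parts of $A_z^2$ have differential order at most one (a consequence of the anticommutation of the vertical Clifford and horizontal exterior symbols, equivalently of the Lichnerowicz formula (\ref{eqn:lich})) and that the iteration terminates after finitely many steps because $\dim B<\infty$. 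This is essentially the standard proof hiding behind the citation, so it buys self-containedness at the cost of the parametrix bookkeeping you flag. Two small points in that bookkeeping deserve care: first, the Gaussian rate $e^{-d^2/4s}$ survives the convolution exactly because $\frac{d(x,w)^2}{s-u}+\frac{d(w,y)^2}{u}\geq\frac{d(x,y)^2}{s}$, with the leftover spatial Gaussian integrated using the (at most exponential) volume growth guaranteed by bounded geometry, the resulting $e^{\alpha s}$ factor being harmless on $(0,T]$; second, each Duhamel step in fact \emph{improves} the power of $s$ (the time integral contributes $s$ against the $u^{-1/2}$ from the first-order coefficient), so the components of positive form degree satisfy strictly better bounds than $s^{-n/2}$ rather than needing anything ``absorbed'' -- had the iteration genuinely lost $s^{-1/2}$ per step, the stated bound would fail, so it is worth stating this cancellation explicitly. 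With those clarifications your proof is a valid substitute for the citation, and it also makes transparent why the constant may be taken to depend only on $T$ and $z$.
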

\begin{proof}
This estimate follows from \cite[Proposition 4.1]{don}, which is stated in the context of families of Dirac operators and manifolds/vector bundles with bounded geometry. Also see \cite[Proposition 10.1]{mrs1} where this estimate is stated for the case of a single Dirac operator.
\end{proof}

We will use the following lemma in the proof of Proposition \ref{prop:ap1}.
\begin{lemma}\label{laplacelemma}
Let $(M,g)$ be a Riemannian manifold, and let $E$ be a vector bundle with bundle metric $(\cdot,\cdot)$ and metric connection $\con$. Let $L = \con^{*}\con$ denote the connection Laplacian, and let $\Delta = d^{*}d = -*d*d$ denote the scalar Laplacian acting on $C^{\infty}(M)$. Then for any section $u \in \Gamma(M;E)$ we have
\[
\Delta(|u|^{2}) = 2(Lu,u) - 2|\con u|^{2}.
\]
In particular, when $E = M \times \mathbb{R}$ is trivial with flat metric and flat connection $d$ acting on scalar functions, we obtain
\[
\Delta(|u|^{2}) = 2u\Delta u - 2|du|^{2}
\]
for any $u \in C^{\infty}(M)$.
\end{lemma}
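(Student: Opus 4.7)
The identity is a classical Weitzenböck-type computation, and the plan is to prove it by a pointwise calculation in a convenient local frame, exploiting the metric compatibility of $\nabla$. The only input needed is that for a metric connection $\nabla$ and any vector field $X$, the Leibniz rule gives $X(u,u) = 2(\nabla_{X}u,u)$, and that with the convention $\Delta = d^{*}d$ on functions and $L = \nabla^{*}\nabla$ on sections, both operators admit the local expressions
\[
\Delta f = -\sum_{i}\bigl(e_{i}(e_{i}f) - (\nabla_{e_{i}}^{TM}e_{i})f\bigr),\qquad Lu = -\sum_{i}\bigl(\nabla_{e_{i}}\nabla_{e_{i}}u - \nabla_{\nabla_{e_{i}}^{TM}e_{i}}u\bigr),
\]
for a local orthonormal frame $(e_{i})$ of $TM$.

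Fix a point $p\in M$ and choose $(e_{i})$ to be a geodesic normal orthonormal frame at $p$, so that $\nabla^{TM}_{e_{i}}e_{j}|_{p}=0$. Then at $p$ one has $\Delta f(p) = -\sum_{i}e_{i}(e_{i}f)(p)$ and $Lu(p) = -\sum_{i}\nabla_{e_{i}}\nabla_{e_{i}}u(p)$. Applying the Leibniz rule twice to $|u|^{2} = (u,u)$ yields
\[
e_{i}(|u|^{2}) = 2(\nabla_{e_{i}}u,u),\qquad e_{i}(e_{i}(|u|^{2})) = 2(\nabla_{e_{i}}\nabla_{e_{i}}u,u) + 2|\nabla_{e_{i}}u|^{2}.
\]
Summing over $i$ and evaluating at $p$ gives
\[
\Delta(|u|^{2})(p) = -2\sum_{i}(\nabla_{e_{i}}\nabla_{e_{i}}u,u)(p) - 2\sum_{i}|\nabla_{e_{i}}u|^{2}(p) = 2(Lu,u)(p) - 2|\nabla u|^{2}(p),
\]
which is the claimed identity since $p$ was arbitrary.

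For the scalar case one simply specializes $E = M\times\mathbb{R}$ with its flat connection $\nabla = d$: the metric connection condition is trivially satisfied, $L$ reduces to $\Delta$, and $|\nabla u|^{2} = |du|^{2}$, giving $\Delta(|u|^{2}) = 2u\,\Delta u - 2|du|^{2}$. No real obstacle is expected here; the one minor bookkeeping point is consistency of the sign convention $\Delta = d^{*}d$ (the analyst's positive Laplacian), which is why the cross terms in the second derivative cancel against the connection term in $L$ rather than adding to it.
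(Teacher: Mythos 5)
Your proof is correct and follows essentially the same route as the paper: an orthonormal-frame expansion using metric compatibility to produce the $(Lu,u)$ and $|\con u|^{2}$ terms. The only difference is cosmetic — the paper first writes $\Delta(|u|^{2}) = 2d^{*}(\con u,u)$ and then expands $d^{*}$ in a frame, whereas you compute both derivatives at once in a geodesic normal frame at a point, which if anything makes the handling of the frame-derivative terms slightly more explicit.
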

\begin{proof}
Owing to the compatibility of the connection with the bundle metric we have 
\[
d(|u|^{2}) = d(u,u) = 2(\con u, u) \implies \Delta (|u|^{2}) = d^{*}d(|u|^{2}) = 2d^{*}(\con u, u).
\]
In an orthonormal local frame $(e_{j})$ for $TM$ write $(\con u, u) = \sum_{j}(\con_{e_{j}}u,u)e^{j}$. Then
\begin{align*}
d^{*}(\con u, u) &= -\sum_{j}\con_{e_{j}}\cdot(\con_{e_{j}}u,u) \\
&= -\sum_{j}\left((\con_{e_{j}}\con_{e_{j}}u,u) + (\con_{e_{j}}u,\con_{e_{j}}u) \right) \\
&= (Lu,u) - |\con u|^{2}
\end{align*}
and this completes the derivation.
\end{proof}

\begin{proposition}\label{prop:ap1}
Let $K(s,x,y)$ denote the integral kernel for the heat operator $e^{-sA^2}$. There exists constants $C,\alpha,\gamma > 0$ such that
\[
|K^{z}(s,x,y)| \leq Ce^{\alpha s}s^{-n/2}e^{-\gamma d^{2}(x,y)/s}
\]
for all $x,y \in \en(M^{z})$ and all $s > 0$. In particular, for any $s > 0$ the heat operator $e^{-sA^{2}}$ satisfies (AP1).
\end{proposition}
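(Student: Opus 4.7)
My plan is to split the estimate into a short-time regime and a long-time regime. The generalized Lichnerowicz formula (\ref{eqn:lich}) decomposes $A^{2} = L + R$, where $L$ is the vertical connection Laplacian of $\con^{\mb{E},0}$ and $R$ is a pointwise bundle endomorphism built from the scalar curvature and the twisting curvature. The bounded geometry of $M \to B$ and $\mb{E} \to M$ ensures the operator norm of $R$ is uniformly bounded on $M^{z}$ by some constant $\alpha > 0$, which will be the source of the $e^{\alpha s}$ factor in the target estimate.

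For the short-time regime $s \in (0, 1]$, I would apply Proposition \ref{prop:don} directly with $T = 1$, obtaining
\[
|K^{z}(s, x, y)| \leq C_{0}\, s^{-n/2} e^{-d^{2}(x, y)/4s}
\]
for all $x, y \in M^{z}$ and $s \in (0, 1]$. Since $e^{\alpha s} \geq 1$ on this interval, this already implies the target estimate with $\gamma = 1/4$ on $(0, 1]$.

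For the long-time regime $s > 1$, I would glue short-time pieces via the semigroup identity. Writing $s = t_{1} + \cdots + t_{N}$ with each $t_{i} \in [1/2, 1]$ (so $N$ is of order $s$) and iterating
\[
K^{z}(s, x, y) = \int K^{z}(t_{1}, x, w_{1})\cdots K^{z}(t_{N}, w_{N-1}, y)\, dw_{1}\cdots dw_{N-1},
\]
each factor is controlled by the short-time Gaussian bound established above. Iterated Gaussian convolution on a bounded-geometry manifold then yields an estimate of the form $C_{1}^{N} s^{-n/2} e^{-\gamma d^{2}(x, y)/s}$, and absorbing $C_{1}^{N} \leq e^{(2\log C_{1})\, s}$ into the exponential by adjusting $\alpha$ upward gives the desired form.

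The main obstacle will be preserving the Gaussian decay constant $\gamma > 0$ uniformly through the iterated convolution, since a naive estimate degrades the constant at each step and would produce $e^{-\gamma d^{2}/s}$ only with a $\gamma$ shrinking as $N \to \infty$. The classical remedy is Davies' perturbation method: instead of bounding $|K^{z}(s, x, y)|$ directly, one bounds a weighted $L^{2}$ operator norm $\|e^{\varphi}\, e^{-sA^{2}}\, e^{-\varphi}\|$ for a one-parameter family of Lipschitz weights $\varphi$ with $|d\varphi|$ controlled, and then optimizes over $\varphi$ to extract a pointwise Gaussian bound. This yields a universal $\gamma$ valid for all $s > 0$, independent of the number of iteration steps, completing the proof.
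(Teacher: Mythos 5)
Your proposal is correct in outline, but it follows a genuinely different route from the paper. The paper's proof (following \cite[Proposition 10.2]{mrs1}) never splits time regimes: it uses the Lichnerowicz formula $A^{2}=L+R$ together with Lemma \ref{laplacelemma} and Kato's inequality to show that $|K^{z}|$ satisfies the scalar parabolic differential inequality $(\partial_{s}+\Delta)|K^{z}|\leq \alpha|K^{z}|$, and then compares $e^{-\alpha s}|K^{z}|$ with the scalar heat kernel on a bounded-geometry manifold, whose Gaussian upper bounds are valid for \emph{all} $s>0$; this yields the estimate in one stroke, with $e^{\alpha s}$ coming from the uniform bound on the curvature endomorphism $R$. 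You instead take the short-time bound from Proposition \ref{prop:don} as the input and extend to large $s$ by the semigroup property, proposing Davies' perturbation method to keep the Gaussian constant uniform. That is a legitimate alternative, with two remarks. First, your feared degradation of $\gamma$ under chaining is avoidable more elementarily: writing $s=t_{1}+\cdots+t_{N}$ with $t_{i}\in[1/2,1]$, splitting the product of Gaussians as $\prod_i e^{-\gamma d_i^{2}/t_i}\leq e^{-\frac{\gamma}{2}d^{2}(x,y)/s}\prod_i e^{-\frac{\gamma}{2}d_i^{2}/t_i}$ (Cauchy--Schwarz on the exponents) and integrating the second factor step by step using bounded geometry gives $C^{N}e^{-\frac{\gamma}{2}d^{2}/s}$ with $C^{N}\leq e^{\alpha s}$ — so only a single halving of $\gamma$, not one per step, and Davies is optional. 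Second, if you do invoke Davies, note that $A^{2}$ is not self-adjoint: it is a self-adjoint Laplacian-type operator plus a bounded, form-valued (nilpotent) perturbation, so the weighted $L^{2}$ estimates must be run for this perturbed semigroup (the bounded term contributing the $e^{\alpha s}$ factor), and the passage from weighted $L^{2}$ bounds to pointwise bounds still needs the short-time kernel bounds you already have. What each approach buys: the paper's comparison argument is shorter and handles all $s>0$ uniformly, at the price of the Kato/maximum-principle machinery; yours uses only the short-time parametrix-level estimate plus general semigroup technology, which is more modular but requires the extra bookkeeping above.
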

\begin{proof}
The estimate is obtained by following the proof of \cite[Proposition 10.2]{mrs1} on each fiber, replacing the Dirac operator the Bismut superconnection $A$, and working with families of heat kernels. The same proof goes through with little change because the Bismut superconnection satisfies the Lichnerowicz formula $A^{2} = L + R$, cf. (\ref{eqn:lich}). Here $R$ is given in terms of the scalar curvature of the fibers of $M \to B$ and the twisting curvature of $E$, and $L = (L^{z})_{z \in B}$ is the family of connection Laplacians on the fibers $E_{z} \to M_{z}$ associated with the connections $\con^{\mb{E},0}|_{M_{z}}$. 

We can apply Lemma \ref{laplacelemma} on each fiber $E_{z} \to M_{z}$ with the scalar Laplacian $\Delta = d^{*}d$ acting on smooth functions on $M_{z}$, and $u = K^{z}$ the heat kernel along the fiber over $z \in B$ (and similarly we can apply the scalar version of Lemma \ref{laplacelemma} with $u = |K^{z}|$ on each fiber). We combine the resulting identities with Kato's inequality
\begin{equation}
\big|d|K^{z}|\big| \leq |\con K^{z}|
\end{equation}
which can be applied to a section of any Riemannian vector bundle with compatible connection, in particular our section $K^{z}$, see \cite[Lemma 10.4]{mrs1}. 
This yields the differential inequality
\begin{align*}
|K^{z}|\frac{\partial}{\partial s}|K^{z}| + |K^{z}|\Delta(|K^{z}|) + (RK^{z},K^{z}) &\leq 0
\end{align*}
and therefore
\begin{align*}
&|K^{z}|\frac{\partial}{\partial s}|K^{z}| + |K^{z}|\Delta(|K^{z}|) \leq -(RK^{z},K^{z}) \leq \|R\||K^{z}|^{2} \\
&\implies \frac{\partial}{\partial s}|K^{z}| + \Delta|K^{z}| \leq \|R\||K^{z}| \leq \alpha|K^{z}| \numberthis \label{2eqn8}
\end{align*}
where we bound the curvature term $\|R\| \leq \alpha$ using the bounded geometry of $M$. The rest of the proof then follows as in \cite[Proposition 10.2]{mrs1}.
\end{proof}

The following lemma is the analogue in the family context of \cite[Lemma 7.13]{roe98} which is used in the proof of \cite[Proposition 10.6]{mrs1}.
\begin{lemma}\label{lemma:curvatureaction}
Let $A$ denote the Bismut superconnection on $\mb{E} \to M$. For any smooth function $h \in C^{\infty}(M)$ and section $u \in \Gamma(M;\mb{E})$ we have
\[
A^{2}(hu) = m_{0}(\con^{T^{*}M}dh)u-2\con_{\grad h}^{\mb{E},0}u + hA^{2}u.
\]
\end{lemma}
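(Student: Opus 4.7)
The plan is to adapt the classical Weitzenböck-style argument for a Dirac operator, viewing the Bismut superconnection $A$ as the Dirac operator associated with the degenerate Clifford module $\mb{E}\to M$. Since the statement is purely local, I will work in a fixed orthonormal local frame $(e_{j})$ for $TM$ with dual coframe $(e^{j})$, so that $A = \sum_{j} m_{0}(e^{j})\con^{\mb{E},0}_{e_{j}}$.

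First I would exploit the fact that $A$ is a first-order differential operator whose principal symbol is $m_{0}$. This gives the Leibniz-type identity
\[
A(hu) = m_{0}(dh)\,u + h\,Au,
\]
which is immediate from writing $A$ in the local frame and using the ordinary Leibniz rule on $\con^{\mb{E},0}_{e_{j}}(hu) = e_{j}(h)u + h\,\con^{\mb{E},0}_{e_{j}}u$. Applying $A$ to both sides yields
\[
A^{2}(hu) = A\bigl(m_{0}(dh)u\bigr) + A(hAu) = A\bigl(m_{0}(dh)u\bigr) + m_{0}(dh)\,Au + h\,A^{2}u,
\]
reducing the problem to computing $A(m_{0}(dh)u)$.

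Next I would expand $A(m_{0}(dh)u)$ in the local frame and use the fact that $\con^{\mb{E},0}$ is a Clifford connection for the degenerate Clifford action $m_{0}$, which means $\con^{\mb{E},0}_{X}(m_{0}(\alpha)v) = m_{0}(\con^{T^{*}M}_{X}\alpha)v + m_{0}(\alpha)\con^{\mb{E},0}_{X}v$. This gives
\[
A\bigl(m_{0}(dh)u\bigr) = \sum_{j} m_{0}(e^{j})\,m_{0}(\con^{T^{*}M}_{e_{j}}dh)\,u + \sum_{j} m_{0}(e^{j})\,m_{0}(dh)\,\con^{\mb{E},0}_{e_{j}}u.
\]
The first sum is by definition $m_{0}(\con^{T^{*}M}dh)u$. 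For the second sum I would invoke the degenerate Clifford relation $m_{0}(\alpha)m_{0}(\beta) + m_{0}(\beta)m_{0}(\alpha) = -2g_{M/B}(\alpha_{V},\beta_{V})$ to swap the order of $m_{0}(e^{j})$ and $m_{0}(dh)$, producing $-m_{0}(dh)\,Au - 2\con^{\mb{E},0}_{\grad h}u$ after recognizing $\sum_{j} g_{M/B}((e^{j})_{V},(dh)_{V})\,e_{j} = \grad h$ (which matches the convention used in the statement, since $h$ is a function on $M$ and only the vertical component of $dh$ survives the contraction with $m_{0}$).

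Assembling these pieces, the $m_{0}(dh)\,Au$ terms cancel, and we obtain the claimed identity. The only subtle point is keeping track of signs in the degenerate Clifford relation and confirming that the contraction appearing in the anticommutator correctly reproduces $\con^{\mb{E},0}_{\grad h}$ as stated; aside from this bookkeeping, the computation is routine and parallels \cite[Lemma 7.13]{roe98} step for step.
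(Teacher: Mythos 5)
Your proposal is correct and follows essentially the same route as the paper: the Leibniz identity $A(hu)=m_{0}(dh)u+hAu$ reduces everything to the supercommutator $[A,m_{0}(dh)]$, which the paper evaluates by citing compatibility of $\con^{\mb{E},0}$ with $m_{0}$ and which you evaluate by the same frame computation (Clifford connection property plus the degenerate Clifford relation), exactly as the paper itself does explicitly later in Proposition \ref{prop:hessian}. Your remark that only the vertical part of $dh$ survives the contraction, so that $\grad h$ is the dual of $d_{M/B}h$ with respect to the degenerate metric, matches the paper's convention $(d_{Y/B}f)^{\#}$.
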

\begin{proof}
Take any local orthonormal frame $(e_{j})$ for $TM$. By definition we have
\begin{align*}
A(hu) &= \sum_{j\geq 1}m_{0}(e^{j})\con_{e_{j}}^{\mb{E},0}(hu) \\
&= \sum_{j\geq 1}m_{0}(e^{j})(e_{j}(h)u + h\con_{e_{j}}^{\mb{E},0}u) \\
&= m_{0}(dh)u + hAu.
\end{align*}
Hence
\begin{align*}
A^{2}(hu) &= A(m_{0}(dh)u) + A(hAu) \\
&= [A,m_{0}(dh)]u + h A^{2}u,
\end{align*}
where the bracket denotes a supercommutator with respect to the $\mb{Z}_{2}$-grading on $\mb{E}$, cf. Remark \ref{remark:supercomm}. Since $\con^{\mb{E},0}$ is compatible with the Clifford action $m_{0}$, we have
\[
[A,m_{0}(dh)] = m_{0}(\con^{T^{*}M}dh) - 2\con^{\mb{E},0}_{\grad h}
\]
and this completes the derivation of the formula. Note that the same calculation also works for the Bismut superconnection $A(Y)$ on $\mb{E}_{Y} \to Y$ and $\tildhat{A}$ on $\tildhat{\mb{E}} \to \tildhat{Y}$.
\end{proof}

\begin{lemma}\label{mrs10.5}
Let $K(s,x,y)$ denote the integral kernel for the heat operator $e^{-sA^2}$. There exists constants $C,\alpha,\gamma > 0$ such that
\[
|\con^{\mb{\tildhat{E}},0} \tildhat{K}^{z}(s,x,y)| \leq Ce^{\alpha s}s^{-n/2-1}e^{-\gamma d^{2}(x,W_{0})/s}
\]
for all $x,y \in \bigsqcup_{k\geq 1}W_{k}^{z}$ and all $s > 0$.
\end{lemma}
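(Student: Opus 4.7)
Lemma \ref{mrs10.5} is the family analogue of \cite[Lemma 10.5]{mrs1}, upgrading Proposition \ref{prop:ap1} with one additional Clifford-connection derivative and restricted to the periodic model $\tildhat{Y}$. My plan mirrors the two-stage template used there: a short-time Gaussian estimate from Donnelly's bound (Proposition \ref{prop:don}), extended to all times by a maximum-principle argument driven by the Lichnerowicz formula and Kato's inequality.

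For the short-time step, Proposition \ref{prop:don} applied to the Bismut superconnection $\tildhat{A}$ on the bounded-geometry bundle $\tildhat{\mb{E}} \to \tildhat{Y} \to B$ yields, for any fixed $T > 0$,
\[
|\con^{\tildhat{\mb{E}}, 0} \tildhat{K}^z(s,x,y)| \le C\, s^{-n/2 - 1}\, e^{-d^2(x,y)/(4s)}
\]
uniformly on $(0, T]$. Since $y \in \bigsqcup_{k \ge 1} W_k^z$, the triangle inequality $d^2(x,y) \ge \tfrac{1}{2} d^2(x, W_0) - d^2(y, W_0)$, combined with absorbing the $d(y, W_0)$ term into a mild loss in the Gaussian constant, delivers the stated bound on $(0, T]$.

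For the long-time step, I apply the connection $\con^{\tildhat{\mb{E}}, 0}$ to the heat equation $(\partial_s + \tildhat{A}^2) \tildhat{K} = 0$ and use Lemma \ref{lemma:curvatureaction} to commute $\tildhat{A}^2$ past $\con^{\tildhat{\mb{E}}, 0}$, producing a parabolic equation for $\con^{\tildhat{\mb{E}}, 0} \tildhat{K}$ whose commutator source is controlled by zero-order curvature terms that are uniformly bounded by bounded geometry. Combined with the Lichnerowicz formula (\ref{eqn:lich}) and Kato's inequality
\[
\big| d\,|\con^{\tildhat{\mb{E}}, 0} \tildhat{K}^z| \big| \le |\con^{\tildhat{\mb{E}}, 0} \con^{\tildhat{\mb{E}}, 0} \tildhat{K}^z|,
\]
this yields a scalar differential inequality for $v := |\con^{\tildhat{\mb{E}}, 0} \tildhat{K}^z|$ of the form $(\partial_s + \Delta_x)\, v \le \alpha\, v$, strictly analogous to (\ref{2eqn8}). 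A Donnelly-type comparison with the Euclidean heat semigroup, as at the end of the proof of Proposition \ref{prop:ap1}, then propagates the short-time bound to all $s > 0$ and produces the factor $e^{\alpha s}$.

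The main obstacle will be controlling the commutator $[\tildhat{A}^2, \con^{\tildhat{\mb{E}}, 0}]$: because $\tildhat{A}$ mixes a vertical Dirac-type piece with horizontal $B$-derivatives through the Bismut tensor $\omega$, producing a clean scalar subsolution requires careful tracking of the $\Lambda T^* B$-degree. The bounded-geometry assumption on $\tildhat{Y}$ and $\tildhat{\mb{E}}$ ensures that all such coefficients are uniformly bounded and that the family parameter $z$ enters only through lower-order bounded terms, so the reduction to the single-operator case of \cite[Lemma 10.5]{mrs1} is essentially bookkeeping.
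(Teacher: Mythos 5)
Your overall two-stage strategy --- Donnelly's derivative estimate (Proposition \ref{prop:don}) on $(0,T]$, then a parabolic differential inequality for $|\con^{\tildhat{\mb{E}},0}\tildhat{K}^{z}|$ plus Kato's inequality and a comparison argument for all $s>0$ --- is exactly the paper's route: its proof simply runs the argument of Proposition \ref{prop:ap1} on the section $\con^{\tildhat{\mb{E}},0}\tildhat{K}^{z}$, following \cite[Proposition 10.5]{mrs1}. However, one step as you describe it would fail: the conversion of the Gaussian in $d(x,y)$ into the stated Gaussian in $d(x,W_{0})$. From $d^{2}(x,y)\geq\tfrac12 d^{2}(x,W_{0})-d^{2}(y,W_{0})$ you are left with a factor $e^{+\gamma d^{2}(y,W_{0})/s}$, which is unbounded as $y$ ranges over the whole end and cannot be ``absorbed into a mild loss in the Gaussian constant.'' No absorption can work here: $\tildhat{K}$ is periodic, so $|\con^{\tildhat{\mb{E}},0}\tildhat{K}^{z}(s,x,x)|$ does not decay as $x$ moves out along the end, and a bound by $e^{-\gamma d^{2}(x,W_{0})/s}$ uniform in both arguments cannot be extracted from the $d(x,y)$-Gaussian alone. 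The $d(x,W_{0})$ form is legitimate (and is all that is needed, cf.\ its use in Proposition \ref{prop:ap2}) only when $y$ stays within bounded distance of $W_{0}$ --- e.g.\ in the support of the cutoff derivatives in the Duhamel/patching argument --- where $d(x,y)\geq d(x,W_{0})-C$; so either restrict $y$ accordingly or carry $d(x,y)$ through, as in the single-operator case.

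A second, smaller point: Lemma \ref{lemma:curvatureaction} is not the tool for commuting $\tildhat{A}^{2}$ past the connection; it computes the supercommutator $[A^{2},h]$ with multiplication by a scalar function $h$ (it enters the proof of Proposition \ref{prop:ap2}, not this lemma). What the long-time step needs is the Lichnerowicz formula (\ref{eqn:lich}) for $\tildhat{A}^{2}$ together with the Weitzenb\"ock-type commutator of the connection Laplacian with $\con^{\tildhat{\mb{E}},0}$, a zero-order curvature term uniformly bounded by bounded geometry; with that replacement, and Kato's inequality applied to the section $\con^{\tildhat{\mb{E}},0}\tildhat{K}^{z}$ as in Lemma \ref{laplacelemma}, your inequality $(\partial_{s}+\Delta)v\leq\alpha v$ and the comparison with the short-time Donnelly bound do reproduce the paper's intended argument.
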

\begin{proof}
The estimate follows by applying the same argument as in the proof of Proposition \ref{prop:ap1} to the section $\con^{\mb{\tildhat{E}},0} \tildhat{K}^{z}$. Also see the proof of \cite[Proposition 10.5]{mrs1} where this estimate is stated for the case of a single Dirac operator.
\end{proof}

\begin{proposition}\label{prop:ap2}
Let $K(s,x,y)$ denote the integral kernel for the heat operator $e^{-sA^2}$, and let $\tildhat{K}(s,x,y)$ denote the integral kernel for the heat operator $e^{-s\tildhat{A}^2}$. For any $z \in B$ there exists constants $C,\alpha,\gamma > 0$ such that
\[
|K^{z}(s,x,x) - \tildhat{K}^{z}(s,x,x)| \leq Ce^{\alpha s}e^{-\gamma d^{2}(x,W_{0})/s}
\]
for all $x \in \bigsqcup_{k\geq 1}W_{k}^{z}$ and $s > 0$. In particular, for any $s > 0$ the heat operator $e^{-sA^{2}}$ satisfies (AP2).
\end{proposition}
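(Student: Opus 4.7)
The plan is to adapt the Duhamel-principle argument of \cite[Proposition 10.6]{mrs1} to the family superconnection setting, exploiting the fact that the end-periodic Bismut superconnection $A$ genuinely coincides with the lifted superconnection $\tildhat{A}$ on $\en(M)\simeq \tildhat{Y}_{+}$.

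First, I would introduce a smooth cutoff function $\phi$ on $M$, supported in $\en(M)$, with $\phi\equiv 1$ on $\bigsqcup_{k\geq 1}W_{k}$ and $d\phi$ compactly supported in $W_{0}$. Under the identification $\en(M)\simeq\tildhat{Y}_{+}$, the product $\phi(x)\tildhat{K}(s,x,y)$ is a well-defined section on $M$ (extended by zero off $\en(M)$), and for any fixed $y\in W_{k}^{z}$ with $k\geq 1$ it satisfies $\lim_{s\to 0^{+}}\phi(\cdot)\tildhat{K}(s,\cdot,y)=\phi(y)\delta_{y}=\delta_{y}$, matching the initial condition of $K(s,\cdot,y)$. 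Since $A^{2}$ and $\tildhat{A}^{2}$ agree on $\supp\phi$, Lemma \ref{lemma:curvatureaction} gives
\[
(\partial_{s}+A^{2})(\phi\tildhat{K}) = \phi(\partial_{s}+\tildhat{A}^{2})\tildhat{K}+[A^{2},\phi]\tildhat{K} = \bigl(m_{0}(\con^{T^{*}M}d\phi)-2\con^{\mb{E},0}_{\grad\phi}\bigr)\tildhat{K}.
\]

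Applying Duhamel's principle to the difference $K-\phi\tildhat{K}$ (which has zero initial condition at $y$) yields
\[
K(s,x,y)-\phi(x)\tildhat{K}(s,x,y) = -\int_{0}^{s}\!\!\int_{M/B} K(s-t,x,x')\,[A^{2},\phi]_{x'}\,\tildhat{K}(t,x',y)\,dx'\,dt,
\]
and the spatial integration is effectively restricted to the compact set $W_{0}^{z}$ where $d\phi$ is supported. Evaluating at $x=y\in W_{k}^{z}$ with $k\geq 1$ (so $\phi(x)=1$) and applying the triangle inequality gives $d(x,x')\geq d(x,W_{0}^{z})$ and $d(x',y)\geq d(x,W_{0}^{z})$ for $x'\in W_{0}^{z}$. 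I would then apply the Gaussian upper bound of Proposition \ref{prop:ap1} to $K(s-t,x,x')$, the analogous Gaussian bound on $\tildhat{Y}$ to $\tildhat{K}(t,x',y)$, and the gradient estimate of Lemma \ref{mrs10.5} to $\con^{\mb{\tildhat{E}},0}\tildhat{K}(t,x',y)$. The bounded volume of $W_{0}^{z}$ together with a standard semigroup identity of the form
\[
\int_{0}^{s}(s-t)^{-n/2}t^{-n/2-1}e^{-a/(s-t)}e^{-b/t}\,dt \leq C\,s^{-n/2-1}e^{-\gamma(a+b)/s}
\]
then combine to produce the claimed Gaussian factor $e^{-\gamma d^{2}(x,W_{0}^{z})/s}$.

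The main technical obstacle lies in the careful bookkeeping of the semigroup time integration and the uniformity of constants as $s\to 0^{+}$: one must verify that the convolution of the two Gaussian kernels really produces the exponential $e^{-\gamma d^{2}(x,W_{0})/s}$ without residual singular factors near $s=0$, and that the polynomial-in-$s$ prefactors arising from the form-degree expansion of $A^{2}$ (together with the curvature terms in the Lichnerowicz formula (\ref{eqn:lich})) are absorbed into the exponential $e^{\alpha s}$. Beyond this bookkeeping, the family setting introduces no new phenomena, because the pointwise Gaussian bounds of Proposition \ref{prop:ap1} and Lemma \ref{mrs10.5} already accommodate the differential-form components of the heat kernel in the Hermitian norm on $\Lambda T^{*}B\otimes E\boxtimes E^{*}$.
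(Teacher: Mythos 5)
Your strategy is the same as the paper's: the proof given there is a one-paragraph reduction to the argument of \cite[Proposition 10.6]{mrs1}, using the Gaussian bound of Proposition \ref{prop:ap1}, the gradient estimate of Lemma \ref{mrs10.5}, and the commutator identity of Lemma \ref{lemma:curvatureaction}, and your cutoff-plus-Duhamel reconstruction is exactly that argument with the correct family-version ingredients in the right places (the identification of $[A^{2},\phi]$ via Lemma \ref{lemma:curvatureaction}, the zero initial condition of $K-\phi\tildhat{K}$, and the localization of the Duhamel source to $\supp d\phi$).

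The one step that does not survive as written is the displayed ``semigroup identity''. As $a,b\to 0^{+}$ the left-hand side diverges (the integrand behaves like $t^{-n/2-1}$ near $t=0$ and $(s-t)^{-n/2}$ near $t=s$ once the exponentials stop helping), so no uniform constant exists; and even where the inequality holds, its right-hand side keeps the factor $s^{-n/2-1}$, which must disappear from the target bound $Ce^{\alpha s}e^{-\gamma d^{2}(x,W_{0})/s}$. Both problems arise precisely in the regime your setup permits: with $\supp d\phi$ merely contained in $W_{0}$, a point $x\in W_{1}$ near the interface $f^{-1}(1)$ has $d(x,\supp d\phi)$ arbitrarily small, i.e.\ $a,b$ arbitrarily small. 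The fix, as in \cite{mrs1}, is to place the transition region of $\phi$ strictly inside $W_{0}$, say $\supp d\phi\subseteq f^{-1}[1/4,3/4]$, so that $\rho:=d(x,\supp d\phi)$ satisfies both $\rho\geq\rho_{0}>0$ uniformly over $x\in\bigsqcup_{k\geq1}W_{k}$ and $\rho\geq d(x,W_{0})$. Then use $\tfrac{1}{s-t}+\tfrac{1}{t}\geq\tfrac{4}{s}$ to extract $e^{-2\gamma\rho^{2}/s}\leq e^{-2\gamma d^{2}(x,W_{0})/s}$ from half of each Gaussian, and spend the other half to absorb the singular powers, e.g.\ $(s-t)^{-n/2}e^{-\gamma\rho^{2}/2(s-t)}\leq C\rho_{0}^{-n}$ and $t^{-n/2-1}e^{-\gamma\rho^{2}/2t}\leq C\rho_{0}^{-n-2}$, after which the remaining $t$-integral contributes only a factor of $s$, absorbed into $Ce^{\alpha s}$. (Note also that what you actually need from Lemma \ref{mrs10.5} is a Gaussian bound on $\con_{x'}\tildhat{K}(t,x',y)$ decaying in $d(x',y)$ for $x'\in\supp d\phi$, which is what the underlying periodic-manifold estimate provides.) With these adjustments your argument is complete and coincides with the paper's intended proof.
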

\begin{proof}
The estimate follows by adapting the argument used in the proof of \cite[Proposition 10.6]{mrs1} to the family setting: using property (AP1) and Lemma \ref{mrs10.5}, which we established above for the heat operator $e^{-sA^{2}}$, together with Lemma \ref{lemma:curvatureaction}, which provides the family version of the curvature identity for the Dirac operator.
\end{proof}

Finally, we establish property (AP3) for the heat operator. The argument is similar to the proof of Proposition \ref{prop:ap2} for property (AP2).
\begin{proposition}\label{prop:ap3}
Let $K(s,x,y)$ denote the integral kernel for the heat operator $e^{-sA^2}$, and let $\tildhat{K}(s,x,y)$ denote the integral kernel for the heat operator $e^{-s\tildhat{A}^2}$. For any $z \in B$ and any given $T > 0$ there exists constants $C, \gamma > 0$ such that
\[
|K^{z}(s,x,y) - \tildhat{K}^{z}(s,x,y)| \leq Ce^{-\gamma d^{2}/s} 
\]
for all $0 < s \leq T$ and $x,y \in \bigsqcup_{k\geq 1}W_{k}^{z}$, where $d = \min\{d(x,W_{0}),d(y,W_{0})\}$. In particular, for any $s > 0$ the heat operator $e^{-sA^{2}}$ satisfies (AP3).
\end{proposition}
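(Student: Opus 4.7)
The plan is to mimic the strategy of Proposition \ref{prop:ap2}, the main new ingredient being that we now need off-diagonal Gaussian decay in both spatial arguments rather than only on the diagonal. Fix $z \in B$ and $T>0$ and suppress $z$ from the notation. Choose a smooth cutoff $\chi$ on $M^z$ (defined via the identification $\en(M^z) \simeq \tildhat Y^z_+$) such that $\chi \equiv 1$ on $\bigsqcup_{k\geq 1}W_k^z$, $\chi \equiv 0$ outside $\en(M^z)$, and $\supp d\chi \subset W_0^z$. Because the end-periodic Bismut superconnection $A^z$ coincides with its normal operator $\tildhat A^z$ on $\supp\chi$, the auxiliary function $v(s,x,y) := \chi(x)\tildhat K^z(s,x,y)$ satisfies
\[
(\partial_s + (A^z_x)^2)\,v \;=\; [(A^z_x)^2,\chi]\,\tildhat K^z,
\]
and Lemma \ref{lemma:curvatureaction} identifies $[(A^z)^2,\chi] = m_0(\con^{T^*M}d\chi) - 2\con^{\mb E,0}_{\grad\chi}$, a first-order differential expression supported in $W_0^z$. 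For $y \in \bigsqcup_{k\geq 1}W_k^z$ one has $\chi(y)=1$, so $v(0,x,y) = \delta_y = K^z(0,x,y)$ as distributions; Duhamel's principle then gives
\[
K^z(s,x,y) - v(s,x,y) \;=\; -\int_0^s\!\int_{W_0^z} K^z(s-\sigma, x, \zeta)\,[(A^z_\zeta)^2,\chi]\,\tildhat K^z(\sigma, \zeta, y)\,d\zeta\,d\sigma,
\]
and for $x \in \bigsqcup_{k\geq 1}W_k^z$ likewise $\chi(x)=1$, so the left-hand side is exactly $K^z - \tildhat K^z$.

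Next I estimate the integrand using three inputs: Proposition \ref{prop:ap1} supplies the Gaussian bound on $K^z(s-\sigma,x,\zeta)$; Proposition \ref{prop:don} applied to the end-periodic Bismut superconnection $\tildhat A^z$ on the bounded-geometry manifold $\tildhat Y^z$ yields $|\tildhat K^z(\sigma,\zeta,y)|$ and $|\con \tildhat K^z(\sigma,\zeta,y)| \leq C\sigma^{-n/2-1}e^{\alpha\sigma}e^{-\gamma d(\zeta,y)^2/\sigma}$ on $(0,T]$; and the bounded diameter $C_0$ of $W_0^z$ lets me replace $d(x,\zeta)$ and $d(\zeta,y)$ by $d(x,W_0^z)$ and $d(y,W_0^z)$ up to additive constants absorbed into $\gamma$. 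Writing $a := d(x,W_0^z)$ and $b := d(y,W_0^z)$, the elementary convexity estimate
\[
\frac{a^2}{s-\sigma} + \frac{b^2}{\sigma} \;\geq\; \frac{(a+b)^2}{s} \;\geq\; \frac{d^2}{s}
\]
(minimum attained at $\sigma = bs/(a+b)$) lets me split the spatial exponent in half, factoring out $e^{-\gamma d^2/(2s)}$ and retaining the remaining half of the Gaussians inside the time integral.

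The main technical obstacle is then to bound the residual convolution integral
\[
\int_0^s (s-\sigma)^{-n/2}\sigma^{-n/2-1}\,e^{-\gamma'\left[a^2/(s-\sigma) + b^2/\sigma\right]}\,d\sigma
\]
uniformly for $s \in (0,T]$, despite the product $(s-\sigma)^{-n/2}\sigma^{-n/2-1}$ being non-integrable at both endpoints. The plan is to split at $\sigma = s/2$ and use the retained spatial Gaussians to control each endpoint singularity via the substitutions $u = b^2/\sigma$ and $u = a^2/(s-\sigma)$, which reduce each half to a finite $\Gamma$-type integral; the degenerate case $a = 0$ or $b = 0$, where the Gaussian degenerates and the Duhamel estimate becomes borderline, is handled separately by combining the on-diagonal bound from Proposition \ref{prop:ap2} with the standard off-diagonal Gaussian estimate on $K^z$ and $\tildhat K^z$ from Proposition \ref{prop:don}, which together bound $|K^z - \tildhat K^z|$ by a constant in a fixed tubular neighborhood of $\partial W_0^z$. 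Absorbing $e^{\alpha s}$ into $C$ using $s \leq T$, this yields $|K^z(s,x,y) - \tildhat K^z(s,x,y)| \leq Ce^{-\gamma d^2/s}$ on $(0,T]$, which is property (AP3).
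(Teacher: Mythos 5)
Your overall strategy is exactly the one the paper intends: Proposition \ref{prop:ap3} is proved by rerunning the cutoff--Duhamel argument of Proposition \ref{prop:ap2} (the family version of the Mrowka--Ruberman--Saveliev argument, built on Lemma \ref{lemma:curvatureaction}, Proposition \ref{prop:ap1} and the derivative estimates coming from Proposition \ref{prop:don}/Lemma \ref{mrs10.5}), using $s\le T$ to replace $e^{\alpha s}$ by a constant. Your identity $(\partial_{s}+A^{2})(\chi\tildhat{K})=[A^{2},\chi]\tildhat{K}$, the Duhamel representation with forcing supported in $\supp d\chi$, and the convexity inequality $a^{2}/(s-\sigma)+b^{2}/\sigma\ge(a+b)^{2}/s\ge d^{2}/s$ are all the right ingredients.

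However, your treatment of the region where $x$ or $y$ is close to $W_{0}$ does not close. With $\supp d\chi\subset W_{0}$ touching $\partial W_{1}$, the endpoint singularities $(s-\sigma)^{-n/2}$ and $\sigma^{-n/2-1}$ in the time convolution are controlled only by the retained Gaussians $e^{-\gamma a^{2}/(s-\sigma)}$ and $e^{-\gamma b^{2}/\sigma}$, so your bound carries prefactors growing like negative powers of $a=d(x,W_{0})$ and $b=d(y,W_{0})$ (and of $s$, traded for further negative powers of $a+b$); these cannot be absorbed into $e^{-\gamma' d^{2}/s}$ uniformly, e.g.\ when $d$ is small and $s\approx d^{2}$. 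Your fallback for that region, bounding $|K-\tildhat{K}|$ by a constant near $\partial W_{0}$, only helps where $e^{-\gamma d^{2}/s}$ is bounded below, i.e.\ where $d=0$ exactly or $s$ is bounded away from $0$; for small positive $d$ and $s\to 0^{+}$ the right-hand side of the claimed estimate decays, so a constant bound proves nothing there, and a region of small $d$ and small $s$ is left uncovered by both halves of your argument. The repair is minor: choose the cutoff so that $\supp d\chi\subset f^{-1}[1/4,3/4]$, hence at distance at least $c_{0}>0$ from $\bigsqcup_{k\ge 1}W_{k}$. Then the distances $a'=d(x,\supp d\chi)$ and $b'=d(y,\supp d\chi)$ that actually enter the Duhamel estimate satisfy $a'\ge\max(a,c_{0})$ and $b'\ge\max(b,c_{0})$, so every polynomial prefactor is bounded by a constant depending only on $c_{0},n,\gamma,T$ (for instance $s^{-m}e^{-\varepsilon(a'+b')^{2}/s}\le C(a'+b')^{-2m}\le Cc_{0}^{-2m}$), while the extracted Gaussian satisfies $e^{-\gamma'(a'+b')^{2}/s}\le e^{-\gamma' d^{2}/s}$; with this choice no separate degenerate case is needed and the stated estimate follows uniformly for $s\in(0,T]$.
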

\begin{proof}
One may follow the proof of Proposition \ref{prop:ap2}, and in this case since $0 < s \leq T$, replace the function $e^{\alpha s}$ with a constant upper bound.
\end{proof}

\begin{corollary}\label{cor:ap}
Let $A_{t}$ denote the rescaled end-periodic Bismut superconnection, cf. equation (\ref{eqn:rescaledsuperconn}). For any $s,t > 0$, the heat operator $e^{-sA_{t}^{2}}$ is an asymptotically periodic smoothing operator with periodic normal operator $e^{-s\tildhat{A}_{t}^2}$.
\end{corollary}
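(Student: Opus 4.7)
The plan is to verify that the rescaled end-periodic Bismut superconnection $A_t$ satisfies the same structural hypotheses as the unrescaled $A$, so that the arguments of Propositions \ref{prop:ap1}, \ref{prop:ap2}, and \ref{prop:ap3} transfer to $A_t$ for each fixed $t > 0$.

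First I would observe that the rescaling $A_t = \delta_t A$ preserves end-periodicity: the form-degree rescaling $\delta_t$ commutes with the pullback $p^{*}$, so the equality of $A$ with $\tildhat{A}$ along $\en(M)$ is inherited by $A_t$ and $\tildhat{A}_t$. In particular $N(A_t) = \tildhat{A}_t$, and by Proposition \ref{FLprop2}(a), $N(A_t^2) = \tildhat{A}_t^2$. Second, I would write out the Lichnerowicz-type formula for $A_t^2$ obtained from (\ref{eqn:lich}) after rescaling. Expanding $A_t^2 = (t^{1/2}D + A_{[1]} + t^{-1/2}A_{[2]})^2$ and applying (\ref{eqn:lich}) to the degree-zero part $tD^2$, one obtains a decomposition of the form $A_t^2 = tL + R_t$, where $L$ is the connection Laplacian and $R_t$ is a zeroth-order curvature-type operator whose coefficients are polynomials in $t^{\pm 1/2}$ with bounded-geometry coefficients. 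For any fixed $t > 0$, $R_t$ is therefore uniformly bounded on $M$.

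Given these two observations, the three heat kernel estimates transfer to $A_t$ essentially by inspection. The proof of Proposition \ref{prop:ap1} applied Lemma \ref{laplacelemma}, Kato's inequality, and Donnelly's estimate (Proposition \ref{prop:don}) using only the Lichnerowicz decomposition with a bounded curvature term; running the same argument with $A_t^2 = tL + R_t$ yields the Gaussian bound for the heat kernel of $e^{-sA_t^2}$, with constants depending on $s,t$ but uniform in $x,y$ for each $z$, which implies (AP1). The proofs of Propositions \ref{prop:ap2} and \ref{prop:ap3} additionally invoked Lemma \ref{lemma:curvatureaction}; the analogous supercommutator identity for $A_t$ is obtained by the same computation (with the $\delta_t$ factors absorbed into the final constants), so (AP2) and (AP3) follow in the same way for $e^{-sA_t^2}$ with respect to the normal heat operator $e^{-s\tildhat{A}_t^2}$.

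The main technical point is simply bookkeeping the $t$-dependent factors introduced by $\delta_t$ to confirm that they only affect the sizes of the constants $C, \alpha, \gamma$ and not the structural form of the estimates. Since $\delta_t$ introduces only fixed powers of $t^{\pm 1/2}$ into the coefficients of $A_t^2$ and does not disturb the bounded geometry of $M$ or $E$, this bookkeeping is straightforward, and the corollary follows.
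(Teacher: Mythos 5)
Your proposal is correct and follows essentially the same route as the paper, which states Corollary \ref{cor:ap} without a separate argument precisely because the proofs of Propositions \ref{prop:ap1}--\ref{prop:ap3} only use that the superconnection is end-periodic (with normal operator obtained by pullback) and satisfies a Lichnerowicz-type decomposition with bounded-geometry lower-order term, properties that $A_{t}$ inherits for each fixed $t>0$ since $\delta_{t}$ commutes with $p^{*}$ and only rescales coefficients by powers of $t^{\pm 1/2}$. Your bookkeeping that the $t$-dependence is absorbed into the constants $C,\alpha,\gamma$ (which the definition of asymptotic periodicity permits) is exactly the point the paper leaves implicit.
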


\begin{proposition}\label{prop:moreapops}
Let $P \in \epdiff^{\bigcdot}(M;\mb{E})$. Then $Pe^{-sA_{t}^2}$ is also a vertical family of asymptotically periodic smoothing operators, with normal operator $N(P)e^{-s\tildhat{A}_{t}^{2}}$.

In particular, it follows that for any $s,t > 0$, the following smoothing operators are asymptotically periodic:
\[
\mathrm{(i)} \ A_{t}e^{-sA_{t}^{2}} \hspace{3mm} \mathrm{(ii)} \ \frac{d}{dt}(A_{t}^{2})e^{-sA_{t}^{2}} \hspace{3mm} \mathrm{(iii)} \ \mc{Q} = \dt{A}_{t}e^{-sA_{t}^{2}}.
\]
\end{proposition}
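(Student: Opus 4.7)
The heat operator $e^{-sA_t^2}$ is smoothing, with smooth integral kernel $K(s,x,y)$, so $Pe^{-sA_t^2}$ is also smoothing, with integral kernel $K_P(s,x,y) = P_x K(s,x,y)$ obtained by letting $P$ act in the first variable. Likewise $N(P)e^{-s\tildhat{A}_t^2}$ has kernel $\tildhat{K}_P = N(P)_x\tildhat{K}$. The plan is to verify conditions (AP1)--(AP3) for the pair $(K_P,\tildhat{K}_P)$ and then read off the three examples as special cases.

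For (AP1), I would invoke Proposition~\ref{prop:don}, which bounds all mixed covariant derivatives of the heat kernel by
\[
|\con_x^j\con_y^k K^z(s,x,y)| \leq Cs^{-n/2-|j|-|k|}e^{-d^2(x,y)/4s}
\]
uniformly for $s$ in any fixed bounded interval. Writing $P$ in a bounded-geometry orthonormal frame as $\sum_{|j|\leq m}a_j(x)\con^j$ with $m=\mathrm{ord}(P)$ and uniformly bounded coefficients (possible because $P$ is end-periodic and $M$, $\mb{E}$ have bounded geometry), one obtains $|K_P^z(s,x,y)|\leq C's^{-n/2-m}e^{-d^2(x,y)/4s}$, which at fixed $s>0$ is of the form required by (AP1).

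For (AP2) and (AP3), the starting observation is that $P$ coincides with $N(P)$ along $\en(M)$, so on $\bigsqcup_{k\geq 1}W_k$ we have
\[
K_P(s,x,y) - \tildhat{K}_P(s,x,y) = N(P)_x\,(K-\tildhat{K})(s,x,y).
\]
It therefore suffices to produce pointwise exponential bounds on $|\con^j(K-\tildhat{K})|$ for every $j\leq m$. These can be obtained by iterating the Bochner-Kato argument already used in the proofs of Proposition~\ref{prop:ap1}, Lemma~\ref{mrs10.5}, and Propositions~\ref{prop:ap2}--\ref{prop:ap3}: for each multi-index $j$ the section $\con^j(K-\tildhat{K})$ satisfies a heat-type equation with a lower-order correction coming from the commutator $[\con^j,A^2]$. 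The generalized Lichnerowicz formula~(\ref{eqn:lich}), together with Kato's inequality and the uniform boundedness of the scalar curvature, the twisting curvature $F_E'$, and the 2-tensor $\omega$ entering $\con^{\mb{E},0}$, yield a differential inequality for $|\con^j(K-\tildhat{K})|$ of exactly the same shape as (\ref{2eqn8}). The maximum-principle argument of Proposition~\ref{prop:ap2} then delivers the diagonal bound $Ce^{\alpha s}e^{-\gamma d^2(x,W_0)/s}$, and its bounded-time variant of Proposition~\ref{prop:ap3} delivers the off-diagonal bound $Ce^{-\gamma d^2/s}$. Expanding $N(P)$ in a bounded-geometry frame then gives (AP2) and (AP3).

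The main obstacle is controlling the commutators $[\con^j,A^2]$ that arise when passing from $j=0$ (treated in the earlier propositions) to higher covariant derivatives. These commutators involve up to $|j|$ covariant derivatives of the curvature data appearing in~(\ref{eqn:lich}), all of which are uniformly bounded by the bounded-geometry hypothesis, so the induction on $j$ follows the $j=0$ case essentially verbatim. The three asserted examples are then immediate, since the rescaled Bismut superconnection $A_t$, its anticommutator derivative $\frac{d}{dt}(A_t^2) = A_t\dt{A}_t + \dt{A}_t A_t$, and $\dt{A}_t$ itself are each vertical families of end-periodic differential operators on $\mb{E}$, with normal operators $\tildhat{A}_t$, $\frac{d}{dt}(\tildhat{A}_t^2)$, and $\dt{\tildhat{A}}_t$ respectively.
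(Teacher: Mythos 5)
Your proposal is correct and follows essentially the same route as the paper: establish (AP1) from the covariant-derivative heat kernel bounds of Proposition~\ref{prop:don} together with the uniform boundedness of the coefficients of $P$ along the end, and obtain (AP2)--(AP3) by using $P=N(P)$ on $\en(M)$ and re-running the earlier difference estimates at the level of covariant derivatives of $K-\tildhat{K}$. The only cosmetic difference is that you phrase the derivative estimates as an explicit Bochner--Kato iteration with commutators $[\con^{j},A^{2}]$, whereas the paper simply cites Proposition~\ref{prop:don} and repeats the argument of Proposition~\ref{prop:ap3}; the substance is the same.
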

\begin{proof}
Write $H = e^{-sA_{t}^{2}}$ with normal operator $N(H) = e^{-s\tildhat{A}_{t}^{2}}$. It's clear that the family $PH$ consists of smoothing operators, because the algebra of smoothing operators is a two-sided ideal inside the algebra of bounded operators. The family of smoothing kernels for $PH$ satisfy the estimates (AP1)-(AP3) with normal operator $N(PH) = N(P)e^{-s\tildhat{A}_{t}^{2}}$ because as we have shown above the heat kernels satisfy these estimates, and Proposition \ref{prop:don} yields estimates on the covariant derivatives of the heat kernels. For example, suppose for simplicity that $P$ is a first-order differential operator, then locally we can write
\[
P = \sum_{j}a_{j}\con^{\mb{E},0}_{X_{j}}
\]
for some coefficient endomorphisms $a_{j} \in \Gamma(M;\End\mb{E})$ and vector fields $X_{j} \in \Gamma(M;TM)$. Over the end of $M$ we have $P = N(P) = p^{*}P(Y)$ so the endomorphisms $a_{j}$ are uniformly bounded. By Proposition \ref{prop:ap3}, for any $z \in B$ we have constants $C, \gamma > 0$ such that
\[
|K_{H}^{z}(x,y) - K_{N(H)}^{z}(x,y)| \leq Ce^{-\gamma d^{2}}
\]
for all $x,y \in \bigsqcup_{k \geq 1}W_{k}^{z}$, with $d = \min\{d(x,W_{0}),d(y,W_{0})\}$. Using the estimate on covariant derivatives given by Proposition \ref{prop:don} and then following the argument of Proposition \ref{prop:ap3} again, we deduce that
\[
|\con^{E}_{X_{j}}\cdot(K^{z}_{H}(x,y) - K^{z}_{N(H)}(x,y))| \leq C_{2}\exp(-\gamma_{2} d^{2})
\]
for all $x,y \in \bigsqcup_{k \geq 1}W_{k}^{z}$ and each $j$. Using this estimate together with the uniform boundedness of the coefficients of $P$ we estimate
\begin{align*}
|K_{PH}^{z}(x,y) - K_{N(P)N(H)}^{z}(x,y)| &= |PK_{H}^{z}(x,y) - N(P)K_{N(H)}^{z}(x,y)| \\
&= |PK_{H}^{z}(x,y) - PK_{N(H)}^{z}(x,y)| \\
&\leq \sum_{j}|a_{j}^{z}(x)||\con^{E}_{X_{j}}\cdot(K^{z}_{H}(x,y) - K^{z}_{N(H)}(x,y))| \\
&\leq C_{3}\exp(-\gamma_{3} d^{2})
\end{align*}
which is exactly the estimate (AP3). Similar arguments can be used to show that (AP1) and (AP2) hold as well.
\end{proof}

\begin{proposition}\label{prop:normalop1}
Let $H_{1}$ and $H_{2}$ be asymptotically periodic smoothing operators. Then $N(H_{1}H_{2}) = N(H_{1})N(H_{2})$.
\end{proposition}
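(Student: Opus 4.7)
The plan is to verify that the composition $H_1 H_2$ -- which is automatically a vertical family of smoothing operators, since the smoothing operators form a two-sided ideal -- satisfies (AP1)--(AP3) with candidate normal operator $\tildhat{H}_1 \tildhat{H}_2$, where $\tildhat{H}_i = N(H_i)$. By the uniqueness of the normal operator noted immediately after the definition of asymptotic periodicity, this then forces $N(H_1 H_2) = \tildhat{H}_1 \tildhat{H}_2 = N(H_1) N(H_2)$.

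The basic tool is the convolution formula for kernels of composed smoothing operators, combined with the algebraic identity
\[
K_{H_1} K_{H_2} - K_{\tildhat{H}_1} K_{\tildhat{H}_2} = K_{H_1}\bigl(K_{H_2} - K_{\tildhat{H}_2}\bigr) + \bigl(K_{H_1} - K_{\tildhat{H}_1}\bigr) K_{\tildhat{H}_2},
\]
and the Gaussian convolution estimate $\int \exp(-\gamma d^2(x,z))\exp(-\gamma d^2(z,y))\,dz \leq C\exp(-\gamma' d^2(x,y))$, which follows from the triangle inequality $d^2(x,z) + d^2(z,y) \geq \tfrac{1}{2} d^2(x,y)$ and the bounded geometry of the fibers.

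For (AP1) I would split $M = Z \sqcup \en(M)$: on $\en(M) \simeq \tildhat{Y}_+$ the Gaussian bounds of (AP1) on each factor combine via the convolution estimate to give a Gaussian bound on $K_{H_1 H_2}$ in $d(x,y)$, while the compact contribution from $z \in Z$ is exponentially small because $d(x,z), d(z,y)$ are bounded below by $d(x,W_0)$ or $d(y,W_0)$ for $x,y$ in the end. The main step is (AP3); writing
\[
K_{H_1 H_2}(x,y) - K_{\tildhat{H}_1 \tildhat{H}_2}(x,y) = \left(\int_{Z/B} K_{H_1} K_{H_2}\,dz - \int_{\tildhat{Y}_-/B} K_{\tildhat{H}_1} K_{\tildhat{H}_2}\,dz\right) + \int_{\tildhat{Y}_+/B} \bigl(K_{H_1} K_{H_2} - K_{\tildhat{H}_1} K_{\tildhat{H}_2}\bigr)\,dz
\]
under the identification $\en(M) \simeq \tildhat{Y}_+$, the first bracketed term is exponentially small in $d = \min\{d(x,W_0), d(y,W_0)\}$ because $z$ is bounded away from $x,y$; for the remaining integral I apply the algebraic splitting and subdivide $\tildhat{Y}_+$ according to whether $z$ lies near $W_0$ or far from $W_0$. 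On the near region, one factor decays Gaussianly in $d(x,W_0)$ or $d(y,W_0)$ by (AP1); on the far region, (AP3) applied to $K_{H_i} - K_{\tildhat{H}_i}$ yields Gaussian decay in $d(z,W_0)$, which combines with Gaussian convolution to give the required bound. Condition (AP2) is the diagonal specialization $x=y$ of (AP3).

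The main obstacle I anticipate is the bookkeeping of Gaussian exponents through the various convolution estimates, and in particular ensuring that the final bound involves $d = \min\{d(x,W_0), d(y,W_0)\}$ rather than $d(x,y)$ (these are not comparable in general, since $x$ and $y$ can be close to each other but deep in the end). Conceptually this parallels the covariant-derivative estimate used at the end of the proof of Proposition \ref{prop:moreapops}.
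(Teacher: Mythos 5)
Your overall strategy---verify (AP1)--(AP3) for $H_{1}H_{2}$ with candidate normal operator $\tildhat{H}_{1}\tildhat{H}_{2}$ and then invoke uniqueness---is not the paper's argument, and as written it has a genuine gap at the compact piece. The axioms (AP1)--(AP3) constrain the kernels $K_{H_{i}}$ only when \emph{both} arguments lie in $\bigsqcup_{k\geq 1}W_{k}$; they say nothing about $K_{H_{1}}(x,z)$ or $K_{H_{2}}(z,y)$ when $z \in Z\cup W_{0}$ and the other variable is far out on the end. In the convolution $K_{H_{1}H_{2}}(x,y)=\int_{M/B}K_{H_{1}}(x,z)K_{H_{2}}(z,y)\,dz$ the contribution from $z\in Z\cup W_{0}$ is therefore not controlled by the hypotheses: your claim that it is exponentially small ``because $d(x,z),d(z,y)$ are bounded below by $d(x,W_{0})$ or $d(y,W_{0})$'' presupposes an off-diagonal decay estimate on exactly the region where none is assumed; a large distance only helps if you already have decay in that distance there. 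Indeed, nothing in the definition (smooth kernels plus (AP1)--(AP3)) prevents, say, $K_{H_{1}}(x,z)=g(x)\phi(z)$ with $\phi$ a bump supported in $Z$ and $g$ growing (or spiking) along the end, added to a legitimate periodic piece over the end: all three axioms are untouched, yet $K_{H_{1}H_{2}}(x,y)$ acquires a term $g(x)\int_{Z}\phi(z)K_{H_{2}}(z,y)\,dz$ that violates the Gaussian bounds, so the product need not be asymptotically periodic at all. The same issue recurs in your (AP3) decomposition (the $\int_{Z/B}$ part of the first bracketed term, and the ``near region'' when $z$ lies in $W_{0}$ itself). The parts of your plan confined to the end are fine: the Gaussian convolution estimate, the global Gaussian bound for the periodic kernels obtained from (AP1), (AP3) and $\mathbb{Z}$-equivariance, and (AP2) as the diagonal case of (AP3). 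What is missing is control of the kernels when one variable lies over $Z\cup W_{0}$---for instance uniform Gaussian bounds on all of $M\times M$, which the operators actually used in the paper do satisfy (cf.\ Propositions \ref{prop:don}, \ref{prop:ap1}, \ref{prop:moreapops}) but which are not part of the definition of asymptotic periodicity.

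The paper's proof takes a much softer, purely algebraic route that sidesteps all of this: by definition the normal operators are pullbacks, so $N(H_{j})(p^{*}u)=p^{*}(H_{j}(Y)u)$, and hence $N(H_{1})N(H_{2})(p^{*}u)=p^{*}(H_{1}(Y)H_{2}(Y)u)$; that is, the composition of the lifts is the lift of the composition, exactly as for differential operators in Proposition \ref{FLprop2}. The identity is established at the level of the periodic models, with no re-derivation of the (AP) estimates for the product. Your route, if supplemented by the missing kernel bounds (as a hypothesis, or verified for the specific operators arising in the applications), would prove the stronger statement that $H_{1}H_{2}$ is itself asymptotically periodic, which the paper leaves implicit; but as it stands the key step does not follow from (AP1)--(AP3) alone.
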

\begin{proof}
By definition, the normal operators are given by
\[
N(H_{j})(p^{*}u) = p^{*}(H_{j}(Y)u), \ j = 1,2
\]
and so the proof amounts to a direct calculation as in Proposition \ref{FLprop2}.
\end{proof}

\begin{proposition}\label{prop:normalop2}
Let $A_{t}$ denote the rescaled end-periodic Bismut superconnection. Consider the heat operator $H = e^{-sA_{t}^{2}}$ which by Corollary \ref{cor:ap} is asymptotically periodic with normal operator $\tildhat{H} = p^{*}(e^{-A_{t}^{2}(Y)}) = e^{-s\tildhat{A}_{t}^{2}}$. Let $H(Y)(\xi)$ denote the indicial family of $H$. Then we have
\[
H(Y)(\xi) = e^{-sA_{t}^{2}(Y)(\xi)}.
\]
\end{proposition}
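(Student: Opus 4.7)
The plan is to identify $H(Y)(\xi) = e^{-sA_t^{2}(Y)(\xi)}$ by combining the intertwining property of the Fourier--Laplace transform established in Proposition \ref{FLprop1} with the uniqueness of the heat semigroup on the closed manifold $Y$. First, Proposition \ref{FLprop1} together with the rescaling (\ref{eqn:rescaledFL}) yields
\[
\mathcal{F}_\xi \circ \tildhat{A}_t = A_t(Y)(\xi) \circ \mathcal{F}_\xi
\]
on smooth compactly supported sections. Iterating (in the spirit of Proposition \ref{FLprop2}(b) applied to asymptotically periodic operators) gives $\mathcal{F}_\xi \circ \tildhat{A}_t^{2} = A_t(Y)(\xi)^{2} \circ \mathcal{F}_\xi$ on the same class of sections.

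Next I would apply the FL transform to the heat equation itself. For $\varphi \in C_c^{\infty}(\tildhat{Y};\tildhat{\mb{E}})$, the section $u(s,\cdot) := e^{-s\tildhat{A}_t^{2}}\varphi$ satisfies $(\partial_s + \tildhat{A}_t^{2})u = 0$ with $u(0,\cdot) = \varphi$. Writing $v_s = \mathcal{F}_\xi u(s,\cdot)$ and using that $\mathcal{F}_\xi$ is linear and commutes with $\partial_s$ (which acts only on the parameter $s$), one obtains
\[
(\partial_s + A_t(Y)(\xi)^{2})\, v_s = 0, \qquad v_0 = \mathcal{F}_\xi \varphi.
\]
The function $w_s := e^{-sA_t^{2}(Y)(\xi)}\mathcal{F}_\xi\varphi$ solves the same Cauchy problem on the closed manifold $Y$. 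Since $A_t(Y)(\xi)^{2}$ is a generalized Laplacian on $Y$ (having the same principal symbol as $A_t(Y)^{2}$, as $A_t(Y)(\xi)$ differs from $A_t(Y)$ only by the zeroth-order operator $-\log(\xi)\delta_{t}m_{0}(df)$), the heat semigroup it generates is unique; hence $v_s = w_s$. Letting $\varphi$ range over the dense subspace $C_c^{\infty}(\tildhat{Y};\tildhat{\mb{E}})$ yields the operator identity
\[
\mathcal{F}_\xi \circ e^{-s\tildhat{A}_t^{2}} = e^{-sA_t^{2}(Y)(\xi)} \circ \mathcal{F}_\xi,
\]
which by the definition of the indicial family is precisely $H(Y)(\xi) = e^{-sA_t^{2}(Y)(\xi)}$.

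The main technical point is justifying the termwise application of $\mathcal{F}_\xi$ to the heat equation, since a priori the series $\sum_m \xi^m u(\cdot+m,s)$ defining $\mathcal{F}_\xi u(s,\cdot)$ need only converge in $L^{2}(Y;\mb{E}_Y)$. For compactly supported $\varphi$, however, Propositions \ref{prop:ap1} and \ref{prop:don} provide Gaussian pointwise bounds on $u(s,\cdot)$ together with its $s$- and covariant spatial derivatives, so the series defining $\mathcal{F}_\xi u(s,\cdot)$ and $\mathcal{F}_\xi(\tildhat{A}_t^{2} u(s,\cdot))$ converge absolutely and uniformly on compact subsets of $Y$. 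This permits the interchange of $\mathcal{F}_\xi$ with both $\partial_s$ and $\tildhat{A}_t^{2}$. I expect this analytic bookkeeping to be the principal obstacle, though it is a routine consequence of the estimates already established.
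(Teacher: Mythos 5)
Your proposal is correct and follows essentially the same route as the paper: push the heat equation for $e^{-s\tildhat{A}_{t}^{2}}$ through the Fourier--Laplace transform using the indicial-family relation for $A_{t}^{2}$, invoke uniqueness of solutions of the Cauchy problem on the closed manifold $Y$, and conclude on all of $L^{2}$ by density of the range of $\mc{F}$ and continuity of the heat operators. Your added remarks on justifying the interchange of $\mc{F}_{\xi}$ with $\partial_{s}$ and $\tildhat{A}_{t}^{2}$ via the Gaussian estimates are a reasonable elaboration of a step the paper treats implicitly.
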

\begin{proof}
By definition, the heat operator $e^{-sA_{t}^{2}(Y)(\xi)}$ maps any $u \in \Gamma(Y;\mb{E}_{Y})$ to a solution of the heat equation
\[
\begin{cases}
&\displaystyle\left(\frac{\partial}{\partial s} + A_{t}^{2}(Y)(\xi)\right)e^{-sA_{t}^{2}(Y)(\xi)}u = 0 \numberthis \label{eqn:FLheateqn} \\
&\displaystyle\lim_{s \to 0^{+}}e^{-sA_{t}^{2}(Y)(\xi)}u = u.
\end{cases}
\]
We will show that $H(Y)(\xi)u$ also solves this equation. Fix $\xi \in S^{1}$ and suppose that $u = \mc{F}_{\xi}v$. Then
\begin{align*}
H(Y)(\xi)u = H(Y)(\xi)\mc{F}_{\xi}v
= \mc{F}_{\xi}\circ \tildhat{H}v
= \mc{F}_{\xi}\circ e^{-s\tildhat{A}_{t}^{2}}v.
\end{align*}
Furthermore
\[
\left(\frac{\partial}{\partial s} + \tildhat{A}_{t}^{2}\right)e^{-s\tildhat{A}_{t}^{2}}v = 0
\]
and from Proposition \ref{FLprop2} we know that the indicial family of the curvature $A_{t}^{2}$ is $A_{t}^{2}(Y)(\xi) = (A_{t}(Y)(\xi))^2$, so taking the FL transform yields
\begin{align*}
&\left(\frac{\partial}{\partial s}\mc{F}_{\xi} + A_{t}^{2}(Y)(\xi)\circ\mc{F}_{\xi}\right)e^{-s\tildhat{A}_{t}^{2}}v = 0 \\
&\implies \left(\frac{\partial}{\partial s} + A_{t}^{2}(Y)(\xi)\right)H(Y)(\xi)u = 0.
\end{align*}
which is equation (\ref{eqn:FLheateqn}). Moreover, $H(Y)(\xi)u$ satisfies the initial condition in equation (\ref{eqn:FLheateqn}). We have
\[
\lim_{s \to 0^{+}} e^{-s\tildhat{A}_{t}^{2}}v = v
\]
so taking the FL transform yields
\[
\lim_{s \to 0^{+}} H(Y)(\xi)u = \lim_{s \to 0^{+}} \mc{F}_{\xi}e^{-s\tildhat{A}_{t}^{2}}v = \mc{F}_{\xi}v = u.
\]
Thus $H(Y)(\xi)u$ solves the heat equation (\ref{eqn:FLheateqn}) with the same initial condition. By uniqueness of solutions of the heat equation we deduce that $H(Y)(\xi)u = e^{-sA_{t}^{2}(Y)(\xi)}u$ for every $u \in \Rg \mc{F}_{\xi}$. Moreover the image of $\mc{F}$ is dense cf. \cite[Proposition 3.15]{tony}, and the heat operators are continuous, so we conclude that $H(Y)(\xi) = e^{-sA_{t}^{2}(Y)(\xi)}$ as claimed.
\end{proof}

\subsection{Renormalized supertrace defect formula}

Let $X \to B$ be a Riemannian fiber bundle with closed fibers and let $E \to X$ be a family of vector bundles. Given a vertical family of smoothing operators $H = (H^{z})_{z\in B}$ on $\mb{E} = E \otimes \pi^{*}\Lambda T^{*}B \to X$ we have a family of smooth integral kernels $K_{H}^{z} \in \Gamma(X_{z} \times X_{z},\Lambda T_{z}^{*}B \otimes E_{z}\boxtimes E_{z}^{*})$ such that
\[
(H^{z}u)(x) = \int_{X_{z}} K_{H}^{z}(x,x')u(x')\,dx'.
\]
for any section $u \in \Gamma(X;E)$. In particular, for any $z \in B$ and $x \in X_{z}$ we have $K_{H}^{z}(x,x) \in \Lambda T^{*}_{z}B \otimes (\End E_{z})|_{x}$ so it makes sense to take the usual matrix trace of the endomorphism part. Thus the trace of $H$ yields a differential form on $B$ defined by fiber integrals of pointwise traces,
\begin{align*}
&(\Tr H)(z) = \int_{X_{z}}\tr_{E}(K_{H}^{z}(x,x))\,dx \\
&\mbox{ i.e., \ } \Tr H = \int_{X/B} \tr_{E}(K_{H}(x,x))\,dx.
\end{align*}
Moreover, if $E \to X$ is a family of $\mb{Z}_{2}$-graded vector bundles, then we get an induced $\mb{Z}_{2}$-grading on $\mb{E}$ given by $\mb{E} = \mb{E}^{+}\oplus\mb{E}^{-}$ with
\begin{align*}
&\mb{E}^{+} = \left(\pi^{*}\Lambda^{\mathrm{ev}}T^{*}B \otimes E^{+}\right) \oplus \left(\pi^{*}\Lambda^{\mathrm{odd}}T^{*}B \otimes E^{-}\right), \\
&\mb{E}^{-} = \left(\pi^{*}\Lambda^{\mathrm{ev}}T^{*}B \otimes E^{-}\right) \oplus \left(\pi^{*}\Lambda^{\mathrm{odd}}T^{*}B \otimes E^{+}\right).
\end{align*}
With respect to this grading we define the supertrace of $H$ by
\[
\Str(H) = \Tr(H^{+}) - \Tr(H^{-})
\]
where $H^{\pm}$ are the restrictions of $H$ to $\mb{E}^{\pm}$. Thus the supertrace of $H$ is a differential form on $B$ given by
\begin{align*}
& \Str(H): B \to \Lambda T^{*}B \\
& \Str(H)(z) = \Str_{E|_{M_{z}}}(H_{z}) = \Tr_{E|_{M_{z}}}(H_{z}^{+}) - \Tr_{E|_{M_{z}}}(H_{z}^{-}).
\end{align*}
See \cite[pp. 309-314]{bgv} and \cite[Lemma 9.14]{bgv} for reference.

In general when the fiber bundle $M \to B$ is non-compact, the (super)trace of a smoothing operator may not be well-defined, and asymptotically periodic smoothing operators in particular need not be trace-class. In the context of end-periodic manifolds there is a coherent way of renormalizing the trace, see \cite[\S 3.1]{mrs1} where they define the end-periodic renormalized trace in the single-operator case. We will adapt this renormalized trace to the family context and show that it is well-defined on the space of asymptotically periodic smoothing operators.

Let $M \to B$ be an end-periodic fiber bundle, and $E \to M$ a family of end-periodic Clifford modules. For any vertical family of asymptotically periodic smoothing operators $H$ we define the renormalized trace
\[
\rTr H = \lim_{N\to\infty}\left[\int_{Z_{N}/B}\tr(K_{H}(x,x))\,dx - (N+1)\int_{W_{0}/B}\tr(K_{N(H)}(x,x))\,dx\right]
\]
where $Z_{N}$ is the submanifold of $M$ composed of the compact part $Z$ together with the first $N$ segments of the end of $M$. The renormalized supertrace $\rStr H$ is then obtained using the same formula but with the pointwise $\str$ instead of $\tr$. Equivalently, $\rStr$ is also given by
\[
\rStr H = \rTr(H^{+}) - \rTr(H^{-}).
\]
The renormalized supertrace $\rStr$ generalizes the $b$-supertrace of \cite{mp} equation (12.1) (also see \cite[Definition 4.64]{melrose}). Much like other renormalized traces, the renormalized trace $\rTr$ does not vanish on commutators; rather, we have a \emph{trace defect formula} which expresses $\rTr[P,Q]$ solely in terms of the indicial families of $P$ and $Q$. The formula of \cite[Theorem 4.6]{mrs1} explains the renormalized trace defect for a single smoothing operator of the form $D^{m}e^{-tD^2}$ where $D$ is an end-periodic Dirac operator. Similarly, in the $\mb{Z}_{2}$-graded context, the renormalized supertrace $\rStr$ does not vanish on supercommutators, but there is a supertrace defect formula that expresses $\rStr[P,Q]$ in terms of indicial families. The $\bStr$ defect for families of $\mb{Z}_{2}$-graded smoothing operators in the category of manifolds with cylindrical ends is explained by \cite[Proposition 9]{mp}.

The renormalized (super)trace is well-defined for families of asymptotically periodic smoothing operators.
\begin{proposition}
Let $M \to B$ be an end-periodic fiber bundle and $E \to M$ a family of end-periodic vector bundles. If $H$ is a vertical family of asymptotically periodic smoothing operators on $E \to M$, then the limit as $N \to \infty$ in the definition of $\rTr(H)$ converges to a well-defined differential form $\rTr(H) \in \Omega^{\bigcdot}(B)$. 
\end{proposition}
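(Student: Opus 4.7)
The plan is to rewrite the expression inside the limit so that its convergence becomes a direct consequence of the decay estimate (AP2). The geometric input is that the end of $M$ decomposes into fundamental segments $W_0 \sqcup W_1 \sqcup \cdots$, each identified with $W_0$ by a deck transformation of $\tildhat{Y} \to Y$, and that the pointwise trace of the normal-operator kernel descends to a function on $Y$ and is hence periodic along this decomposition, so that $\int_{W_k/B}\tr(K_{N(H)})\,dx = \int_{W_0/B}\tr(K_{N(H)})\,dx$ for every $k \geq 0$.

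First I would decompose $Z_N = Z \sqcup W_0 \sqcup W_1 \sqcup \cdots \sqcup W_N$ and apply the periodicity above to rewrite
\[
\int_{Z_N/B}\tr(K_H)\,dx - (N+1)\int_{W_0/B}\tr(K_{N(H)})\,dx = \int_{Z/B}\tr(K_H)\,dx + \sum_{k=0}^{N}\int_{W_k/B}\bigl[\tr(K_H) - \tr(K_{N(H)})\bigr]\,dx.
\]
The $\int_Z$ contribution and the $k=0$ summand are independent of $N$ and define finite smooth forms on $B$. For the tail $k \geq 1$, (AP2) applies: since $f$ is Lipschitz with respect to the Riemannian metric on $\tildhat{Y}$ (with Lipschitz constant bounded by $\sup |df|$), for $x \in W_k^z$ one has $d(x,W_0^z) \geq c(k-1)$, and therefore
\[
|K_H^z(x,x) - K_{N(H)}^z(x,x)| \leq C\, e^{-\gamma c^2 (k-1)^2}.
\]
Since $W_k^z$ has the same (finite) volume as $W_0^z$, integration produces a bound of the form $C' e^{-\gamma' k^2}$, and the series $\sum_{k\geq 1}$ converges absolutely, pointwise in $z \in B$.

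To upgrade pointwise convergence to convergence as a smooth differential form on $B$, I would use the bounded geometry of $M \to B$ together with Proposition \ref{prop:don}, which controls covariant derivatives of the family of heat kernels uniformly in $z$. The same estimates used to prove (AP1)--(AP3) then yield analogous bounds for derivatives in the horizontal ($B$-) directions, and since $B$ is closed the constants can be chosen uniformly in $z$. The partial sums thus converge in every $C^m(B)$ norm to a smooth section of $\Lambda T^{*}B$, which is the required $\rTr(H) \in \Omega^{\bigcdot}(B)$.

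The main obstacle is precisely this smoothness step: while the pointwise-in-$z$ convergence is essentially immediate from (AP2), verifying that the limit genuinely defines a differential form requires uniformity in $z$ together with control on horizontal derivatives. This is a routine extension of the bounded-geometry-plus-Proposition-\ref{prop:don} bootstrap used earlier to establish (AP1)--(AP3), but it is the step that actually uses the family structure rather than reducing fiberwise to the single-operator case of \cite{mrs1}.
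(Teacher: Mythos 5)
Your argument is essentially the paper's: the paper proves this proposition simply by invoking \cite[Lemma 3.1]{mrs1} together with (AP2), and that argument is exactly your segment-by-segment decomposition of $Z_{N}$, the periodicity of $\tr(K_{N(H)})$ over the $W_{k}$, and the Gaussian decay in $d(x,W_{0})\gtrsim k$ coming from the Lipschitz bound on $f$. Your extra discussion of uniformity in $z$ and horizontal derivatives goes beyond what the paper records (it only asserts the fiberwise convergence of the limit), so the core of your proof matches the intended one.
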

\begin{proof}
This follows just as in \cite[Lemma 3.1]{mrs1} using the fact that our asymptotically periodic operators satisfy the estimate (AP2) by definition.
\end{proof}

Recall that in the $\mb{Z}_{2}$-graded setting the bracket $[P,Q]$ is a supercommutator, cf. Remark \ref{remark:supercomm}. The following proposition explains the renormalized supertrace defect in the end-periodic $\mb{Z}_{2}$-graded family context.
\begin{proposition}[Supertrace defect formula]\label{strdefect}
Let $M \to B$ be an end-periodic fiber bundle and $E \to M$ a family of end-periodic vector bundles. Let $P$ and $Q$ be vertical families of asymptotically periodic smoothing operators. Then:
\begin{align*}
\rStr[P,Q] &= \frac{1}{2\pi i}\oint_{|\xi|=1}\int_{W_{0}/B}f(x)\str\left(K_{[P_{\xi},Q_{\xi}]}(x,x)\right)dx\frac{d\xi}{\xi} \\
& \hspace{5mm} - \frac{1}{2\pi i}\oint_{|\xi|=1} \Str\left(\frac{\partial P_{\xi}}{\partial \xi} Q_{\xi}\right) d\xi
\end{align*}
where the bracket is understood to be a supercommutator, i.e.,
\[
\str\left(K_{[P_{\xi},Q_{\xi}]}\right) = \str\left(K_{P_{\xi}Q_{\xi}}-(-1)^{|P||Q|}K_{Q_{\xi}P_{\xi}}\right).
\]
\end{proposition}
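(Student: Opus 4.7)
The plan is to adapt the argument of Mrowka-Ruberman-Saveliev \cite[Theorem 4.6]{mrs1} to the $\mb{Z}_2$-graded family setting. By Proposition \ref{prop:normalop1}, $N([P,Q]) = [\tildhat P, \tildhat Q]$, so $[P,Q]$ is itself asymptotically periodic and $\rStr[P,Q]$ is well-defined. The first step is to establish a kernel formula via the Fourier-Laplace inversion (\ref{eqn:FLinverse}): decomposing $\tildhat{Y} = \bigsqcup_k W_k$ and using the distributional identity $\sum_{k \in \mb{Z}}(\xi_1/\xi_2)^k = 2\pi\delta(\theta_1 - \theta_2)$ for $\xi_j = e^{i\theta_j}$, I would show that
\[
K_{\tildhat{P}\tildhat{Q}}(x,y) = \frac{1}{2\pi i}\oint_{|\xi|=1} \xi^{f(y) - f(x)} K_{P_\xi Q_\xi}(p(x),p(y))\,\frac{d\xi}{\xi}.
\]
Setting $x = y$ and integrating over the fundamental domain $W_0$ then gives $\int_{W_0/B}\str K_{[\tildhat P, \tildhat Q]}(x,x)\,dx = \frac{1}{2\pi i}\oint \Str[P_\xi, Q_\xi]\,\frac{d\xi}{\xi} = 0$, since for each $\xi$ the operators $P_\xi, Q_\xi$ are smoothing on the closed fibers of $Y \to B$.

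\textbf{Reduction to a boundary pairing.} The vanishing of the periodic integral eliminates the $(N+1)$-term in the definition of $\rStr[P,Q]$, reducing it to $\lim_{N\to\infty}\int_{Z_N/B}\str K_{[P,Q]}\,dx$. Next I would expand the kernel of the supercommutator as $K_{[P,Q]}(x,x) = \int_M \bigl[K_P(x,z)K_Q(z,x) - (-1)^{|P||Q|}K_Q(x,z)K_P(z,x)\bigr]\,dz$ and apply the super-cyclicity identity $\str_x(K_P(x,z)K_Q(z,x)) = (-1)^{|P||Q|}\str_z(K_Q(z,x)K_P(x,z))$ together with Fubini to rewrite the integral over $Z_N$ as a boundary pairing,
\[
\int_{Z_N/B}\str K_{[P,Q]}(x,x)\,dx = \Bigl(\int_{x\in Z_N,\,z\notin Z_N} - \int_{x\notin Z_N,\,z\in Z_N}\Bigr)\str_x\bigl[K_P(x,z)K_Q(z,x)\bigr]\,dz\,dx.
\]
This reorganization isolates the contributions from pairs of points straddling the interface between $Z_N$ and its complement.

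\textbf{Evaluation via FL transform and integration by parts.} The final step is to take $N \to \infty$. By property (AP3), the periodic models $\tildhat P, \tildhat Q$ approximate $P, Q$ with exponentially small error near the boundary, so the limit can be computed with the periodic kernels alone. Substituting the FL representation $K_{\tildhat P}(x,z) = \frac{1}{2\pi i}\oint \xi^{f(z)-f(x)}K_{P_\xi}(p(x),p(z))\,\frac{d\xi}{\xi}$ and performing the change of variables $x = x_0 + k_1$, $z = z_0 + k_2$ with $x_0, z_0 \in W_0$, the boundary pairing becomes a double contour integral on $S^1 \times S^1$ weighted by a geometric series in $\xi_1/\xi_2$. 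The key algebraic input, obtained by differentiating the intertwining relation $P_\xi \circ \mc{F}_\xi = \mc{F}_\xi \circ \tildhat P$ in $\xi$, is the identity
\[
\xi\,\frac{\partial P_\xi}{\partial \xi} = [f,\tildhat P]_\xi,
\]
which exchanges multiplication by the non-periodic weight $f$ on $\tildhat Y$ for $\xi$-differentiation on the closed fiber $Y$. Integration by parts on $S^1$ using this identity then produces the two summands in the claim: the weighted bulk term $\frac{1}{2\pi i}\oint\int_{W_0/B} f\,\str K_{[P_\xi, Q_\xi]}\,\frac{d\xi}{\xi}$ and the residue $-\frac{1}{2\pi i}\oint \Str(\partial_\xi P_\xi \cdot Q_\xi)\,d\xi$.

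\textbf{Expected obstacle.} The hardest part will be the analytic bookkeeping in the last step: interchanging the $N \to \infty$ limit with the double contour integral, summing the geometric-series weight $\sum_{k_1 \leq N,\,k_2 \geq N+1}(\xi_1/\xi_2)^{k_2 - k_1}$ as a tempered distribution on $S^1 \times S^1$, and establishing the smoothness in $\xi$ needed to integrate by parts. By contrast, extending from the single-operator setting of \cite[Theorem 4.6]{mrs1} to families and to the $\mb{Z}_2$-graded case is essentially formal: the pointwise supertrace and fiber integration commute with FL inversion and contour integration, and the sign conventions propagate uniformly through the super-cyclicity identity.
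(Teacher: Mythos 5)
Your outline is correct, and its analytic engine is the same one the paper leans on---\cite[Theorem 4.6]{mrs1} transported to the family setting by means of the estimates (AP1)--(AP3)---but you route the $\mb{Z}_{2}$-grading differently. The paper does not re-derive the kernel computation at all: it asserts that the MRS argument applies verbatim to the ungraded chiral blocks, yielding a family $\rTr$ defect formula for $[P^{-},Q^{+}]$ and $[P^{+},Q^{-}]$, and then deduces the supertrace formula by purely algebraic parity bookkeeping, e.g.\ $\rStr[P,Q]=\rTr[P^{-},Q^{+}]-\rTr[P^{+},Q^{-}]$ when $P,Q$ are odd. You instead carry the grading through the MRS computation itself: the FL kernel identity, the vanishing of the periodic correction because $\Str[P_{\xi},Q_{\xi}]=0$ on the closed fibers of $Y\to B$, the reduction of $\int_{Z_{N}/B}\str K_{[P,Q]}$ to the straddling boundary pairing via super-cyclicity, and the (correct) identity $\xi\,\partial_{\xi}P_{\xi}=[f,\tildhat{P}]_{\xi}$ driving the integration by parts in $\xi$. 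Each route buys something: the paper's chiral reduction keeps all genuinely new content algebraic and inherits the hard analysis by citation, while yours is self-contained and makes visible where the $f$-weighted bulk term and the $\partial_{\xi}$ residue actually come from, at the price of redoing the MRS bookkeeping you flag (the $N\to\infty$/contour interchange and the distributional geometric series). If you write it up, two points that the paper also leaves implicit should be supplied: the products $PQ$ and $QP$ must be shown to satisfy (AP1)--(AP3) so that their renormalized traces exist (Proposition \ref{prop:normalop1} only identifies the normal operator), and the Fubini/cancellation step behind the boundary pairing needs the absolute convergence furnished by the Gaussian off-diagonal decay; neither is a gap relative to the paper's own level of detail.
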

\begin{proof}
Following the proof of the trace commutator formula \cite[Theorem 4.6]{mrs1} in the family context using the estimates (AP1), (AP2) and (AP3) with the ungraded chiral parts of $P$ and $Q$, we deduce the $\rTr$ defect formula in the family context,
\begin{align*}
\rTr[P^{-},Q^{+}] &= \frac{1}{2\pi i}\oint_{|\xi|=1}\int_{W_{0}/B}f(x)\tr\left(K_{P_{\xi}^{-}Q_{\xi}^{+}}(x,x)-K_{Q_{\xi}^{+}P_{\xi}^{-}} (x,x)\right)dx\,\frac{d\xi}{\xi} \\
&\hspace{5mm} - \frac{1}{2\pi i}\oint_{|\xi|=1}\Tr\left(\frac{\partial P_{\xi}^{-}}{\partial\xi}Q_{\xi}^{+}\right)d\xi
\end{align*}
and similarly for $\rTr[P^{+},Q^{-}]$.

Now our supertrace defect formula is proved by breaking the operators $P$ and $Q$ into their chiral parts, then applying the preceding formula to each part. If $P$ and $Q$ have opposite parities then $[P,Q]$ is odd and both sides of the formula vanish, so it suffices to assume that $P$ and $Q$ are both even or both odd. If $P$ and $Q$ are both odd, we have
\[
\rStr[P,Q] = \rTr[P^{-},Q^{+}] - \rTr[P^{+},Q^{-}]
\]
and applying the family version of the $\rTr$ defect formula to each term yields
\begin{align*}
\rStr[P,Q] &= \frac{1}{2\pi i}\oint_{|\xi|=1}\int_{W_{0}/B}f(x)\tr(P_{\xi}^{-}Q_{\xi}^{+}(x,x) - Q_{\xi}^{+}P_{\xi}^{-}(x,x))\,dx\frac{d\xi}{\xi} \\
& \hspace{5mm} - \frac{1}{2\pi i}\oint_{|\xi|=1}\Tr\left(\frac{\partial P_{\xi}^{-}}{\partial\xi} Q_{\xi}^{+}\right) d\xi \\
& \hspace{5mm} - \frac{1}{2\pi i}\oint_{|\xi|=1}\int_{W_{0}/B}f(x)\tr(P_{\xi}^{+}Q_{\xi}^{-}(x,x) - Q_{\xi}^{-}P_{\xi}^{+}(x,x))\,dx\frac{d\xi}{\xi} \\
& \hspace{5mm} + \frac{1}{2\pi i}\oint_{|\xi|=1}\Tr\left(\frac{\partial P_{\xi}^{+}}{\partial\xi} Q_{\xi}^{-}\right) d\xi.
\end{align*}
Note that
\begin{align*}
\str(P_{\xi}Q_{\xi}) &= \tr(P_{\xi}^{-}Q_{\xi}^{+}) - \tr(P_{\xi}^{+}Q_{\xi}^{-}) \\
\str(Q_{\xi}P_{\xi}) &= \tr(Q_{\xi}^{-}P_{\xi}^{+}) - \tr(Q_{\xi}^{+}P_{\xi}^{-}) \\
\Str\left(\frac{\partial P_{\xi}}{\partial \xi}Q_{\xi}\right) &= \Tr\left(\frac{\partial P_{\xi}^{-}}{\partial \xi}Q_{\xi}^{+}\right) - \Tr\left(\frac{\partial P_{\xi}^{+}}{\partial \xi}Q_{\xi}^{-}\right)
\end{align*}
so grouping together terms in the preceding formula according to this pattern yields the statement of the proposition. The case where $P$ and $Q$ are both even operators is similar.
\end{proof}

In order to establish the transgression formula we will also need to apply the supertrace defect formula to supercommutators of the form $[A_{t},Q]$ where $Q$ is an asymptotically periodic smoothing operator. Since $A_{t}$ is not a smoothing operator, this warrants some justification.
\begin{proposition}\label{prop:alsoholds}
Let $A_{t}$ be the rescaled end-periodic Bismut superconnection. The $\rStr$ defect formula also holds in the case where $Q$ is the asymptotically periodic smoothing operator $\mc{Q} = \dt{A}_{t}e^{-A_{t}^{2}}$ and $P = A_{t}$.
\end{proposition}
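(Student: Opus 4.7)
The strategy is to deduce the defect formula for the pair $(A_{t}, \mc{Q})$ from Proposition \ref{strdefect} by means of a heat-semigroup regularization argument. First I would verify that both sides of the claimed identity are well-defined. By a direct extension of Proposition \ref{prop:moreapops} (allowing differentiation of the kernel $K_{\mc{Q}}(x,y)$ in either variable, which is legitimate due to the rapid off-diagonal Gaussian decay), both $A_{t}\mc{Q}$ and $\mc{Q}A_{t}$ are asymptotically periodic smoothing operators, hence $[A_{t},\mc{Q}]$ is asymptotically periodic smoothing and $\rStr[A_{t},\mc{Q}]$ is defined. On the right-hand side, the indicial family of $A_{t}$ is $A_{t}(Y)(\xi) = A_{t}(Y) - \log(\xi)\delta_{t}m_{0}(df)$ by Proposition \ref{FLprop1} (and (\ref{eqn:rescaledFL})), so $\partial_{\xi}A_{t}(Y)(\xi) = -\xi^{-1}\delta_{t}m_{0}(df)$, a zero-order multiplication; the composition $\partial_{\xi}A_{t}(Y)(\xi)\cdot\mc{Q}(Y)(\xi)$ is therefore a smoothing operator on the closed fiber bundle $Y\to B$ whose $\Str$ is a well-defined differential form on $B$.

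Next, for each $\epsilon>0$ introduce the regularization
\[
A_{t}^{\epsilon} := e^{-\epsilon A_{t}^{2}/2}\,A_{t}\,e^{-\epsilon A_{t}^{2}/2}.
\]
By Corollary \ref{cor:ap} and Proposition \ref{prop:moreapops}, $A_{t}^{\epsilon}$ is an asymptotically periodic smoothing operator; by Propositions \ref{prop:normalop1}, \ref{prop:normalop2}, and \ref{FLprop1}, its indicial family is $e^{-\epsilon A_{t}^{2}(Y)(\xi)/2}A_{t}(Y)(\xi)e^{-\epsilon A_{t}^{2}(Y)(\xi)/2}$, which converges strongly to $A_{t}(Y)(\xi)$ as $\epsilon\to 0^{+}$, and whose $\xi$-derivative converges to $-\xi^{-1}\delta_{t}m_{0}(df)$. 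Applying Proposition \ref{strdefect} to the pair $(A_{t}^{\epsilon},\mc{Q})$ yields the defect formula for $\rStr[A_{t}^{\epsilon},\mc{Q}]$ for every $\epsilon>0$.

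Finally, pass to the limit $\epsilon\to 0^{+}$ on both sides. On the right, the $W_{0}/B$-integral converges by dominated convergence using continuity of the closed-fiber smoothing-operator kernels in $\xi\in S^{1}$, and the $\Str$-integral converges via the explicit limit of $\partial_{\xi}A_{t}^{\epsilon}(Y)(\xi)$. On the left, one checks that $[A_{t}^{\epsilon},\mc{Q}]\to[A_{t},\mc{Q}]$ in a topology sufficient to pass under $\rStr$, namely in the space of asymptotically periodic smoothing operators equipped with the seminorms controlling the (AP1)--(AP3) constants. The main obstacle is the last step: one must extract $\epsilon$-independent Gaussian decay bounds for the regularized kernels of $A_{t}^{\epsilon}\mc{Q}$ and $\mc{Q}A_{t}^{\epsilon}$ so that the renormalization subtraction in the definition of $\rStr$ remains uniformly controlled. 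These uniform bounds should follow by revisiting the proofs of Propositions \ref{prop:ap1}--\ref{prop:ap3}: composition with the heat semigroups $e^{-\epsilon A_{t}^{2}/2}$ only improves the off-diagonal decay, and the constants from Proposition \ref{prop:don} can be taken uniform on compact $\epsilon$-intervals, so passage to the limit is justified.
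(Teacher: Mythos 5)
Your proposal is correct and follows essentially the same route as the paper: the paper regularizes with $P_{s}=A_{t}e^{-sA_{t}^{2}}$, applies Proposition \ref{strdefect} to the pair $(P_{s},\mc{Q})$, and passes to the limit $s\to 0^{+}$ using uniform-in-$s$ convergence of the renormalized trace, which is exactly your scheme since $A_{t}$ commutes with its heat semigroup and so your symmetric regularization $e^{-\epsilon A_{t}^{2}/2}A_{t}e^{-\epsilon A_{t}^{2}/2}$ coincides with $A_{t}e^{-\epsilon A_{t}^{2}}$. The only cosmetic difference is that you phrase the limit interchange via uniform (AP1)--(AP3) bounds and dominated convergence, while the paper phrases it via continuity in $s$ of $\rTr(P_{s}\mc{Q})$ and $\rTr(\mc{Q}P_{s})$; both hinge on the same uniformity.
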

\begin{proof}
For any $s >0$ consider the smoothing operator $P_{s} = A_{t}e^{-sA_{t}^{2}}$, which is asymptotically periodic by Proposition \ref{prop:moreapops}. We can apply the supertrace defect formula of Proposition \ref{strdefect} to obtain a defect formula for $\rStr[P_{s},Q]$. Since the integral kernels are continuous in $s$, and the limit as $N \to \infty$ in the definition of $\rTr(P_{s}Q)$ converges uniformly in $s$ on bounded time intervals, the renormalized trace $\rTr(P_{s}Q)$ is a continuous function of $s > 0$. Similarly, $\rTr(QP_{s})$ is continuous in $s$, hence $\rStr[P_{s},Q]$ is also continuous in $s$. Therefore, we can interchange limits to calculate
\[
\rStr[A_{t},Q] = \lim_{s \to 0^{+}} \rStr[P_{s},Q]
\]
which yields the desired result. 
\end{proof}

\subsection{Uniformity for the rescaled heat operator}
In the single-operator case, the estimate of \cite[Corollary 10.8]{mrs1} ensures that the renormalized supertrace $\rStr(e^{-tD^2})$ converges uniformly on bounded time intervals $t \in (0, T]$. This uniform convergence justifies interchanging limits to compute the short-time limit of $\rStr(e^{-tD^2})$ in \cite[Proposition 5.2]{mrs1}. The analogue of \cite[Corollary 10.8]{mrs1} in the family context is Proposition \ref{prop:ap3}, which similarly guarantees uniform convergence of the renormalized supertrace $\rStr(e^{-sA^2})$ for $s \in (0, T]$. However, when computing the short-time limit of the renormalized Chern character $\rch(A_{t}) = \rStr(e^{-A_{t}^2})$, we fix $s = 1$ and let $t \to 0^+$. Unlike the single-operator case, where the exponent is simply $tD^2$, the curvature $A_{t}^{2}$ of the rescaled Bismut superconnection includes additional $t$-dependent terms with distinct scaling.

In other words, although we have good control over the $s$-dependence in estimates for $e^{-sA^2}$, we require refined estimates for $e^{-sA_{t}^2}$ that control the additional $t$-dependence. Crucially, we need these estimates to yield uniform convergence of $\rch(A_{t})$ on bounded time intervals $t \in (0, T]$ so that we can again justify interchanging limits when taking $t \to 0^+$ to compute the short-time limit of $\rch(A_{t})$. We will establish this in Proposition \ref{uniformityprop} below. In \S\ref{section:short-time} we will use Proposition \ref{uniformityprop} to show that the short-time limit of $\rch(A_{t})$ converges to the integral of a local index form.

\begin{remark}
Throughout this section we will use the notation
\[
a(t) = O(t^{\infty})
\]
to mean that $a(t)/t^{q} \to 0$ as $t \to 0^{+}$ for any $q > 0$. We will also introduce the notation
\[
a(t) \lessim b(t)
\]
to mean that $a(t) \leq C(t)b(t)$ for some factor $C(t) > 0$ which converges to a finite value as $t \to 0^{+}$.
\end{remark}


\begin{lemma}\label{gaussianlemma}
Let $M$ be a manifold of bounded geometry, with injectivity radius $\iota = \mathrm{inj}(M) > 0$. Let $B_{\iota}(m)$ denote the geodesic ball of radius $\iota$ centered at $m \in M$. Fix any $\gamma > 0$. There exists constants $C,\beta > 0$ independent of $m \in M$ such that, as $t\to 0^{+}$,
\begin{align*}
&\mathrm{(i)} \hspace{3mm} \int_{B_{\iota}(m)} e^{-\gamma d^{2}(m,z)/t}\,dz \leq C\left(\frac{t}{\gamma}\right)^{1/2}e^{\frac{\beta^{2}t}{4\gamma}}\int_{v(0)}^{v(\iota)}e^{-v^{2}}\,dv = O(t^{1/2}) \\
&\mathrm{(ii)} \hspace{3mm} \int_{M\setminus B_{\iota}(m)}e^{-\gamma d^{2}(m,z)/t}\,dz \leq C\left(\frac{t}{\gamma}\right)^{1/2}e^{\frac{\beta^{2} t}{4\gamma}}\int_{v(\iota)}^{\infty}e^{-v^2}\,dv = O(t^{\infty})
\end{align*}
where we define $v(r) = \left(\frac{\gamma}{t}\right)^{1/2}\left(r-\frac{\beta t}{2\gamma}\right)$. In particular this implies that 
\[
\int_{M}e^{-\gamma d^{2}(m,z)/t}\,dz \leq O(t^{1/2})
\]
as $t \to 0^{+}$.
\end{lemma}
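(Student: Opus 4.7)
The approach is to reduce both integrals to one-dimensional radial integrals by passing to geodesic normal coordinates centered at $m$, then to complete the square in the Gaussian exponent so that the same substitution $v = \sqrt{\gamma/t}(r - \beta t/(2\gamma))$ handles both parts in a unified way.

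For part (i), inside the injectivity radius ball $B_{\iota}(m)$ the exponential map $\exp_{m}: B_{\iota}(0) \subset T_{m}M \to B_{\iota}(m)$ is a diffeomorphism, so we may switch to polar coordinates and write $dz = V(r,\theta)\, dr\, d\theta$ with $\theta$ ranging over the unit sphere in $T_{m}M$. The bounded geometry of $M$ gives a uniform lower bound on the sectional curvature, so Rauch comparison provides a uniform bound $V(r,\theta) \leq C\, e^{\beta r}$ with constants $C, \beta > 0$ independent of $m$ and $\theta$. Integrating out the angular variable against the fixed measure on $S^{n-1}$, the left-hand side of (i) is bounded by a constant multiple of $\int_{0}^{\iota} e^{-\gamma r^{2}/t + \beta r}\, dr$.

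For part (ii), geodesic normal coordinates no longer apply globally, but the bounded geometry hypothesis gives a uniform lower bound on the Ricci curvature, and Bishop-Gromov volume comparison yields a uniform exponential volume growth estimate $\vol(B_{r}(m)) \leq C\, e^{\beta r}$ (possibly after enlarging $C$ and $\beta$). Applying the coarea formula then bounds the left-hand side of (ii) by a constant multiple of $\int_{\iota}^{\infty} e^{-\gamma r^{2}/t + \beta r}\, dr$. In both cases, completing the square gives
\[
-\frac{\gamma r^{2}}{t} + \beta r = -\frac{\gamma}{t}\left(r - \frac{\beta t}{2\gamma}\right)^{2} + \frac{\beta^{2} t}{4\gamma},
\]
and the substitution $v = \sqrt{\gamma/t}\bigl(r - \beta t/(2\gamma)\bigr)$, with Jacobian $dr = \sqrt{t/\gamma}\, dv$, converts the radial integrals into the advertised forms with limits $v(0)$ to $v(\iota)$ for (i) and $v(\iota)$ to $\infty$ for (ii).

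The asymptotic estimates then follow immediately from the behaviour of the substitution endpoints as $t \to 0^{+}$: we have $v(0) = -\tfrac{\beta\sqrt{t}}{2\sqrt{\gamma}} \to 0^{-}$ and $v(\iota) = \sqrt{\gamma/t}\bigl(\iota - \tfrac{\beta t}{2\gamma}\bigr) \to \infty$, so $\int_{v(0)}^{v(\iota)} e^{-v^{2}}\, dv \to \sqrt{\pi}/2$ and (i) is $O(t^{1/2})$; for (ii) the tail bound $\int_{a}^{\infty} e^{-v^{2}}\, dv \leq e^{-a^{2}}/(2a)$ applied to $a = v(\iota) \sim \iota\sqrt{\gamma/t}$ gives decay of order $e^{-\iota^{2}\gamma/t}$, which is $O(t^{\infty})$. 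The main point requiring care is ensuring that the constants $C, \beta$ in the volume growth bounds are genuinely uniform in the basepoint $m \in M$; this is exactly what the bounded geometry hypothesis provides via the comparison theorems of Rauch and Bishop-Gromov, and it is the only nontrivial ingredient beyond routine calculus.
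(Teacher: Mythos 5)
Your proposal is correct and follows essentially the same route as the paper: reduce to a radial integral (coarea/polar coordinates), bound the volume growth uniformly via comparison geometry under bounded geometry, complete the square, and substitute $v=\sqrt{\gamma/t}\,(r-\beta t/2\gamma)$, with the Gaussian tail bound giving $O(t^{\infty})$ and the convergent Gaussian integral giving $O(t^{1/2})$. The only nitpick is in part (ii): to feed the coarea formula you need the sphere-area form of the Bishop--Gromov estimate, $\vol(S_{r}(m))\leq Ce^{\beta r}$ (as the paper uses), rather than the ball-volume bound $\vol(B_{r}(m))\leq Ce^{\beta r}$ you quote -- a trivially repaired slip, since both follow uniformly from the Ricci lower bound.
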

\begin{proof}
Since $M$ has bounded geometry, it has positive injectivity radius $\iota > 0$. We decompose the integral over $M$ into two parts,
\[
\int_{M} e^{-\gamma d^{2}(m,z)/t}\,dz = \int_{B_{\iota}(m)}e^{-\gamma d^{2}(m,z)/t}\,dz + \int_{M\setminus B_{\iota}(m)} e^{-\gamma d^{2}(m,z)/t}\,dz.
\]
We can use the coarea formula to calculate the integral over $M\setminus B_{\iota}(m)$ as follows. Let $S_{r}(m) = \partial B_{r}(m)$ denote the sphere of radius $r$ centered at $m$ inside $M$, which is the preimage of $r$ under the Riemannian distance function $d(m,\cdot)$. Note that $d(m,\cdot)$ is smooth almost everywhere, because it is smooth away from the cut locus of $m$, which has measure zero. Moreover, the rate of increase of distance along a geodesic is exactly $|\nabla d(m,z)| = 1$. Therefore, by the coarea formula,
\begin{align*}
\int_{M\setminus B_{\iota}(m)}e^{-\gamma d^{2}(m,z)/t}\,dz = \int_{\iota}^{\infty}\int_{S_{r}(m)}\frac{e^{-\gamma r^2/t}}{|\nabla d(m,z)|}\,d\sigma_{r}\,dr = \int_{\iota}^{\infty}\int_{S_{r}(m)}e^{-\gamma r^2/t}\,d\sigma_{r}\,dr
\end{align*}
where $d\sigma_{r}$ is the induced surface measure. The Bishop-Gromov inequality implies that geodesic balls have at most exponential surface area growth, i.e., $\vol(S_{r}(m)) \leq Ce^{\beta r}$ for some constants $C,\beta > 0$, therefore
\begin{align*}
\int_{\iota}^{\infty}\int_{S_{r}(m)}e^{-\gamma r^2/t}\,d\sigma_{r}\,dr &\leq C\int_{\iota}^{\infty}e^{-\gamma r^2/t}e^{\beta r}\,dr = Ce^{\frac{\beta^{2}t}{4\gamma}}\int_{\iota}^{\infty}e^{-\frac{\gamma}{t}\left(r-\frac{\beta t}{2\gamma}\right)^{2}}\,dr
\end{align*}
where we complete the square in the exponent. Making the change of variables $v =\left(\frac{\gamma}{t}\right)^{1/2}\left(r-\frac{\beta t}{2\gamma}\right)$ yields the bound (ii) in the statement of the proposition. Since $v(\iota) \to \infty$ as $t \to 0^{+}$, the Gaussian integral over the tail $\{v \geq v(\iota)\}$ decays like $O(t^{\infty})$ as $t \to 0^{+}$ by the well-known estimate
\[
\int_{a}^{\infty} e^{-v^2}\,dv \leq \frac{e^{-a^2}}{a\sqrt{\pi}}
\]
and so the upper bound decays like $O(t^{\infty})$ as $t\to 0^{+}$. 

The integral over $B_{\iota}(m)$ can be calculated in the same way, which yields the estimate
\begin{align*}
\int_{B_{\iota}(m)} e^{-\gamma d^{2}(m,z)/t}\,dz \leq C\left(\frac{t}{\gamma}\right)^{1/2}e^{\frac{\beta^{2}t}{4\gamma}}\int_{v(0)}^{v(\iota)}e^{-v^2}\,dv = O(t^{1/2})
\end{align*}
where $v(0) \to 0$ and $v(\iota) \to \infty$ as $t\to 0^{+}$. The Gaussian integral converges to $\sqrt{\pi}/2$, from which we obtain the $O(t^{1/2})$ decay as $t\to 0^{+}$.
\end{proof}

\begin{proposition}\label{uniformityprop}
Let $A$ be the end-periodic Bismut superconnection adapted to a family of end-periodic Dirac operators $D$, with normal operator $\tildhat{A} = p^{*}A(Y)$. Let $A_{t}$ denote the rescaled Bismut superconnection. Then we have
\[
|K_{e^{-A_{t}^{2}}}(x,x) - K_{e^{-\tildhat{A}_{t}^{2}}}(x,x)| \leq O(t^{\infty})
\]
for all $x \in \bigsqcup_{k\geq 1}W_{k}$ and $t \in (0,T]$. Consequently, the limit
\[
\rch(A_{t}) = \rStr(e^{-A_{t}^2}) = \lim_{N \to \infty} s_{N}(t),
\]
converges uniformly on bounded time intervals $t \in (0,T]$.
\end{proposition}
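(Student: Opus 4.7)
The plan is to reduce this to the estimates already established for the unrescaled heat operator by exploiting the Quillen-type rescaling relating $A_t$ to $A$. Let $\sigma_t$ denote the fiberwise operator on $\mathbb{E} = \pi^{*}\Lambda T^{*}B\otimes E$ which acts on the $j$-form component by multiplication by $t^{j/2}$. A direct computation using the decomposition $A = \sum_{j}A_{[j]}$ gives
\[
A_{t} = t^{1/2}\,\sigma_{t}^{-1}\,A\,\sigma_{t}, \qquad A_{t}^{2} = t\,\sigma_{t}^{-1}\,A^{2}\,\sigma_{t}, \qquad e^{-A_{t}^{2}} = \sigma_{t}^{-1}\,e^{-tA^{2}}\,\sigma_{t}.
\]
At the level of Schwartz kernels restricted to $E$-valued sections, the right-hand $\sigma_t$ is absorbed by integration against the test section, while the left-hand $\sigma_t^{-1}$ survives, giving $K_{e^{-A_{t}^{2}}}(x,x) = \sigma_{t}^{-1}\,K_{e^{-tA^{2}}}(x,x)$, and similarly for $\tildhat{A}_t$. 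Since $\sigma_t^{-1}$ rescales the $j$-form component by $t^{-j/2}$, its operator norm on $\Lambda T^{*}B$ is bounded by $C\,t^{-\dim B/2}$ for $t \in (0,T]$.

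Next I would apply Proposition \ref{prop:ap3} with $s = t \in (0,T]$, which produces constants $C,\gamma > 0$ such that
\[
\bigl|K_{e^{-tA^{2}}}^{z}(x,x) - K_{e^{-t\tildhat{A}^{2}}}^{z}(x,x)\bigr| \leq C\,e^{-\gamma d^{2}(x,W_{0})/t}
\]
for all $x \in \bigsqcup_{k\geq 1}W_{k}^{z}$. Combining with the bound on $\sigma_t^{-1}$ yields
\[
\bigl|K_{e^{-A_{t}^{2}}}^{z}(x,x) - K_{e^{-\tildhat{A}_{t}^{2}}}^{z}(x,x)\bigr| \lessim t^{-\dim B/2}\,e^{-\gamma d^{2}(x,W_{0})/t},
\]
and whenever $d(x,W_0) > 0$ the exponential factor crushes the polynomial prefactor, establishing the pointwise $O(t^{\infty})$ claim.

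For the uniform convergence of $\rch(A_t) = \lim_{N}s_{N}(t)$, I would use periodicity of $\tildhat{K}$ (which allows $\int_{W_{k}/B}\str(K_{e^{-\tildhat{A}_t^2}}) = \int_{W_{0}/B}\str(K_{e^{-\tildhat{A}_t^2}})$ for every $k$) to rewrite
\[
s_{N}(t) = \int_{Z/B}\str\bigl(K_{e^{-A_{t}^{2}}}\bigr) - \int_{W_{0}/B}\str\bigl(K_{e^{-\tildhat{A}_{t}^{2}}}\bigr) + \sum_{k=1}^{N}\int_{W_{k}/B}\str\bigl(K_{e^{-A_{t}^{2}}} - K_{e^{-\tildhat{A}_{t}^{2}}}\bigr).
\]
The pointwise estimate together with $d(W_k,W_0) \geq c(k-1)$ (from bounded geometry) bounds the tail by
\[
\sum_{k>N}\Bigl|\int_{W_{k}/B}\str\bigl(K_{e^{-A_{t}^{2}}} - K_{e^{-\tildhat{A}_{t}^{2}}}\bigr)\Bigr| \lessim \vol(W_{0}/B)\sum_{k>N} t^{-\dim B/2}\,e^{-\gamma c^{2}(k-1)^{2}/t}.
\]
A direct calculus exercise shows that $\sup_{t\in(0,T]} t^{-\dim B/2}e^{-\gamma c^{2}(k-1)^{2}/t}$ is bounded either by $T^{-\dim B/2}e^{-\gamma c^{2}(k-1)^{2}/T}$ or by a fixed negative power of $k-1$ (from an interior critical point), so the summed tail tends to zero uniformly in $t \in (0,T]$ as $N \to \infty$.

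The main obstacle is verifying the conjugation identity at the level of the Schwartz kernel: the operator identity $e^{-A_t^2} = \sigma_t^{-1}e^{-tA^2}\sigma_t$ is immediate, but one must be careful that the diagonal kernel (viewed as a form on $B$ with values in $\End E$) transforms simply by application of $\sigma_t^{-1}$, rather than by a compounded action from both sides. Once this bookkeeping is settled, both assertions follow directly from Proposition \ref{prop:ap3}.
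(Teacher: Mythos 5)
Your proof is correct, but it takes a genuinely different and substantially shorter route than the paper. The paper compares $e^{-A_t^2}$ with $e^{-tD^2}$ via the Volterra series, telescopes each difference $I_m-\tildhat{I}_m$, and controls the resulting terms with the Mrowka--Ruberman--Saveliev off-diagonal estimates together with the Gaussian integral bound of Lemma \ref{gaussianlemma}, keeping explicit track of the $t$-dependent curvature factor $R_t$. You instead exploit the fact that the Getzler rescaling is a conjugation: with $\sigma_t$ acting by $t^{j/2}$ on the $j$-form part, indeed $A_t=t^{1/2}\sigma_t^{-1}A\sigma_t$, hence $e^{-A_t^2}=\sigma_t^{-1}e^{-tA^2}\sigma_t$ and likewise for $\tildhat{A}_t$; and since the kernels in this paper act on sections of $E$ (form degree zero in the base), the diagonal kernels of the rescaled heat operators differ from those of $e^{-tA^2}$, $e^{-t\tildhat{A}^2}$ only by $\sigma_t^{-1}$ on the $\Lambda T^{*}B$ factor, whose norm is $O(t^{-\dim B/2})$ on $(0,T]$. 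This reduces the entire proposition to Proposition \ref{prop:ap3} evaluated at $s=t$, which is exactly the uniform-in-$s$ input available, at the harmless cost of a polynomial prefactor; the Duhamel expansion and the separate treatment of $R_t$ disappear. Two small points of hygiene: (i) the bound $t^{-\dim B/2}e^{-\gamma d^{2}(x,W_0)/t}$ is literally $O(t^{\infty})$ only where $d(x,W_0)>0$ (the same caveat is implicit in the paper's proof), but this is immaterial because the series argument uses the quantitative bound itself, not the $O(t^{\infty})$ shorthand; (ii) in the tail estimate, the ``interior critical point'' alternative gives $(k-1)^{-\dim B}$, which by itself is not summable when $\dim B=1$ --- the correct resolution is that the critical point $t^{*}\sim 2\gamma c^{2}(k-1)^{2}/\dim B$ exceeds $T$ for all but finitely many $k$, so for large $k$ the supremum over $(0,T]$ is attained at $t=T$ and the tail is dominated by $\sum_{k>N}T^{-\dim B/2}e^{-\gamma c^{2}(k-1)^{2}/T}$, which tends to $0$ uniformly in $t\in(0,T]$. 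With that clause made explicit, your argument is complete and yields the same conclusion the paper needs (uniform convergence of $s_N(t)$ on bounded time intervals).
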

\begin{proof}
First we explain how the convergence of the renormalized supertrace follows from the estimate in question. Indeed, the same procedure as in the proof of \cite[Lemma 3.1]{mrs1} shows how the convergence of the renormalized supertrace $\rStr(e^{-A_{t}^{2}})$ translates to the convergence of the series
\[
\sum_{k=0}^{\infty}\int_{W_{0}/B}\tr(K_{e^{-A_{t}^{2}}}(x+k,x+k)-K_{e^{-\tildhat{A}_{t}^{2}}}(x+k,x+k))\,dx.
\]
The estimate in the statement of Proposition \ref{uniformityprop} implies in particular that this series converges uniformly on bounded time intervals $t \in (0,T]$. 

The Lichnerowicz formula for the vertical family of Dirac operators (applied on each fiber) yields
\[
D^2 = (\con^{E})^{*}(\con^{E}) + \frac{1}{4}S_{M/B} + \frac{1}{2}\sum_{\alpha,\beta}\cl(e^{\alpha})\cl(e^{\beta})K_{E}'(e_{\alpha},e_{\beta})
\]
where $(e^{\alpha})$ is a vertical orthonormal frame for $T^{*}(M/B)$. Comparing with the Lichnerowicz formula (\ref{eqn:lich}) for the Bismut superconnection, we see that the difference $R = A^{2} - D^{2}$ is an endomorphism of $\mathbb{E}$ given in terms of the curvature (and covariant derivatives of the curvature), and involves no derivatives. In particular, since $E$ and $M$ have bounded geometry, the components of $R$ are all uniformly bounded.  Rescaling via (\ref{eqn:rescaledsuperconn}) yields $A_{t}^{2} = tD^{2} + t^{1/2}\delta_{t}R$, and we will write $R_{t} = t^{-1/2}\delta_{t}R$ for brevity. The precise structure of the operator $R_{t}$ is not important, the only thing we will use here is the fact that
\begin{equation}\label{eqn:curvaturebound}
|R_{t}| \leq r(t^{-1}) = \sum_{j=1}^{4}c_{j}t^{-j/2}
\end{equation}
for some constants $c_{j} > 0$, using the uniform boundedness of $R$. According to the Volterra series for the heat operator, cf. \cite{bgv} equation (2.5), we have
\[
e^{-A_{t}^2} = e^{-tD^2} + \sum_{m \geq 1} (-t)^{m}I_{m}
\]
where $I_{m}$ is the operator given by
\[
I_{m} = \int_{\Delta^{m}}e^{-\sigma_{0}tD^2}R_{t}e^{-\sigma_{1}tD^2}R_{t}\cdots R_{t}e^{-\sigma_{m}tD^2}\,d\sigma
\]
where $\Delta_{m}$ is the standard $m$-simplex
\[
\Delta_{m} = \{(\sigma_{0},\ldots,\sigma_{m}) \in \mb{R}^{m+1}: \sum_{j=0}^{m}\sigma_{j} = 1, \sigma_{j} \geq 0\}.
\]
Similarly, for the heat operator associated with the Bismut superconnection $\tildhat{A}$, we have
\[
e^{-\tildhat{A}_{t}^2} = e^{-t\tildhat{D}^2} + \sum_{m \geq 1} (-t)^{m} \tildhat{I}_{m}.
\]
The sum over $m \geq 1$ is finite because $I_{m}$ is a sum of terms of degree at least $m$ with respect to the grading of the differential forms on $B$. For the remainder of the proof we will fix the notation $\mc{K}(t,x,y) = K_{e^{-tD^{2}}}(x,y)$ and $\tildhat{\mc{K}}(t,x,y) = K_{e^{-t\tildhat{D}^{2}}}(x,y)$. We seek to estimate the difference of heat kernels
\begin{align*}
|K_{e^{-A_{t}^{2}}}(x,x) - K_{e^{-\tildhat{A}_{t}^{2}}}(x,x)| &\leq |\mc{K}(t,x,x) - \tildhat{\mc{K}}(t,x,x)| \\
&\hspace{4mm} + \sum_{m \geq 1} t^{m} |K_{I_{m}}(t,x,x) - K_{\tildhat{I}_{m}}(t,x,x)| \numberthis \label{eqn:uniformity1}
\end{align*}
for $x \in \bigsqcup_{k\geq 1}W_{k}$ and $t \in (0,T]$. The estimate of \cite[Corollary 10.8]{mrs1} yields
\begin{equation}\label{eqn:uniformity2}
|\mc{K}(t,x,x) - \tildhat{\mc{K}}(t,x,x)| \leq Ce^{-\gamma d^{2}(x,W_{0})/t} = O(t^{\infty})
\end{equation}
for all $x \in \bigsqcup_{k\geq 1}W_{k}$ and $t \in (0,T]$, so it remains to estimate the sum over $m\geq 1$. Consider the first term $m=1$. We can express the difference $I_{1} - \tildhat{I}_{1}$ as a telescoping sum,
\begin{align*}
I_{1} - \tildhat{I}_{1} &= \int_{0}^{1}e^{-\sigma tD^2}R_{t}e^{-(1-\sigma)tD^2}\,d\sigma - \int_{0}^{1}e^{-\sigma t\tildhat{D}^2}\tildhat{R}_{t}e^{-(1-\sigma)t\tildhat{D}^2}\,d\sigma\\
&= \int_{0}^{1}(e^{-\sigma tD^2}-e^{-\sigma t\tildhat{D}^2})R_{t}e^{-(1-\sigma)tD^2}\,d\sigma + \int_{0}^{1}e^{-\sigma t\tildhat{D}^{2}}(R_{t}-\tildhat{R}_{t})e^{-(1-\sigma)tD^2}\,d\sigma \\
&\hspace{6mm} + \int_{0}^{1}e^{-\sigma t\tildhat{D}^2}\tildhat{R}_{t}(e^{-(1-\sigma)tD^2}-e^{-(1-\sigma)t\tildhat{D}^{2}})\,d\sigma
\end{align*}
but $R_{t} = \tildhat{R}_{t}$ along $\en(M)$ because $A^2$ is an end-periodic differential operator, so the middle term vanishes and we are left with
\begin{align*}
I_{1} - \tildhat{I}_{1} &= \int_{0}^{1}(e^{-\sigma tD^2}-e^{-\sigma t\tildhat{D}^2})R_{t}e^{-(1-\sigma)tD^2}\,d\sigma \\
&\hspace{4mm} + \int_{0}^{1}e^{-\sigma t\tildhat{D}^2}R_{t}(e^{-(1-\sigma)tD^2}-e^{-(1-\sigma)t\tildhat{D}^{2}})\,d\sigma.
\end{align*}
Hence the difference of integral kernels is bounded above by
\begin{align*}
&|K_{I_{1}}(t,x,x) - K_{\tildhat{I}_{1}}(t,x,x)| \\
&\hspace{4mm}\leq \int_{0}^{1}\int_{M}|\mc{K}(\sigma t,x,y)-\tildhat{\mc{K}}(\sigma t,x,y)|\cdot|R_{t}\mc{K}((1-\sigma)t,y,x)|\,dy\,d\sigma \\
&\hspace{6mm} + \int_{0}^{1}\int_{M}|\tildhat{\mc{K}}(\sigma t,x,y)R_{t}|\cdot|\mc{K}((1-\sigma)t,y,x)-\tildhat{\mc{K}}((1-\sigma)t,y,x)|\,dy\,d\sigma \\
&= (*) + (**).
\end{align*}
Owing to the clear symmetry, the same argument can be used to estimate both terms. We focus on the first term $(*)$ for the moment. Using the off-diagonal estimate \cite[Proposition 10.9]{mrs1} we have
\[
|\mc{K}(\sigma t,x,y)-\tildhat{\mc{K}}(\sigma t,x,y)| \leq C_{1}e^{-\gamma d^{2}/\sigma t} \leq C_{1}e^{-\gamma d^{2}/t} 
\]
where $d = \min\{d(x,W_{0}),d(y,W_{0})\}$. Moreover, applying \cite[Proposition 10.2]{mrs1} together with the curvature bound (\ref{eqn:curvaturebound}), we have
\begin{align*}
|R_{t}\mc{K}((1-\sigma)t,y,x)| &\leq C_{2}r(t^{-1})e^{-\alpha(1-\sigma)t}t^{-n/2}e^{-\gamma d^{2}(x,y)/(1-\sigma)t} \\
&\leq C_{2}r(t^{-1})e^{-\alpha(1-\sigma)t}t^{-n/2}e^{-\gamma d^{2}(x,y)/t} 
\end{align*}
where $d(x,y)$ is the Riemannian distance on $M$. Thus the term $(*)$ is estimated by
\begin{align*}
(*) &\leq C_{3}r(t^{-1})t^{-n/2}\int_{0}^{1}\int_{M}e^{-\gamma d^{2}/t}e^{-\alpha(1-\sigma)t}e^{-\gamma d^{2}(x,y)/t}\,dy\,d\sigma \\
&= C_{3}\left(\frac{1-e^{-\alpha t}}{\alpha t}\right)r(t^{-1})t^{-n/2}\int_{M}e^{-\gamma d^{2}/t}e^{-\gamma d^{2}(x,y)/t}\,dy \\
&\lessim r(t^{-1})t^{-n/2}\int_{M}e^{-\gamma d^{2}/t}e^{-\gamma d^{2}(x,y)/t}\,dy \numberthis \label{eqn:uniformity3}
\end{align*}
where we note that the factor $(1-e^{-\alpha t})/(\alpha t)$ approaches $1$ as $t \to 0^{+}$. In order to examine this integral over $M$, we split $M$ into two submanifolds:
\begin{align*}
&M_{0}(x) = \{y \in M : d(y,W_{0}) \leq d(x,W_{0})\} \\
&M_{1}(x) = \{y \in M : d(y,W_{0}) \geq d(x,W_{0})\}
\end{align*}
so that $d^2 = d^2(y,W_{0})$ on $M_{0}(x)$ and $d^2 = d^2(x,W_{0})$ on $M_{1}(x)$. Then breaking (\ref{eqn:uniformity3}) into two parts, we have
\begin{align*}
(*) &\lessim r(t^{-1})t^{-n/2}\int_{M} e^{-\gamma d^{2}/ t}e^{-\gamma d^{2}(x,y)/t}\,dy \\
&= r(t^{-1})t^{-n/2}\left(\int_{M_{0}(x)}e^{-\gamma d^2(y,W_{0})/ t}e^{-\gamma d^{2}(x,y)/t}\,dy\right) \\
&\hspace{6mm} + r(t^{-1})t^{-n/2}e^{-\gamma d^2(x,W_{0})/t}\left(\int_{M_{1}(x)}e^{-\gamma d^{2}(x,y)/t}\,dy\right).
\end{align*}
For $y \in M_{0}(x)$ we have $d^{2}(y,W_{0}) + d^{2}(x,y) \geq \frac{1}{2}d^{2}(x,W_{0})$ by the triangle inequality, so we can estimate
\begin{align*}
\int_{M_{0}(x)} e^{-\gamma d^2(y,W_{0})/t}e^{-\gamma d^{2}(x,y)/t}\,dy \leq \vol(M_{0}(x))e^{-\gamma d^{2}(x,W_{0})/2t} = O(t^{\infty}).
\end{align*}
Lastly, using Lemma \ref{gaussianlemma} we can bound the integral over $M_{1}(x)$ by a function which decays like $O(t^{1/2})$ as $t \to 0^{+}$. Putting all of this together yields
\begin{align*}
|K_{I_{1}}(t,x,x) - K_{\tildhat{I}_{1}}(t,x,x)| &\lessim r(t^{-1})t^{-n/2}\cdot O(t^{\infty}) \\
&\hspace{4mm} + r(t^{-1})t^{-n/2}e^{-\gamma d(x,W_{0})/t}\cdot O(t^{1/2}) \\
&= O(t^{\infty})
\end{align*}
because $r(t^{-1})$ grows at most polynomially as $t\to 0^{+}$.

A similar argument holds for the remaining terms with $m > 1$ in (\ref{eqn:uniformity1}), just with a longer telescoping sum and with more terms of the same type already discussed. Thus for every $m \geq 1$ we have an estimate of the form
\[
|K_{I_{m}}(t,x,x) - K_{\tildhat{I}_{m}}(t,x,x)| \lessim p(t^{-1})\cdot O(t^{\infty}) = O(t^{\infty})
\]
where $p(t^{-1})$ grows at most polynomially as $t \to 0^{+}$. Combining this estimate with (\ref{eqn:uniformity1}) and (\ref{eqn:uniformity2}) we obtain
\[
|K_{e^{-A_{t}^{2}}}(x,x) - K_{e^{-\tildhat{A}_{t}^{2}}}(x,x)| \leq O(t^{\infty})
\]
as claimed.
\end{proof}

\subsection{Special case: the b-supertrace defect}

A particularly important case is that of manifolds with cylindrical ends, where the renormalized trace coincides with the $b$-trace defined in \cite[\S 4.20]{melrose}. Melrose used the $b$-trace and its defect formula to give a proof of the Atiyah-Patodi-Singer index theorem for manifolds with $b$-metrics, wherein the eta invariant arises naturally from the transgression formula for the variation of $\bStr(e^{-tD^{2}})$. In the setting of \emph{families} of Dirac operators on manifolds with boundary, the analogous $b$-supertrace and its defect play a central role in \cite{mp}, where they are used to prove the index formula \cite[Theorem 1]{mp}. There, the Bismut–Cheeger eta form emerges naturally from the transgression formula for the variation of $\bStr(e^{-A_{t}^2})$, with $A_{t}$ the rescaled Bismut superconnection. In \S\ref{section:epeta} we will show that our end-periodic eta form reduces to the Bismut–Cheeger eta form in the cylindrical-end case. In this section we will introduce the $b$-trace and $b$-supertrace in preparation for that calculation.

Let $M \to B$ be an end-periodic fiber bundle and $E \to M$ a family of end-periodic Clifford modules. Suppose that the end of $M$ is cylindrical, meaning $\tildhat{Y} = \mb{R}\times \partial Z$, $Y = S^{1} \times \partial Z$, and $f(r,q) = r$. Recall that for any vertical family of asymptotically periodic smoothing operators $H$ we defined the renormalized trace
\[
\rTr H = \lim_{N\to\infty}\left[\int_{Z_{N}/B}\tr(K_{H}(x,x))\,dx - (N+1)\int_{W_{0}/B}\tr(K_{N(H)}(x,x))\,dx\right]
\]
where $Z_{N}$ is the submanifold of $M$ composed of the compact part $Z$ together with the first $N$ segments of the end of $M$. In the case where the end of $M$ is cylindrical, it is straightforward to see that the renormalized trace reduces to the $b$-trace $\bTr(H)$ of \cite[Definition 4.64]{melrose} and the renormalized supertrace reduces to the $b$-supertrace $\bStr(H)$ of equation (12.1) of \cite{mp}. We will use the notation $\bTr(H)$ when working on a manifold with cylindrical end.

As in the general end-periodic case, the $b$-trace defect $\bTr[P,Q]$ is expressed in terms of the indicial families of the operators $P$ and $Q$. The key difference in the cylindrical setting is that a slightly simpler indicial family can be used -- one that arises from the Mellin transform of the normal operator instead of the Fourier-Laplace transform -- and this is how the $\bTr$ and $\bStr$ defects are formulated in \cite[Proposition 5.9]{melrose} and \cite[Proposition 9]{mp}. The Mellin transform is given by 
\begin{align*}
&\mc{M}_{\lambda}: C_{c}^{\infty}(\tildhat{Y};\tildhat{E}) \to C^{\infty}(\partial Z;E_{\partial Z}) \\
&(\mc{M}_{\lambda}u)(q) = \int_{0}^{\infty} r^{-i\lambda}u(r,q)\,\frac{dr}{r}
\end{align*}
for any $\lambda \in \mb{R}$, cf. \cite[\S 5.1]{melrose}. This is just the Fourier transform after an exponential change of coordinates. Take any $P \in \epdiff^{s}(M;\mb{E})$ with normal operator $N(P) \in \mathrm{Diff}_{\pi}^{s}(\tildhat{Y};\tildhat{\mb{E}})$. For any $\lambda \in \mb{R}$ we define an operator $P(\partial Z)(\lambda) \in \mathrm{Diff}_{\pi}^{s}(\partial Z;\mb{E}_{\partial Z})$ by the relation
\[
\mc{M}_{\lambda}\circ N(P) = P(\partial Z)(\lambda)\circ \mc{M}_{\lambda}.
\]
In \cite{melrose} the family of operators \{$P(\partial Z)(\lambda)\}_{\lambda \in \mb{R}}$ is called the \emph{indicial family} of $P$. Thus we have so far defined two indicial families, $P_{\xi}(Y) = P(Y)(\xi)$ and $P_{\lambda}(\partial Z) = P(\partial Z)(\lambda)$, obtained from the Fourier-Laplace transform and the Mellin transform respectively. When we want to distinguish the two we will say that $P_{\lambda}(\partial Z)$ is the indicial family induced by the Mellin transform, and $P_{\xi}(Y)$ is the indicial family induced by the Fourier-Laplace transform. The $b$-supertrace defect in the cylindrical end case is expressed in terms of the (Mellin transform induced) indicial family,
\begin{equation}\label{eqn:bstrdefect}
\bStr[P,Q] = -\frac{1}{2\pi i}\int_{\mb{R}}\Str\left(\frac{\partial}{\partial\theta}[P_{\theta}(\partial Z)]Q_{\theta}(\partial Z)\right)d\theta,
\end{equation}
cf. \cite[Proposition 9]{mp}. The following proposition explains the relation between the indicial families induced by the Fourier-Laplace and Mellin transforms.
\begin{proposition}\label{newprop}
For a family of end-periodic differential operators $P\in\epdiff^{\bigcdot}(M;\mb{E})$ with normal operator $N(P) = \tildhat{P}$ we have
\begin{equation}\label{neweqn}
K_{\tildhat{P}}(x,y) = \frac{1}{2\pi i}\oint_{S^{1}}\xi^{f(y)-f(x)}K_{P_{\xi}(Y)}(p(x),p(y))\frac{d\xi}{\xi}
\end{equation}
for any $x,y \in \tildhat{Y}$. 

When $M$ is a manifold with cylindrical end, i.e., with end modeled on $\tildhat{Y} = \mb{R} \times \partial Z$, in terms of the Mellin transform we have
\begin{equation}\label{neweqn2}
K_{\tildhat{P}}((r,q),(r',q')) = \frac{1}{2\pi}\int_{\mathrm{Im}\,\theta=a}e^{i(r-r')\theta}K_{P_{\theta}(\partial Z)}(q,q')\,d\theta,
\end{equation}
for any $a \in \mb{R}$ as in the Mellin inversion theorem \cite[Theorem 5.1]{melrose}. Moreover, in the cylindrical end case we have the identity
\begin{equation}\label{neweqn3}
\frac{1}{2\pi i}\oint_{S^{1}}\Tr(P_{\xi}(Y))\frac{d\xi}{\xi} = \frac{1}{2\pi}\int_{\mb{R}}\Tr(P_{\theta}(\partial Z))\,d\theta.
\end{equation}
\end{proposition}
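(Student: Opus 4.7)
The plan is to derive \eqref{neweqn} directly from the inversion formula for the Fourier--Laplace transform, then obtain \eqref{neweqn2} by running the same argument with the Mellin transform in place of $\mc{F}$, and finally deduce \eqref{neweqn3} by restricting both kernel identities to the diagonal and integrating over a fundamental domain.

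For \eqref{neweqn}, I would apply the intertwining identity $\mc{F}_{\xi}\circ N(P)=P(Y)(\xi)\circ\mc{F}_{\xi}$ to a compactly supported test section $u\in C_{c}^{\infty}(\tildhat{Y};\tildhat{\mb{E}})$ and invert using \eqref{eqn:FLinverse} to write
\[
(\tildhat{P}u)(x)=\frac{1}{2\pi i}\oint_{S^{1}}\xi^{-f(x)}\bigl(P(Y)(\xi)\mc{F}_{\xi}u\bigr)(p(x))\,\frac{d\xi}{\xi}.
\]
Next I would expand $P(Y)(\xi)\mc{F}_{\xi}u$ using the Schwartz kernel of the indicial family and replace the integral over $Y$ by an integral over the fundamental domain $W_{0}\subset\tildhat{Y}$. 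Substituting the defining formula $(\mc{F}_{\xi}u)(p(y))=\xi^{f(y)}\sum_{m\in\mb{Z}}\xi^{m}u(y+m)$ and shifting $y\mapsto y+m$ in each summand uses that $f(y+m)=f(y)+m$ and that the translates $W_{m}=W_{0}+m$ partition $\tildhat{Y}$. These cancellations package the sum and the integral over $W_{0}$ into a single integral over $\tildhat{Y}$, producing
\[
(\tildhat{P}u)(x)=\int_{\tildhat{Y}}\left[\frac{1}{2\pi i}\oint_{S^{1}}\xi^{f(y)-f(x)}K_{P_{\xi}(Y)}(p(x),p(y))\,\frac{d\xi}{\xi}\right]u(y)\,dy,
\]
so that the expression in brackets must equal $K_{\tildhat{P}}(x,y)$, which is \eqref{neweqn}. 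The main bookkeeping step is checking that the change of variables $y\mapsto y+m$ correctly absorbs the factor $\xi^{m}$ against $\xi^{f(y)}$ so that only $\xi^{f(y')}$ survives before taking the contour integral.

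For \eqref{neweqn2}, in the cylindrical model $\tildhat{Y}=\mb{R}\times\partial Z$ the Mellin transform plays the role of $\mc{F}$ and satisfies the analogous intertwining $\mc{M}_{\lambda}\circ N(P)=P(\partial Z)(\lambda)\circ \mc{M}_{\lambda}$. Applying the Mellin inversion theorem \cite[Theorem~5.1]{melrose} in place of \eqref{eqn:FLinverse}, and expanding $P(\partial Z)(\lambda)\mc{M}_{\lambda}u$ via its kernel, the same unfolding argument yields \eqref{neweqn2}; the factor $e^{i(r-r')\theta}$ replaces $\xi^{f(y)-f(x)}$ after passing from multiplicative to additive coordinates on the end.

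For \eqref{neweqn3}, I would restrict both formulas to the diagonal. Setting $x=y$ in \eqref{neweqn} and taking pointwise trace gives
\[
\tr K_{\tildhat{P}}(x,x)=\frac{1}{2\pi i}\oint_{S^{1}}\tr K_{P_{\xi}(Y)}(p(x),p(x))\,\frac{d\xi}{\xi},
\]
while setting $(r,q)=(r',q')$ in \eqref{neweqn2} gives
\[
\tr K_{\tildhat{P}}((r,q),(r,q))=\frac{1}{2\pi}\int_{\mathrm{Im}\,\theta=a}\tr K_{P_{\theta}(\partial Z)}(q,q)\,d\theta,
\]
which is independent of $r$. Integrating both expressions over the fundamental domain $W_{0}=[0,1]\times\partial Z$ (which is isometric to $Y=S^{1}\times\partial Z$) and equating them yields \eqref{neweqn3}. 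The potential subtleties are Fubini and contour choices: I would note that for the smoothing operators to which we actually apply this (heat operators, and $P e^{-s A_t^2}$ for end-periodic $P$), the estimates (AP1)--(AP3) together with Proposition~\ref{prop:don} guarantee enough decay in $\theta$ and enough regularity in $\xi$ to justify swapping the order of integration in the last step.
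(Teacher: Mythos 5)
Your proposal is correct and follows essentially the same route as the paper, which simply cites the single-operator calculation of \cite[Proposition 2.5]{mrs1}: invert the Fourier--Laplace (resp.\ Mellin) transform, unfold the $\mb{Z}$-sum over the translates $W_{m}$ using $f(y+m)=f(y)+m$, and read off the kernel, then obtain \eqref{neweqn3} by restricting both kernel formulas to the diagonal and taking the trace. Your explicit bookkeeping of the $\xi^{m}$ cancellation and the remark on justifying Fubini for the smoothing operators actually used are exactly the details the paper leaves to the cited reference.
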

\begin{proof}
The formula (\ref{neweqn}) follows from the same direct calculation as in the single-operator case, cf. \cite[Proposition 2.5]{mrs1}. The formula (\ref{neweqn2}) is derived from the same calculation again, but replacing the Fourier-Laplace transform with the Mellin transform. The identity (\ref{neweqn3}) is then obtained by restricting the integral kernel formulas to the diagonal $x=y$ and taking the trace.
\end{proof}
\section{Transgression formula and eta form}\label{section:transgression}

\subsection{Transgression formula for periodic ends}\label{section:trans2}
In this section we derive a transgression formula for the renormalized Chern character of the  rescaled end-periodic Bismut superconnection adapted to a family of end-periodic Dirac operators; i.e., a formula for the variation of the renormalized Chern character $\rch(A_{t})$. 

We first exhibit a broad outline of how the transgression formula is derived. By Duhamel's formula we have
\begin{align*}
\frac{d}{dt}\rStr(e^{-A_{t}^{2}}) &= -\rStr\left[A_{t},\dt{A_{t}}e^{-A_{t}^{2}}\right] - \int_{0}^{1}\rStr\left[e^{-sA_{t}^{2}},\frac{d}{dt}(A_{t}^{2})e^{-(1-s)A_{t}^{2}}\right]ds \\
&= -\rStr\left[A_{t}-A_{[1]},\dt{A_{t}}e^{-A_{t}^{2}}\right] - \rStr\left[A_{[1]},\dt{A_{t}}e^{-A_{t}^{2}}\right] \\
&\hspace{4mm} - \int_{0}^{1}\rStr\left[e^{-sA_{t}^{2}},\frac{d}{dt}(A_{t}^{2})e^{-(1-s)A_{t}^{2}}\right]ds.
\end{align*}
Applying the $\rStr$ defect formula to each of these three terms produces a total of six defect terms, all depending on the indicial family $A_{t}(Y)(\xi)$:
\begin{align*}
\frac{d}{dt}\rStr(e^{-A_{t}^{2}}) = (\alpha_{1}(t)-\alpha_{2}(t)) + (\beta_{1}(t)-\beta_{2}(t)) + (\gamma_{1}(t)-\gamma_{2}(t)).
\end{align*}
Our derivation of the transgression formula will be completed by showing that
\begin{align*}
\mathrm{(i)} \hspace{3mm} &\alpha_{1}(t) + \beta_{1}(t) + \gamma_{1}(t) = \frac{-1}{2\pi i}\frac{d}{dt}\oint_{|\xi|=1}\int_{W_{0}/B}f(x)\str\left(K_{e^{-A_{t}^{2}(Y)(\xi)}}(x,x)\right)dx\,\frac{d\xi}{\xi}, \\
\mathrm{(ii)} \hspace{3mm} &\alpha_{2}(t) + \gamma_{2}(t) = -\frac{1}{2}\epeta(t), \mbox{ and } \\
\mathrm{(iii)}  \hspace{3mm} &\beta_{2}(t) = \mbox{exact form}.
\end{align*}

\begin{remark} In the case of manifolds with cylindrical ends, the transgression formula for the $b$-Chern character $\bch(A_{t}) = \bStr(e^{-A_{t}^{2}})$ has $\alpha_{1} = \beta_{1} = \gamma_{1} = 0$, $\gamma_{2} = 0$, and the Bismut-Cheeger eta form is $\frac{1}{2}\widehat{\eta}(t) = -\alpha_{2}(t)$, cf. \cite[Proposition 11]{mp}.
\end{remark}

\noindent We start with the following calculation using Duhamel's formula:
\begin{align*}
\frac{d}{dt}\rch(A_{t}) &= \frac{d}{dt}\rStr\left(e^{-A_{t}^{2}}\right) \\
&= -\int_{0}^{1}\rStr\left(e^{-sA_{t}^{2}}\frac{d}{dt}(A_{t}^{2})e^{-(1-s)A_{t}^{2}}\right)ds \\
&= -\rStr\left(\frac{d}{dt}(A_{t}^{2})e^{-A_{t}^{2}}\right) - \int_{0}^{1}\rStr\left[e^{-sA_{t}^{2}},\frac{d}{dt}(A_{t}^{2})e^{-(1-s)A_{t}^{2}}\right]ds \\
&= -\rStr\left[A_{t},\dt{A_{t}}e^{-A_{t}^{2}}\right] - \gamma(t)
\end{align*}
where we have introduced the shorthand
\[
\gamma(t) = \int_{0}^{1}\rStr\left[e^{-sA_{t}^{2}},\frac{d}{dt}(A_{t}^{2})e^{-(1-s)A_{t}^{2}}\right]ds
\]
which is the correction that appears when we commute the two terms under $\rStr$. We will fix the following notation:
\begin{itemize}
\item $\mc{P} = A_{t} - A_{[1]}$ and $\mc{P}_{\xi} = \mc{P}(Y)(\xi)$.
\item $\mc{Q} = \dt{A_{t}}e^{-A_{t}^{2}}$ and $\mc{Q}_{\xi} = \mc{Q}(Y)(\xi)$.
\item $\displaystyle H_{1} = e^{-sA_{t}^{2}(Y)(\xi)}$ and $\displaystyle H_{2} = \frac{d}{dt}(A_{t}^{2}(Y)(\xi))e^{-(1-s)A_{t}^{2}(Y)(\xi)}$.
\end{itemize}
Also note that
\begin{align*}
\mc{P}_{\xi} &= \mc{P}(Y)(\xi) = A_{t}(Y)(\xi)-A_{[1]}(Y)(\xi), \\
\mc{Q}_{\xi} &= \mc{Q}(Y)(\xi) = \dt{\mc{P}_{\xi}}e^{-A_{t}^{2}(Y)(\xi)}.
\end{align*}
Returning to the time derivative of $\rch(A_{t})$ and splitting $A_{t} = (A_{t}-A_{[1]}) + A_{[1]}$, we have
\begin{align*}
\frac{d}{dt}\rch(A_{t}) &= -\rStr\left[A_{t},\dt{A_{t}}e^{-A_{t}^{2}}\right] - \gamma(t)\\
&= -\rStr\left[A_{t}-A_{[1]},\dt{A_{t}}e^{-A_{t}^{2}}\right] -\rStr\left[A_{[1]},\dt{A_{t}}e^{-A_{t}^{2}}\right] - \gamma(t) \\
&= -\rStr[\mc{P},\mc{Q}] - \rStr[A_{[1]},\mc{Q}] - \gamma(t) \\
&= -\alpha(t) - \beta(t) - \gamma(t) \numberthis \label{eqn:firsttrans}
\end{align*}
where we introduce the abbreviations
\begin{align*}
&\alpha(t) = \rStr[\mc{P},\mc{Q}] = \rStr\left[A_{t}-A_{[1]},\dt{A_{t}}e^{-A_{t}^{2}}\right], \\
&\beta(t) = \rStr[A_{[1]},\mc{Q}] = \rStr\left[A_{[1]},\dt{A_{t}}e^{-A_{t}^{2}}\right].
\end{align*}
Let us focus on the first term $\alpha(t)$ on the right-hand side of (\ref{eqn:firsttrans}). By the supertrace defect formula we have
\begin{align*}
\alpha(t) &= \frac{1}{2\pi i}\oint_{|\xi|=1}\int_{W_{0}/B}f(x)\str(K_{\mc{P}_{\xi}\mc{Q}_{\xi}}(x,x) + K_{\mc{Q}_{\xi}\mc{P}_{\xi}}(x,x))\,dx\frac{d\xi}{\xi} \\
& \hspace{5mm} - \frac{1}{2\pi i}\oint_{|\xi|=1} \Str\left(\frac{\partial \mc{P}_{\xi}}{\partial\xi} \mc{Q}_{\xi}\right)d\xi \\
&\stackrel{\mathrm{def}}{=} \alpha_{1}(t) - \alpha_{2}(t). \numberthis \label{eqn:secondtrans}
\end{align*}
The following lemma is the key step in understanding the term $\alpha_{1}(t)$.
\begin{lemma}\label{translemma}
We have
\begin{align*}
\str(K_{\mc{P}_{\xi}\mc{Q}_{\xi}} + K_{\mc{Q}_{\xi}\mc{P}_{\xi}}) &= -\frac{d}{dt}\str\left(K_{e^{-A_{t}^{2}(Y)(\xi)}}\right) - \str\left(K_{A_{[1]}(Y)(\xi)\mc{Q}_{\xi}} + K_{\mc{Q}_{\xi}A_{[1]}(Y)(\xi)}\right) \\
&\hspace{6mm} - \int_{0}^{1}\str(K_{H_{1}H_{2}}-K_{H_{2}H_{1}})\,ds
\end{align*}
where we have set
\[
H_{1} = e^{-sA_{t}^{2}(Y)(\xi)} \mbox{ \hspace{2mm} and \hspace{2mm} } H_{2} = \frac{d}{dt}(A_{t}^{2}(Y)(\xi))e^{-(1-s)A_{t}^{2}(Y)(\xi)}.
\]
\end{lemma}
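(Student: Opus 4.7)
The plan is to derive the identity by working entirely on the closed fiber $Y$, where the indicial operator $A_{t}(Y)(\xi)$ commutes with its own heat semigroup and standard Duhamel manipulations apply. All three occurrences of $\str(K_{\bigcdot})$ are linear in the underlying smoothing operator and commute with $\tfrac{d}{dt}$ by smoothness in $t$, so the desired formula is really an operator identity on $Y$ once this reduction is accepted.

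First, using $\mc{P}_{\xi} = A_{t}(Y)(\xi) - A_{[1]}(Y)(\xi)$ and linearity of $P \mapsto \str(K_{P})$, I would split
\begin{equation*}
\str(K_{\mc{P}_{\xi}\mc{Q}_{\xi} + \mc{Q}_{\xi}\mc{P}_{\xi}}) = \str(K_{A_{t}(Y)(\xi)\mc{Q}_{\xi} + \mc{Q}_{\xi}A_{t}(Y)(\xi)}) - \str(K_{A_{[1]}(Y)(\xi)\mc{Q}_{\xi} + \mc{Q}_{\xi}A_{[1]}(Y)(\xi)}),
\end{equation*}
which isolates the $A_{[1]}$-contribution appearing in the statement. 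For the remaining $A_{t}$-term, note that since $A_{[1]}(Y)(\xi) = A_{[1]}(Y) - \log(\xi)\pi^{*}d_{B}f$ is independent of $t$ by Proposition \ref{FLprop1}, one has $\dt{\mc{P}_{\xi}} = \tfrac{d}{dt}A_{t}(Y)(\xi)$, whence $\mc{Q}_{\xi} = \tfrac{d}{dt}A_{t}(Y)(\xi) \cdot e^{-A_{t}^{2}(Y)(\xi)}$. The key observation is that $A_{t}(Y)(\xi)$ commutes with $e^{-A_{t}^{2}(Y)(\xi)}$ by the functional calculus on the closed manifold $Y$, which gives the clean operator identity
\begin{equation*}
A_{t}(Y)(\xi)\mc{Q}_{\xi} + \mc{Q}_{\xi}A_{t}(Y)(\xi) = \bigl(A_{t}(Y)(\xi)\tfrac{d}{dt}A_{t}(Y)(\xi) + \tfrac{d}{dt}A_{t}(Y)(\xi)\cdot A_{t}(Y)(\xi)\bigr) e^{-A_{t}^{2}(Y)(\xi)} = \tfrac{d}{dt}(A_{t}^{2}(Y)(\xi))\, e^{-A_{t}^{2}(Y)(\xi)}.
\end{equation*}

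Finally, I would apply Duhamel's formula to the smooth family $t \mapsto e^{-A_{t}^{2}(Y)(\xi)}$ on $Y$ to get $\tfrac{d}{dt}e^{-A_{t}^{2}(Y)(\xi)} = -\int_{0}^{1}H_{1}H_{2}\,ds$, while noting that $H_{2}H_{1}$ is actually $s$-independent (the two exponentials multiply to $e^{-A_{t}^{2}(Y)(\xi)}$), so $\int_{0}^{1}H_{2}H_{1}\,ds = \tfrac{d}{dt}(A_{t}^{2}(Y)(\xi))\, e^{-A_{t}^{2}(Y)(\xi)}$. Subtracting, passing to Schwartz kernels, and applying pointwise $\str$ (all of which commute with $\tfrac{d}{dt}$ and the $s$-integral by smoothness of the closed-fiber heat kernel) yields
\begin{equation*}
\int_{0}^{1}\str(K_{H_{1}H_{2}-H_{2}H_{1}})\,ds = -\tfrac{d}{dt}\str(K_{e^{-A_{t}^{2}(Y)(\xi)}}) - \str(K_{\frac{d}{dt}(A_{t}^{2}(Y)(\xi))\, e^{-A_{t}^{2}(Y)(\xi)}}),
\end{equation*}
and substituting the previous operator identity into the second term on the right rearranges into exactly the statement of the lemma. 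Since the entire argument takes place on the closed furled-up manifold $Y$ there are no renormalization or end-effects to track, and the only real technical point is justifying the various exchanges of $\tfrac{d}{dt}$, the kernel map, pointwise $\str$, and $s$-integration -- all of which are routine from smoothness of the heat kernel. The conceptual content is simply that on the closed fiber the indicial operator commutes with its own heat semigroup, collapsing the left-hand side into a total $t$-derivative modulo a Duhamel correction.
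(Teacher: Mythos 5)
Your proposal is correct, and it proves the same identity with the same two essential ingredients as the paper -- Duhamel's formula for $\tfrac{d}{dt}e^{-A_{t}^{2}(Y)(\xi)}$ together with the $s$-independence of $H_{2}H_{1}$, and the fact that $A_{t}(Y)(\xi)$ supercommutes with its own heat operator -- but organized differently. The paper differentiates the heat kernel first, splits $A_{t}(Y)(\xi)=\mc{P}_{\xi}+A_{[1]}(Y)(\xi)$ \emph{inside} the resulting term $\tfrac{d}{dt}(A_{t}^{2}(Y)(\xi))e^{-A_{t}^{2}(Y)(\xi)}$, and then needs the bookkeeping with the terms $(1a),(1b),(1c)$ and the identity $[B,A]C=A[B,C]+[B,AC]$ to reassemble the $A_{[1]}$-contribution, invoking $[e^{-A_{t}^{2}(Y)(\xi)},A_{t}(Y)(\xi)]=0$ only at the very last step. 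You instead split the left-hand side at the outset into the $A_{t}$-part and the $A_{[1]}$-part and use the commutation relation immediately to identify $A_{t}(Y)(\xi)\mc{Q}_{\xi}+\mc{Q}_{\xi}A_{t}(Y)(\xi)=\tfrac{d}{dt}(A_{t}^{2}(Y)(\xi))e^{-A_{t}^{2}(Y)(\xi)}$, which makes the supercommutator algebra unnecessary; this is a genuine simplification of the bookkeeping. Two remarks. First, your appeal to ``functional calculus'' is not quite the right justification: $A_{t}(Y)(\xi)$ has differential-form coefficients and, for $\xi\in S^{1}$, non-self-adjoint zeroth-order terms, so it is not amenable to a spectral-theorem argument; the commutation $[A_{t}(Y)(\xi),e^{-A_{t}^{2}(Y)(\xi)}]=0$ instead follows from uniqueness of solutions of the heat equation (or the Volterra series), i.e., the standard fact that a superconnection commutes with the heat operator of its own curvature -- the same fact the paper uses when it declares the supercommutator with $A_{t}(Y)(\xi)$ to vanish, so this is a cosmetic rather than substantive issue. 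Second, you correctly work at the level of operator identities and only then restrict kernels to the diagonal and apply the pointwise $\str$; this matters because the lemma is a pointwise statement (it is later integrated against the weight $f(x)$), so cyclicity of the global supertrace is not available, and your argument indeed never uses it.
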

\begin{proof}
By Duhamel's formula we have
\begin{align*}
\frac{d}{dt}\str\left(K_{e^{-A_{t}^{2}(Y)(\xi)}}\right) &= \str\left(K_{\frac{d}{dt}\left(e^{-A_{t}^{2}(Y)(\xi)}\right)}\right) \\
&= -\int_{0}^{1}\str(K_{H_{1}H_{2}})\,ds \\
&= -\int_{0}^{1}\str(K_{H_{2}H_{1}})\,ds - \int_{0}^{1}\str(K_{H_{1}H_{2}}-K_{H_{2}H_{1}})\,ds \\
&= -\str\left(K_{\frac{d}{dt}(A_{t}^{2}(Y)(\xi))e^{-A_{t}^{2}(Y)(\xi)}}\right) - \int_{0}^{1}\str(K_{H_{1}H_{2}}-K_{H_{2}H_{1}})\,ds.
\end{align*}
The first term on the right-hand side here is
\begin{align*}
&\str\left(K_{\frac{d}{dt}(A_{t}^{2}(Y)(\xi))e^{-A_{t}^{2}(Y)(\xi)}}\right) \\
&\hspace{4mm} = \str\left(K_{(A_{t}(Y)(\xi)\dt{\mc{P}}_{\xi}+\dt{\mc{P}}_{\xi}A_{t}(Y)(\xi))e^{-A_{t}^{2}(Y)(\xi)}}\right) \\
&\hspace{4mm} = \str\left(K_{((\mc{P}_{\xi}+A_{[1]}(Y)(\xi))\dt{\mc{P}}_{\xi}+\dt{\mc{P}}_{\xi}(\mc{P}_{\xi}+A_{[1]}(Y)(\xi)))e^{-A_{t}^{2}(Y)(\xi)}}\right) \\
&\hspace{4mm} = \str\left(K_{(\mc{P}_{\xi}\dt{\mc{P}}_{\xi}+\dt{\mc{P}}_{\xi}\mc{P}_{\xi})e^{-A_{t}^{2}(Y)(\xi)}}\right) +\str\left(K_{[A_{[1]}(Y)(\xi),\dt{\mc{P}}_{\xi}]e^{-A_{t}^{2}(Y)(\xi)}} \right) \\
&\hspace{4mm} \stackrel{\mathrm{def}}{=} (1a) + (1b).
\end{align*}
Thus
\[
\frac{d}{dt}\str\left(K_{e^{-A_{t}^{2}(Y)(\xi)}}\right) = -(1a) -(1b) - \int_{0}^{1}\str(K_{H_{1}H_{2}}-K_{H_{2}H_{1}})\,ds.
\]
Solving for (1a) yields
\[
(1a) = -\frac{d}{dt}\str\left(K_{e^{-A_{t}^{2}(Y)(\xi)}}\right) - (1b) - \int_{0}^{1}\str(K_{H_{1}H_{2}}-K_{H_{2}H_{1}})\,ds.
\]
On the other hand, the term $\alpha_{1}(t)$ in equation (\ref{eqn:secondtrans}) involves
\begin{align*}
\str(K_{\mc{P}_{\xi}\mc{Q}_{\xi}} + K_{\mc{Q}_{\xi}\mc{P}_{\xi}}) &= \str\left(K_{\mc{P}_{\xi}\dt{\mc{P}}_{\xi}e^{-A_{t}^{2}(Y)(\xi)}} + K_{\dt{\mc{P}}_{\xi}e^{-A_{t}^{2}(Y)(\xi)}\mc{P}_{\xi}}\right) \\
&= \str\left(K_{(\mc{P}_{\xi}\dt{\mc{P}}_{\xi} + \dt{\mc{P}}_{\xi}\mc{P}_{\xi})e^{-A_{t}^{2}(Y)(\xi)}}\right) + \str\left(K_{\dt{\mc{P}}_{\xi}[e^{-A_{t}^{2}(Y)(\xi)},\mc{P}_{\xi}]} \right) \\
&\stackrel{\mathrm{def}}{=} (1a) + (1c).
\end{align*}
Then plugging in our previous expression for (1a) here we get
\[
\str(K_{\mc{P}_{\xi}\mc{Q}_{\xi}} + K_{\mc{Q}_{\xi}\mc{P}_{\xi}}) = -\frac{d}{dt}\str\left(K_{e^{-A_{t}^{2}(Y)(\xi)}}\right) - (1b) + (1c) - \int_{0}^{1}\str(K_{H_{1}H_{2}}-K_{H_{2}H_{1}})\,ds.
\]
Now we look at the expression $-(1b) + (1c)$ and use the following algebraic fact: if $A$ is odd, $B$ is odd, and $C$ is even, then
\[
[B,A]C = A[B,C] + [B,AC].
\]
This identity yields
\begin{align*}
-(1b) &= -\str\left(K_{[A_{[1]}(Y)(\xi),\dt{\mc{P}}_{\xi}]e^{-A_{t}^{2}(Y)(\xi)}} \right) \\
&= -\str\left(K_{\dt{\mc{P}}_{\xi}\left[A_{[1]}(Y)(\xi),e^{-A_{t}^{2}(Y)(\xi)}\right]}\right) - \str\left( K_{\left[A_{[1]}(Y)(\xi),\dt{\mc{P}}_{\xi}e^{-A_{t}^{2}(Y)(\xi)} \right]}\right)
\end{align*}
and therefore
\begin{align*}
-(1b) + (1c) &= -\str\left(K_{[A_{[1]}(Y)(\xi),\dt{\mc{P}}_{\xi}]e^{-A_{t}^{2}(Y)(\xi)}} \right) + \str\left(K_{\dt{\mc{P}}_{\xi}[e^{-A_{t}^{2}(Y)(\xi)},\mc{P}_{\xi}]} \right)  \\
&= -\str\left(K_{\dt{\mc{P}}_{\xi}\left[A_{[1]}(Y)(\xi),e^{-A_{t}^{2}(Y)(\xi)}\right]}\right) - \str\left( K_{\left[A_{[1]}(Y)(\xi),\dt{\mc{P}}_{\xi}e^{-A_{t}^{2}(Y)(\xi)} \right]}\right) \\
&\hspace{6mm} + \str\left(K_{\dt{\mc{P}}_{\xi}[e^{-A_{t}^{2}(Y)(\xi)},\mc{P}_{\xi}]} \right) \\
&= \str\left(K_{\dt{\mc{P}}_{\xi}\left[e^{-A_{t}^{2}(Y)(\xi)},A_{[1]}(Y)(\xi)\right]}\right) - \str\left( K_{\left[A_{[1]}(Y)(\xi),\dt{\mc{P}}_{\xi}e^{-A_{t}^{2}(Y)(\xi)} \right]}\right) \\
&\hspace{6mm} + \str\left(K_{\dt{\mc{P}}_{\xi}[e^{-A_{t}^{2}(Y)(\xi)},\mc{P}_{\xi}]} \right),
\end{align*}
where in the first term on the last line, we have changed the sign by swapping the operators inside the supercommutator (noting that $A_{[1]}(Y)(\xi)$ is odd and $e^{-A_{t}^{2}(Y)(\xi)}$ is even). We can reunite the first and last terms here to obtain
\[
-(1b) + (1c) = \str\left(K_{\dt{\mc{P}}_{\xi}[e^{-A_{t}^{2}(Y)(\xi)},A_{[1]}(Y)(\xi) + \mc{P}_{\xi}]}\right) - \str \left(K_{\left[A_{[1]}(Y)(\xi),\dt{\mc{P}}_{\xi}e^{-A_{t}^{2}(Y)(\xi)} \right]}\right),
\]
but of course we have $A_{[1]}(Y)(\xi) + \mc{P}_{\xi} = A_{t}(Y)(\xi)$ by definition, so the first supercommutator on the right-hand side will vanish, and we are left with
\[
-(1b) + (1c) = -\str\left(K_{\left[A_{[1]}(Y)(\xi),\dt{\mc{P}}_{\xi}e^{-A_{t}^{2}(Y)(\xi)} \right]}\right) = -\str\left(K_{A_{[1]}(Y)(\xi)\mc{Q}_{\xi}} + K_{\mc{Q}_{\xi}A_{[1]}(Y)(\xi)}\right),
\]
which establishes the lemma.
\end{proof}
Returning to the term $\alpha_{1}(t)$ and applying Lemma \ref{translemma}, we have
\begin{align*}
\alpha_{1}(t) &= \frac{1}{2\pi i}\oint_{|\xi|=1}\int_{W_{0}/B}f(x)\str(K_{\mc{P}_{\xi}\mc{Q}_{\xi}}(x,x) + K_{\mc{Q}_{\xi}\mc{P}_{\xi}}(x,x))\,dx\frac{d\xi}{\xi} \\
&= \frac{1}{2\pi i}\oint_{|\xi|=1}\int_{W_{0}/B}f(x)\bigg[-\frac{d}{dt}\str\left(K_{e^{-A_{t}^{2}(Y)(\xi)}}(x,x)\right) \\
&\hspace{6mm} -\str\left(K_{A_{[1]}(Y)(\xi)\mc{Q}_{\xi}}(x,x) + K_{\mc{Q}_{\xi}A_{[1]}(Y)(\xi)}(x,x)\right) \\
&\hspace{6mm} -\int_{0}^{1}\str(K_{H_{1}H_{2}}(x,x)-K_{H_{2}H_{1}}(x,x))\,ds\bigg]\,dx\frac{d\xi}{\xi}.
\end{align*}
We compare this identity with
\begin{align*}
&\beta(t) = \rStr[A_{[1]},\mc{Q}] \\
&= \frac{1}{2\pi i}\oint_{|\xi|=1}\int_{W_{0}/B} f(x)\str\left(K_{A_{[1]}(Y)(\xi)\mc{Q}_{\xi}}(x,x) + K_{\mc{Q}_{\xi}A_{[1]}(Y)(\xi)}(x,x)\right)dx\frac{d\xi}{\xi} \\
&\hspace{6mm}-\frac{1}{2\pi i}\oint_{|\xi|=1} \Str\left(\frac{\partial}{\partial \xi}\left(A_{[1]}(Y)(\xi)\right)\mc{Q}_{\xi}\right)d\xi \\
&\stackrel{\mathrm{def}}{=} \beta_{1}(t) - \beta_{2}(t),
\end{align*}
and, noting that $H_{1}$ and $H_{2}$ are both even operators, 
\begin{align*}
\gamma(t) &= \int_{0}^{1}\rStr\left[e^{-sA_{t}^{2}},\frac{d}{dt}(A_{t}^{2})e^{-(1-s)A_{t}^{2}}\right]ds \\
&= \int_{0}^{1}\oint_{|\xi|=1}\int_{W_{0}/B} f(x)\str(K_{H_{1}H_{2}}(x,x)-K_{H_{2}H_{1}}(x,x))\,dx\frac{d\xi}{\xi}ds \\
&\hspace{6mm}-\frac{1}{2\pi i}\int_{0}^{1}\oint_{|\xi|=1} \Str\left(\frac{\partial H_{1}}{\partial \xi}H_{2}\right) d\xi\,ds \\
&\stackrel{\mathrm{def}}{=} \gamma_{1}(t) - \gamma_{2}(t).
\end{align*}
Thus, we see that
\begin{align*}
\alpha_{1}(t) &= -\frac{1}{2\pi i}\frac{d}{dt}\oint_{|\xi|=1}\int_{W_{0}/B}f(x)\str\left(K_{e^{-A_{t}^{2}(Y)(\xi)}}(x,x)\right)dx\frac{d\xi}{\xi} \\
&\hspace{8mm} - \beta_{1}(t) - \gamma_{1}(t).
\end{align*}
Substituting this expression for $\alpha_{1}(t)$ back into the transgression formula (\ref{eqn:firsttrans}), cancellations occur and we obtain
\begin{align*}
\frac{d}{dt}\rch(A_{t}) &= -\alpha(t) - \beta(t) - \gamma(t) \\
&= -\alpha_{1}(t) + \alpha_{2}(t) - \beta_{1}(t) + \beta_{2}(t) - \gamma_{1}(t) + \gamma_{2}(t) \\
&= \frac{1}{2\pi i}\frac{d}{dt}\oint_{|\xi|=1}\int_{W_{0}/B}f(x)\str\left(K_{e^{-A_{t}^{2}(Y)(\xi)}}(x,x)\right)dx\frac{d\xi}{\xi} \\
&\hspace{6mm} + \beta_{2}(t) + \alpha_{2}(t) + \gamma_{2}(t)
\end{align*}
which in more detail is 
\begin{align*}
\frac{d}{dt}\rch(A_{t}) &= \frac{1}{2\pi i}\frac{d}{dt}\oint_{|\xi|=1}\int_{W_{0}/B}f(x)\str\left(K_{e^{-A_{t}^{2}(Y)(\xi)}}(x,x)\right)dx\frac{d\xi}{\xi} \\
& \hspace{6mm} + \frac{1}{2\pi i}\oint_{|\xi|=1} \Str\left(\frac{\partial}{\partial \xi}\left(A_{[1]}(Y)(\xi)\right)\mc{Q}_{\xi}\right)d\xi \\
&\hspace{6mm}+\frac{1}{2\pi i}\oint_{|\xi|=1} \Str\left(\frac{\partial \mc{P}_{\xi}}{\partial\xi} \mc{Q}_{\xi}\right)d\xi \\
&\hspace{6mm}+\frac{1}{2\pi i}\int_{0}^{1}\oint_{|\xi|=1} \Str\left(\frac{\partial H_{1}}{\partial \xi}H_{2}\right) d\xi\,ds. \numberthis \label{transformula}
\end{align*}
Looking at equation (\ref{transformula}), our first term is a family version of the first term on the right-hand side of (23) in \cite{mrs1}. We will argue that our second term $\beta_{2}(t)$ is an exact form, and the last two terms together yield the end-periodic eta form.

\begin{proposition}\label{exactprop}
The term $\beta_{2}(t)$ on the right-hand side of equation (\ref{transformula}),
\[
\beta_{2}(t) = \frac{1}{2\pi i}\oint_{|\xi|=1} \Str\left(\frac{\partial}{\partial \xi}\left(A_{[1]}(Y)(\xi)\right)\mc{Q}_{\xi}\right)d\xi \in \Omega^{\bigcdot}(B)
\]
is an exact form on $B$.
\end{proposition}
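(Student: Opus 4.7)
By Proposition~\ref{FLprop1}, $\partial A_{[1]}(Y)(\xi)/\partial\xi = -\xi^{-1}\,m_0(\pi^{*}d_{B}f)$, so
\[
\beta_{2}(t) = -\frac{1}{2\pi i}\oint_{|\xi|=1}\Str\bigl(m_{0}(\pi^{*}d_{B}f)\cdot\mc{Q}_{\xi}\bigr)\frac{d\xi}{\xi}.
\]
The plan is to exhibit this as $d_{B}$-exact on $B$ by means of the Bianchi identity for the Bismut superconnection on the closed fiber bundle $Y\to B$: on closed fibers the Dirac and two-form components of $A(Y)$ contribute trivially to $\Str$ of a supercommutator (by supertrace cyclicity), so for any smoothing operator $H$ with $\Lambda T^{*}B$-valued coefficients one has $d_{B}\Str(H) = \Str([A_{[1]}(Y),H])$.

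If a smooth $\tilde f\colon Y\to\mb{R}$ with $d_{B}\tilde f = \pi^{*}d_{B}f$ existed globally, then $m_{0}(\pi^{*}d_{B}f) = [A_{[1]}(Y),\tilde f]$ would give, via super-Leibniz,
\[
m_{0}(\pi^{*}d_{B}f)\cdot\mc{Q}_{\xi} = [A_{[1]}(Y),\tilde f\,\mc{Q}_{\xi}] - \tilde f\,[A_{[1]}(Y),\mc{Q}_{\xi}],
\]
and the first term is $d_{B}$-exact by Bianchi. Such a global $\tilde f$ does not exist because $f$ only lives on the cover $\tildhat Y$; however, the Fourier--Laplace transform intertwines multiplication by $f$ on $\tildhat Y$ with differentiation $\xi\,\partial_{\xi}$ on the indicial-family side, as follows from the direct identity $\mc{F}_{\xi}(fu) = \xi\,\partial_{\xi}(\mc{F}_{\xi}u)$ (and hence the indicial family of $[\tildhat H,f]$ is $-\xi\,\partial_{\xi}H(Y)(\xi)$). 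After $S^{1}$-averaging against $d\xi/\xi$, the ``missing-$\tilde f$'' obstruction therefore becomes an integration by parts in $\xi$ whose boundary contribution vanishes by the single-valuedness of $\mc{Q}_{\xi}$ on $S^{1}$ (once the branch of $\log\xi$ is fixed as above). This reinterpretation produces an explicit primitive
\[
\tau(t) = -\frac{1}{2\pi i}\oint_{|\xi|=1}\Str(\Phi_{\xi})\,\frac{d\xi}{\xi},
\]
where $\Phi_{\xi}$ is built from $m_{0}(\pi^{*}d_{B}f)$, $\dt{A_{t}(Y)(\xi)}$, and the Duhamel expansion of $e^{-A_{t}^{2}(Y)(\xi)}$.

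Verifying $d_{B}\tau(t) = \beta_{2}(t)$ will invoke two super-anticommutation identities that force the spurious $\log\xi$-dependent contributions to cancel: first, $\{m_{0}(\pi^{*}d_{B}f),m_{0}(\pi^{*}d_{B}f)\} = 0$ because $(d_{B}f)\wedge(d_{B}f) = 0$; and second, $\{m_{0}(\pi^{*}d_{B}f),\cl(d_{Y/B}f)\} = 0$ because the horizontal wedge and vertical Clifford components of the degenerate Clifford action super-anticommute by the very definition of $m_{0}$. Together these give $\{A_{t}(Y)(\xi),m_{0}(\pi^{*}d_{B}f)\} = \{A_{t}(Y),m_{0}(\pi^{*}d_{B}f)\}$, independent of $\xi$, which is the structural fact that makes the $S^{1}$-average collapse to a $d_{B}$-exact form.

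The hard part will be assembling $\Phi_{\xi}$ explicitly from the Duhamel series and carefully tracking the branch contribution of $\log\xi$ on $S^{1}$; the anticommutation identities above are the mechanism by which the failure of $\tilde f$ to exist globally on $Y$ is absorbed into a $d_{B}$-exact form on $B$.
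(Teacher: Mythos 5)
Your proposal never actually proves the statement: it is a plan whose decisive step is left open. You correctly compute $\partial_{\xi}A_{[1]}(Y)(\xi) = -\xi^{-1}\pi^{*}d_{B}f$, and the algebraic observations you make (that exterior multiplication by $\pi^{*}d_{B}f$ squares to zero and anticommutes with $\cl(d_{Y/B}f)$, so that $[A_{t}(Y)(\xi),\pi^{*}d_{B}f]$ is independent of $\xi$) are true. But the object that is supposed to do the work --- the primitive $\tau(t)$ with integrand $\Phi_{\xi}$ "built from" a Duhamel expansion --- is never written down, the identity $d_{B}\tau(t)=\beta_{2}(t)$ is never verified, and the cancellation of the branch contributions of $\log\xi$ under the $\xi$-integration by parts is only promised ("the hard part will be\dots"). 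Since the whole content of the proposition is the existence of an explicit $d_{B}$-primitive, deferring exactly that construction leaves a genuine gap; moreover the detour through the intertwining $\mc{F}_{\xi}(fu)=\xi\,\partial_{\xi}\mc{F}_{\xi}u$ amounts to re-deriving part of the trace-defect machinery and does not obviously converge to a closed formula.

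For comparison, the argument the paper uses is two lines and produces the primitive explicitly: integrate by parts in $\xi$ over the closed contour to move the $\xi$-derivative from $A_{[1]}(Y)(\xi)$ onto $\mc{Q}_{\xi}$, so that $\beta_{2}(t) = -\frac{1}{2\pi i}\oint_{|\xi|=1}\Str\bigl(A_{[1]}(Y)(\xi)\,\partial_{\xi}\mc{Q}_{\xi}\bigr)\,d\xi$; then write $A_{[1]}(Y)(\xi) = d+\varphi$ with $\varphi$ a bundle map, note that the supertrace kills the supercommutator term involving $\varphi$ (the family analogue of \cite[Lemma 9.15]{bgv}, as in the last step of \cite[Proposition 11]{mp}), and the $d$ part yields $\beta_{2}(t) = -d_{B}\bigl(\frac{1}{2\pi i}\oint_{|\xi|=1}\Str(\partial_{\xi}\mc{Q}_{\xi})\,d\xi\bigr)$, which is exactly the exact term recorded in (\ref{transformula}) and Theorem \ref{mainthm1}. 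If you want to salvage your route, you would at minimum have to (i) exhibit $\Phi_{\xi}$ in closed form, (ii) check $d_{B}\tau(t)=\beta_{2}(t)$ using your anticommutation identities, and (iii) justify the vanishing of the boundary term in the $\xi$-integration by parts, e.g.\ by noting that the supertrace integrand is single-valued on $S^{1}$ because shifting the branch of $\log\xi$ conjugates the indicial family by the well-defined multiplication operator $e^{2\pi i f}$ on $Y$ --- a point that also needs to be acknowledged (and is much easier to handle) in the short argument above.
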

\begin{proof}
Decompose the connection as $A_{[1]}(Y)(\xi) = d + \varphi$ where $\varphi$ is a bundle map, and integrate by parts:
\begin{align*}
\oint_{|\xi|=1} \Str\left(\frac{\partial}{\partial \xi}\left(A_{[1]}(Y)(\xi)\right)\mc{Q}_{\xi}\right) d\xi &= -\oint_{|\xi|=1} \Str\left(A_{[1]}(Y)(\xi)\frac{\partial \mc{Q}_{\xi}}{\partial \xi}\right)d\xi \\
&= -\oint_{|\xi|=1} \Str\left(d\cdot\frac{\partial \mc{Q}_{\xi}}{\partial \xi}\right) + \Str\left[\varphi,\frac{\partial \mc{Q}_{\xi}}{\partial\xi} \right] d\xi \\
&= -d_{B}\left(\oint_{|\xi|=1}\Str\left(\frac{\partial \mc{Q}_{\xi}}{\partial\xi}\right) d\xi\right)
\end{align*}
because $\Str\left[\varphi,\frac{\partial \mc{Q}_{\xi}}{\partial\xi} \right] = 0$. For comparison see the analogous argument in \cite[Lemma 9.15]{bgv} and the last line in the proof of \cite[Proposition 11]{mp}.
\end{proof}

The proof of the transgression formula is completed by defining the time-dependent \emph{end-periodic eta form} as the combination of the two remaining terms in equation (\ref{transformula}) (with the opposite sign, to be consistent with \cite[Proposition 11]{mp}). Thus we define the (time-dependent) end-periodic eta form by
\begin{align*}
\frac{1}{2}\epeta(t) &= -\alpha_{2}(t) - \gamma_{2}(t) \\
&= -\frac{1}{2\pi i}\oint_{|\xi|=1} \Str\left(\frac{\partial \mc{P}_{\xi}}{\partial\xi} \mc{Q}_{\xi}\right)  d\xi - \frac{1}{2\pi i}\int_{0}^{1}\oint_{|\xi|=1} \Str\left(\frac{\partial H_{1}}{\partial \xi}H_{2}\right)  d\xi\,ds.
\end{align*}
In the next section we will clarify this object, and then state the end result in Theorem \ref{mainthm1}. Depending upon the particular geometry of the fiber bundle $Y \to B$ it is possible that $\epeta(t)$ simplifies significantly; for example, we will show that when $Y = S^{1} \times \partial Z$ is the furled-up manifold for a cylindrical end, $\epeta(t)$ reduces to the time-dependent eta form of \cite[Proposition 11]{mp}. The integral over $t \in (0,\infty)$ of $\epeta(t)$ yields the end-periodic eta form appearing in our index formula.

\subsection{The end-periodic eta form}\label{section:epeta}
In this section we will present a geometric interpretation for the term $\gamma_{2}(t)$ involved in the definition of the end-periodic eta form. We will also show that the latter reduces to the Bismut-Cheeger eta form when the periodic end is cylindrical, and that the degree zero component $(\epeta)_{[0]}$ is exactly the end-periodic eta invariant of \cite{mrs1}.

We will abbreviate $A_{t}(Y)(\xi) = A_{t}(\xi)$. Recall the precise structure of indicial family of the rescaled end-periodic Bismut superconnection from equation (\ref{eqn:rescaledFL}). We have
\begin{align*}
& \mc{P}_{\xi} = A_{t}(\xi)-A_{[1]}(Y)(\xi) = t^{1/2}D(Y)(\xi)- t^{-1/2}A_{[2]}(Y) \\
&\implies \frac{\partial \mc{P}_{\xi}}{\partial\xi} = \frac{\partial}{\partial\xi}(A_{t}(\xi)-A_{[1]}(Y)(\xi)) = \frac{-t^{1/2}}{\xi}\cl(d_{Y/B}f).
\end{align*}
Therefore the first term in the definition of $\epeta(t)$ is
\begin{align*}
\alpha_{2}(t) &= -\frac{1}{2\pi i}\oint_{|\xi|=1} \Str\left(\frac{\partial \mc{P}_{\xi}}{\partial\xi} \mc{Q}_{\xi}\right)  d\xi \\
&= \frac{-1}{2\pi i}\oint_{|\xi|=1}\Str\left(\frac{\partial}{\partial\xi}(A_{t}(\xi)-A_{[1]}(Y)(\xi))\dt{A}_{t}(\xi)e^{-A_{t}^{2}(\xi)}\right)d\xi \\
&= \frac{-1}{2\pi i}\oint_{|\xi|=1}t^{1/2}\Str\left(\cl(d_{Y/B}f)\dt{A}_{t}(\xi)e^{-A_{t}^{2}(\xi)}\right)  \frac{d\xi}{\xi}.
\end{align*}

\noindent Note that this is very similar to the object in \cite{mp} equation (13.13) which, as \cite[Proposition 11]{mp} argues in the cylindrical end context, is equal to the time-dependent Bismut-Cheeger eta form $\frac{1}{2}\widehat{\eta}(t)$. We will show in this section that it does in fact generalize the time-dependent Bismut-Cheeger eta form. Now turning to the second term in the definition of $\epeta(t)$, we recall that we defined
\begin{align*}
2\pi i \gamma_{2}(t) &= \int_{0}^{1}\oint_{|\xi|=1} \Str\left(\frac{\partial H_{1}}{\partial \xi}H_{2}\right)  d\xi\,ds \\
&= \int_{0}^{1}\oint_{|\xi|=1} \Str\left( \frac{\partial}{\partial\xi}\left(e^{-sA_{t}^{2}(\xi)}\right)\frac{d}{dt}(A_{t}^{2}(\xi))e^{-(1-s)A_{t}^{2}(\xi)}\right)d\xi\,ds.
\end{align*}
\noindent As we will explain, it is natural to think of this object in relation to the operator $[A_{t}(\xi),\mc{B}(\xi)]$ where we set
\[
\mc{B}(\xi) = \frac{\partial}{\partial\xi}A_{t}(\xi) = -\xi^{-1}\delta_{t}m_{0}(df)
\]
so that
\begin{align*}
\frac{\partial}{\partial \xi}(A^{2}_{t}(\xi)) &= A_{t}(\xi)\frac{\partial}{\partial\xi}(A_{t}(\xi)) + \frac{\partial}{\partial\xi}(A_{t}(\xi))A_{t}(\xi) \\
&= [A_{t}(\xi),\mc{B}(\xi)] \\
&= -\xi^{-1}[A_{t}(\xi),\delta_{t}m_{0}(df)].
\end{align*}
We will rewrite the second term $\gamma_{2}(t)$ in the definition of $\epeta(t)$ so as to exhibit its relation with the operator $[A_{t}(\xi),\delta_{t}m_{0}(df)]$. The following proposition explains the geometric significance of the operator $[A_{t}(\xi),\delta_{t}m_{0}(df)]$.
\begin{proposition}\label{prop:hessian}
With $A(Y)$ the Bismut superconnection on the furled-up fiber bundle $Y \to B$, we have
\begin{equation}\label{eqn:hessian}
[A(Y),m_{0}(df)] = m_{0}(\con^{T^{*}Y}df) - 2\con_{(d_{Y/B}f)^{\#}}^{\mb{E}_{Y},0}
\end{equation}
and
\[
[A(Y)(\xi),m_{0}(df)] = m_{0}(\con^{T^{*}Y}df) - 2\con_{(d_{Y/B}f)^{\#}}^{\mb{E}_{Y},0} + 2\ln(\xi)|d_{Y/B}f|^{2},
\]
where $(d_{Y/B}f)^{\#}$ is the dual vector field with respect to the metric $g_{Y}^{0}$ on $Y$, cf.  (\ref{eqn:degenmetric}). Moreover, if the periodic end is cylindrical, i.e., $\tildhat{Y} = \mb{R} \times \partial Z$ and $f(r,q) = r$, then
\[
m_{0}(\con^{T^{*}Y}df) = \con_{(d_{Y/B}f)^{\#}}^{\mb{E}_{Y},0} = 0
\]
hence also $[A(Y),m_{0}(df)] = 0$.
\end{proposition}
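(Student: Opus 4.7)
The first identity generalizes the calculation in Lemma \ref{lemma:curvatureaction}, and I would proceed by the same local method. Writing $A(Y) = \sum_j m_0(e^j)\, \con^{\mb{E}_Y, 0}_{e_j}$ in a local orthonormal frame $(e_j)$ for $TY$ with dual coframe $(e^j)$, and noting that both $A(Y)$ and $m_0(df)$ are odd so $[A(Y), m_0(df)] = A(Y) m_0(df) + m_0(df) A(Y)$, I would expand using the Clifford compatibility
\[
\con^{\mb{E}_Y, 0}_{e_j}\!\left(m_0(df)\, u\right) = m_0\!\left(\con^{T^*Y}_{e_j} df\right) u + m_0(df)\, \con^{\mb{E}_Y, 0}_{e_j} u
\]
and regroup to obtain
\[
[A(Y), m_0(df)]\, u = \sum_j m_0(e^j)\, m_0\!\left(\con^{T^*Y}_{e_j} df\right) u + \sum_j \{m_0(e^j), m_0(df)\}\, \con^{\mb{E}_Y, 0}_{e_j} u.
\]
The first sum is by convention $m_0(\con^{T^*Y} df)\, u$, matching the notation from Lemma \ref{lemma:curvatureaction}. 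For the second, polarization of $m_0(\alpha)^2 = -g_0^Y(\alpha, \alpha)$ gives $\{m_0(e^j), m_0(df)\} = -2\, g_0^Y(e^j, df)$, and since $g_0^Y$ only pairs vertical components, summing against $\con^{\mb{E}_Y, 0}_{e_j}$ yields exactly $-2\, \con^{\mb{E}_Y, 0}_{(d_{Y/B} f)^{\#}}$, which closes the first identity.

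For the indicial family identity, substituting $A(Y)(\xi) = A(Y) - \log(\xi)\, m_0(df)$ from Proposition \ref{FLprop1} gives
\[
[A(Y)(\xi), m_0(df)] = [A(Y), m_0(df)] - \log(\xi)\, [m_0(df), m_0(df)].
\]
Since $m_0(df)$ is odd, $[m_0(df), m_0(df)] = 2 m_0(df)^2 = -2\, g_0^Y(df, df) = -2\, |d_{Y/B} f|^2$, producing the extra term $+2 \log(\xi)\, |d_{Y/B} f|^2$ as claimed.

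For the cylindrical case $Y = S^1 \times \partial Z$ with product metric and $f = \theta$, the 1-form $df = d\theta$ has constant length and direction and is therefore parallel under the Levi-Civita connection on $Y$, so $\con^{T^*Y} df = 0$ and hence $m_0(\con^{T^*Y} df) = 0$. The dual vertical vector field is $(d_{Y/B} f)^{\#} = \partial_\theta$, and since the Clifford module data on the cylindrical end is pulled back from $\partial Z$ with all structures $\theta$-translation invariant, $\con^{E_Y}_{\partial_\theta}$ is trivial; combined with the fact that $\partial_\theta$ is vertical (so the $\pi^*\con^B_{\partial_\theta}$ piece contributes nothing) and that the BGV tensor $\omega$ from \cite[Definition 10.5]{bgv} vanishes in the $\partial_\theta$-direction for the product geometry, one concludes $\con^{\mb{E}_Y, 0}_{\partial_\theta} = 0$, which gives $[A(Y), m_0(df)] = 0$. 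The main obstacle is the clean verification of this last vanishing directly from the definition of $\con^{\mb{E}_Y, 0}$, which amounts to checking that each of its three summands is trivial in the $\partial_\theta$-direction; as a safety net, the final identity $[A(Y), m_0(df)] = 0$ also follows from the $\theta$-translation invariance of $A(Y)$ and $m_0(d\theta)$ along the cylindrical end.
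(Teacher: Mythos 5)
Your two displayed identities are proved exactly as in the paper: the paper also expands $A(Y)=\sum_j m_0(e^j)\con^{\mb{E}_Y,0}_{e_j}$, uses the Clifford compatibility $[\con^{\mb{E}_Y,0}_{e_j},m_0(df)]=m_0(\con^{T^*Y}_{e_j}df)$ together with the degenerate Clifford relation $m_0(e^j)m_0(df)+m_0(df)m_0(e^j)=-2g^Y_0(e^j,df)$, and identifies $\sum_j g^Y_0(e^j,df)\con^{\mb{E}_Y,0}_{e_j}=\con^{\mb{E}_Y,0}_{(d_{Y/B}f)^{\#}}$; the passage to $A(Y)(\xi)$ via $-2\ln(\xi)m_0(df)^2=+2\ln(\xi)|d_{Y/B}f|^2$ is also identical. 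The only genuine divergence is in the cylindrical case. You propose to kill $\con^{\mb{E}_Y,0}_{\partial_\theta}$ by checking each of its three summands, which forces you to verify $\partial_\theta\lrcorner\,\omega=0$ from BGV's Definition 10.5 — the step you flag as the main obstacle. That check does go through (every component of $\omega$ contracted with $\partial_\theta$ is either a second-fundamental-form term $g(\con\partial_\theta,\cdot)$, which vanishes because $\partial_\theta$ is parallel for the product metric, or a horizontal-curvature term $g([e^H_\alpha,e^H_\beta]^V,\partial_\theta)$, which vanishes because the horizontal distribution is tangent to the $\partial Z$ factor), so your route is viable but requires unwinding the definition of $\omega$. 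The paper instead sidesteps this computation entirely: after noting that translation invariance kills the first two summands, it observes that the remaining equation equates a derivation with the $C^\infty(Y)$-linear operator $\tfrac12 m_0(\partial_r\lrcorner\,\omega)$ and concludes the vanishing from that structural mismatch. So your approach buys an explicit, hands-on verification at the cost of a BGV bookkeeping argument you have not carried out, while the paper's buys brevity; your fallback observation (translation invariance of $A(Y)$ and $m_0(dr)$ along the end) is also a legitimate way to get the final vanishing $[A(Y),m_0(df)]=0$ and is arguably the cleanest way to make this step airtight. No gap that would invalidate the proposal.
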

\begin{proof}
We start by establishing the identity (\ref{eqn:hessian}). The Bismut superconnection is by definition
\[
A(Y) = \sum_{j}m_{0}(e^{j})\con^{\mb{E}_{Y},0}_{e_{j}}
\]
for any local orthonormal frame $(e_{j})$ for $TY$. Note that
\[
[\con_{e_{j}}^{\mb{E}_{Y},0},m_{0}(df)] = m_{0}(\con_{e_{j}}^{T^{*}Y}df)
\]
because $\con^{\mb{E}_{Y},0}$ is a Clifford connection, i.e., compatible with the Clifford action $m_{0}$.  Moreover, since $A(Y)$ and $m_{0}(df)$ are odd operators their supercommutator is an anticommutator. Therefore
\begin{align*}
[A(Y),m_{0}(df)] &= A(Y)m_{0}(df) + m_{0}(df)A(Y) \\
&= \sum_{j}m_{0}(e^{j})\con^{\mb{E}_{Y},0}_{e_{j}}m_{0}(df) + m_{0}(df)m_{0}(e^{j})\con^{\mb{E}_{Y},0}_{e_{j}} \\
&= \sum_{j} m_{0}(e^{j})\con^{\mb{E}_{Y},0}_{e_{j}}m_{0}(df) + \left(-m_{0}(e^{j})m_{0}(df)-2g_{Y}^{0}(df,e^{j})\right)\con^{\mb{E}_{Y},0}_{e_{j}} \\
&= \sum_{j} m_{0}(e^{j})\left(\con^{\mb{E}_{Y},0}_{e_{j}}m_{0}(df)-m_{0}(df)\con^{\mb{E}_{Y},0}_{e_{j}}\right) - 2g_{Y}^{0}(df,e^{j})\con^{\mb{E}_{Y},0}_{e_{j}} \\
&= \sum_{j} m_{0}(e^{j})[\con^{\mb{E}_{Y},0}_{e_{j}},m_{0}(df)]-2g_{Y}^{0}(df,e^{j})\con^{\mb{E}_{Y},0}_{e_{j}} \\
&= \sum_{j} m_{0}(e^{j})m_{0}(\con_{e_{j}}^{T^{*}Y}df)-2g_{Y}^{0}(df,e^{j})\con^{\mb{E}_{Y},0}_{e_{j}} \\
&= m_{0}(\con^{T^{*}Y}df) - 2\con_{(d_{Y/B}f)^{\#}}^{\mb{E}_{Y},0}.
\end{align*}
Then the identity for $[A(Y)(\xi),m_{0}(df)]$ is an immediate consequence of (\ref{eqn:hessian}), because
\begin{align*}
[A(Y)(\xi),m_{0}(df)] &= [A(Y),m_{0}(df)] - 2\ln(\xi)m_{0}(df)^{2} \\
&= [A(Y),m_{0}(df)] + 2\ln(\xi)|d_{Y/B}f|^{2}.
\end{align*}
Suppose the periodic end is cylindrical. Then $f(r,q) = r$ is just the lateral coordinate along the cylinder and $df$ is a parallel covector field on $\tildhat{Y}$, so the Hessian of $f$ vanishes and $m_{0}(\con^{T^{*}Y}df) \equiv 0$. Moreover, $\grad f = \partial_{r}$ and by \cite{mp} equation (1.9) the connection on $\tildhat{E} \to \tildhat{Y}$ is translation invariant, in other words we have $\con^{\tildhat{E}}_{\partial_{r}} =p^{*}\con^{E_{Y}}_{\partial_{r}} = 0$, hence
\begin{align*}
\con_{\partial_{r}}^{\mb{E}_{Y},0} &= \pi^{*}\con^{T^{*}B}_{\partial_{r}}\otimes 1 + 1 \otimes \con^{E_{Y}}_{\partial_{r}} + \frac{1}{2}m_{0}(\partial_{r}\lrcorner\,\omega) \\
&= 0 + \frac{1}{2}m_{0}(\partial_{r}\lrcorner\,\omega).
\end{align*}
Note that the left-hand side of this equation is a derivation (i.e. satisfies a product rule with respect to multiplication by scalar functions) whereas the right-hand side is $C^{\infty}(Y)$-linear, so we must have $\con_{\partial_{r}}^{\mb{E}_{Y},0} = 0$. Thus $[A(Y),m_{0}(df)] = 0$ when the end is cylindrical.
\end{proof}

We will show later (cf. Proposition \ref{prop:hessian2}) that the vanishing of the Hessian $m_{0}(\con^{T^{*}Y}df)$ is not just necessary but also sufficient to deduce that the end-periodic fiber bundle is cylindrical. 

Now we will clarify the differential form $\gamma_{2}(t)$ by rewriting it so as to exhibit its relation with the operator $[A_{t}(\xi),\delta_{t}m_{0}(df)]$. First, apply Duhamel's formula to the $\xi$-derivative of the heat operator:
\begin{align*}
2\pi i \gamma_{2}(t) = -\int_{0}^{1}\int_{0}^{1}\oint_{S^{1}}s\Str\bigg(e^{-suA_{t}^{2}(\xi)}& [A_{t}(\xi),\mc{B}(\xi)]e^{-s(1-u)A_{t}^{2}(\xi)} \\
&\cdot \frac{d}{dt}(A_{t}^{2}(\xi))e^{-(1-s)A_{t}^{2}(\xi)}\bigg)\,d\xi\,ds\,du
\end{align*}
which after cycling these even operators under the $\Str$ becomes
\begin{align*}
2\pi i \gamma_{2}(t) = -\int_{0}^{1}\int_{0}^{1}\oint_{S^{1}}s\Str\bigg(& e^{-s(1-u)A_{t}^{2}(\xi)} \frac{d}{dt}(A_{t}^{2}(\xi)) \\
&\hspace{3mm} \cdot e^{-(1-s(1-u))A_{t}^{2}(\xi)}[A_{t}(\xi),\mc{B}(\xi)]\bigg)\,d\xi\,ds\,du. \numberthis \label{eqn:newtermdoubleint}
\end{align*}
We introduce the shorthand
\[
F(s(1-u)) = \Str\bigg( e^{-s(1-u)A_{t}^{2}(\xi)} \frac{d}{dt}(A_{t}^{2}(\xi))\cdot e^{-(1-s(1-u))A_{t}^{2}(\xi)}[A_{t}(\xi),\mc{B}(\xi)]\bigg).
\]
Making the change of variables $v = s(1-u)$, the double integral (\ref{eqn:newtermdoubleint}) becomes
\begin{align*}
\int_{0}^{1}&\int_{0}^{1}sF(s(1-u))\,ds\,du = -\int_{0}^{1}\int_{0}^{s}sF(v)\cdot\frac{-1}{s}\,dv\,ds \\
&= \int_{0}^{1}\int_{0}^{s}F(v)\,dv\,ds = \int_{0}^{1}\int_{v}^{1}F(v)\,ds\,dv = \int_{0}^{1}(1-v)F(v)\,dv.
\end{align*}
Therefore 
\begin{align*}
2\pi i \gamma_{2}(t) &= -\int_{0}^{1}\int_{0}^{1}\oint_{S^{1}}sF(s(1-u))\,d\xi\,ds\,du \\
&= -\int_{0}^{1}\oint_{S^{1}}(1-v)F(v)\,d\xi\,dv \\
&= \int_{0}^{1}\oint_{S^{1}}(v-1)\Str\left(e^{-vA_{t}^{2}(\xi)}\frac{d}{dt}(A_{t}^{2}(\xi))e^{-(1-v)A_{t}^{2}(\xi)}[A_{t}(\xi),\mc{B}(\xi)]\right)d\xi\,dv \\
&= \oint_{S^{1}}\Str\left(\int_{0}^{1}(v-1)e^{-vA_{t}^{2}(\xi)}\frac{d}{dt}(A_{t}^{2}(\xi))e^{-(1-v)A_{t}^{2}(\xi)}\,dv\cdot[A_{t}(\xi),\mc{B}(\xi)]\right)d\xi \\
&= \oint_{S^{1}}\Str\left(\frac{d}{dt}\int_{0}^{1}e^{-uA_{t}^{2}(\xi)}\,du\cdot[A_{t}(\xi),\mc{B}(\xi)]\right)d\xi \\
&= \oint_{S^{1}}\Str\left(\dt{\mc{H}}_{t}(\xi)[A_{t}(\xi),\mc{B}(\xi)]\right)d\xi
\end{align*}
where we define the operator
\[
\mc{H}_{t}(\xi) = \int_{0}^{1}e^{-uA_{t}^{2}(\xi)}\,du.
\]
Note that
\[
[A_{t}(\xi),\mc{B}(\xi)]e^{-uA_{t}^{2}(\xi)} = [A_{t}(\xi)\mc{B}(\xi),e^{-uA_{t}^{2}(\xi)}] + [A_{t}(\xi)e^{-uA_{t}^{2}(\xi)},\mc{B}(\xi)]
\]
for any $u > 0$, hence also
\[
[A_{t}(\xi),\mc{B}(\xi)]\mc{H}_{t}(\xi) = [A_{t}(\xi)\mc{B}(\xi),\mc{H}_{t}(\xi)] + [A_{t}(\xi)\mc{H}_{t}(\xi),\mc{B}(\xi)]
\]
and therefore
\[
\Str\left([A_{t}(\xi),\mc{B}(\xi)]\mc{H}_{t}(\xi)\right) = 0.
\]
Differentiating this latter equation with respect to $t$ yields
\begin{align*}
0 &= \frac{d}{dt}\oint_{S^{1}}\Str\left([A_{t}(\xi),\mc{B}(\xi)]\mc{H}_{t}(\xi)\right)d\xi \\
&= \oint_{S^{1}}\Str\left(\frac{d}{dt}[A_{t}(\xi),\mc{B}(\xi)]\mc{H}_{t}(\xi)\right)d\xi + \oint_{S^{1}}\Str\left([A_{t}(\xi),\mc{B}(\xi)]\dt{\mc{H}}_{t}(\xi)\right)d\xi \\
&= \oint_{S^{1}}\Str\left(\frac{d}{dt}[A_{t}(\xi),\mc{B}(\xi)]\mc{H}_{t}(\xi)\right)d\xi + 2\pi i \gamma_{2}(t),
\end{align*}
hence
\begin{align*}
2\pi i \gamma_{2}(t) &= -\oint_{S^{1}}\Str\left(\frac{d}{dt}[A_{t}(\xi),\mc{B}(\xi)]\mc{H}_{t}(\xi)\right)d\xi \\
&= \oint_{S^{1}}\Str\left(\frac{d}{dt}[A_{t}(\xi),\delta_{t}m_{0}(df)]\mc{H}_{t}(\xi)\right)\frac{d\xi}{\xi},
\end{align*}
and this completes our derivation of the transgression formula.

\begin{theorem}[Transgression formula]\label{mainthm1}
Let $M \to B$ be an end-periodic fiber bundle and let $E \to M$ be a family of end-periodic Clifford modules, with $D$ the associated family of end-periodic Dirac operators. Let $A_{t}$ be the rescaled end-periodic Bismut superconnection on $\mb{E} \to M$ adapted to $D$. Then we have the following transgression formula for the renormalized Chern character:
\begin{align*}\numberthis \label{eqn:finaltrans}
\frac{d}{dt}\rch(A_{t}) &= \frac{1}{2\pi i}\frac{d}{dt}\oint_{|\xi|=1}\int_{W_{0}/B}f(x)\str\left(K_{e^{-A_{t}^{2}(Y)(\xi)}}(x,x)\right)dx\frac{d\xi}{\xi} \\
& \hspace{6mm} - \frac{1}{2}\epeta(t) -d_{B}\left(\frac{1}{2\pi i}\oint_{|\xi|=1}\Str\left(\frac{\partial \mc{Q}_{\xi}}{\partial\xi}\right)d\xi\right),
\end{align*}
where the end-periodic eta form is given by
\begin{align*}
\epeta(t) &= \frac{-1}{\pi i}\oint_{|\xi|=1}t^{1/2}\Str\left(\cl(d_{Y/B}f)\dt{A}_{t}(Y)(\xi)e^{-A_{t}^{2}(Y)(\xi)}\right)\frac{d\xi}{\xi} \\
&\hspace{6mm} - \frac{1}{\pi i}\oint_{|\xi|=1}\Str\left(\frac{d}{dt}[A_{t}(Y)(\xi),\delta_{t}m_{0}(df)]\mc{H}_{t}(\xi)\right)\frac{d\xi}{\xi},
\end{align*}
with $\displaystyle \mc{H}_{t}(\xi) = \int_{0}^{1}e^{-uA_{t}^{2}(\xi)}\,du.$
\end{theorem}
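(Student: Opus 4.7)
The plan is to start from $\rch(A_{t}) = \rStr(e^{-A_{t}^{2}})$ and apply Duhamel's formula to write
\[
\frac{d}{dt}\rch(A_{t}) = -\rStr[A_{t},\dt{A}_{t}e^{-A_{t}^{2}}] - \int_{0}^{1}\rStr\Bigl[e^{-sA_{t}^{2}},\tfrac{d}{dt}(A_{t}^{2})e^{-(1-s)A_{t}^{2}}\Bigr]\,ds,
\]
then split $A_{t} = (A_{t}-A_{[1]}) + A_{[1]}$ inside the first bracket so that the right-hand side becomes $-\alpha(t) - \beta(t) - \gamma(t)$ with $\alpha(t) = \rStr[\mc{P},\mc{Q}]$, $\beta(t) = \rStr[A_{[1]},\mc{Q}]$, and $\gamma(t)$ the Duhamel correction. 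The point of this decomposition is that every bracket on the right is a supercommutator of asymptotically periodic smoothing operators (using Proposition \ref{prop:moreapops} and Proposition \ref{prop:alsoholds} for the $A_{[1]}$ piece), so the renormalized supertrace defect formula of Proposition \ref{strdefect} applies and produces six Fourier–Laplace integrals $\alpha_{1}-\alpha_{2}$, $\beta_{1}-\beta_{2}$, $\gamma_{1}-\gamma_{2}$.

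The central algebraic step, and what I expect to be the main obstacle, is to show
\[
\alpha_{1}(t) + \beta_{1}(t) + \gamma_{1}(t) = -\frac{1}{2\pi i}\frac{d}{dt}\oint_{|\xi|=1}\int_{W_{0}/B} f(x)\str\bigl(K_{e^{-A_{t}^{2}(Y)(\xi)}}(x,x)\bigr)\,dx\,\frac{d\xi}{\xi}.
\]
The plan here is to use Duhamel's formula on the $\xi$-parameter indicial heat operator $e^{-A_{t}^{2}(Y)(\xi)}$ to express its $t$-derivative in terms of $\str\left(K_{(A_{t}(\xi)\dt{\mc{P}}_{\xi} + \dt{\mc{P}}_{\xi}A_{t}(\xi))e^{-A_{t}^{2}(\xi)}}\right)$ plus a commutator defect $\int_{0}^{1}\str(K_{H_{1}H_{2}}-K_{H_{2}H_{1}})\,ds$; then isolate the term $\str(K_{\mc{P}_{\xi}\mc{Q}_{\xi}}+K_{\mc{Q}_{\xi}\mc{P}_{\xi}})$ using the odd-even identity $[B,A]C = A[B,C] + [B,AC]$ applied to $A = \dt{\mc{P}}_{\xi}$, $B = A_{[1]}(Y)(\xi)$, $C = e^{-A_{t}^{2}(\xi)}$. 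The crucial cancellation is that after reorganizing, one supercommutator collapses using $A_{[1]}(Y)(\xi) + \mc{P}_{\xi} = A_{t}(Y)(\xi)$, leaving exactly the combination that matches $\beta_{1}+\gamma_{1}$.

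Next I would handle $\beta_{2}$: decomposing the connection $A_{[1]}(Y)(\xi) = d + \varphi$ and integrating by parts in $\xi$, the supercommutator $\Str[\varphi, \partial_{\xi}\mc{Q}_{\xi}]$ vanishes and what remains is $-d_{B}$ applied to $\frac{1}{2\pi i}\oint \Str(\partial_{\xi}\mc{Q}_{\xi})\,d\xi$, yielding the exact form on the right-hand side of \eqref{eqn:finaltrans}. The remaining terms $-\alpha_{2}(t) - \gamma_{2}(t)$ are then defined to be $\frac{1}{2}\epeta(t)$. Substituting $\mc{P}_{\xi} = t^{1/2}D(Y)(\xi) + t^{-1/2}A_{[2]}(Y)$ gives $\partial_{\xi}\mc{P}_{\xi} = -t^{1/2}\xi^{-1}\cl(d_{Y/B}f)$, producing the first displayed piece of $\epeta(t)$.

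Finally I would recast $\gamma_{2}(t)$ in the geometric form claimed in the statement. Apply Duhamel to $\partial_{\xi}(e^{-sA_{t}^{2}(\xi)})$ and cycle the even operators under $\Str$ to rewrite
\[
2\pi i\,\gamma_{2}(t) = -\int_{0}^{1}\!\!\int_{0}^{1}\oint_{S^{1}} s\,\Str\!\left(e^{-s(1-u)A_{t}^{2}(\xi)}\tfrac{d}{dt}(A_{t}^{2}(\xi))e^{-(1-s(1-u))A_{t}^{2}(\xi)}[A_{t}(\xi),\mc{B}(\xi)]\right) d\xi\,ds\,du,
\]
where $\mc{B}(\xi) = \partial_{\xi}A_{t}(\xi) = -\xi^{-1}\delta_{t}m_{0}(df)$. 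The change of variables $v = s(1-u)$ collapses the double $s,u$-integral into a single integral producing $\dt{\mc{H}}_{t}(\xi)$, with $\mc{H}_{t}(\xi) = \int_{0}^{1}e^{-uA_{t}^{2}(\xi)}\,du$. The identity $\Str([A_{t}(\xi),\mc{B}(\xi)]\mc{H}_{t}(\xi)) = 0$ (from $[A_{t}(\xi),\mc{B}(\xi)]\mc{H}_{t}(\xi) = [A_{t}(\xi)\mc{B}(\xi),\mc{H}_{t}(\xi)] + [A_{t}(\xi)\mc{H}_{t}(\xi),\mc{B}(\xi)]$, both supertraceless) is then differentiated in $t$ to swap the $\dt{\mc{H}}_{t}$ onto the bracket, giving the stated expression $\oint \Str\left(\frac{d}{dt}[A_{t}(\xi),\delta_{t}m_{0}(df)]\mc{H}_{t}(\xi)\right)\frac{d\xi}{\xi}$ with the correct $\xi^{-1}$ weighting absorbed via $\mc{B}(\xi) = -\xi^{-1}\delta_{t}m_{0}(df)$.
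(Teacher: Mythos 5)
Your proposal is correct and follows essentially the same route as the paper: the same Duhamel decomposition into $\alpha,\beta,\gamma$, the same key lemma combining Duhamel on the indicial heat operator with the identity $[B,A]C = A[B,C]+[B,AC]$ and the collapse via $A_{[1]}(Y)(\xi)+\mc{P}_{\xi}=A_{t}(Y)(\xi)$, the same integration-by-parts argument showing $\beta_{2}$ is exact, and the same change of variables plus differentiation of $\Str([A_{t}(\xi),\mc{B}(\xi)]\mc{H}_{t}(\xi))=0$ to recast $\gamma_{2}$. (Minor note: $\mc{P}$ and $A_{[1]}$ are differential, not smoothing, operators, so the defect formula for those brackets requires the limiting argument of Proposition \ref{prop:alsoholds}, which you correctly invoke.)
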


Next we will show that the end-periodic eta form reduces to the Bismut-Cheeger eta form when the periodic end is cylindrical. In this case, the furled-up manifold is simply $Y = S^{1}\times \partial Z$ with infinite cyclic cover $\tildhat{Y} = \mb{R} \times \partial Z$. Our definition of $\epeta(t)$ involves the $\gamma_{2}(t)$ term which we initially defined as
\begin{align*}
\gamma_{2}(t) = \frac{1}{2\pi i}\int_{0}^{1}\int_{|\xi|=1} \Str\left( \frac{\partial}{\partial\xi}\left(e^{-sA_{t}^{2}(Y)(\xi)}\right)\frac{d}{dt}(A_{t}^{2}(Y)(\xi))e^{-(1-s)A_{t}^{2}(Y)(\xi)}\right)d\xi\,ds.
\end{align*}
In this integrand, the only contributing factors are those that produce a term which is \emph{even} in $\xi$, because the terms which are \emph{odd} in $\xi$ will integrate to zero by symmetry. So we can simplify this expression by looking at the involved operators and identifying the parts that are even and odd in $\xi$.

\begin{remark}[Even and odd functions in $\xi \in S^{1}$]
When we say that a function $h$ on $S^{1}\subseteq\mb{C}$ is \emph{odd} in $\xi$ we mean that it satisfies $h(\overline{\xi}) = -h(\xi)$ for all $\xi \in S^{1}$. For example, $\xi \mapsto \ln(\xi)$ is an odd function, whereas $\xi \mapsto \ln(\xi)^{2}$ is even. A straightforward calculation shows that if $h$ is odd in $\xi$ then
\[
\oint_{S^{1}}h(\xi)\frac{d\xi}{\xi} = 0.\vspace{-5mm}
\]
\end{remark}

Denote the lateral coordinate along the cylindrical end by $f(r,q) = r$ where $r \in [0,\infty)$. We also let $r$ denote the periodic coordinate along the $S^{1}$-direction of $Y$. In this case the indicial family of the Bismut superconnection is
\begin{align*}
A_{t}(Y)(\xi) &= A_{t}(Y) - \ln(\xi)\delta_{t}m_{0}(dr) \\
&= A_{t}(Y) - \ln(\xi)(t^{1/2}\cl(d_{Y/B}r)+\pi^{*}d_{B}r),
\end{align*}
with curvature
\begin{align*}
A_{t}^{2}(Y)(\xi) &= A_{t}^{2}(Y) - \ln(\xi)\delta_{t}[A(Y),m_{0}(dr)] - t\ln(\xi)^{2}|d_{Y/B}r|^2 \\
&= A_{t}^{2}(Y) - t\ln(\xi)^{2}
\end{align*}
where we have used that $[A(Y), m_{0}(dr)] = 0$ in the cylindrical end case by Proposition \ref{prop:hessian}. Owing to this simplification, we have
\[
e^{-A_{t}^{2}(Y)(\xi)} = e^{-t\ln(\xi)^{2}}e^{-A_{t}^{2}(Y)} = \mbox{even in }\, \xi.
\]
Moreover,
\begin{align*}
&\frac{\partial}{\partial \xi}(e^{-sA_{t}^{2}(Y)(\xi)}) = -2st\frac{\ln(\xi)}{\xi}e^{-st\ln(\xi)^{2}}e^{-sA_{t}^{2}(Y)} = \frac{1}{\xi}(\mbox{odd in }\, \xi), \\
&\frac{d}{dt}(A_{t}^{2}(Y)(\xi)) = \frac{d}{dt}(A_{t}^{2}(Y))-\ln(\xi)^{2}
= \mbox{even in }\, \xi.
\end{align*}
Therefore
\begin{equation}\label{eqn:cylparity2}
\gamma_{2}(t) = -\frac{1}{2\pi i}\int_{0}^{1}\oint_{S^{1}}\Str\left(\mathrm{odd}\cdot\mathrm{even}\cdot\mathrm{even}\right)\frac{d\xi}{\xi}\,ds = 0
\end{equation}
which is zero because the integrand is completely odd in $\xi$. 

Since $\gamma_{2}(t) = 0$ in the cylindrical end case, the $\epeta(t)$ form reduces to
\begin{align*}
\frac{1}{2}\epeta(t) &= \frac{-1}{2\pi i}\oint_{|\xi|=1} \Str\left(\frac{\partial \mc{P}_{\xi}}{\partial\xi} \mc{Q}_{\xi}\right) d\xi \\
&= \frac{-1}{2\pi i}\oint_{|\xi|=1}t^{1/2}\Str\left(\cl(dr)\dt{A}_{t}(Y)(\xi)e^{-A_{t}^{2}(Y)(\xi)}\right)\frac{d\xi}{\xi}.
\end{align*}
We introduce the shorthand $\mc{R} = \cl(dr)\dt{A_{t}}e^{-A_{t}^{2}}$ for the following calculation. Using Proposition \ref{newprop} we calculate
\begin{align*}
\frac{1}{2\pi i}\oint_{S^{1}}\Str(\mc{R}_{\xi}(Y))\frac{d\xi}{\xi} &\stackrel{(\ref{neweqn})}{=} \int_{W_{0}/B}\str(K_{\tildhat{\mc{R}}}(x,x))\,dx \\
&= \int_{0}^{1}\int_{\partial Z/B}\str(K_{\tildhat{\mc{R}}}((r,q),(r,q)))\,dq\,dr \\
&\stackrel{(\ref{neweqn2})}{=} \int_{0}^{1}\int_{\partial Z/B}\frac{1}{2\pi}\int_{\mb{R}}\str(K_{\mc{R}_{\theta}(\partial Z)}(q,q))\,d\theta\,dq\,dr \\
&= \frac{1}{2\pi}\int_{\mb{R}}\int_{\partial Z/B}\str(K_{\mc{R}_{\theta}(\partial Z)}(q,q))\,dq\,d\theta \\
&= \frac{1}{2\pi}\int_{\mb{R}}\Str(\mc{R}_{\theta}(\partial Z))\,d\theta.
\end{align*}
Looking at the Mellin transform-induced indicial family with parameter $\theta$ where $\xi = e^{i\theta}$, it is shown in \cite[Proposition 7]{mp} and \cite[Proposition 11]{mp} that
\begin{align*}
&A_{t}(\partial Z)(\theta) = A_{t}(\partial Z) -i\theta m_{0}(dr)\\
&A_{t}^{2}(\partial Z)(\theta) = A_{t}^{2}(\partial Z) + t\theta^{2} \\
&e^{-A_{t}^{2}(\partial Z)(\theta)} = e^{-t\theta^{2}}e^{-A_{t}^{2}(\partial Z)}.
\end{align*}
Therefore
\begin{align*}
\mc{R}_{\theta}(\partial Z) &= \cl(dr)\dt{A_{t}}(\partial Z)(\theta)e^{-A_{t}^{2}(\partial Z)(\theta)} \\
&= \cl(dr)\left[\frac{-i}{2}\theta t^{-1/2}\cl(dr)+\dt{A_{t}}(\partial Z)\right]e^{-t\theta^{2}}e^{-A_{t}^{2}(\partial Z)},
\end{align*}
hence
\begin{align*}
\epeta(t) &= -\frac{1}{\pi i}\oint_{S^{1}}t^{1/2}\Str(\mc{R}_{\xi}(Y))\frac{d\xi}{\xi} \\
&= -\frac{1}{\pi}\int_{\mb{R}}\Str(\mc{R}_{\theta}(\partial Z))\,d\theta \\
&= -\frac{1}{\pi}\int_{\mb{R}}\frac{i}{2}\theta e^{-t\theta^{2}}\,d\theta\cdot \Str(e^{-A_{t}^{2}(\partial Z)}) \\
&\hspace{12mm} - \frac{1}{\pi}\int_{\mb{R}}e^{-t\theta^{2}}\,d\theta \cdot t^{1/2}\Str\left(\cl(dr)\dt{A_{t}}(\partial Z)e^{-A_{t}^{2}(\partial Z)}\right) \\
&= 0 - \frac{1}{\sqrt{\pi}}\Str\left(\cl(dr)\dt{A_{t}}(\partial Z)e^{-A_{t}^{2}(\partial Z)}\right)
\end{align*}
which is exactly the expression for the eta form in \cite{mp} equation (13.14).

\begin{remark}
As shown in \cite[\S 6.3]{mrs1}, the end-periodic eta invariant reduces to the classical APS eta invariant when the end is cylindrical. This is the degree zero part of what we have just shown above. The argument of \cite{mrs1} relies on the fact that the eta invariant can be readily expressed in terms of the boundary spectrum, and calculations can be carried out using a complete orthonormal system of eigenvectors for the boundary Dirac operator. In contrast, our proof that the end-periodic eta form reduces to the Bismut–Cheeger eta form follows a rather different approach, because the eta form includes differential form components of higher degree that are not amenable to spectral methods.
\end{remark}

Now we will show that the degree zero component of the end-periodic eta form -- the function $(\epeta)_{[0]} \in C^{\infty}(B)$ -- is the fiberwise end-periodic eta invariant of \cite[Theorem A]{mrs1}. In particular this implies that the index formula of Theorem \ref{mainthm1} reduces to the index formula of \cite[Theorem A]{mrs1} in the single operator case.

Considering a family of end-periodic Dirac operators $D = (D^{z})_{z\in B}$, the (time-dependent) end-periodic eta form in Theorem \ref{mainthm1} is
\begin{align*}
\epeta(t) &= \frac{-1}{\pi i}\oint_{|\xi|=1}t^{1/2}\Str\left(\cl(d_{Y/B}f)\dt{A}_{t}(Y)(\xi)e^{-A_{t}^{2}(Y)(\xi)}\right)\frac{d\xi}{\xi} \\
&\hspace{6mm} - \frac{1}{\pi i}\oint_{|\xi|=1}\Str\left(\frac{d}{dt}[A_{t}(Y)(\xi),\delta_{t}m_{0}(df)]\mc{H}_{t}(\xi)\right)\frac{d\xi}{\xi}.
\end{align*}
By replacing the Bismut superconnection $A$ with the Dirac opertor $D$, it is clear that the degree zero component of the first term is the fiberwise end-periodic eta invariant $z\mapsto \eta_{\mathrm{ep}}(D^{z}(Y_{z}))(t)$, cf. (\ref{eqn:epetainv}). It remains to show that the degree zero component of the second term is zero, i.e.,
\[
\int_{0}^{1}\rStr\left[e^{-stD^{2}},D^{2}e^{-(1-s)tD^2}\right]ds = 0.
\]
This can be shown by the following direct calculation. We set $H_{1} = e^{-stD^{2}}$ and $H_{2} = D^{2}e^{-(1-s)tD^{2}}$, and write $H_{j}(\xi) = H_{j}(Y)(\xi)$ and $D_{\xi} = D(Y)(\xi)$ for brevity. The supertrace defect formula yields
\begin{align*}
\rStr[H_{1},H_{2}] &= \frac{1}{2\pi i}\int_{W_{0}}\oint_{S^{1}}f(x)\str(K_{H_{1}(\xi)H_{2}(\xi)}(x,x)-K_{H_{2}(\xi)H_{1}(\xi)}(x,x))\frac{d\xi}{\xi}\,dx \\
&\hspace{4mm} -\frac{1}{2\pi i}\oint_{S^{1}}\Str\left(\frac{\partial}{\partial\xi}(H_{1}(\xi))\cdot H_{2}(\xi)\right)d\xi.
\end{align*}
Note that $H_{1}(\xi)H_{2}(\xi) = H_{2}(\xi)H_{1}(\xi)$,  so the first term in $\rStr[H_{1},H_{2}]$ vanishes. As for the second term in $\rStr[H_{1},H_{2}]$, we have
\[
D_{\xi}^{2} = D^{2}-\ln(\xi)[D,\cl(d_{Y/B}f)]-\ln(\xi)^{2}|d_{Y/B}f|^{2} = D^{2} + \mc{C}(\xi)
\]
where we write $\mc{C}(\xi) = -\ln(\xi)[D,\cl(d_{Y/B}f)]-\ln(\xi)^{2}|d_{Y/B}f|^{2}$. Therefore
\begin{align*}
\frac{\partial}{\partial\xi}(H_{1}(\xi)) = -\int_{0}^{1}ste^{-ustD_{\xi}^{2}}\partial_{\xi}(\mc{C}(\xi))e^{-(1-u)stD_{\xi}^{2}}\,du
\end{align*}
and
\begin{align*}
\oint_{S^{1}}& \Str\left(\frac{\partial}{\partial\xi}(H_{1}(\xi))\cdot H_{2}(\xi)\right)d\xi \\
&= -\int_{0}^{1}\oint_{S^{1}}st\Str\left(e^{-ustD_{\xi}^{2}}\partial_{\xi}(\mc{C}(\xi))e^{-(1-u)stD_{\xi}^{2}}D_{\xi}^{2}e^{-tD_{\xi}^{2}} \right)d\xi\,du \\
&= -st\oint_{S^{1}}\Str\left(\partial_{\xi}(\mc{C}(\xi))D_{\xi}^{2}e^{-tD_{\xi}^{2}}\right)d\xi.
\end{align*}
Note that
\[
\partial_{\xi}(\mc{C}(\xi)) = \frac{2}{\xi}(\cl(\con^{T^{*}Y}df)-\ln(\xi)|d_{Y/B}f|^{2})
\]
and so
\begin{align*}
\oint_{S^{1}}\Str\left(\partial_{\xi}(\mc{C}(\xi))D_{\xi}^{2}e^{-tD_{\xi}^{2}}\right)d\xi &= 2\oint_{S^{1}}\Str\left(\cl(\con^{T^{*}Y}df)D_{\xi}^{2}e^{-tD_{\xi}^{2}}\right)\frac{d\xi}{\xi} \\
&\hspace{4mm} -2|d_{Y/B}f|^{2}\oint_{S^{1}}\Str(D_{\xi}^{2}e^{-tD_{\xi}^{2}})\ln(\xi)\,\frac{d\xi}{\xi}.
\end{align*}
Moreover we see that
\[
\Str\left(\cl(\con^{T^{*}Y}df)D_{\xi}^{2}e^{-tD_{\xi}^{2}}\right) = 0
\]
because the operator inside the supertrace is odd with respect to the $\mb{Z}_{2}$-grading, and
\[
\Str(D_{\xi}^{2}e^{-tD_{\xi}^{2}}) = \frac{1}{2}\Str[D_{\xi},D_{\xi}e^{-tD_{\xi}^{2}}] = 0.
\]
In total this means
\[
\oint_{S^{1}} \Str\left(\frac{\partial}{\partial\xi}(H_{1}(\xi))\cdot H_{2}(\xi)\right)d\xi = 0
\]
and hence that $\rStr[H_{1},H_{2}] = 0$. Therefore the degree zero component of the (time-dependent) end-periodic eta form is
\begin{align*}
\epeta(t)_{[0]} &= \frac{-1}{\pi i}\oint_{|\xi|=1}t^{1/2}\Str\left(\cl(d_{Y/B}f)\frac{1}{2t^{1/2}}D_{\xi}e^{-tD_{\xi}^{2}}\right)\frac{d\xi}{\xi} \\
&= \frac{-1}{\pi i}\oint_{|\xi|=1}\Tr\left(\cl(d_{Y/B}f)D_{\xi}^{+}e^{-tD^{-}_{\xi}D_{\xi}^{+}}\right)\frac{d\xi}{\xi}\numberthis\label{degreezero}
\end{align*}
which is the function $z \mapsto \eta_{\mathrm{ep}}(D^{z}(Y_{z}))(t) \in \mb{R}$, mapping any $z \in B$ to the scalar (time-dependent) end-periodic eta invariant of the fiber over $z$. 

\subsection{The Hessian of a periodic end}\label{section:hessian}

In \S \ref{section:epeta} we showed that the term $\gamma_{2}(t)$ in the end-periodic eta form can be expressed in terms of the operator
\[
[A(Y),m_{0}(df)] = m_{0}(\con^{T^{*}Y}df) - 2\con_{(d_{Y/B}f)^{\#}}^{\mb{E}_{Y},0}.
\]
Moreover we showed in Proposition \ref{prop:hessian} that if $[A(Y),m_{0}(df)]$ vanishes then the Hessian $m_{0}(\con^{T^{*}Y}df)$ vanishes, i.e., $d_{Y/B}f$ is a parallel 1-form along the fibers of $Y \to B$, which is reminiscent of the parallel 1-form $dr$ along a cylindrical end. On the other hand, if $[A(Y),m_{0}(df)]$ vanishes then the end-periodic eta form reduces to
\[
\epeta(t) = \frac{-1}{\pi i}\oint_{|\xi|=1}t^{1/2}\Str\left(\cl(dr)\dt{A}_{t}(Y)(\xi)e^{-A_{t}^{2}(Y)(\xi)}\right)\frac{d\xi}{\xi}
\]
which is structurally similar to the Bismut-Cheeger eta form in the cylindrical end case. This leads to the question of whether the periodic end is actually cylindrical in this case. We will show that the periodic end is indeed cylindrical whenever $m_{0}(\con^{T^{*}Y}df) \equiv 0$; in other words, when the periodic end is noncylindrical then $m_{0}(\con^{T^{*}Y}df) \not\equiv 0$ which suggests that the new term in the end-periodic eta form is in general nonzero.

We recall from \S \ref{section:EPgeometry} the construction of the infinite cyclic covering space $\tildhat{Y} \to Y$. Let $(Y,g_{Y})$ be a smooth, closed, connected Riemannian manifold. One chooses a primitive cohomology class in $H^{1}(Y,\mathbb{Z})$, or equivalently a smooth map $f_{0}: Y \to S^{1}$. Corresponding to this class, the infinite cyclic covering space $p: \tildhat{Y} \to Y$ is defined via the pullback diagram
\[
\begin{tikzcd}
	\tildhat{Y} & \mb{R} \\
	Y & {S^{1}.}
	\arrow["f", from=1-1, to=1-2]
	\arrow["p"', from=1-1, to=2-1]
	\arrow["\exp", from=1-2, to=2-2]
	\arrow["f_{0}"', from=2-1, to=2-2]
\end{tikzcd}
\]
From the pullback diagram we see that the differential $df \in \Omega^{1}(\tildhat{Y})$ descends to the 1-form $f_{0}^{*}(d\theta)$ on $Y$, in other words we have $df = p^{*}(f_{0}^{*}(d\theta))$. We lift the geometric data from $Y$ up to $\tildhat{Y}$ via pullback: the metric $g$ on $TY$ lifts to a metric on $T\tildhat{Y}$, and the Levi-Civita connection on $T\tildhat{Y}$ with respect to the lifted metric is the lift of the Levi-Civita connection on $TY$.

Motivated by the remarks at the beginning of this subsection we will consider the special case where $df$ is a parallel 1-form on $\tildhat{Y}$, i.e., $\con^{T^{*}\tildhat{Y}} df = 0$ with respect to the Levi-Civita connection on $\tildhat{Y}$. Note that $p$ is a Riemannian covering map when $\tildhat{Y}$ is equipped with the pullback metric $p^{*}g_{Y}$, and so $df$ is parallel on $\tildhat{Y}$ if and only if $f_{0}^{*}(d\theta)$ is parallel on $Y$.

\begin{proposition}\label{prop:hessian2}
Suppose that $df$ is a parallel 1-form on $\tildhat{Y}$. Then $f_{0}: Y \to S^{1}$ defines a smooth fiber bundle $F - Y \xrightarrow{f_{0}} S^{1}$, and the infinite cyclic covering space is a cylinder $\tildhat{Y} = \mb{R} \times F$.
\end{proposition}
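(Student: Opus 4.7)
The plan is to show that the dual vector field $X = (df)^{\sharp} = \grad f$ on $\tildhat{Y}$ is a complete, nowhere-vanishing parallel vector field whose flow produces a diffeomorphism $\mathbb{R}\times F \to \tildhat{Y}$, and that this diffeomorphism descends to exhibit $f_{0}$ as a mapping-torus fiber bundle.

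First I would observe that the hypothesis $\con^{T^{*}\tildhat{Y}}df = 0$ together with compatibility of the Levi-Civita connection with the metric gives $\con^{T\tildhat{Y}}X = 0$, so $X$ is parallel. Since $df = p^{*}(f_{0}^{*}d\theta)$ represents a primitive (hence nonzero) integral cohomology class on $Y$, the form $df$ is not identically zero, and parallelism forces $|X|^{2} = |df|^{2} = c^{2}$ to be a positive constant on all of $\tildhat{Y}$. Because $\tildhat{Y}$ is complete (being a Riemannian covering of the closed manifold $Y$), the bounded-norm vector field $X$ has a complete flow $\phi_{s}$, and from $X(f) = |X|^{2} = c^{2}$ we obtain $f(\phi_{s}(x)) = f(x) + c^{2}s$ for all $s \in \mathbb{R}$.

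Next, let $F = f^{-1}(0) \subset \tildhat{Y}$, which is a smooth closed embedded hypersurface by the nonvanishing of $df$. Define $\Phi \colon \mathbb{R}\times F \to \tildhat{Y}$ by $\Phi(s,q) = \phi_{s/c^{2}}(q)$. Injectivity follows because $f\circ \Phi(s,q) = s$ recovers $s$, and then $q = \phi_{-s/c^{2}}(\Phi(s,q))$; surjectivity follows from completeness of the flow, since any $x \in \tildhat{Y}$ satisfies $\phi_{-f(x)/c^{2}}(x) \in F$; and $\Phi$ is a local diffeomorphism because $X$ is transverse to $F$. Hence $\Phi$ is a global diffeomorphism, establishing $\tildhat{Y} \cong \mathbb{R}\times F$.

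To descend to $Y$, let $\tau \colon \tildhat{Y} \to \tildhat{Y}$ denote the generator of the deck group, so that $f\circ\tau = f + 1$. Then $\tau^{*}df = df$, and since $\tau$ is a Riemannian isometry it intertwines the musical isomorphism with itself, giving $\tau_{*}X = X$; consequently $\tau$ commutes with $\phi_{s}$. Setting $\sigma \colon F \to F$, $\sigma(q) = \phi_{-1/c^{2}}(\tau(q))$, a direct computation gives $\tau(\Phi(s,q)) = \Phi(s+1, \sigma(q))$, so $Y = \tildhat{Y}/\mathbb{Z}$ is the mapping torus of $\sigma$. The pullback diagram then identifies $f_{0}$ with the resulting projection to $S^{1}$, producing the fiber bundle $F \to Y \xrightarrow{f_{0}} S^{1}$. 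The main obstacle is conceptual rather than technical: one has to recognize that $\con df = 0$ upgrades $f$ from merely harmonic to having a parallel gradient, and that completeness of the flow, which is guaranteed by constancy of $|X|$ on a complete manifold, is precisely what promotes the local product structure near $F$ to a global one. Once this is in hand the descent is routine.
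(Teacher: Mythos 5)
Your argument is correct, but it takes a genuinely different route from the paper. The paper works downstairs: differentiating the pullback diagram, it uses only the weakest consequence of parallelism --- that $df$ is nowhere vanishing --- to conclude that $f_{0}$ is a surjective proper submersion of the compact manifold $Y$ onto $S^{1}$, invokes Ehresmann's fibration theorem to get the fiber bundle $F - Y \xrightarrow{f_{0}} S^{1}$, and then obtains the cylinder structure for free, since $\tildhat{Y}$ is the pullback of this bundle over the contractible base $\mb{R}$ and is therefore trivial. You work upstairs: parallelism gives $\grad f$ constant (positive) norm, hence a complete flow on the complete manifold $\tildhat{Y}$, and flowing out of $F = f^{-1}(0)$ produces an explicit diffeomorphism $\mb{R}\times F \simeq \tildhat{Y}$; you then descend through the deck transformation (which preserves $\grad f$ and so commutes with the flow) to present $Y$ as a mapping torus, identifying $f_{0}$ with the mapping-torus projection. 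Your approach uses the full strength of the hypothesis $\con^{T^{*}\tildhat{Y}}df = 0$ (constancy of $|df|$ is what makes the flow complete and the parametrization exact), but in return it is constructive: it produces the trivialization, the fiber, and the monodromy $\sigma$ explicitly, and it avoids appealing to Ehresmann. The paper's proof is shorter and shows the conclusion already follows from $df$ being nowhere zero. The verifications you leave implicit (constancy of $|df|^{2}$ from $d|df|^{2} = 2(\con df, df) = 0$ on the connected cover, $\tau_{*}\grad f = \grad f$ because $\tau$ is an isometry with $\tau^{*}df = df$, and $\sigma(F) \subseteq F$) all check out, so there is no gap.
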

\begin{proof}
Differentiating the pullback diagram yields
\[
d(\exp)\circ df = df_{0}\circ dp.
\]
The fact that $df$ is parallel (and nonzero) implies that $df$ is a nonvanishing 1-form, hence $f$ is a smooth submersion. Since the covering maps $\exp$ and $p$ are local diffeomorphisms, $f_{0}$ is also a smooth submersion. Moreover, $f_{0}$ is surjective because its image $f_{0}(Y)$ is both open and closed in $S^{1}$, and $f_{0}$ is a proper map because $Y$ is compact. By Ehresmann's fibration theorem \cite[Theorem 9.3]{voisin} it follows that $f_{0}: Y \to S^{1}$ is a smooth locally trivial fibration, which in this case implies that it is in fact a smooth fiber bundle, because $Y$ and $S^{1}$ are smooth compact manifolds.

Now the infinite cyclic covering space is by definition the pullback of the fiber bundle $Y \to S^{1}$ along $\exp: \mathbb{R} \to S^{1}$, so $\tildhat{Y}$ is a smooth fiber bundle over $\mathbb{R}$. But $\mathbb{R}$ is contractible so $\tildhat{Y}$ is a trivial fiber bundle over $\mathbb{R}$.
\end{proof}

As a result, if the end-periodic fiber bundle has $m_{0}(\con^{T^{*}Y}df) \equiv 0$, then the fibers of the infinite cyclic covering $\tildhat{Y} \to B$ are all cylinders and thus the periodic end is actually a cylindrical end. Combining this with Proposition \ref{prop:hessian}, we have shown that the end-periodic fiber bundle is cylindrical if and only if the Hessian $m_{0}(\con^{T^{*}Y}df)$ vanishes identically.

\section{Chern character of the index bundle}\label{section:indexthm}
In this section we will integrate our transgression formula (\ref{eqn:finaltrans}) to obtain a local index formula in the end-periodic family setting, following the approach of \cite[\S 9.3]{bgv} and \cite[\S 15]{mp}.

Integrating the transgression formula (\ref{eqn:finaltrans}) yields
\begin{align*}\numberthis\label{integratedtrans}
\lim_{t \to \infty} &\rch(A_{t}) - \lim_{t \to 0^{+}} \rch(A_{t}) = \\
& \lim_{t \to \infty}(\dag) - \lim_{t \to 0^{+}}(\dag) - \int_{0}^{\infty}\frac{1}{2}\epeta(t)\,dt - d_{B}\left(\frac{1}{2\pi i}\int_{0}^{\infty}\oint_{|\xi|=1}\Str\left(\frac{\partial \mc{Q}_{\xi}}{\partial\xi}\right)  d\xi\,dt\right)
\end{align*}
where we have denoted
\[
(\dag) = \frac{1}{2\pi i}\oint_{|\xi|=1}\int_{W_{0}/B}f(x)\str\left(K_{e^{-A_{t}^{2}(Y)(\xi)}}(x,x)\right)dx\frac{d\xi}{\xi}.
\]
We will show that the long-time and short-time limits of the renormalized Chern character converge to
\begin{align*}
\lim_{t \to \infty} \rch(A_{t}) - \lim_{t \to 0^{+}} \rch(A_{t}) = \ch(\nabla_{0}) - \int_{Z/B}\AS(D(Z)).
\end{align*}
On the right-hand side of (\ref{integratedtrans}), we will show that the limits converge to 
\begin{align*}
\lim_{t \to \infty}(\dag) - \lim_{t \to 0^{+}}(\dag) = 0 - \int_{W_{0}/B}f\,\AS(D(Y)).
\end{align*}
We define the end-periodic eta form as the integral over $t \in (0,\infty)$ of $\epeta(t)$,
\[
\epeta = \int_{0}^{\infty} \epeta(t)\,dt.
\]
Finally, the remaining exact term in the transgression formula, integrated over $t \in (0,\infty)$, is obviously exact.

First we verify that the integral over $t \in (0,\infty)$ of $\epeta(t)$ converges to a well-defined differential form on $B$. Our proof is similar to the analogous fact \cite[Proposition 12]{mp} in the context of manifolds with boundary.
\begin{proposition}
The integral over $t \in (0,\infty)$ of the time-dependent end-periodic eta form $\epeta(t)$ converges in $\Omega^{\bigcdot}(B)$, so we have a well-defined differential form
\[
\epeta = \int_{0}^{\infty} \epeta(t)\,dt \in \Omega^{\bigcdot}(B).
\]
\end{proposition}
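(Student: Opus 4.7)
The plan is to split $\int_0^\infty \epeta(t)\,dt = \int_0^1 + \int_1^\infty$ and to bound each piece using a different mechanism: uniform exponential decay of the heat semigroup for large $t$, and Getzler rescaling for small $t$, following the model of \cite[Proposition 12]{mp}.

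For large $t$, the Fredholmness hypothesis combined with Theorem \ref{taubes1} gives invertibility of every indicial operator $D(Y)(\xi)$. Continuity of the spectrum and compactness of $B \times S^1$ then yield a uniform spectral gap $c > 0$ with $D(Y)(\xi)^2 \geq c^2$. Writing $A_t^2(Y)(\xi) = tD(Y)(\xi)^2 + $ terms of order at most $t^{1/2}$ (with polylogarithmic $\xi$-dependence), a Duhamel expansion and standard heat semigroup estimates produce, for $t \geq 1$, trace-norm bounds on $e^{-A_t^2(Y)(\xi)}$ of the form $Ct^{N}e^{-c^2 t/2}$ uniformly in $\xi$. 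Since $\dt{A}_t(Y)(\xi)$ and $\tfrac{d}{dt}[A_t(Y)(\xi),\delta_t m_0(df)]$ grow only polynomially in $t$, both terms of $\epeta(t)$ decay exponentially, and $\int_1^\infty \epeta(t)\,dt$ converges absolutely.

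For small $t$, I plan to exploit Proposition \ref{twistedprop}: for each fixed $\xi \in S^1$, the operator $A_t(Y)(\xi)$ is, up to twisting by the flat connection on $L_\xi$, a rescaled Bismut superconnection on the twisted Clifford module $\mb{E}_Y \otimes L_\xi \to Y$. Getzler rescaling as in \cite[\S 10]{bgv} and \cite[\S 15]{mp} then gives pointwise supertrace bounds on the relevant heat kernels. For the first term of $\epeta(t)$, the resulting bound is $O(t^{-1/2})$, exactly as for the Bismut--Cheeger eta form. For the second term, Proposition \ref{prop:hessian} identifies the commutator $[A_t(Y)(\xi),\delta_t m_0(df)]$ with a first-order rescaled differential operator (plus a Clifford-multiplication term); its $t$-derivative composed with $\mc{H}_t(\xi) = \int_0^1 e^{-u A_t^2(Y)(\xi)}\,du$ admits the same $O(t^{-1/2})$ Getzler estimate, which is integrable near $t = 0$. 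Joint smoothness in $\xi$ and compactness of $S^1$ let the contour integrals be commuted with limits throughout.

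The main obstacle is the short-time analysis of the new $\gamma_2$ contribution, which has no analogue in the Bismut--Cheeger or Melrose--Piazza settings. A naive order count makes $[A_t(Y)(\xi),\delta_t m_0(df)]$ look second-order, and such an expression would defeat the usual heat-kernel bounds; Proposition \ref{prop:hessian} --- which reveals that the principal symbols cancel and the commutator collapses to first order --- is the critical algebraic input that brings this term within reach of the standard local index techniques and completes the convergence argument.
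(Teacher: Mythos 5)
Your overall architecture --- splitting the integral at $t=1$, exponential decay for large $t$ from the spectral gap of the invertible indicial family (uniform by compactness of $B\times S^{1}$), and short-time control obtained by twisting $A(Y)(\xi)$ by the flat line bundle $L_{\xi}$ --- is the same as the paper's, and your large-$t$ argument and your $O(t^{-1/2})$ bound for the first term of $\epeta(t)$ near $t=0$ are fine (the paper gets these from the positive infimum of the eigenvalues of $D^{z}(Y)(\xi)^{2}$ and from \cite[Theorem 9.23]{bgv}, respectively).

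The gap sits exactly where you place the main obstacle, in the short-time analysis of the $\gamma_{2}$ term, and your proposed resolution does not establish it. The supercommutator $[A_{t}(Y)(\xi),\delta_{t}m_{0}(df)]$ is a supercommutator of a first-order operator with a zeroth-order one, so it is automatically of differential order at most one; nothing was ever second order, and the principal symbols do not cancel --- this is an anticommutator of odd operators, so the symbols add, producing precisely the first-order piece $-2\con^{\mb{E}_{Y},0}_{(d_{Y/B}f)^{\#}}$ appearing in Proposition \ref{prop:hessian}. The real threat to integrability at $t=0$ is not the differential order but the explicit negative power of $t$ attached to the horizontal-degree-two component of $A_{t}$: expanding,
\[
[A_{t}(Y)(\xi),\delta_{t}m_{0}(df)] = t[D(Y)(\xi),\cl(d_{Y/B}f)] + \cdots + t^{-1/2}[A_{[2]}(Y),\pi^{*}d_{B}f],
\]
and the last term, after applying $d/dt$, would contribute $t^{-3/2}$ against the $O(1)$ heat factor $\mc{H}_{t}(\xi)$, which is not integrable near $0$. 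The paper removes it by a direct computation showing $[A_{[2]}(Y),\pi^{*}d_{B}f]=0$; only then is $\frac{d}{dt}[A_{t}(Y)(\xi),\delta_{t}m_{0}(df)]$ of size $O(t^{-1/2})$, and combined with the leading $t^{0}$ term of the short-time expansion of $e^{-A_{t}^{2}(Y)(\xi)}$ (Proposition \ref{twistedprop} together with \cite[Theorem 10.21]{bgv}, which you should invoke for this term as well, not only for the first one) this gives the integrable $O(t^{-1/2})$ bound. Your instinct to lean on Proposition \ref{prop:hessian} can in fact be made to work, but not via symbol cancellation: compare the $\Lambda^{3}T^{*}B$-components of the two sides of (\ref{eqn:hessian}); the right-hand side has horizontal degree at most two, so the degree-three component $[A_{[2]}(Y),\pi^{*}d_{B}f]$ of the left-hand side must vanish, recovering the paper's key cancellation. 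Without some such argument the asserted $O(t^{-1/2})$ estimate for the $\gamma_{2}$ contribution is unsupported, and the convergence proof is incomplete.
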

\begin{proof}
The time-dependent end-periodic eta form is defined as
\begin{align*}
\epeta(t) &= \frac{-1}{\pi i}\oint_{|\xi|=1}t^{1/2}\Str\left(\cl(d_{Y/B}f)\dt{A}_{t}(Y)(\xi)e^{-A_{t}^{2}(Y)(\xi)}\right)  \frac{d\xi}{\xi} \\
&\hspace{6mm} - \frac{1}{\pi i}\oint_{|\xi|=1}\Str\left(\frac{d}{dt}[A_{t}(Y)(\xi),\delta_{t}m_{0}(df)]\mc{H}_{t}(\xi)\right)\frac{d\xi}{\xi}.
\end{align*}
We examine the rate of decay/growth in $t$ for both of these integrands. Looking at the first term here, since $D^{z}(Y)(\xi)$ has trivial nullspace for each $z \in B$ and $\xi \in S^{1}$, we have
\[
|e^{-A_{t}^{2}(Y)(\xi)}| \leq Ce^{-\lambda t}
\]
where $\lambda > 0$ is the infimum of the smallest eigenvalues of $D^{z}(Y)(\xi)^{2}$ over $z \in B$, cf. \cite[Lemma 9.5]{bgv}. Moreover $\dt{A}_{t}(Y)(\xi)$ decays like $t^{-1/2}$ which simply balances with the extra $t^{1/2}$ coefficient, hence
\[
t^{1/2}\Str\left(\cl(d_{Y/B}f)\dt{A}_{t}(Y)(\xi)e^{-A_{t}^{2}(Y)(\xi)}\right)
\]
decays exponentially as $t \to \infty$ and is therefore integrable as $t \to \infty$. On the other hand, as $t \to 0^{+}$ we have
\[
\Str\left(\dt{A}_{t}(Y)(\xi)e^{-A_{t}^{2}(Y)(\xi)}\right) \leq O(t^{-1/2})
\]
by \cite[Theorem 9.23]{bgv}, so the integrand in the first term is also integrable as $t \to 0^{+}$.

Now looking at the second term
\[
\gamma_{2}(t) = \frac{1}{2\pi i}\oint_{|\xi|=1}\Str\left(\frac{d}{dt}[A_{t}(Y)(\xi),\delta_{t}m_{0}(df)]\mc{H}_{t}(\xi)\right)\frac{d\xi}{\xi}
\]
a direct calculation shows that
\begin{align*}
[A_{t}(Y)(\xi),\delta_{t}m_{0}(df)] &= t[D(Y)(\xi),\cl(d_{Y/B}f)] + t^{1/2}[D(Y)(\xi),\pi^{*}d_{B}f] \\
&\hspace{4mm} + t^{1/2}[A_{[1]}(Y)(\xi),\cl(d_{Y/B}f)] + [A_{[1]}(Y)(\xi),\pi^{*}d_{B}f] \\
&\hspace{4mm} + [A_{[2]}(Y),cl(d_{Y/B}f)] + t^{-1/2}[A_{[2]}(Y),\pi^{*}d_{B}f].
\end{align*}
and in fact $[A_{[2]}(Y),\pi^{*}d_{B}f] = 0$, which annihilates the term which is most problematic as $t \to 0^{+}$. To see this, one may take a local orthonormal frame $(e_{j})$ for $T(Y/B)$ and $(e_{\alpha})$ for $\pi^{*}TB$, and any $\omega \otimes u \in \Gamma(M;\mb{E}_{Y}) = \Gamma(M;\pi^{*}\Lambda T^{*}B \otimes E_{Y})$, and calculate
\begin{align*}
(A_{[2]}(Y)&\cdot\pi^{*}d_{B}f)(\omega\otimes u) \\
&= \frac{-1}{4}\sum_{\alpha < \beta}\sum_{j}e^{\alpha}\wedge e^{\beta} \cl(e^{j})\left(\Omega_{Y}(e_{\alpha},e_{\beta})(e_{j})(\pi^{*}d_{B}f\wedge \omega \otimes u)\right) \\
&= \frac{-1}{4}\sum_{\alpha < \beta}\sum_{j}\bigg(e^{\alpha}\wedge e^{\beta}\, \Omega_{Y}(e_{\alpha},e_{\beta})(e_{j}) \\
&\hspace{28mm} \cdot (-1)^{\deg(u)+1}(\pi^{*}d_{B}f \wedge \omega) \otimes \cl(e^{j})(u)\bigg)
\end{align*}
and on the other hand, since $\pi^{*}d_{B}f$ can be commuted past the 2-form $e^{\alpha}\wedge e^{\beta}$,
\begin{align*}
(\pi^{*}d_{B}f \cdot A_{[2]}(Y))(\omega\otimes u) &= \frac{-1}{4}\sum_{\alpha < \beta}\sum_{j}\bigg(\pi^{*}d_{B}f\wedge e^{\alpha}\wedge e^{\beta}\,\Omega_{Y}(e_{\alpha},e_{\beta})(e_{j}) \\
&\hspace{28mm} \cdot (-1)^{\deg(u)}\omega\otimes\cl(e^{j})(u)\bigg) \\
&= \frac{-1}{4}\sum_{\alpha < \beta}\sum_{j}\bigg(e^{\alpha}\wedge e^{\beta}\,\Omega_{Y}(e_{\alpha},e_{\beta})(e_{j}) \\
&\hspace{28mm} \cdot (-1)^{\deg(u)}(\pi^{*}d_{B}f\wedge\omega)\otimes\cl(e^{j})(u)\bigg) \\
&= -(A_{[2]}(Y)\cdot\pi^{*}d_{B}f)(\omega\otimes u),
\end{align*}
hence $[A_{2}(Y),\pi^{*}d_{B}f] = A_{2}(Y)(\pi^{*}d_{B}f) + (\pi^{*}d_{B}f)A_{[2]}(Y) = 0$. Therefore
\begin{align*}
\frac{d}{dt}[A_{t}(Y)(\xi),\delta_{t}m_{0}(df)] &= [D(Y)(\xi),\cl(d_{Y/B}f)] + \frac{1}{2}t^{-1/2}[D(Y)(\xi),\pi^{*}d_{B}f] \\
&\hspace{4mm} + \frac{1}{2}t^{-1/2}[A_{[1]}(Y)(\xi),\cl(d_{Y/B}f)].
\end{align*}
As $t \to \infty$ this operator does not slow down the overall rate of convergence of the integrand, and so integrability as $t \to \infty$ follows from the same argument we made before for the other integrand, namely the rapid decay of the heat operator $\mc{H}_{t}(\xi)$ as $t \to \infty$.

As for integrability when $t \to 0^{+}$, note that the operator $\partial_{t}[A_{t}(Y)(\xi),\delta_{t}m_{0}(df)]$ grows at worst like $t^{-1/2}$ as $t \to 0^{+}$. By Proposition \ref{twistedprop}, the indicial family $A(Y)(\xi)$ can be twisted by a flat connection to produce a Bismut superconnection $\mb{A}$ on a twisted bundle $\mb{E}_{Y}\otimes L_{\xi}$. The local heat invariants of $\mb{A}$ and $A(Y)(\xi)$ coincide exactly because $\mb{A}$ is obtained from $A(Y)(\xi)$ by twisting with a flat connection on a line bundle over $Y$, hence the coefficients of the short-time asymptotic expansion of the heat kernel for $A_{t}^{2}(Y)(\xi)$ coincide with those of $\mb{A}_{t}^{2}$. By \cite[Theorem 10.21]{bgv} the lowest order term (in $t$) in the short-time asymptotic expansion for the latter heat kernel is $t^{0}$. Thus, overall, the integrand in $\gamma_{2}(t)$ grows at worst like $t^{-1/2}$ as $t \to 0^{+}$, which is integrable as $t \to 0^{+}$. Since both integrands are integrable as $t \to \infty$ and $t \to 0^{+}$, the integral over $t \in (0,\infty)$ converges in $\Omega^{\bigcdot}(B)$.
\end{proof}

Before we state the index formula, we briefly recall the definition of the $K$-theoretic index bundle and its Chern character in de Rham  cohomology. It is convenient to consider two cases:
\begin{enumerate}[(i)]
\item The \emph{constant rank} case: where $\dim \ker (D^{z})^{+}$ is constant over $z \in B$, in which case the spaces $\ker (D^{z})^{+}$ and $\ker (D^{z})^{-}$ assemble vector bundles and hence $\Ker D = \Ker D^{+}\oplus\Ker D^{-}$ is a $\mathbb{Z}_{2}$-graded vector bundle.
\item The \emph{non-constant rank} case: where $\dim \ker (D^{z})^{+}$ is allowed to vary over $z \in B$, hence the spaces $\ker (D^{z})^{+}$ and $\ker (D^{z})^{-}$ do not assemble vector bundles.
\end{enumerate}
Recall that the $K$-theory group $K(B)$ is by definition the Grothendieck group generated by isomorphism classes of vector bundles over $B$. In the constant-rank case the index bundle is simply the $K$-theory element
\[
\Ind D = [\Ker D^{+}] - [\Ker D^{-}] \in K(B)
\]
and the Chern character of the index bundle is the cohomology class
\[
\ch(\Ind D) = [\ch(\con_{0})] = [\Str(e^{-\con_{0}^{2}})] \in H_{dR}^{\bigcdot}(B)
\]
where $\con_{0} = P_{0}A_{[1]}P_{0}$ is the induced connection on $\Ker D$, with $P_{0}$ the projection onto the kernel. The cohomology class is independent of the choice of connection on the kernel.

In the non-constant rank case the kernels $\ker D^{z}$ no longer assemble a vector bundle and we need another way to define the index bundle. In this case we follow the standard stabilization procedure, cf. \cite[\S 6]{mp} and \cite[\S 9.5]{bgv}. The stabilization procedure constructs a vertical family of odd smoothing operators $P \in \Psi^{-\infty}_{\pi}(M;\mb{E})$ so that the perturbed operator $D + P$ is injective on each fiber, hence $\Coker(D+P)$ is a well-defined vector bundle. Then the index bundle of $D$ is defined as the $K$-theory class
\begin{equation}\label{eqn:indexbundle}
\Ind D = [B\times \mb{C}^{N}] - [\Coker(D+P)] \in K(B)
\end{equation}
where $N$ is a positive integer such that
\[
\Gamma(M_{z};E_{z}^{+})/V_{z} \simeq \mathbb{C}^{N}
\]
for every $z \in B$, with a subspace $V_{z} \subseteq \Gamma(M_{z};E_{z}^{+})$ constructed in such a way as to essentially fulfill the role of $(\ker (D^{z})^{+})^{\perp}$, but yielding a dimension $N$ which is independent of $z$.

Write $\Ind D = [V^{+}] - [V^{-}]$ and think of $V^{+}$ and $V^{-}$ as the even and odd components of a superbundle $V = V^{+}\oplus V^{-} \to B$. Then the Chern character of $\Ind D$ is defined as the cohomology class
\[
\ch(\Ind D) = \ch(V^{+}) - \ch(V^{-}) = [\ch(A^{+})] - [\ch(A^{-})] \in H_{dR}^{\bigcdot}(B)
\]
where $A$ is any superconnection on $V$, with chiral parts $A^{+}$ and $A^{-}$. With this setup, our index formula can be stated as follows.
\begin{theorem}[Index formula]\label{mainthm2}
Let $M \to B$ be an end-periodic fiber bundle and let $E \to M$ be a family of end-periodic Clifford modules, with $D$ the associated family of end-periodic Dirac operators, and suppose that the family $D$ is Fredholm. Then $\ch(\Ind D) \in H_{dR}^{\mathrm{ev}}(B)$ is represented in de Rham cohomology by
\begin{align*}
\ch(\con_{0}) &= \int_{Z/B}\AS(D(Z)) - \int_{W_{0}/B}f\,\AS(D(Y)) - \frac{1}{2}\epeta \\
&\hspace{6mm} - d_{B}\left(\frac{1}{2\pi i}\int_{0}^{\infty}\oint_{|\xi|=1}\Str\left(\frac{\partial \mc{Q}_{\xi}}{\partial\xi}\right)  d\xi\,dt\right).
\end{align*}
Moreover, the degree zero part of the end-periodic eta form  $\epeta$ is the fiberwise end-periodic eta invariant, i.e., $(\epeta)_{[0]} \in C^{\infty}(B)$ given by $z \mapsto \eta_{\mathrm{ep}}(D^{z}(Y_{z}))$.
\end{theorem}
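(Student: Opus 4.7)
The plan is to obtain the index formula by integrating the transgression identity (\ref{eqn:finaltrans}) from Theorem \ref{mainthm1} over $t \in (0,\infty)$, which produces the relation (\ref{integratedtrans}) already displayed above. This reduces the proof to four separate limit evaluations: the short-time and long-time limits of $\rch(A_{t})$, and the short-time and long-time limits of the boundary term
\[
(\dag) \;=\; \frac{1}{2\pi i}\oint_{|\xi|=1}\int_{W_{0}/B} f(x)\,\str\!\left(K_{e^{-A_{t}^{2}(Y)(\xi)}}(x,x)\right)dx\,\frac{d\xi}{\xi}.
\]
Convergence of $\int_{0}^{\infty}\epeta(t)\,dt$ has been established in the preceding proposition, and the remaining exact-form term integrates trivially to an exact form on $B$, so identifying the four limits as $\ch(\con_{0})$, $\int_{Z/B}\AS(D(Z))$, $\int_{W_{0}/B}f\,\AS(D(Y))$, and $0$ respectively will finish the proof.

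For the short-time limit of $\rch(A_{t})$, I would combine the uniform convergence on $(0,T]$ given by Proposition \ref{uniformityprop} with the infinite-order agreement $|K_{e^{-A_{t}^{2}}}(x,x)-K_{e^{-\tildhat{A}_{t}^{2}}}(x,x)| \leq O(t^{\infty})$ on the periodic end. The latter forces the renormalization $(N+1)\int_{W_{0}/B}\str K_{e^{-\tildhat{A}_{t}^{2}}}$ to cancel the periodic contribution of $\int_{Z_{N}/B}\str K_{e^{-A_{t}^{2}}}$ uniformly as $t \to 0^{+}$, so the short-time limit reduces to the integral of the pointwise supertrace over $Z$; a Getzler rescaling argument verbatim from the closed-manifold calculation \cite[Theorem 10.23]{bgv} then evaluates this as $\int_{Z/B}\AS(D(Z))$. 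For the short-time limit of $(\dag)$, the crucial input is Proposition \ref{twistedprop}: since $A(Y)(\xi)$ is the twist by a flat line-bundle connection of a genuine Bismut superconnection $\mb{A}$ on $\mb{E}_{Y}\otimes L_{\xi} \to Y$, the local heat invariants of $A_{t}^{2}(Y)(\xi)$ agree with those of $\mb{A}_{t}^{2}$ uniformly in $\xi \in S^{1}$, and the closed-manifold local index theorem applied to $\mb{A}$ gives
\[
\lim_{t\to 0^{+}}\str\!\left(K_{e^{-A_{t}^{2}(Y)(\xi)}}(x,x)\right) = \AS(D(Y))(x)
\]
independently of $\xi$; integrating against $f(x)\,dx\,d\xi/\xi$ over $W_{0}\times S^{1}$ and using $\oint_{S^{1}}d\xi/\xi = 2\pi i$ yields $\int_{W_{0}/B}f\,\AS(D(Y))$. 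The long-time limits both rest on Fredholmness: Taubes' theorem and a compactness argument on $B \times S^{1}$ provide a uniform spectral gap for the indicial family, so $e^{-A_{t}^{2}(Y)(\xi)}$ decays exponentially as $t \to \infty$ uniformly in $\xi$, giving $\lim_{t\to\infty}(\dag) = 0$; while for $\lim_{t\to\infty}\rch(A_{t})$ the heat operator concentrates on the kernel bundle of $D$ and the limit is identified with $\ch(\con_{0})$ as in \cite[\S 9.3]{bgv}.

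The principal obstacle will be the non-constant rank case, in which $\ker D^{z}$ does not assemble a smooth bundle and the long-time limit of $\rch(A_{t})$ must be interpreted through the stabilization (\ref{eqn:indexbundle}): one perturbs $D$ by a vertical family $P$ of smoothing operators respecting the end-periodic structure so that $D+P$ is fiberwise injective, runs the entire transgression argument for $A+P$ in place of $A$, and then identifies the resulting long-time limit with $\ch(\con_{0})$ following the stabilization arguments of \cite[\S 6]{mp} and \cite[\S 9.5]{bgv}. A secondary technical point is justifying the interchange of $\lim_{t \to 0^{+}}$ with the $\rStr$ renormalization and with the Fourier-Laplace contour integral, both of which reduce to the uniform estimates already built into Proposition \ref{uniformityprop} and the uniform short-time heat asymptotics provided by the twisting interpretation. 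The closing statement that $(\epeta)_{[0]}$ recovers the fiberwise end-periodic eta invariant is then just the identity (\ref{degreezero}) restated in the current notation.
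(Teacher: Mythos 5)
Your proposal follows essentially the same route as the paper: integrate the transgression formula of Theorem \ref{mainthm1}, evaluate the short-time limits via Proposition \ref{uniformityprop} and the twisted-superconnection interpretation of Proposition \ref{twistedprop}, evaluate the long-time limits via Fredholmness and the spectral-gap argument of \cite[\S 9.3]{bgv}, and read off the degree zero statement from (\ref{degreezero}). The one place you understate the work is the non-constant rank case: it is not enough that the perturbation $P$ ``respect the end-periodic structure'' --- the paper must construct $P$ asymptotically periodic with vanishing normal operator $N(P)=0$ (via a uniform parametrix with compact smoothing error and a Neumann-series argument, Lemmas \ref{parametrixlemma} and \ref{constantranklemma}), since only then are the indicial families unchanged and hence $\epeta^{P}=\epeta$ and $(\dag^{P})=(\dag)$, so that the stated formula involves the unperturbed eta form rather than a perturbed one.
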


The statement about the degree zero component of the end-periodic eta form follows from equation (\ref{degreezero}). The proof Theorem \ref{mainthm2} will be completed by calculating the limits in the integrated transgression formula (\ref{integratedtrans}), which we will do in \S\ref{section:short-time}--\ref{section:NCRcase} below. Throughout \S\ref{section:short-time}--\ref{section:RHSlimits} we assume that the family of Dirac operators $D$ has constant rank nullspace over $z \in B$. In \S\ref{section:NCRcase} we show that the general case reduces to this case, and that the index formula holds as stated without any constant rank assumption.

\subsection{Short-time limit of the renormalized Chern character}\label{section:short-time}

We will calculate the limit of $\rch(A_{t})$ as $t \to 0^{+}$ using the short-time asymptotic expansion for the heat kernel. The relevant short-time expansions for us are given by \cite[Theorem 10.21]{bgv} (for a closed manifold, or a family of closed manifolds) and \cite[Proposition 2.11]{roe} (for a manifold of bounded geometry). Combining these two results gives us the short-time asymptotic expansion for a family of heat kernels associated with the end-periodic Bismut superconnection.

\begin{proposition}\label{asymptoticsprop}
Let $M^{n} \to B$ be an end-periodic fiber bundle and let $E \to M$ be a family of end-periodic Clifford modules, with $D = D(M)$ the associated family of end-periodic Dirac operators. Let $A$ denote the end-periodic Bismut superconnection adapted to $D$, and let $K_{t} = (K_{t}^{z})_{z\in B}$ denote the associated family of heat kernels, i.e., the family of integral kernels for the heat operator $e^{-A_{t}^{2}}$. Then we have an asymptotic expansion as $t \to 0^{+}$,
\[
K_{t}(x,x) \sim (4\pi t)^{-n/2} \sum_{i=0}^{\infty} t^{i}k_{i}(x)
\]
with coefficients $k_{i} \in \sum_{j \leq 2i} \Lambda^{j}T^{*}M \otimes \End_{\Cl(T^{*}(M/B))}(E)$, such that
\[
\sum_{i=0}^{n/2}(k_{i})_{[2i]} = \mathrm{\mathbf{I}}(D(M)).
\]
\end{proposition}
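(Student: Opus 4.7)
The plan is to reduce the statement to two ingredients that already exist in the literature, leveraging the fundamental fact that the asymptotic expansion of $K_t(x,x)$ as $t\to 0^+$ is a purely local invariant of the operator at the point $x$. This locality is what makes it plausible to transfer the statement from closed families on the one hand, and from single operators on manifolds of bounded geometry on the other, to our end-periodic family setting.

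First I would establish the \emph{existence} of the asymptotic expansion with uniform control. Since $M \to B$ is an end-periodic fiber bundle, it has bounded geometry, and likewise $E \to M$ has bounded geometry (cf.\ the remark in \S\ref{section:EPgeometry} on bounded geometry). The end-periodic Bismut superconnection $A$ satisfies the Lichnerowicz-type formula (\ref{eqn:lich}) with a curvature endomorphism whose components and derivatives are uniformly bounded. Consequently the parametrix construction and Duhamel iteration used in \cite[Proposition 2.11]{roe} (adapted in a straightforward manner to the superconnection setting, where the extra differential-form components simply append finitely many terms at each step in the Volterra series) yield the expansion
\[
K_t(x,x) \sim (4\pi t)^{-n/2}\sum_{i=0}^{\infty} t^i k_i(x)
\]
with remainder bounds uniform in $x \in M$ and locally uniform in $z \in B$. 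At this stage $k_i$ is simply constructed by solving the standard transport equations along geodesics from $x$, so its expression involves only the local Riemannian geometry, the Clifford module data, and the connection $\con^{\mb{E},0}$ in a neighborhood of $x$.

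Next I would identify the coefficient space and the top-degree part. Because the transport equations for $k_i$ involve the curvature of $\con^{\mb{E},0}$, which takes values in $\pi^*\Lambda^{\geq 1} T^*B \otimes \End_{\Cl(T^*(M/B))}(E)$ together with the vertical Clifford action, an inductive bookkeeping argument (identical to the one used in the proof of \cite[Theorem 10.21]{bgv}) shows that $k_i$ lies in $\sum_{j\leq 2i} \Lambda^j T^*M \otimes \End_{\Cl(T^*(M/B))}(E)$. The identification
\[
\sum_{i=0}^{n/2}(k_i)_{[2i]} = \AS(D(M))
\]
is then obtained by the Getzler rescaling argument. Since the coefficients $k_i(x)$ are determined by the local symbol of $A_t^2$ at $x$, the Getzler rescaling computation carried out in \cite[Theorem 10.21]{bgv} in the closed-fibers case applies verbatim at each point $x \in M$: Getzler's normal coordinates exist in a ball of radius bounded below by the injectivity radius (which is positive by bounded geometry), and the rescaled limit of $A_t^2$ at $x$ is the harmonic oscillator whose heat kernel is given by Mehler's formula, producing exactly $\widehat{A}(T(M/B))\wedge \ch'(E)$ in top degree at $x$.

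The main obstacle, as I see it, is not any one of these steps in isolation but the verification that the Getzler rescaling argument of \cite[Theorem 10.21]{bgv}, which is written in the closed-fibers setting, really does survive unchanged in our noncompact but bounded-geometry end-periodic family setting. The delicate point is that one wants to use a compactly supported localization around $x$ and compare with a model operator on $T_xM$, and that comparison must be uniform as $x$ moves along the periodic end. Uniformity follows from bounded geometry (uniform injectivity radius, uniform bounds on curvature and its covariant derivatives, uniform bounds on the symbol of $A$), but writing this out carefully is the technical heart of the argument; once done, the algebraic identification of the top-degree coefficient with $\AS(D(M))$ is formal.
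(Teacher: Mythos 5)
Your proposal is correct and follows essentially the same route as the paper, which likewise obtains the expansion by combining the bounded-geometry parametrix construction of \cite[Proposition 2.11]{roe} with the family/Getzler-rescaling identification of the coefficients from \cite[Theorem 10.21]{bgv}, using the locality of the heat coefficients and the bounded geometry of the end-periodic setting. Your discussion of the uniformity issue along the periodic end simply spells out what the paper leaves implicit in its citation of these two results.
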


Using the short-time asymptotic expansion we will show that
\[
\lim_{t\to 0^{+}} \rch(A_{t}) = \int_{Z/B}\AS(D(Z))
\]
where $Z$ is the compact part of the end-periodic fiber bundle. Consider the definition of the renormalized Chern character
\begin{align*}
\rch(A_{t}) &= \rStr(e^{-A_{t}^{2}}) \\
&= \lim_{N\to\infty} \left(\int_{Z_{N}/B}\str(K_{t}(x,x))\,dx - (N+1)\int_{W_{0}/B}\str(\tildhat{K}_{t}(x,x))\,dx\right) \\
&= \lim_{N \to \infty} s_{N}(t).
\end{align*}
For each fixed $N \in \mb{N}$ we can deduce from Proposition \ref{asymptoticsprop} that
\begin{align*}
\lim_{t\to 0^{+}} s_{N}(t) &= \lim_{t\to 0^{+}}\left(\int_{Z_{N}/B}\str(K_{t}(x,x))\,dx - (N+1)\int_{W_{0}/B}\str(\tildhat{K}_{t}(x,x))\,dx\right) \\
&= \int_{Z_{N}/B}\AS(D(M))\,dx - (N+1)\int_{W_{0}/B} \AS(D(\tildhat{Y}))\,dx \\
&= \int_{Z/B}\AS(D(Z))\,dx.
\end{align*}
By Proposition \ref{uniformityprop} the limit $\lim_{N\to\infty} s_{N}(t)$ converges uniformly on bounded time intervals, and so we are justified in interchanging limits to calculate
\begin{align*}
\lim_{t\to 0^{+}} \rch(A_{t}) &= \lim_{t\to 0^{+}}\lim_{N\to\infty} s_{N}(t) \\
&= \lim_{N\to\infty}\lim_{t\to 0^{+}} s_{N}(t) \\
&= \lim_{N\to\infty}\left(\int_{Z/B}\AS(D(Z))\right) \\
&= \int_{Z/B}\AS(D(Z)).
\end{align*} 

\subsection{Long-time limit of the renormalized Chern character}\label{section:long-time}

Let $M \to B$ be an end-periodic fiber bundle and $E \to M$ a family of end-periodic Clifford modules, with associated family of end-periodic Dirac operators $D = (D^{z})_{z_{\in B}}$ and end-periodic Bismut superconnection $A$ adapted to $D$. We let $P_{0}$ denote the projection onto $\Ker D$, and $\con_{0} = \con^{\Ker D} = P_{0}A_{[1]}P_{0}$ the connection on $\Ker D$ induced by $A$. We recall that we are operating under the following assumptions:
\begin{enumerate}[(i)]
\item The operators $(D^{z})_{z\in B}$ are Fredholm on $H^{1}(M_{z};E_{z})$, which is equivalent to saying that the normal operators $\tildhat{D}^{z}$ are invertible for every $z \in B$ or that the indicial families $D(Y)^{z}(\xi)$ are invertible for every $z \in B$ and $\xi \in S^{1}$.
\item The family $D = (D^{z})_{z\in B}$ has nullspace of constant rank over $z \in B$.
\item The base $B$ is compact.
\end{enumerate}
Under these assumptions, the arguments of \cite[Proposition 9.20, Lemma 9.20, Lemma 9.21]{bgv} go through with little change, using the properties of the heat kernel on a smooth Riemannian manifold with bounded geometry, cf. \cite{roe}. By these arguments the heat operator $e^{-A_{t}^{2}}$ can be understood by diagonalizing and then exponentiating. Let $P_{0}$ and $P_{1}$ denote the projections onto $\Ker D$ and $\Rg D$. We can find $g$ such that
\[
A^2 = g^{-1}
\begin{bmatrix}
U & 0 \\
0 & V
\end{bmatrix}
g
\]
with $U = \con_{0}^{2} + U^{+},$ and $V = P_{1}AP_{1} + V^{+}$. Rescaling in $t > 0$ we get
\[
\delta_{t}(A^{2}) = \delta_{t}(g)^{-1}
\begin{bmatrix}
\delta_{t}(U) & 0 \\
0 & \delta_{t}(V)
\end{bmatrix}
\delta_{t}(g)
\]
which then yields
\[
e^{-A_{t}^{2}} = e^{-t\delta_{t}(A^{2})} = \delta_{t}(g)^{-1}
\begin{bmatrix}
e^{-t\delta_{t}(U)} & 0 \\
0 & e^{-t\delta_{t}(V)}
\end{bmatrix}
\delta_{t}(g),
\]
from which we deduce
\[
e^{-t\delta_{t}U} = e^{-\con_{0}^{2}} + O(t^{-1/2}).
\]
Using \cite[Lemma 9.21]{bgv} to handle the term $e^{-t\delta_{t}(V)}$, we choose a precompact open subset $S \subseteq B$ and for any $z \in S$ set $\lambda^{z} = $ smallest nonzero eigenvalue of $(D^{z})^2$. Then for $\lambda = \inf_{z \in S} \lambda^{z}$ we have
\[
P_{1}e^{-t\delta_{t}V}P_{1} = O(e^{-t\lambda/2})
\]
uniformly on precompact open subsets of $B$. These estimates, together with the Volterra series for the heat kernels of $U$ and $V$, cf. \cite[equation (11.2)]{mp}, imply that
\[
\rStr(e^{-A_{t}^{2}}) = \rStr(e^{-\con_{0}^{2}}) + O(t^{-1/2}).
\]
Note that $e^{-\con_{0}^{2}}$ is trace-class by the invertibilty of the normal operators $\tildhat{D}^{z}$ for every $z \in B$, hence the renormalization factor vanishes and $\rStr(e^{-\con_{0}^{2}}) = \Str(e^{-\con_{0}^{2}})$. Thus taking the limit as $t \to \infty$ yields
\begin{equation}\label{eq:longtimeconvergence}
\lim_{t\to\infty} \rch(A_{t}) = \ch(\con_{0}).
\end{equation}

\subsection{Limits on the right-hand side}\label{section:RHSlimits}

In this section we will calculate the long-time and short-time limits of the term $(\dag)$ in (\ref{integratedtrans}) by applying the framework of \S\ref{section:FLtransform}, where we showed that the indicial family $A_{t}^{2}(Y)(\xi)$ when twisted by a flat connection produces a Bismut superconnection on a twisted bundle.

\begin{proposition}
Under the hypotheses of Theorem \ref{mainthm2}, we define
\[
(\dag) = \frac{1}{2\pi i}\oint_{|\xi|=1}\int_{W_{0}/B}f(x)\str\left(K_{e^{-A_{t}^{2}(Y)(\xi)}}(x,x)\right)dx\frac{d\xi}{\xi}.
\]
Then we have
\[
\lim_{t \to \infty} (\dag) = 0 \mbox{ \ and \ } \lim_{t \to 0^{+}} (\dag) = \int_{W_{0}/B}f\cdot \mathrm{\mathbf{I}}(Y/B).
\]
\end{proposition}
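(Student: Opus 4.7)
For the long-time limit, the strategy is to leverage the Fredholmness hypothesis via Taubes's theorem: Fredholmness of $D$ is equivalent to the invertibility of the indicial family $D(Y)^z(\xi): H^1(Y_z;\mb{E}_Y|_{Y_z}) \to L^2(Y_z;\mb{E}_Y|_{Y_z})$ for every $(z,\xi) \in B \times S^1$. Since $B \times S^1$ is compact and $(z,\xi) \mapsto D(Y)^z(\xi)$ is continuous, the smallest eigenvalue of $D(Y)^z(\xi)^2$ admits a uniform lower bound $\lambda > 0$. By Proposition \ref{twistedprop}, the operator $A_t(Y)(\xi)$ is equivalent, after twisting by the trivial connection on $L_\xi$, to a genuine Bismut superconnection $\mb{A}_t$ on the twisted bundle $\mb{E}_Y \otimes L_\xi$, whose Dirac part has trivial kernel. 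The long-time analysis of \S\ref{section:long-time} therefore applies fiberwise to yield $|K_{e^{-A_t^2(Y)(\xi)}}(x,x)| = O(e^{-\lambda t/2})$ uniformly in $(z,x,\xi)$. Dominated convergence in $\xi$ and $x$ then gives $\lim_{t\to\infty}(\dag) = 0$.

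For the short-time limit, the key observation is that under the identification $u\otimes 1 \leftrightarrow u$ coming from the trivialization $L_\xi = \mb{C} \times Y$, part (c) of Proposition \ref{twistedprop} implies that the Bismut superconnection $\mb{A}$ on $\mb{E}_Y \otimes L_\xi$ acts as $A(Y)(\xi)$; consequently the integral kernel of $e^{-\mb{A}_t^2}$ agrees with that of $e^{-A_t^2(Y)(\xi)}$. Since $\mb{A}$ is genuinely the Bismut superconnection adapted to the Dirac operator on $\mb{E}_Y \otimes L_\xi$, Proposition \ref{asymptoticsprop} produces a short-time asymptotic expansion whose top-degree coefficient is the local index form $\AS(D(E_Y \otimes L_\xi))$. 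Because $L_\xi$ carries a \emph{flat} connection, its curvature vanishes, hence $\ch(L_\xi) = 1$ pointwise, and so $\AS(D(E_Y \otimes L_\xi)) = \widehat{A}(T(Y/B)) \wedge \ch'(E_Y) \cdot \ch(L_\xi) = \AS(D(Y))$, independently of $\xi$.

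Combining these points, as $t \to 0^+$ the pointwise supertrace $\str(K_{e^{-A_t^2(Y)(\xi)}}(x,x))$ converges to $\AS(D(Y))(x)$ uniformly in $\xi \in S^1$. Multiplying by $f(x)$, integrating over $W_0$, and computing the contour integral using $\frac{1}{2\pi i}\oint_{|\xi|=1}\frac{d\xi}{\xi} = 1$ yields
\[
\lim_{t\to 0^+}(\dag) = \int_{W_0/B} f \cdot \AS(D(Y)).
\]

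The main technical obstacle is securing enough uniformity in $\xi \in S^1$ to legitimately interchange the short-time limit with the contour integration. This amounts to verifying that the convergence in Proposition \ref{asymptoticsprop}, when applied to the family $\mb{A}_t$ parametrized jointly by $(z,\xi) \in B \times S^1$, is uniform in $\xi$ -- which reduces to smooth dependence of $(z,\xi) \mapsto A(Y)^z(\xi)$ on its parameters together with uniformity bounds on the Volterra remainder terms analogous to those established in Proposition \ref{uniformityprop}.
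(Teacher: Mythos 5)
Your proposal is correct and follows essentially the same route as the paper: the short-time limit is handled exactly as in the text, by identifying $e^{-A_{t}^{2}(Y)(\xi)}$ with the heat operator of the twisted Bismut superconnection $\mb{A}_{t}$ on $\mb{E}_{Y}\otimes L_{\xi}$ from Proposition \ref{twistedprop} and using flatness of $L_{\xi}$ (so $\ch(L_{\xi})=1$) to get the $\xi$-independent limit $\AS(D(Y))$, after which $\frac{1}{2\pi i}\oint\frac{d\xi}{\xi}=1$ gives the stated formula. For the long-time limit the paper simply invokes the families long-time convergence result (\cite[Theorem 9.19]{bgv}) together with the vanishing of $\Ker D(Y)(\xi)$ guaranteed by Fredholmness, whereas you make the same mechanism explicit through a uniform spectral gap on $B\times S^{1}$ and exponential decay plus dominated convergence — a harmless, slightly more detailed variant of the same argument.
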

\begin{proof}
For the long-time limit, by applying \cite[Theorem 9.19]{bgv} we calculate
\begin{align*}
\lim_{t \to \infty} \str\left(K_{\exp(-A_{t}^{2}(Y)(\xi))}(x,x)\right) = \str\left(K_{\exp(\con_{0}^{2})}(x,x)\right) = 0
\end{align*}
where $\con_{0} = P_{0}A_{[1]}(Y)(\xi)P_{0}$ is the induced connection on $\Ker D_{\xi}(Y)$, which is just the zero space under the assumption that $D = D(M)$ is Fredholm. Therefore $\lim_{t\to\infty} (\dag) = 0$.

For the short-time limit, consider the Bismut superconnection $\mb{A}$ on $\mb{E}_{Y}\otimes L_{\xi}$ defined in Proposition \ref{twistedprop} as the twist of $A(Y)(\xi)$ by the flat connection $d$ on the bundle $L_{\xi}$. Since $\mb{A}$ is obtained by twisting $A(Y)(\xi)$ by this flat connection on a rank-1 bundle, the local heat invariants in the short-time asymptotic expansion for the heat operators $e^{-A_{t}^{2}(Y)(\xi)}$ and $e^{-\mb{A}_{t}^{2}}$ coincide exactly. Therefore, applying \cite[Theorem 10.23]{bgv} to the rescaled Bismut superconnection $\mb{A}_{t}$ on $\mb{E}_{Y}\otimes L_{\xi}$ we calculate
\begin{align*}
\lim_{t \to 0^{+}} \str\left(K_{\exp(-A_{t}^{2}(Y)(\xi))}(x,x)\right) = \lim_{t \to 0^{+}} \str\left(K_{e^{-{\mb{A}_{t}}}}(x,x)\right)
= \mathrm{\mathbf{I}}(Y/B)\ch(L_{\xi}) 
= \mathrm{\mathbf{I}}(Y/B)
\end{align*}
since the Chern character is multiplicative over tensor products, and $L_{\xi}$ is equipped with a flat connection which implies $\ch(L_{\xi}) = 1$. Therefore
\begin{align*}
\lim_{t \to 0^{+}} (\dag) = \lim_{t\to 0^{+}} \frac{1}{2\pi i}\oint_{|\xi|=1}\int_{W_{0}/B}f(x)\str\left(K_{e^{-A_{t}^{2}(Y)(\xi)}}(x,x)\right)dx\frac{d\xi}{\xi}
= \int_{W_{0}/B}f\cdot \mathrm{\mathbf{I}}(Y/B).
\end{align*}
\vspace{-3mm}
\end{proof}

\subsection{Non-constant rank case}\label{section:NCRcase}
Now we remove the requirement that the family of Dirac operators $D = (D^{z})_{z\in B}$ have constant $\dim \ker (D^{z})^{+}$ over $z \in B$. In this case, the kernels $\ker D^{z}$ no longer assemble a vector bundle, and the standard stabilization procedure perturbs $D$ by a vertical family of odd smoothing operators $P \in \Psi^{-\infty}_{\pi}(M;\mb{E})$ so that $D + P$ is injective on each fiber, hence $\Coker(D+P) \to B$ is a well-defined vector bundle, and the index bundle $\Ind D \in K(B)$ is defined as in equation (\ref{eqn:indexbundle}).

Let $A^{P}$ denote the Bismut superconnection adapted to the perturbed family $D+P$ and let $A_{t}^{P}$ denote the rescaled superconnection. Then we know from the constant-rank case that
\begin{equation}
\lim_{t\to\infty}\rch(A_{t}^{P}) = \ch(\con^{\Ker(D+P)})\label{eq:longtime}
\end{equation}
cf. \cite[Theorem 9.26]{bgv}, and it is straightforward to calculate, using the same methods as before, that
\begin{equation}
\lim_{t\to 0^{+}}\rch(A_{t}^{P}) = \int_{Z/B}\AS(D(Z)) + \ch(B\times\mb{C}^{N}). \label{eq:shorttime}
\end{equation}
Then taking the transgression formula with the perturbed Bismut superconnection $A_{t}^{P}$ and integrating over $t \in (0,\infty)$ yields
\[
\lim_{t\to\infty}\rch(A_{t}^{P}) - \lim_{t\to 0^{+}}\rch(A_{t}^{P}) = \lim_{t \to \infty}(\dag^{P}) - \lim_{t\to 0^{+}}(\dag^{P}) - \frac{1}{2}\epeta^{P}
\]
where the superscript $P$ on the right-hand side indicates that the correction term involves the perturbed operator $A_{t}^{P}$ in its definition instead of $A_{t}$. Now from the definition of the index bundle we have
\[
\Ind(D) = [\Ker(D+P)] - [B\times\mb{C}^{N}]
\]
and thus, using \eqref{eq:longtime} and \eqref{eq:shorttime}, 
\begin{align*}
\ch(\Ind D) &= [\ch(\con^{\Ker(D+P)})]_{\mathrm{dR}} - N \\
&= \lim_{t \to \infty} [\rch(A_{t}^{P})]_{\mathrm{dR}} - N \\
&= \left[\int_{Z/B}\AS(D(Z)) + \lim_{t \to \infty}(\dag^{P}) - \lim_{t\to 0^{+}}(\dag^{P}) - \frac{1}{2}\epeta^{P}\right]_{\mathrm{dR}}
\end{align*}
where two factors of $N$ have canceled each other, and the notation $[\ \cdot \ ]_{\mathrm{dR}}$ indicates a de Rham cohomology class. Thus the index formula will follow from this equation as soon as we calculate these correction terms associated with the perturbed superconnection $A_{t}^{P}$. 

In fact, these correction terms depend only upon the indicial family of the superconnection $A_{t}$. We will show that, for a careful choice of perturbation $P$, the indicial family is unchanged by the perturbation, and so the limits $\lim_{t\to\infty} (\dag^{P})$ and $\lim_{t\to 0^{+}}(\dag^{P})$ are exactly the same as the limits we calculated in \S\ref{section:RHSlimits}. Similarly, the end-periodic eta form is unaffected by the perturbation, that is $\epeta^{P} = \epeta$.

Following this idea, we aim to construct a family of rank-stabilizing perturbations $P = (P^{z})_{z\in B}$ so that (i) $D+P$ is a family having nullspace of constant rank, and (ii) $N(P) = \tildhat{P} = 0$ (in particular, $P$ will be trace-class). Since the correction terms in the index formula depend only on the normal operator (or indicial family), the correction terms are unchanged when we replace $D$ with $D+P$. This will complete the proof of the index formula in the non-constant rank case. 

We will adapt the proof of \cite[Lemma 1.1]{mr} to our context. One of the key components is the parametrix construction for the Dirac operator. Under the assumption that $D$ is Fredholm (i.e., the normal operator $N(D)$ is invertible), there exists a parametrix $G$ such that $GD = I-R$ where the error $R$ is a compact operator. We additionally want to show that we can construct the parametrix so that the error is a compact \emph{smoothing} operator. In order to establish this we will make use of the uniform calculus of pseudodifferential operators $\mathcal{U}\Psi^{s}(M;E)$, cf. \cite[Definition 3.2]{shubin}. The uniform calculus can be defined for any manifold of bounded geometry, in particular for an end-periodic manifold.
\begin{lemma}\label{parametrixlemma}
Let $M$ be an end-periodic manifold and let $D$ be an end-periodic Dirac operator on $C_{c}^{\infty}(M;E)$. Suppose that the normal operator $N(D)$ is invertible. Then $D$ has a parametrix $G \in \mathcal{U}\Psi^{-1}(M;E)$ such that the error $R = I - GD$ is a compact smoothing operator.
\end{lemma}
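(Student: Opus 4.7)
The plan is to combine the standard uniform parametrix construction with the Fredholm property guaranteed by Taubes' theorem. First I would invoke the symbolic parametrix construction in the uniform calculus $\mathcal{U}\Psi^{\bigcdot}(M;E)$ of \cite{shubin}. Since $D$ is a first-order elliptic differential operator whose coefficients (being end-periodic) have bounded geometry, its principal symbol can be inverted modulo uniform symbols of strictly lower order. The usual iterative construction yields a two-sided parametrix $G_0 \in \mathcal{U}\Psi^{-1}(M;E)$ with $G_0 D = I - R_0'$ and $D G_0 = I - R_0$, where $R_0, R_0' \in \mathcal{U}\Psi^{-\infty}(M;E)$ are uniform smoothing operators. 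In the non-compact setting, however, uniform smoothing operators need not be compact, so $R_0'$ does not yet suffice and additional work is required.

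The second step upgrades the parametrix using Fredholmness. Invertibility of $N(D)$ together with Taubes' theorem gives that $D: H^1(M;E) \to L^2(M;E)$ is Fredholm, so $\ker D$ is finite-dimensional. By elliptic regularity the kernel consists of smooth sections, and since they lie in $L^2$ and $N(D)$ has no $L^2$ nullspace, they decay along the periodic end. Consequently the orthogonal projection $\Pi$ onto $\ker D$ is a finite-rank operator with smooth, rapidly decaying integral kernel, hence is a compact smoothing operator. Let $G$ denote the Moore-Penrose generalized inverse of $D$, so that $G D = I - \Pi$.

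To verify $G \in \mathcal{U}\Psi^{-1}(M;E)$ I would express the correction $G - G_0$ in terms of operators already controlled. Combining $GD = I - \Pi$ with $G_0 D = I - R_0'$ and $D G_0 = I - R_0$ algebraically yields $G - G_0 = G R_0 - \Pi G_0$. The term $\Pi G_0$ is a finite-rank smoothing operator because $\Pi$ has smooth rapidly decaying kernel. The main obstacle is the term $G R_0$: while $G$ is bounded $L^2 \to H^1$ and $R_0$ is uniformly smoothing, one needs that their composition lies in the uniform smoothing ideal $\mathcal{U}\Psi^{-\infty}(M;E)$, i.e., that its integral kernel has iterated derivatives bounded uniformly in both arguments. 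This reduces, via elliptic regularity applied iteratively, to uniform bounds on the kernel of $R_0$ and its derivatives, together with the exponential decay of $L^2$ null sections of $D$ along the periodic end (which follows from invertibility of $N(D)$). Granted this, $G \in \mathcal{U}\Psi^{-1}(M;E)$ and $R = I - GD = \Pi$ is the desired compact smoothing error.
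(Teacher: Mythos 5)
There is a genuine gap. Your strategy (symbolic uniform parametrix $G_{0}$, then replace it by the Moore--Penrose inverse $G$ using Fredholmness) stands or falls on the claim that $G\in\mathcal{U}\Psi^{-1}(M;E)$, equivalently that $GR_{0}$ lies in the uniform residual class; and at exactly this point you write ``Granted this''. That step is the entire analytic content of the lemma and it does not follow from what you have assembled. Membership in $\mathcal{U}\Psi^{-1}(M;E)$ requires uniform symbol estimates near the diagonal and uniform control of the kernel (with all derivatives, in both variables) over the noncompact end; the generalized inverse $G$ is a global, non-local operator, and elliptic regularity only gives mapping properties $H^{s}\to H^{s+1}$, not the two-variable uniform kernel estimates or the near-diagonal decomposition the uniform calculus demands. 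The exponential decay of $L^{2}$ null sections that you invoke controls only the finite-rank projection $\Pi$; it says nothing about the off-diagonal behaviour of $G$ (or of $GR_{0}$) along the end. Quantitative uniformity there is precisely where the invertibility of $N(D)$ must be used in an explicit way, and your sketch never brings in the structure that makes such estimates available.

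For comparison, the paper's proof is organized so that it never has to show the generalized inverse is a uniform operator. It glues a parametrix for the Dirac operator on the closed double of a compact piece with the explicit inverse $N(D)^{-1}$ on the end, obtaining $G_{1}=\psi_{1}A\phi_{1}+\psi_{2}N(D)^{-1}\phi_{2}$ with compact error; the uniformity of $\psi_{2}N(D)^{-1}\phi_{2}$ is then verified by expressing the kernel of $N(D)^{-1}$ through the Fourier--Laplace transform in terms of the family $D(Y)(\xi)^{-1}$, $\xi\in S^{1}$, on the \emph{compact} manifold $Y$, where symbol seminorms are finite and depend continuously on $\xi$, hence are uniformly bounded; finally the symbolic iteration in the uniform calculus upgrades the error to a compact smoothing operator. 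If you want to salvage your route, you would need an analogous quantitative input -- e.g.\ uniform (say exponential) off-diagonal decay of the Schwartz kernel of $G$ along the end together with uniform local elliptic estimates -- and this again comes down to the Fourier--Laplace description of $N(D)^{-1}$, i.e.\ to essentially the argument the paper gives.
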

\begin{proof}
First we will construct a parametrix for $D$ with compact error following the method of \cite[Lemma 5.5]{mazzeo}. Let $Z_{3} = Z\sqcup W_{0}\sqcup W_{1}\sqcup W_{2}$ and let $\widehat{Z}_{3}$ denote the double of $Z_{3}$ across the boundary. Let $D(\widehat{Z}_{3})$ denote the Dirac operator acting on sections of $E|_{\widehat{Z}_{3}}$, with respect to the obvious geometric data. Since $\widehat{Z}_{3}$ is a closed manifold, $D(\widehat{Z}_{3})$ is Fredholm, so we can find a parametrix $A$ such that $C = I - AD(\widehat{Z}_{3})$ is compact. Now by patching together the parametrix $A$ for $D(\widehat{Z}_{3})$ near $Z$, with the inverse normal operator $N(D)^{-1}$ along the end, we define an operator
\[
G_{1} = \psi_{1}A\phi_{1} + \psi_{2}N(D)^{-1}\phi_{2}.
\]
The smooth cutoffs $\psi_{j},\phi_{j} \in C^{\infty}(M)$ are defined so that:
\begin{itemize}
\item $\phi_{1}|_{Z\sqcup W_{0}} = 1$ and $\phi_{1}|_{M\setminus Z_{2}} = 0$,
\item $\phi_{2} = 1-\phi_{1}$,
\item $\supp(\psi_{1}) \subset Z_{3}$ and $\psi_{1}|_{\supp(\phi_{1})} = 1$,
\item $\supp(\psi_{2}) \subset \tildhat{Y}_{+}$ and $\psi_{2}|_{\supp(\phi_{2})} = 1$.
\end{itemize}
Thus the segment $W_{2}$ serves as the ``patching region''. A direct calculation shows that
\begin{align*}
G_{1}D &= [D,\psi_{1}]A\phi_{1} + \psi_{1}AD\phi_{1} + [D,\psi_{2}]N(D)^{-1}\phi_{2} +    \psi_{2}DN(D)^{-1}\phi_{2} \\
&= [D,\psi_{1}]A\phi_{1} + [D,\psi_{2}]N(D)^{-1}\phi_{2} - \psi_{1}C\phi_{1} + (\phi_{1}+\phi_{2}) \\
&= I - (\mathrm{compact \, operator}).
\end{align*}
In the last line one can argue exactly as in \cite[Lemma 5.5]{mazzeo} that the error is compact using the compact inclusion afforded by Rellich's lemma and the fact that the terms are smoothing of order at least $-1$.  Thus we have a parametrix $G_{1}$ for $D$ which is a (classical) pseudodifferential operator of order $-1$, such that the error $R_{1} = I - G_{1}D$ is compact.

Next we will show that $G_{1} \in \mathcal{U}\Psi^{-1}(M;E)$ is a uniform pseudodifferential operator, cf. \cite[Definition 3.2]{shubin}. From the patching construction we have $G_{1} = \psi_{1}A\phi_{1} + \psi_{2}N(D)^{-1}\phi_{2}$, and it is clear that $\psi_{1}A\phi_{1}$ is uniform because $A$ is a parametrix for a Fredholm operator on the closed manifold $\widehat{Z}_{3}$, so it will suffice to show that $\psi_{2}N(D)^{-1}\phi_{2} \in \mathcal{U}\Psi^{-1}(\tildhat{Y};\tildhat{E})$. It is easy to arrange that the integral kernel vanishes sufficiently far from the diagonal by adjusting the cutoff functions, so the only remaining point is to verify the uniformity condition on the symbol.

Using the bounded geometry of $\tildhat{Y}$ we fix a uniformly locally finite covering of $\tildhat{Y}$ by normal coordinate balls $\{B_{\epsilon}(x_{i})\}_{i\in\mathcal{I}}$, for some $\epsilon > 0$ which is bounded in terms of the injectivity radius of $\tildhat{Y}$. Let $\{\chi_{i}\}_{i\in\mathcal{I}}$ be a partition of unity subordinate to this open cover. By choosing $\epsilon > 0$ less than the injectivity radius of $Y$ we can ensure that the covering map restricts to an isometry $p: B_{\epsilon}(x_{i}) \to B_{\epsilon}(p(x_{i}))$ for every $i\in\mathcal{I}$. 

Let $q_{i}(x,\zeta)$ denote the symbol of the localized operator $\chi_{i}N(D)^{-1}\chi_{i}$ on $B_{\epsilon}(x_{i})$, and let $q_{i}(\xi,p(x),\zeta)$ denote the symbol of the localization of $D_{\xi}(Y)^{-1}$ on $B_{\epsilon}(p(x_{i}))$ (recall that the invertibility of $N(D)$ is equivalent to the invertibility of the indicial family $D_{\xi}(Y)$). The same calculation as in Proposition \ref{newprop} shows that the integral kernels are related by
\[
K_{N(D)^{-1}}(x,y) = \frac{1}{2\pi i}\oint_{S^{1}}\xi^{f(y)-f(x)}K_{D_{\xi}(Y)^{-1}}(p(x),p(y))\frac{d\xi}{\xi}
\]
and therefore the local symbols are related by
\[
q_{i}(x,\zeta) = \frac{1}{2\pi i}\oint_{S^{1}}q_{i}(\xi,p(x),\zeta)\frac{d\xi}{\xi}.
\]
Since $D_{\xi}(Y)^{-1}$ is a pseudodifferential operator of order $-1$ on the closed manifold $Y$ we have
\[
|D_{x}^{\alpha}D_{\zeta}^{\beta}q_{i}(\xi,p(x),\zeta)| \leq C_{i}^{\alpha\beta}(\xi)(1+|\zeta|)^{-1-|\beta|}
\]
for some $C_{i}^{\alpha\beta}(\xi) > 0$. Moreover, the symbols $q_{i}(\xi,p(x),\zeta)$ depend smoothly on $\xi \in S^{1}$, so the seminorm
\[
C_{i}^{\alpha\beta}(\xi) = \sup_{y,\zeta}|D_{y}D_{\zeta}^{\beta}q_{i}(\xi,y,\zeta)|(1+|\zeta|)^{1+|\beta|}
\]
is a smooth function of $\xi$. Thus we can find a constant
\[
C_{i}^{\alpha\beta} = \sup_{\xi\in S^{1}}C_{i}^{\alpha\beta}(\xi) < \infty
\]
so that
\begin{align*}
|D_{x}^{\alpha}D_{\zeta}^{\beta}q_{i}(x,\zeta)| \leq \frac{1}{2\pi i}\oint_{S^{1}}|D_{x}^{\alpha}D_{\zeta}^{\beta}q_{i}(\xi,p(x),\zeta)|\frac{d\xi}{\xi} \leq C_{i}^{\alpha\beta}(1+|\zeta|)^{-1-|\beta|}
\end{align*}
for any multi-indices $\alpha,\beta$ and $i \in \mathcal{I}$. Further, since $Y$ is compact we can find a finite subset of points $\{x_{j_{1}}\ldots,x_{j_{m}}\}$ such that the balls $\{B_{\epsilon}(p(x_{j_{\ell}}))\}_{1\leq \ell \leq m}$ cover $Y$. Choosing $C^{\alpha\beta} = \max\{C^{\alpha\beta}_{j_{\ell}}: 1\leq \ell \leq m\}$ then yields a uniform bound
\[
|D_{x}^{\alpha}D_{\zeta}^{\beta}q_{i}(x,\zeta)| \leq C^{\alpha\beta}(1+|\zeta|)^{-1-|\beta|}
\]
which holds for every $i\in\mathcal{I}$. We conclude that $\psi_{2}N(D)^{-1}\phi_{2} \in \mathcal{U}\Psi^{-1}(\tildhat{Y};\tildhat{E})$ and therefore $G_{1} \in \mathcal{U}\Psi^{-1}(M;E)$.

The uniform pseudodifferential operators form an algebra under composition, and the principal symbol is a homomorphism fitting into the short exact sequence
\[
0 \xrightarrow{} \Psi^{s-1}(M;E) \xrightarrow{} \Psi^{s}(M;E) \xrightarrow{\sigma} \mathcal{U}C^{\infty}(S^{*}M;\pi^{*}\End E) \xrightarrow{} 0
\]
where $\mathcal{U}C^{\infty}(S^{*}M;\pi^{*}\End E)$ denotes the space of smooth uniformly bounded sections of $\pi^{*}\End E$ over the cotangent sphere bundle $\pi: S^{*}M \to M$. Using the short exact sequence, we may apply the standard symbolic iterative inversion scheme (as exhibited in the proof of \cite[Theorem 7.24]{melrose} for example) to upgrade $G_{1}$ to a parametrix $G$ such that the error $R = I - GD$ is a compact smoothing operator.
\end{proof}

Consider the bundle of Hilbert spaces $H^{s}_{\pi}(M/B;E) \to B$ with fibers $H^{s}(M_{z};E_{z})$ for $z \in B$. Fix an orthonormal basis $(e_{j})_{j\geq 1}$ for $H^{s}_{\pi}(M/B;E)$ and let $\Pi_{N}^{L_{2}}$ denote the orthogonal projection onto the first $N$ terms of the orthonormal basis. Then $\Pi_{N}^{L_{2}}$ has an $L^{2}$-integral kernel of the form
\[
K_{{\Pi}_{N}^{L_{2}}}(x,y) = \sum_{1 \leq j \leq N} e_{j}(x)\otimes e_{j}^{*}(y).
\]
By approximating the $L^{2}$-sections $(e_{j})$ by compactly supported smooth sections, we may pass to a smooth integral kernel $K_{\Pi_{N}}$ which decays exponentially along $\en(M)$, and which induces a compact smoothing operator $\Pi_{N}$. The family $\Pi_{N} = (\Pi_{N}^{z})_{z \in B}$ constructed in this way consists of projections with constant rank over $z \in B$. Moreover, the family $\Pi_{N}$ is asymptotically periodic with normal operator $N(\Pi_{N}) = 0$ by virtue of the exponential decay along $\en(M)$. 

Applying Lemma \ref{parametrixlemma} to our family of end-periodic Dirac operators $D$ we can find a smooth family of parametrices $G = (G^{z})_{z\in B}$ so that $GD = I - R$ where $R$ is a family of compact smoothing operators. Since the sequence $\Pi_{N}$ converges strongly to the identity as $N \to \infty$ we can choose $N$ sufficiently large so that the compact smoothing operator $S = R(I - \Pi_{N})$ has uniformly small operator norm. Thus the Neumann series
\[
(I-S)^{-1} = \sum_{n \geq 0} S^{n} = I + \sum_{n\geq 1}S^{n}
\]
converges in operator norm. In fact, we can show that the corresponding Neumann series of smooth integral kernels $\sum_{n}K_{S^{n}}(x,y)$ converges in the $C^{\infty}(M\times M;E\boxtimes E^{*})$ (Frech\'et space) topology. It follows that $(I-S)^{-1} = I + S'$ where $S'$ is a smoothing operator.

\begin{lemma}\label{constantranklemma}
Let $(M,g)$ be a smooth Riemannian manifold without boundary and denote by $\con: \Gamma(M;\otimes^{\ell}T^{*}M)\to\Gamma(M;\otimes^{\ell+1}T^{*}M)$ the Levi-Civita connection acting on smooth covariant tensor fields, for any $\ell \in \mb{N}$. We also let $\con$ denote the Levi-Civita connection acting on tensor fields on the Riemannian product $(M\times M,g\oplus g)$. Let $|\cdot|_{g}$ denote the norm induced by $g$ on smooth covariant tensor fields. Let $S$ be a compact smoothing operator on $L^{2}(M)$ with operator norm $0 < \|S\| < 1$ so that the Neumann series 
\[
(I-S)^{-1} = \sum_{n \geq 0} S^{n} = I + S'
\]
converges in operator norm. Let $\{U_{j}\}_{j\in\mb{N}}$ be any exhaustion of $M \times M$ by compact subsets, and equip $C^{\infty}(M\times M)$ with the Frech\'et space topology induced by the family of seminorms
\[
p_{j,k}(v) = \sup_{(x,y)\in U_{j}}|\con^{k}v(x,y)|_{g}
\]
for $j,k \in \mb{N}$, and $v \in C^{\infty}(M\times M)$. Then the Neumann series $\sum_{n\geq 1}K_{S^{n}}(x,y)$ converges in $C^{\infty}(M\times M)$, hence $K_{S'}$ is smooth and $S'$ is a smoothing operator.
\end{lemma}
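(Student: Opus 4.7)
The plan is to bound the Fr\'echet seminorms $p_{j,k}(K_{S^n})$ geometrically in $n$, so that $\sum_{n\geq 1} K_{S^n}$ converges in the Fr\'echet space $C^{\infty}(M\times M)$; its sum then equals $K_{S'}$ by continuity of pointwise evaluation, giving smoothness of $K_{S'}$.

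The starting point is the kernel composition identity, which for $n\geq 2$ gives
\begin{equation*}
K_{S^n}(x,y) = \int_M \int_M K_S(x,z)\,K_{S^{n-2}}(z,w)\,K_S(w,y)\,dz\,dw = \langle S^{n-2}\phi_y,\,\psi_x\rangle_{L^{2}(M)},
\end{equation*}
where $\psi_x(z) := K_S(x,z)$ and $\phi_y(w) := K_S(w,y)$ are viewed as sections of $L^{2}(M)$ in the $z$- (resp.\ $w$-) variable. Because the Levi-Civita connection on the Riemannian product $(M\times M,\,g\oplus g)$ decomposes along the two factors, it suffices to bound the mixed derivatives $\con^{k_1}_x\con^{k_2}_y K_{S^n}(x,y)$ for $k_1+k_2=k$. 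In the integral pairing, $x$-derivatives act only on $\psi_x$ and $y$-derivatives only on $\phi_y$, so Cauchy--Schwarz together with the operator norm bound $\|S^{n-2}\|_{\mathrm{op}}\leq \|S\|^{n-2}$ yields
\begin{equation*}
|\con^{k_1}_x\con^{k_2}_y K_{S^n}(x,y)|\leq \|S\|^{n-2}\,\|\con^{k_1}_x\psi_x\|_{L^{2}}\,\|\con^{k_2}_y\phi_y\|_{L^{2}}.
\end{equation*}

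The main obstacle is to show that $\|\con^k_x K_S(x,\cdot)\|_{L^{2}(M)}$ and $\|\con^k_y K_S(\cdot,y)\|_{L^{2}(M)}$ are uniformly bounded as $x,y$ range over compact subsets of $M$. My tool here is the closed graph theorem: since $S$ is smoothing, $S$ defines a linear map $L^{2}(M)\to C^{\infty}(M)$ between Fr\'echet spaces, and a short subsequence argument (if $f_n\to f$ in $L^{2}$ and $Sf_n\to g$ in $C^{\infty}$, then $Sf_n\to Sf$ in $L^{2}$, hence pointwise a.e.\ along a subsequence, forcing $Sf = g$ a.e.\ and then everywhere by continuity) shows the graph is closed, so $S$ is continuous from $L^{2}(M)$ to $C^{\infty}(M)$. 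For each compact $V\subset M$ and each $k$, continuity yields a constant $A_{V,k}$ with $|\con^{k}(Sf)(x)|_{g}\leq A_{V,k}\|f\|_{L^{2}}$ for all $x\in V$ and $f\in L^{2}(M)$, and Riesz representation then gives $\|\con^{k}_x K_S(x,\cdot)\|_{L^{2}}\leq A_{V,k}$ for $x\in V$. Applying the same reasoning to $S^{*}$ (also a compact smoothing operator, with kernel $(y,w)\mapsto K_S(w,y)$) handles the $y$-side.

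Assembling the pieces, for any compact $U_j\subset V_j^{1}\times V_j^{2}\subset M\times M$ and any $k\geq 0$ there is a constant $C_{j,k}$ with $p_{j,k}(K_{S^n})\leq C_{j,k}\|S\|^{n-2}$ for all $n\geq 2$, while $K_{S^1}=K_S$ is smooth by hypothesis. Since $\|S\|<1$, the series $\sum_{n\geq 1}K_{S^n}$ converges absolutely in $C^{\infty}(M\times M)$. Because pointwise evaluation is continuous in $C^{\infty}$ and $\sum_{n\geq 1}S^{n}$ converges to $S'$ in operator norm, the two limits must agree, so $K_{S'}\in C^{\infty}(M\times M)$ and $S'$ is a smoothing operator.
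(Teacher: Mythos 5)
Your argument is correct, and it reaches the crucial geometric bound $p_{j,k}(K_{S^{n}})\leq C_{j,k}\|S\|^{n-2}$ by a genuinely different mechanism from the paper's proof, although the overall skeleton (geometric decay of the seminorms, summation in the Fr\'echet space, identification of the limit with $K_{S'}$) is the same. The paper works at the operator level: it bounds $\sup_{x\in U}|\nabla^{k}(S^{n}u)(x)|_{g}$ by a constant times $\|S^{n}\|\,\|u\|_{L^{2}}$ using Sobolev embedding in finitely many charts with a partition of unity, transfers this to a kernel bound by testing against an approximating sequence for $\delta_{y}$, and repeats the argument with $S^{*}$ to control the $y$-derivatives. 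You instead factor $S^{n}=S\,S^{n-2}\,S$, realize $K_{S^{n}}(x,y)$ as an $L^{2}$ pairing of $K_{S}(x,\cdot)$ with $S^{n-2}K_{S}(\cdot,y)$, and apply Cauchy--Schwarz, so that the only analytic input is the locally uniform $L^{2}$ bound on $\nabla_{x}^{k}K_{S}(x,\cdot)$ (and, via $S^{*}$, on $\nabla_{y}^{k}K_{S}(\cdot,y)$), which you extract from the closed graph theorem for $S\colon L^{2}\to C^{\infty}$ together with Riesz representation. Your route handles the mixed $x$--$y$ derivatives explicitly (the paper leaves this step implicit), produces the factor $\|S\|^{n-2}$ directly from the operator norm of the middle factor, and avoids both the Sobolev-embedding bookkeeping and the delta-approximation step; the price is that you differentiate under the integral and commute the parameter derivative in $y$ past $S^{n-2}$, which is justified by your locally uniform $L^{2}$ bounds at one order higher but deserves an explicit sentence. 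Two small points to tighten: over $\mathbb{C}$ the pairing should carry a complex conjugate (irrelevant to the estimate), and the identification of the Fr\'echet limit with $K_{S'}$ is most cleanly phrased by pairing against $u,v\in C_{c}^{\infty}(M)$, since operator-norm convergence of the partial sums and locally uniform convergence of their kernels then give the same distributional limit. Finally, your appeal to $S^{*}$ being again smoothing (so that the closed-graph argument applies to it) is the same standing assumption the paper itself makes when it runs its argument with the adjoint, so it is not a gap relative to the paper's proof.
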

\begin{proof}
We will show that for any $j,k,n \in \mb{N}$ we have
\[
p_{j,k}(K_{S^{n}}) \leq C_{j,k}\|S^{n}\|
\]
for some constant $C_{j,k} > 0$. Since $S^{n}$ is a smoothing operator it induces a bounded map $S^{n}: L^{2}(M) \to C^{\infty}(M) \subseteq H^{m}(M)$ for every $m \in \mb{N}$, and if we fix a compact subset $U \subseteq M$ then we have $\|S^{n}u\|_{H^{m}(U)} \leq C_{m}\|S^{n}\|\cdot\|u\|_{L^{2}(U)}$ for any $u \in L^{2}(U)$. Let $\{(\phi_{i},V_{i})\}_{1\leq i \leq q}$ be a finite open cover of $U$ by smooth charts for $M$. Let $\{\chi_{i}\}_{1\leq i \leq q}$ be a partition of unity subordinated to this cover. Applying the Sobolev embedding theorem on each $V_{i}$, we have
\begin{align*}
\sum_{|\alpha|\leq k}\sup_{x\in V_{i}}|\partial^{\alpha}((\chi_{i}S^{n}u)\circ\phi_{i}^{-1})(\phi_{i}(x))| = \|\chi_{i}S^{n}u\|_{C^{k}(V_{i})} \leq C_{i}\|\chi_{i}S^{n}u\|_{H^{m}(V_{i})}
\end{align*}
for some constants $C_{i} > 0$ and $m \in \mb{N}$ chosen sufficiently large (e.g., $m > k + \dim(M)/2$) so that the Sobelev embedding can be applied.  Then summing over the partition of unity yields
\begin{align*}
\sup_{x\in U}|\con^{k}S^{n}u(x)|_{g} &\leq \sum_{1\leq i \leq q}\sup_{x\in V_{i}}|\con^{k}(\chi_{i}S^{n}u)(x)|_{g} \\
&\leq C\sum_{1\leq i \leq q}\sum_{|\alpha|\leq k}\sup_{x\in V_{i}}|\partial^{\alpha}((\chi_{i}S^{n}u)\circ\phi_{i}^{-1})(\phi_{i}(x))| \\
&= C\sum_{1\leq i \leq q}\|\chi_{i}S^{n}u\|_{C^{k}(V_{i})} \\
&\leq C\sum_{1\leq i \leq q} C_{i}\|\chi_{i}S^{n}u\|_{H^{m}(V_{i})} \\
&\leq C_{U,k}\|S^{n}u\|_{H^{m}(U')} \\
&\leq C_{U,k,m}\|S^{n}\|\cdot\|u\|_{L^{2}(U')}
\end{align*}
for some compact $U \subset U' \subset M$ and constants $C_{U,k,m}>0$. Fixing $y \in U$, we may construct a family of mollifiers $u_{\epsilon} \in C_{c}^{\infty}(M)$ supported near $y$ such that $\|u_{\epsilon}\|_{L^{2}} = 1$ for every $\epsilon > 0$ and $u_{\epsilon} \to \delta_{y} \in H^{-m}(U)$ in the weak topology as $\epsilon \to 0^{+}$. Applying the preceding inequality with these $u_{\epsilon}$ and then taking the limit as $\epsilon \to 0^{+}$ yields
\[
\sup_{x\in U}|\con_{x}^{k}K_{S^{n}}(x,y)|_{g} \leq C_{U,k,m}\|S^{n}\|.
\]
Applying the same argument with the adjoint $S^{*}$ and $u_{\epsilon} \to \delta_{x}$ yields the same bound on covariant derivatives of $K_{S^{n}}$ in the $y$-variable. From this we deduce
\[
p_{j,k}(K_{S^{n}}) = \sup_{(x,y)\in U_{j}}|\con^{k}K_{S^{n}}(x,y)|_{g} \leq C_{j,k}\|S^{n}\|
\]
and it follows that, for any $j,k \in \mb{N}$,
\[
\sum_{n=1}^{\infty}p_{j,k}(K_{S^{n}}) \leq C_{j,k}\sum_{n=1}^{\infty}\|S\|^{n} < \infty
\]
thus the Neumann series of integral kernels converges in $C^{\infty}(M\times M)$, and it is clear that it can only converge to $K_{S'}$. Hence $K_{S'}$ is smooth and $S'$ is a smoothing operator.
\end{proof}
It is straightforward to extend Lemma \ref{constantranklemma} to the case where $E \to M$ is a Hermitian vector bundle equipped with a metric connection $\con^{E}$, and $S$ is a compact smoothing operator acting on $L^{2}(M;E)$; the same proof goes through with only notational changes. Thus we conclude that our operator $S = R(I-\Pi_{N})$ satisfies $(I-S)^{-1} = I+S'$ where $S'$ is a smoothing operator.

We fix $N$ sufficiently large as above and set $\Pi = \Pi_{N}$, and define the perturbation $P = -D\Pi$. Since $D$ is end-periodic and $\Pi$ is asymptotically periodic with $N(\Pi) = 0$, it follows that $P$ is asymptotically periodic with normal operator $N(P) = 0$. We construct a new parametrix $G'$ so that
\[
G'(D+P) = I - \Pi.
\]
To construct this new parametrix $G'$, we note that
\[
GD(I-\Pi) = (I-R)(I - \Pi) = (I - R(I-\Pi))(I-\Pi) = (I-S)(I-\Pi)
\]
where $S = R(I-\Pi)$ as above. From the identity $(I-S)^{-1} = I+S'$ it follows that $G' = (I + S')G$ satisfies $G'(D+P) = I - \Pi$. By the construction of $\Pi$ as a family of projections of the same rank over $z \in B$, the equation $G'(D+P) = I - \Pi$ implies that $D+P$ has nullspace of constant dimension over $z \in B$.

In summary, we have constructed a family of asymptotically periodic smoothing perturbations $P$ such that $D+P$ has constant rank nullspace and $N(P) = 0$. As a result, the normal operators and indicial families of $D$ coincide with those of $D+P$, and the same goes for the Bismut superconnections $A_{t}$ and $A_{t}^{P}$. We conclude that
\[
\lim_{t \to \infty}(\dag^{P}) = \lim_{t \to \infty}(\dag), \hspace{3mm} \lim_{t\to 0^{+}}(\dag^{P}) = \lim_{t \to 0^{+}}(\dag), \hspace{3mm} \epeta^{P} = \epeta.
\]
This completes the proof of Theorem \ref{mainthm2} in the non-constant rank case.



\begin{thebibliography}{2}

\bibitem[AGR23]{ag} P. Albin and J. Gell-Redman.
\newblock The index formula for families of {D}irac type operators on pseudomanifolds.
\newblock {\em J. Differential Geom.} 125(2): 207-343, 2023.

\bibitem[APS75]{aps} M. Atiyah, V. Patodi, and I. Singer.
\newblock Spectral asymmetry and {R}iemannian geometry. {I}.
\newblock {\em Math. Proc. Cambridge Philos. Soc.}, 77:43--69, 1975.

\bibitem[Bis86]{bismut} J.-M. Bismut.
\newblock The Atiyah-Singer index theorem for families of Dirac operators: Two heat equation proofs. 
\newblock {\em Invent. Math.}, 83:91--151, 1986.

\bibitem[BC90a]{bc1} J.-M. Bismut and J. Cheeger.
\newblock Families index for manifolds with boundary, superconnections and cones. I.
\newblock {\em J. Funct. Anal.}, 89:313--363, 1990.

\bibitem[BC90b]{bc2} J.-M. Bismut and J. Cheeger.
\newblock Families index for manifolds with boundary, superconnections and cones. II.
\newblock {\em J. Funct. Anal.}, 90:306--354, 1990.

\bibitem[BV87]{bv} N. Berline and M. Vergne.
\newblock A proof of {B}ismut local index theorem for a family of {D}irac operators.
\newblock {\em Topology}, 26(4):435--463, 1987.

\bibitem[BGV92]{bgv} N. Berline, E. Getzler, and M. Vergne.
\newblock \emph{Heat Kernels and {D}irac Operators}.
\newblock Graduate Texts in Mathematics, vol. 298. Springer-Verlag, Berlin, 1992.

\bibitem[Don88]{don} H. Donnelly.
\newblock Local index theorem for families.
\newblock {\em Michigan Math. J.}, 35(1):11-20, 1988.

\bibitem[Fre21]{freed} D. S. Freed.
\newblock The {A}tiyah--{S}inger index theorem.
\newblock {\em Bull. Amer. Math. Soc.}, 58(4):517-566, 2021.

\bibitem[Get85]{getzler} E. Getzler.
\newblock A short proof of the local {A}tiyah--{S}inger index theorem.
\newblock {\em Topology}, 25(1):111--117, 1986.

\bibitem[Gil73]{gilkey} P. B. Gilkey.
\newblock Curvature and the eigenvalues of the {L}aplacian for elliptic complexes.
\newblock {\em Adv. Math.}, 10(3):344--382, 1973.

\bibitem[HM20]{hm} M. Hallam and V. Mathai.
\newblock Positive scalar curvature via end-periodic manifolds.
\newblock {\em Ann. K-theory}, 5 no. 3, 639-676, 2020.

\bibitem[KRS19]{krs} D. Kazaras, D. Ruberman, and N. Saveliev.
\newblock On positive scalar curvature cobordisms and the conformal {L}aplacian on end-periodic manifolds.
\newblock {\em Comm. Anal. Geo.}, 30(4):869--890, 2022.

\bibitem[LM89]{lm} H. B. Lawson and M.-L. Michelsohn.
\newblock {\em Spin Geometry.}
\newblock Princeton Mathematical Series, vol. 38. Princeton University Press, Princeton, NJ, 1989.

\bibitem[LY86]{ly} P. Li and S.-T. Yau.
\newblock On the parabolic kernel of the {S}chrödinger operator.
\newblock {\em Acta Math.}, 156:153--201, 1986.

\bibitem[LM00]{liuma} K. Liu and X. Ma.
\newblock On family rigidity theorems. I.
\newblock {\em Duke Math. J.}, 102(3):451--474, 2000.

\bibitem[Maz91]{mazzeo} R. Mazzeo.
\newblock Elliptic Theory of Differential Edge Operators I.
\newblock {\em Commun. in Partial Differential Equations}, 16(10):1615--1664, 1991.

\bibitem[MPU96]{mpu} R. Mazzeo, D. Pollack, and K. Uhlenbeck.
\newblock Moduli spaces of singular {Y}amabe metrics.
\newblock {\em J. Amer. Math. Soc.}, 9(2):303--344, 1996.

\bibitem[Mel93]{melrose} R. B. Melrose.
\newblock {\em The {A}tiyah--{P}atodi--{S}inger Index Theorem}.
\newblock Research Notes in Mathematics, vol. 4. A K Peters, Wellesley, MA, 1993.

\bibitem[MP97]{mp} R. B. Melrose and P. Piazza.
\newblock Families of {D}irac operators, boundaries and the $b$-calculus.
\newblock {\em J. Differential Geom.}, 46(1):99--180, 1997.

\bibitem[MR05]{mr} R. B. Melrose and F. Rochon.
\newblock Index in {K}-theory for families of fibred cusp operators.
\newblock {\em K-Theory}, 37(1-2):25-104, 2006.

\bibitem[MRS11]{mrs2} T. S. Mrowka, D. Ruberman, and N. Saveliev.
\newblock Seiberg--{W}itten equations, end-periodic {D}irac operators, and a lift of {R}ohlin's invariant.
\newblock {\em J. Differential Geom.,} 88(2):333--377, 2011.

\bibitem[MRS16]{mrs1} T. S. Mrowka, D. Ruberman, and N. Saveliev.
\newblock An index theorem for end-periodic operators.
\newblock {\em Compos. Math.}, 152(2):399--444, 2016.

\bibitem[Pal65]{palais} R. S. Palais, editor.
\newblock {\em Seminar on the {A}tiyah--{S}inger Index Theorem}.
\newblock Annals of Mathematics Studies, vol. 57. Princeton University Press, Princeton, NJ, 1965.

\bibitem[Qui85a]{quillen1} D. Quillen.
\newblock Superconnections and the {C}hern character.
\newblock {\em Topology}, 24(1):89--95, 1985.

\bibitem[Qui85b]{quillen2} D. Quillen.
\newblock Determinants of {C}auchy--{R}iemann operators over a {R}iemann surface.
\newblock {\em Funct. Anal. Appl.}, 19(1):31--34, 1985.

\bibitem[Roe88]{roe} J. Roe.
\newblock An index theorem on open manifolds. {I}.
\newblock {\em J. Differential Geom.}, 27(1):87--113, 1988.

\bibitem[Roe98]{roe98} J. Roe.
\newblock {\em Elliptic operators, topology and asymptotic methods, 2nd ed.}
\newblock CRC Press, 1998.

\bibitem[Shu92]{shubin} M. A. Shubin.
\newblock Spectral theory of elliptic operators on non-compact manifolds.
\newblock {\em Astérisque}, 207(5):35--108, 1992.

\bibitem[TZ91]{tz} L.A. Takhtajan and P.G. Zograf. 
\newblock A local index theorem for families of $\overline{\partial}$-operators on punctured Riemann surfaces and a new K\"ahler metric on their moduli spaces 
\newblock {\em Commun. Math. Phys.}, 137(2): 399–-426, 1991. 

\bibitem[Tau87]{tau} C. H. Taubes.
\newblock Gauge theory on asymptotically periodic 4-manifolds.
\newblock {\em J. Differential Geom.}, 25(3):363--430, 1987.

\bibitem[Ton23]{tony} T. Tony.
\newblock The end-periodic index theorem for {D}irac operators and its connection to the classical {A}tiyah--{P}atodi--{S}inger index theorem.
\newblock Master's thesis, University of Freiburg, 2023.

\bibitem[Voi02]{voisin} C. Voisin.
\newblock {\em Hodge Theory and Complex Algebraic Geometry, I}.
\newblock Cambridge Stud. in Adv. Math. 76, 2002.

\end{thebibliography}
\end{document}